\newcommand{\version}{\today}
\newcommand{\R}{\mathbb{R}}
\newcommand{\C}{\mathbb{C}}
\DeclareMathOperator{\T}{\mathcal{T}}
\DeclareMathOperator{\Lie}{Lie}
\DeclareMathOperator{\rU}{U}
\newcommand{\Z}{\mathbb{Z}}
\DeclareMathOperator{\Cx}{Cx}
\DeclareMathOperator{\Univ}{\mathcal{U}}
\newcommand{\cO}{\mathcal{O}}
\DeclareMathOperator{\Hom}{Hom}
\DeclareMathOperator{\KS}{KS}
\DeclareMathOperator{\Gr}{Gr}
\DeclareMathOperator{\D}{Diff}
\DeclareMathOperator{\Id}{Id}
\DeclareMathOperator{\Hor}{Hor}
\newcommand{\ox}{\otimes}
\newcommand{\x}{\times}
\newcommand{\1}{\sqrt{-1}}
\newcommand{\eps}{\varepsilon}
\newcommand{\pdb}{\bar{\pd}}
\numberwithin{equation}{section}
\title{Haupt--Kapovich theorem revisited}
\author{rodion~n.~d\'eev}
\newcommand{\arr}{\xrightarrow}
\newcommand{\pd}{\partial}
\newtheorem{pr}{Proposition}[section]
\newtheorem{corol}[pr]{Corollary}
\newtheorem*{problem}{Problem}
\newtheorem*{thrm}{Theorem}
\newtheorem*{fact}{Fact}
\newtheorem*{lm}{Lemma}
\theoremstyle{definition}
\newtheorem{defn}{Definition}
\newtheorem*{expl}{Example}
\renewcommand{\P}{\mathrm{P}}
\DeclareMathOperator{\Gris}{Gris}
\DeclareMathOperator{\ord}{ord}
\DeclareMathOperator{\MCG}{MCG}
\DeclareMathOperator{\Sp}{Sp}
\DeclareMathOperator{\SL}{SL}
\DeclareMathOperator{\Res}{Res}
\DeclareMathOperator{\Grsymp}{Grsymp}
\begin{document}
\maketitle

{\it \rightline{to Misha, Ilya, Vitali, and Katia Kapovich}}

\vspace{6mm}

\begin{abstract}
A theorem of O.~Haupt, rediscovered by M.~Kapovich and celebrated by his proof invoking Ratner theory, describes the set of de Rham cohomology classes on a topological orientable surface, which can be realized by an abelian differential in some respective complex structure, in purely topological terms. We give a simplification of Kapovich's proof and make an attempt to describe similarly pairs and triples of cohomology classes, which can be realized by abelian differentials in some complex structure. This leads to some interesting problems in algebraic geometry of curves, and gives an unexpected local description of the Teichm\"uller space.
\end{abstract}
\tableofcontents

\section{Introduction}

The problem which the Haupt--Kapovich theorem concerns is the following. Let $S$ be a topological genus $g>1$ orientable surface. Its first cohomology $H^1(S,\C)$ is the complexification of a lattice endowed with an integral skew-symmetric form, the intersection pairing. If the complex structure $I$ on the surface is fixed, making it into a complex curve, one can consider the space $H^{1,0}(S,I)$ of cohomology classes of holomorphic 1-forms. Whenever another complex structure is chosen, another $H^{1,0}$ subspace in cohomology arises, so it of course cannot be reconstructed from the linear-algebraic data of topological origin on $H^1(S,\C)$. But can the union of all the $H^{1,0}$ subspaces for all possible complex structures be captured by topology? The answer is positive, and is given by the following

\begin{thrm}[O.~Haupt, 1920 \cite{H}, M.~Kapovich, 2000 \cite{K}]The union $$\bigcup_{I\in\T_S}H^{1,0}(S,I) \subset H^1(S,\C)$$ of all the $H^{1,0}$ subspace over all possible complex structures is precisely the subset of the classes $[\alpha]$ which satisfy the following conditions:
\begin{enumerate}
\item $\1[\alpha]\wedge[\bar{\alpha}] \geqslant 0,$
\item Whenever the set of periods $\left\{\int_\gamma\alpha\right\}_{\gamma \in H_1(S,\Z)}$ is a lattice in $\C$, one has $\1[\alpha]\wedge[\bar{\alpha}]$ greater than the covolume of this lattice.
\end{enumerate}
\end{thrm}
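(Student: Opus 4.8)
The plan is to identify the realizable classes with the image $R\subset H^1(S,\C)$ of the period map $(I,\omega)\mapsto[\omega]$ on pairs consisting of a complex structure $I$ and a holomorphic $1$-form (equivalently, a translation-surface structure), and then to pin $R$ down using two structural facts together with a case analysis. The structural facts are: $R$ is \emph{open}, which follows from the existence of period coordinates on strata of abelian differentials, under which the passage to absolute periods is a local submersion onto $H^1(S,\C)$; and $R$ is \emph{invariant} under the postcomposition action of $\mathrm{GL}_2^+(\R)$ on $\C\cong\R^2$ as well as under the $\Sp(2g,\Z)$-action of the mapping class group on $H^1(S,\C)$, since acting by $\mathrm{GL}_2^+(\R)$ on a translation surface yields another one, and a mapping class carries $(I,\omega)$ to some $(I',\omega')$.

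\emph{Necessity} is Stokes together with Abel--Jacobi. In a local holomorphic coordinate $\omega=f\,dz$, so $\1\,\omega\wedge\bar\omega=2|f|^{2}\,dx\wedge dy\geqslant 0$ and $\1[\omega]\wedge[\bar\omega]=\int_{S}\1\,\omega\wedge\bar\omega\geqslant 0$, with equality only for $\omega\equiv 0$; this is (1). If the periods of $\omega$ span a lattice $\Lambda\subset\C$, then $p\mapsto\int_{p_{0}}^{p}\omega$ descends to a nonconstant holomorphic map $F\colon S\to\C/\Lambda$ with $F^{*}dz=\omega$; being nonconstant it is a branched cover of some degree $d$, and $d\geqslant 2$ since an isomorphism would force $g=1$. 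Hence $\1[\omega]\wedge[\bar\omega]=d\cdot\mathrm{covol}(\Lambda)>\mathrm{covol}(\Lambda)$, which is (2). (Conversely, a short symplectic-basis computation shows that whenever the periods of a class span a lattice $\Lambda$ the number $\1[\alpha]\wedge[\bar\alpha]$ is automatically an integer multiple of $\mathrm{covol}(\Lambda)$, so (2) says precisely that this integer is at least $2$; and if $\1[\alpha]\wedge[\bar\alpha]=0$ the Hodge-theoretic identity above forces $[\alpha]=0$, so the borderline of (1) is degenerate.)

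\emph{Sufficiency: the rational case.} Suppose the periods of $[\alpha]$ span a lattice $\Lambda$ and $\1[\alpha]\wedge[\bar\alpha]=d\cdot\mathrm{covol}(\Lambda)$ with $d\geqslant 2$. I would realize $[\alpha]$ by hand, building a degree-$d$ branched cover $F\colon(S,I)\to\C/\Lambda$ — for a suitable $I$ — whose form $F^{*}dz$ has the prescribed period homomorphism $\rho\colon H_{1}(S,\Z)\to\Lambda$: existence of a genus-$g$ cover of this degree is Riemann--Hurwitz, and matching $\rho$ uses the classification up to $\Sp(2g,\Z)$ of homomorphisms $H_{1}(S,\Z)\to\Z^{2}$ together with the freedom to move branch points and to vary the complex structure of the elliptic target. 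No dynamical input is needed here.

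\emph{Sufficiency: the general case, and the main obstacle.} Now take $[\alpha]$ with $\1[\alpha]\wedge[\bar\alpha]>0$ and consider its orbit under $G:=\mathrm{GL}_2^+(\R)\times\Sp(2g,\Z)$. Since $R$ is open and $G$-invariant, $[\alpha]\in R$ as soon as $\overline{G\cdot[\alpha]}$ meets $R$; and for that it is enough to find in $\overline{G\cdot[\alpha]}$ a class $[\beta]$ whose periods span a lattice $\Lambda'$ with $\mathrm{covol}(\Lambda')<\1[\beta]\wedge[\bar\beta]$, for such a $[\beta]$ is realizable by the rational case and openness of $R$ then drags $[\alpha]$ into $R$. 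Producing such a limit is where Ratner's theorem enters: viewing $[\alpha]$ as a point of a homogeneous space acted on by $\Sp(2g,\R)\times\SL_2(\R)$, the closure of the orbit of the lattice $\Sp(2g,\Z)$, taken together with $\mathrm{GL}_2^+(\R)$, is a homogeneous subvariety, and as long as the periods of $[\alpha]$ are not confined to a real line — which $\1[\alpha]\wedge[\bar\alpha]>0$ forbids — this subvariety is big enough to contain classes with lattice periods of arbitrarily large ``degree'' $d$. An equivalent framing is that $R$ is \emph{closed} inside the (open, connected — the intersection form on $H^1(S,\C)$ having signature $(2g,2g)$ with $2g\geqslant 4$) set cut out by (1)--(2), a non-degeneration statement ruling out that the realizing flat surfaces pinch a curve, i.e.\ controlling the shortest period. \textbf{Either way this is the crux}, and the point one most wants to streamline: by isolating the minimal consequence of the orbit-closure classification actually used — a purely linear-algebraic density statement for the $\Sp(2g,\Z)$-action on $\Hom(H_{1}(S,\Z),\R^{2})$ — or, better, by bypassing homogeneous dynamics altogether in favour of an explicit cut-and-paste: build the realizing surface by attaching handles to a flat torus along suitably placed pairs of parallel slits, realizing the prescribed periods one handle at a time, after first using $\Sp(2g,\Z)$ to spread them out enough for the slits to fit.
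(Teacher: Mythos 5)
Your proposal follows the same skeleton as the paper's proof (Proposition \ref{hk-new}), which is itself a streamlining of Kapovich's: necessity by Stokes plus the degree of the map to the period torus; openness of the period map (the paper proves this via nondegeneracy of the Kodaira--Spencer pairing, Proposition \ref{htht-strong}, rather than period coordinates, but the statement is the same Hejhal--Thurston theorem); equivariance under $\SL(2,\R)$ and $\Sp(2g,\Z)$; an explicit ramified cover of a torus of degree $d>1$ in the lattice case; and homogeneous dynamics for the rest. The necessity argument and the lattice-case construction are essentially correct and match Proposition \ref{hk-where-from}, though note that reducing ``realize every lattice class of given degree $d\geqslant 2$'' to ``realize one example'' requires two ingredients you gesture at but do not supply: the topologization lemma (Proposition \ref{topolo}, i.e.\ that realizability of an elliptic line depends only on its real part) and transitivity of $\Sp(2g,\Z)$ on complete rank-two symplectic sublattices of fixed determinant (Proposition \ref{det-trans}).

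The genuine gap is exactly where you flag it, and the heuristic you offer to bridge it is insufficient. You claim that for a non-lattice class the $\mathrm{GL}_2^+(\R)\times\Sp(2g,\Z)$-orbit closure is ``big enough'' merely because positivity of $\1[\alpha]\wedge[\bar\alpha]$ prevents the periods from lying on a real line. That is not a proof, and the naive density claim it suggests is false in genus two: there are intermediate orbit closures coming from irreducible lattices $\SL(2,\cO_K)\subset\SL(2,\R)\times\SL(2,\R)$ (Hilbert modular surfaces, real multiplication), so positivity alone does not force density. The paper closes this step in three moves: (i) $\SL(2,\R)$-equivariance (Proposition \ref{sl2-equiv}) pushes the whole question down to $\Sp(2g,\Z)$-orbits on $\Grsymp(2,2g)=\Sp(2g,\R)/\left(\Sp(2,\R)\x\Sp(2g-2,\R)\right)$; (ii) Ratner's theorem makes each orbit closure an orbit of an intermediate subgroup, and Dynkin's maximality of $\Sp(2,\R)\x\Sp(2g-2,\R)$ forces the dichotomy \emph{dense or discrete} (Proposition \ref{discrete-or-dense}); (iii) Margulis's theorem on reducibility of lattices identifies the discrete orbits as exactly the rational symplectic $2$-planes, i.e.\ the lattice-period classes (Proposition \ref{margulis}, which is where the hypothesis $g>2$ enters). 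With the dichotomy in hand, a dense orbit meets the nonempty open realizable set directly, so there is no need to manufacture a lattice class inside the closure as you propose; your route would additionally require controlling the covolume of the limiting lattice, an extra difficulty the paper's argument avoids.
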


Here $\T_S$ stands for the Teichm\"uller space of complex structures on $S$ modulo isotopy. The necessity of these conditions is obvious: the first follows from the fact that $\1dz\wedge d\bar{z}$ is a positive volume form on a Riemannian surface; as for the second, if $\alpha$ is a holomorphic representative, the multivalued integral $\int_{z_0}^z\alpha$ descends to a holomorphic mapping onto an elliptic curve, the quotient of $\C$ by the periods of $\alpha$, and the degree of a map from a genus $g>1$ curve onto an elliptic curve is at least two, so its area w.~r.~t. the class $[\alpha]$ is at least twice as the area of the elliptic curve, i.~e. the covolume of the period lattice. However, the sufficiency of these conditions is nontrivial. Although it can be proved by elementary means (and this way has been proven by Haupt), a more elegant proof by Kapovich is based on the ergodic theory, especially the Ratner theorem.

Since the subset of realizable classes is invariant under scaling, one may consider it as a question about characterization of linear subspaces, which fall into $H^{1,0}(S,I)$ for some complex structure $I$. Bogomolov noticed that a variant of this question turns out to be interesting:

\begin{problem}Describe the union $$\bigcup_{I\in\T_S} {\Gr}{\left(k,H^{1,0}(S,I)\right)} \subset {\Gr}{\left(k, H^1(S,\C)\right)}$$ in terms of topology of $S$.
\end{problem}

In this paper, we slightly modify the proof of Kapovich in order to fit better for dealing with this problem.

The paper is organized as follows. In the Section \ref{htht}, we discuss the openness of the set of representable classes (or pairs, or triples). In the subsection \ref{htht-ks} the notation is fixed and familiar facts about the Teichm\"uller space, Gau\ss--Manin connection, Kodaira--Spencer class for algebraic curves are recollected. In the subsection \ref{htht-kap}, we introduce the polyperiod mappings, and restate an~openness theorem due to Hejhal and Thurston, which is of great importance in the Kapovich's paper. In the subsection \ref{htht-weak}, a weaker version of Hejhal--Thurston theorem for pairs and triples of abelian differentials is proved, and the algebraic-geometric condition on the pairs in which this theorem fails, is outlined. In the subsection \ref{htht-counter}, a geometric construction of certain examples of failure of Hejhal--Thurston theorem for pairs and triples is provided.

The Section \ref{ratner} invokes ergodic-theoretic considerations. Moore's theorem implies that the sets of representable classes are dense, which in particular implies the Proposition \ref{three-diff}, which is one of the main results of the paper.

\begin{thrm}[Proposition \ref{three-diff}]
Let $(S, I)$ be a complex curve of genus at least four, and $\tau \subset H^{1,0}(S,I)$ be a generic three-dimensional subspace. There exist a neighborhood $U \subset \Gris(3,2g)$ in the isotropic Gra\ss mannian w.~r.~t. the intersection pairing containing $\tau$ s.~t. for any $\tau' \in U$ there exists a unique complex structure $I' = I(\tau')$ s.~t. one has $\tau' \subset H^{1,0}(S,I')$. In other words, deformation of a generic triple of abelian differentials determines a unique local deformation of a complex structure, so that the deformed cohomology classes would be represented by abelian differentials in the deformed complex structure.
\end{thrm}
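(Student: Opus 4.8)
The plan is to realise the assignment $\tau'\mapsto I(\tau')$ as a branch of the inverse of the polyperiod map of §\ref{htht-kap}, deducing the statement from the inverse function theorem together with the weak Hejhal--Thurston theorem for triples of §\ref{htht-weak}. First I would form the parameter space $\mathcal{X}=\Gr(3,\mathcal{H}^{1,0})$ of pairs $(I,\tau)$ with $I\in\T_S$ and $\tau\subset H^{1,0}(S,I)$ three-dimensional, where $\mathcal{H}^{1,0}$ denotes the Hodge bundle over $\T_S$; it is a complex manifold of dimension $(3g-3)+3(g-3)=6g-12$. Trivialising the local system $H^1(S,\C)$ by the Gau\ss--Manin connection (legitimate, $\T_S$ being contractible) produces a holomorphic map $\per\colon\mathcal{X}\to\Gr(3,H^1(S,\C))$, $(I,\tau)\mapsto\tau$, which lands in $\Gris(3,2g)$ because the wedge of two holomorphic $1$-forms on a curve is a holomorphic $2$-form and hence vanishes; and $\dim\Gris(3,2g)=3(2g-3)-\binom32=6g-12$ as well. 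Source and target thus have the same dimension, and it is enough to prove that $d\per$ is an isomorphism at $(I,\tau)$ for $\tau$ generic: then $\per$ is a local biholomorphism there, any neighbourhood $U$ of $\tau$ in its image works (for $\tau'\in U$ there is $(I',\tau')\in\mathcal{X}$ with $\per(I',\tau')=\tau'$, i.e.\ $\tau'\subset H^{1,0}(S,I')$), and local injectivity of $\per$ makes $I'=I(\tau')$ unique.

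It remains to compute $d\per$ at $(I,\tau)$. A tangent vector to $\mathcal{X}$ is a pair $(v,w)$, where $v\in H^1(S,T_S)$ is the Kodaira--Spencer class of the deformation of $I$ and $w\in\Hom(\tau,H^{1,0}/\tau)$ is the motion of $\tau$ inside $H^{1,0}$. Under $\per$ the first entry contributes $\kappa(v)|_\tau$, where $\kappa(v)\colon H^{1,0}\to H^{0,1}$ is the derivative of the period map (no Griffiths transversality to worry about, curves having a one-step Hodge filtration), and the second contributes $w$ itself; since $H^1/\tau=(H^{1,0}/\tau)\oplus H^{0,1}$, these land in complementary subspaces, so $d\per(v,w)=0$ forces $w=0$ and $\kappa(v)|_\tau=0$. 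Hence $d\per$ is injective, and therefore --- matching the dimensions --- an isomorphism, exactly when $v\mapsto\kappa(v)|_\tau$ is injective. Dualising via Serre duality, $H^1(S,T_S)=H^0(S,\Omega^{\ox2})^\ast$ and $H^{0,1}=H^0(S,\Omega)^\ast$, and that map becomes the transpose of the multiplication map
\[
\mu_\tau\colon \tau\ox H^0(S,\Omega)\longrightarrow H^0(S,\Omega^{\ox2}).
\]
So everything reduces to the surjectivity of $\mu_\tau$ for generic $\tau$.

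This surjectivity is the weak Hejhal--Thurston theorem of §\ref{htht-weak}, and it is the main obstacle; here is the route I would take. The two sides have dimensions $3g$ and $3g-3$, so the claim is sharp, and it is precisely where hyperelliptic (and other special) curves intervene: if $S$ is hyperelliptic, already $\mathrm{Sym}^2 H^0(S,\Omega)$ maps into a $(2g-1)$-dimensional subspace of $H^0(S,\Omega^{\ox2})$, so $\mu_\tau$ is never surjective --- this is the ``algebraic-geometric condition'' of §\ref{htht-weak}. For $S$ non-hyperelliptic, I would use the canonical embedding $S\hookrightarrow\CP^{g-1}$: here $H^0(S,\Omega)$ is the space of linear forms, a generic $\tau$ is the space of linear forms vanishing on a generic linear $\Lambda\cong\CP^{g-4}$ disjoint from $S$ (this is where $g\ge4$ enters), and $\tau\cdot H^0(S,\Omega)$ is the span of the restrictions to $S$ of the quadrics vanishing on $\Lambda$, a space of the critical dimension $3g-3$. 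By Max Noether's theorem the restriction $H^0(\CP^{g-1},\cO(2))\to H^0(\cO_S(2))=H^0(S,\Omega^{\ox2})$ is surjective with kernel the quadrics through $S$, and a dimension count shows that for generic $\Lambda$ no such quadric vanishes on $\Lambda$; hence $\mu_\tau$ is an isomorphism. By semicontinuity the locus where $\mu_\tau$ degenerates is a proper subvariety over the non-hyperelliptic curves, so a generic $\tau$ avoids it; as $\tau\subset H^{1,0}(S,I)$ is always representable by $I$ itself, the Proposition follows. (Alternatively, the density of representable triples proved in §\ref{ratner} from Moore's theorem exhibits this exceptional locus as negligible, which is the form in which the Proposition is obtained there.)
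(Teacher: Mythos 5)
Your overall strategy coincides with the paper's: both realize $I(\tau')$ as a local inverse of the polyperiod map $\varkappa_3\colon\Gr(3,\Omega\T_S)\to\Gris(3,V)$, match the dimensions $6g-12$ on the two sides, split the differential into a vertical part and a horizontal part given by the Kodaira--Spencer pairing restricted to $\tau$, and dualize by Serre duality to reduce everything to the surjectivity of the multiplication map $\mu_\tau\colon\tau\ox H^0(K_S)\to H^0(K_S^2)$ for generic $\tau$. This is exactly the content of Propositions \ref{htht-strong} and \ref{krichever} (your ``injective, hence bijective by equality of dimensions'' is the paper's ``surjective onto balanced maps, hence bijective''), and your remark on hyperelliptic curves matches the paper's discussion of linked triples.

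The one place where you genuinely diverge is the proof that $\mu_\tau$ is surjective for generic $\tau$, and that step has a gap. You reduce it, correctly, to the claim that a generic $\Lambda\cong\CP^{g-4}\subset\CP^{g-1}$ lies on no quadric containing the canonical curve; but ``a dimension count shows'' this is not enough, because $I_2(S)$ (of dimension $\binom{g-2}{2}$) and $I(\Lambda)_2$ (of dimension $3g-3$) have exactly complementary dimensions inside $H^0(\cO(2))$, so general position is precisely what has to be proved rather than what follows from counting. Concretely, singular quadrics through the canonical curve can carry large families of $(g-4)$-planes: already for $g=6$ a rank-four quadric in $\CP^5$ contains a four-dimensional family of planes, and $\dim\P\left(I_2(S)\right)+4=9=\dim\Gr(3,6)$, so the naive incidence bound is not strict and one must stratify $I_2(S)$ by rank (or exhibit one good $\Lambda$ by hand). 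Exhibiting one good $\tau$ is essentially the refined Noether theorem, and this is how the paper proceeds: it quotes Noether's theorem on quadratic differentials \cite[III.11.20]{FK} (on every non-hyperelliptic curve there exist $\alpha,\beta,\gamma$ with $\alpha\ox-$, $\beta\ox-$, $\gamma\ox-$ spanning $H^0(K_S^2)$) and then uses semicontinuity (Proposition \ref{genericity-three}), together with the density statement from Moore's theorem (Proposition \ref{ksch-dense}) that you mention only parenthetically. Your argument is complete as written for $g=4$, where $\Lambda$ is a point and $I_2(S)$ a single quadric; for general $g$ you should either carry out the rank-stratified incidence estimate or simply cite the Farkas--Kra refinement as the paper does.
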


Then we remind of the famous $\SL(2, \R)$-action on the moduli space of abelian differentials, and exploit it to give a simplified version of Kapovich's classification of orbits by the mean of Ratner theory. In the Section \ref{el}, the elliptic subspaces are studied, this allows us to finish our version of Kapovich's proof (Proposition \ref{hk-new}). In the end of the section, we formulate an analogue of the Haupt--Kapovich condition for elliptic pairs (Proposition \ref{hk-el-pair}). In the meantime, we characterize possible number of nodes on a curve of given genus and given self-intersection on an abelian surface in a manner which resembles similar classical statement for plane curves due to Severi.

In the Section \ref{chronos}, we describe some geometry of the isoperiodic locus with prospect on so-called lesser isoperiodic foliation defined in the subsection \ref{htht-weak} (Definition \ref{lesser}), deducing from it certain higher-order Legendrian property for the Schottky locus (Proposition \ref{legen}). There we also prove a statement similar to Proposition \ref{three-diff}, but restricted to hyperelliptic curves:

\begin{thrm}[Proposition \ref{hype-two-diff}]
Let $(S, I)$ be a hyperelliptic complex curve of genus at least three, and $\tau \subset H^{1,0}(S,I)$ be a generic two-dimensional subspace. There exists a neighborhood $U \subset \Gris(2,2g)$ containing $\tau$ s.~t. for any $\tau' \in U$ there exists a unique hyperelliptic complex structure $I' = I(\tau')$ s.~t. one has $\tau' \subset H^{1,0}(S,I')$. In other words, deformation of a generic pair of abelian differentials determines a unique local deformation of a hyperelliptic complex structure, so that the deformed cohomology classes would be represented by abelian differentials in the deformed complex structure.
\end{thrm}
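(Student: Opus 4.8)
The plan is to adapt the proof of Proposition \ref{three-diff}, replacing the Teichm\"uller space by its hyperelliptic locus $\T_S^{hyp}$ (a smooth $(2g-1)$-dimensional submanifold of $\T_S$ near $I$) and replacing $3$-planes by $2$-planes. First I would form the ``universal'' space $\mathcal M$ of pairs $(J,\sigma)$ with $J\in\T_S^{hyp}$ near $I$ and $\sigma\in\Gr(2,H^{1,0}(S,J))$; it fibers over $\T_S^{hyp}$ with fiber $\Gr(2,H^{1,0})$ of dimension $2(g-2)$, so $\dim\mathcal M=(2g-1)+2(g-2)=4g-5$, which is exactly $\dim\Gris(2,2g)$ --- this coincidence is the whole reason the statement has a chance to hold. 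Trivializing $H^1(S,\C)$ near $I$ by the Gau\ss--Manin connection gives the polyperiod map $P\colon\mathcal M\to\Gris(2,2g)$, $(J,\sigma)\mapsto\sigma$ (landing among isotropic planes since $H^{1,0}$ is isotropic for the intersection form), and the Proposition reduces to showing that $P$ is a local diffeomorphism at a generic $(I,\tau)$: on a neighbourhood where $P$ is a diffeomorphism onto an open $U\ni\tau$, the inverse assigns to each $\tau'\in U$ a hyperelliptic $I'=I(\tau')$, depending real-analytically on $\tau'$, which is the unique hyperelliptic complex structure near $I$ with $\tau'\subset H^{1,0}(S,I')$.

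Next I would compute $dP$ at $(I,\tau)$, $\tau=\langle\omega_1,\omega_2\rangle$, exactly as in the Hodge-theoretic computation recalled in subsection \ref{htht-ks}. Splitting a tangent vector of $\mathcal M$ as a pair $(\mu,\phi)$ --- with $\mu\in H^1(S,T_S)^\iota$ the Kodaira--Spencer class of the deformation of the (hyperelliptic) complex structure and $\phi\in\Hom(\tau,H^{1,0}/\tau)$ the motion of $\sigma$ inside $H^{1,0}$ --- and differentiating a moving frame of $\sigma\subset H^{1,0}(J)$ modulo $\tau$, one finds that $dP(\mu,\phi)$ sends $\omega_i$ to $\phi(\omega_i)+\mu\cup\omega_i$, with $\phi(\omega_i)\in H^{1,0}/\tau$ and $\mu\cup\omega_i\in H^1(S,\cO_S)=H^{0,1}$ the Kodaira--Spencer cup product. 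As these two pieces lie in complementary summands of $H^1(S,\C)/\tau$, the kernel of $dP$ consists of the $(\mu,0)$ with $\mu\cup\omega_1=\mu\cup\omega_2=0$; by compatibility of the cup product with Serre duality this says $\mu$ annihilates $\omega_1H^0(K)+\omega_2H^0(K)\subseteq H^0(2K)$ in the pairing $H^1(T_S)\times H^0(2K)\to\C$, and since $\mu$ is $\iota$-invariant and Serre duality is $\iota$-equivariant, $\mu$ then annihilates all of $H^0(2K)^\iota$. Hence $dP$ is injective at $(I,\tau)$ precisely when $\omega_1H^0(K)+\omega_2H^0(K)=H^0(2K)^\iota$.

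Then I would establish this equality for generic $\tau$. On a hyperelliptic curve every holomorphic $1$-form is $\iota$-anti-invariant, so $\omega_i\eta$ is $\iota$-invariant for $\eta\in H^0(K)$; thus $\omega_1H^0(K)+\omega_2H^0(K)\subseteq H^0(2K)^\iota$, and the latter has dimension $2g-1$ (invariant quadratic differentials are pulled back from $\mathbb P^1$ and form $H^0(\mathbb P^1,\cO(2g-2))$). For generic $\tau$ one can choose a basis $\omega_1,\omega_2$ with simple and disjoint zero divisors (e.g.\ two general members of the pencil $\tau\subset|K|$); then a quadratic differential divisible by both $\omega_1$ and $\omega_2$ has zero divisor $\geqslant\operatorname{div}(\omega_1)+\operatorname{div}(\omega_2)$, a divisor of degree $4g-4=\deg 2K$, hence equals a scalar multiple of $\omega_1\omega_2$. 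So $\omega_1H^0(K)\cap\omega_2H^0(K)=\C\,\omega_1\omega_2$ and $\dim(\omega_1H^0(K)+\omega_2H^0(K))=g+g-1=2g-1$; comparing with $\dim H^0(2K)^\iota=2g-1$, inside which it sits, forces equality. Therefore $dP$ is injective at the generic $(I,\tau)$, and since $\dim\mathcal M=\dim\Gris(2,2g)$ it is then a local diffeomorphism there, which finishes the proof.

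The only step that genuinely uses hyperellipticity --- and the analogue of the step that in the general $3$-plane case of Proposition \ref{three-diff} required the exceptional-locus analysis of subsection \ref{htht-weak} --- is the surjectivity $\omega_1H^0(K)+\omega_2H^0(K)=H^0(2K)^\iota$, together with the bookkeeping that gives it content: the identification of the tangent space to $\T_S^{hyp}$ with $H^1(T_S)^\iota$, the $\iota$-equivariant splitting of Serre duality pairing $H^1(T_S)^\iota$ perfectly with $H^0(2K)^\iota$, and the resulting numerical coincidence $4g-5=\dim\mathcal M=\dim\Gris(2,2g)$. For a general curve of genus $\geqslant 3$ two forms never span the $(3g-3)$-dimensional $H^0(2K)$, and correspondingly $\dim\mathcal M$ would strictly exceed $\dim\Gris(2,2g)$; passing to the hyperelliptic locus trims both sides by the same $g-2$, and this is exactly what restores the balance. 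I expect verifying this equivariant bookkeeping to be the main (though not deep) obstacle; everything else is the same (co)homological computation as in Proposition \ref{three-diff}.
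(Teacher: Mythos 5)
Your proposal is correct, and at the top level it follows the same strategy the paper intends (a dimension count $\dim\T_S^{hyp}+\dim\Gr(2,g)=4g-5=\dim\Gris(2,2g)$ plus injectivity of the differential of the restricted biperiod map at a generic coprime pair), but you prove the key injectivity by a genuinely different computation. The paper's own proof is a two-line reference back to Proposition \ref{three-diff} (density of the locus via Moore's ergodicity theorem plus local bijectivity), with the local bijectivity in the hyperelliptic case supplied by Propositions \ref{couplets} and \ref{hyperel-skew}: there the isoperiodic deformations are written as sections $s\in H^0(\cO_{Z_\alpha})$ via residues, the lesser isoperiodic tangent space is $\{s(z_{2i})=s(z_{2i+1})\}/\mathrm{const}$, and the hyperelliptic deformations satisfy $s(z_{2i})=-s(z_{2i+1})$, so the two meet only in zero. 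You instead work on the Serre-dual side: the pairing of $H^1(T_S)^\iota$ with $H^0(2K)$ kills the anti-invariant part automatically, and for a coprime pair $L_{\omega_1}+L_{\omega_2}$ has dimension $2g-1$ and therefore exhausts $H^0(2K)^\iota$, which is exactly the failure locus of Max Noether's theorem that the paper discusses after Proposition \ref{couplets}. The two computations carry the same information (your $\dim(L_{\omega_1}+L_{\omega_2})=2g-1$ is the annihilator statement of Proposition \ref{couplets} read backwards), but yours is more self-contained, makes the numerical coincidence transparent, and does not need the ergodic-theoretic density input at all, since the inverse function theorem already yields an open image; the paper's version buys a concrete geometric picture (the involution pairing up the zeros of $\alpha$) and the by-product that the lesser isoperiodic tangent space is independent of $\beta$. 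One small point to watch: like the paper, you only get uniqueness of $I'$ among hyperelliptic structures near $I$, and your genericity assumption should be recorded as ``the pencil $\tau\subset|K|$ is base-point-free and misses the Weierstrass points,'' which is the paper's standing hypothesis in that subsection.
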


The following theorem, which seems to be new, follows from this construction:

\begin{thrm}[Corollary \ref{def-to-hype}]Let $\{C \subset A^2\}$ be a generic pair of an abelian surface $A^2$ and a curve $C$ on it with no worse than normal crossings. Then there exists a hyperelliptic curve on $A^2$ in the same homology class $[C]$.
\end{thrm}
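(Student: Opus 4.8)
The plan is to deduce Corollary \ref{def-to-hype} from Proposition \ref{hype-two-diff} by a deformation argument. Start with a generic pair $(C \subset A^2)$ where $A^2$ is an abelian surface and $C$ a curve with at worst normal crossings, in a fixed homology class $[C] \in H_2(A^2,\Z)$. First I would translate the datum of an immersed curve $C \hookrightarrow A^2$ into the language of abelian differentials: the two coordinate $1$-forms $dz_1, dz_2$ on $A^2$ pull back to holomorphic $1$-forms $\alpha_1, \alpha_2$ on the normalization $\tilde C$ of $C$ (of some genus $g \geqslant 3$), and the pair $\tau = \langle \alpha_1, \alpha_2 \rangle \subset H^{1,0}(\tilde C)$ is isotropic for the intersection pairing precisely because $dz_1 \wedge dz_2$ is, up to scaling, the holomorphic symplectic form on $A^2$ whose restriction to $C$ vanishes. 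Conversely, a $2$-dimensional isotropic subspace of $H^{1,0}$ with the appropriate integrality of periods gives a map to an abelian surface; the cohomology class $[C]$ is read off from the intersection data of $\alpha_1, \alpha_2$ together with the period lattice, all of which are locally constant in families.

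**Using the hyperelliptic deformation.** If the original $\tilde C$ happens to be hyperelliptic we are done, so assume not, and instead perturb. The key point is that Proposition \ref{hype-two-diff} tells us that a \emph{hyperelliptic} complex structure is determined, uniquely and smoothly, by a generic nearby isotropic pair $\tau'$; but to land on a hyperelliptic curve I want to run this in reverse — I would like to move $\tau$ to a pair lying in the $H^{1,0}$ of \emph{some} hyperelliptic curve while keeping the period lattice and intersection data fixed so that the resulting curve still sits in $A^2$ in the class $[C]$. Concretely, I would consider the space of pairs $(\text{hyperelliptic }I', \tau' \subset H^{1,0}(S,I'))$ that are isotropic and whose period matrix against a fixed symplectic basis of $H_1(S,\Z)$ reproduces the lattice $\Lambda \subset \C^2$ defining $A^2$: this is a cut of the (hyperelliptic) isoperiodic locus, and by the dimension count underlying Proposition \ref{hype-two-diff} — a generic pair of differentials on a hyperelliptic curve of genus $g \geqslant 3$ has enough moduli — this cut is nonempty and of the expected dimension, intersecting the relevant fiber. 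Genericity of $(C \subset A^2)$ is exactly what is needed to place us in the open dense set where Proposition \ref{hype-two-diff} applies and where transversality of this period condition holds.

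**The main obstacle.** The hard part will be controlling the homology class. Proposition \ref{hype-two-diff} guarantees a hyperelliptic curve mapping to an abelian surface with the prescribed period lattice, hence to a torus isogenous or isomorphic to $A^2$; the issue is to ensure the image cycle is homologous to $[C]$ and not to some other class with the same ``volume'' data, and that the map is birational onto its image rather than multiple-to-one. I expect to handle this by noting that the homology class of the image is a continuous, integer-valued function of $\tau'$ along a connected path from $\tau$ to the hyperelliptic pair, hence constant equal to $[C]$; the path exists because the isoperiodic locus (with fixed lattice) is connected in the relevant range of genus, a fact I would invoke from the discussion of the isoperiodic foliation in Section \ref{chronos}. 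The birationality and the ``no worse than normal crossings'' conclusion then follow from genericity together with the fact that these are open conditions preserved under small deformation, so they persist along the path from the normal-crossings curve $C$ we started with. Finally I would remark that the genus of the produced hyperelliptic curve matches that of $\tilde C$, since genus is the dimension of $H^{1,0}$ and is locally constant in the family, closing the argument.
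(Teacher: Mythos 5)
Your translation of the pair $\{C \subset A^2\}$ into a two-dimensional isotropic subspace $\tau \subset H^{1,0}(\tilde C)$ spanned by the restrictions of the invariant $1$-forms is the right starting point and matches the paper. But the engine of your argument is not the paper's, and it has a genuine gap. The paper does not reach the hyperelliptic curve by moving $\tau$ at all: it first proves (in the Proposition immediately preceding Corollary \ref{def-to-hype}) that for a \emph{generic} pair $(I,\tau)$ there exists a hyperelliptic structure $J$ with $\tau \subset \Omega(S,J)$, and the proof of that is ergodic-theoretic --- the locus of planes in $\Gris^+(2,V)$ realizable by abelian differentials on a hyperelliptic curve is open (this is where Proposition \ref{hype-two-diff} actually enters, as an openness statement for the hyperelliptic polyperiod map) and $\Sp(2g,\Z)$-invariant, hence dense by Moore's theorem. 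Since the \emph{same} subspace $\tau$ is then realized hyperelliptically, the period lattice, the torus $A(\tau)=A^2$ of Proposition \ref{AJac}, and the pushforward homology class (read off from the intersection form on $U_\Z(\tau)$) are literally unchanged, so no continuity, path, or locally-constant-class argument is needed. You never invoke the ergodicity step, and without it the word ``generic'' has no force.

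In its place you propose to connect $\tau$ to a hyperelliptic pair by a path inside the isoperiodic locus with fixed period lattice, citing connectedness of that locus ``from the discussion in Section \ref{chronos}''. No such fact is proved there: connectedness of the lesser isoperiodic leaves is precisely the open question the paper flags in the sentence immediately following Corollary \ref{def-to-hype} (the hyperelliptic curve would be ``achieved by an actual deformation'' only \emph{if} one proves that connectedness). The known connectedness results, due to Calsamiglia--Deroin--Francaviglia \cite{CDF}, concern the isoperiodic foliation of a single class, not of a pair. So the step on which your whole construction rests is an unproved, strictly stronger statement than the corollary itself. Relatedly, your concerns about the homology class drifting along the path and about birationality dissolve in the correct argument exactly because $\tau$ is never moved.
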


This theorem is a sibling of Bogomolov--Mumford theorem on existence of rational curves on any K3 surface. After all, in the subsection \ref{recip}, we prove a statement about values of two holomorphic differentials at zeroes of a third, which is similar to the reciprocity law on curves for meromorphic functions, yet probably cannot be reduced to it.


In what follows, we use italics when reminding the well-known definitions, and bold when introducing our own.

\section{Hejhal--Thurston `holonomy' theorem}
\label{htht}

\subsection{Three incarnations of Kodaira--Spencer tensor}
\label{htht-ks}
This subsection is a reminder of what is widely known. For an introduction, see an excellent book \cite{TrAG}.

\paragraph{Shape operator of the Hodge bundle.} Consider the space $\Cx(S)$ of tensors $I \in C^\infty(T^*S \ox TS)$ satisfying $I^2=-\Id_{TS}$ with the usual topology on smooth sections of a vector bundle. Any such tensor gives rise to a complex structure since $\dim S = 2$. The diffeomorphism group $\D(S)$ acts on this space via pullbacks.

\begin{defn}
The quotient $\T(S) = \Cx(S)/\D^0(S)$ by the connected component of the diffeomorphism group is called the {\it Teichm\"uller space}.
\end{defn}

\begin{fact}
The Teichm\"uller space of a genus $g$ surface is isomorphic, as a quotient of a Fr\'echet manifold by a Fr\'echet--Lie group, to a finite-dimensional smooth manifold of dimension $6g-6$ (actually, an open ball). It carries a natural complex structure and a holomorphic fibration called the {\normalfont universal curve} $\Univ_S\arr{\pi}\T_S$ with fiber over a point $I$ biholomorphic to $(S,I)$.
\end{fact}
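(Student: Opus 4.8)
The plan is to prove the \textbf{Fact} by combining Kodaira--Spencer deformation theory with the Ahlfors--Bers construction of local charts: reduce to a neighbourhood of a fixed point $I_0 \in \Cx(S)$, build finite-dimensional slices transverse to the $\D^0(S)$-orbits, and glue them.

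First I would settle the infinitesimal picture; write $C := (S, I_0)$. A tangent vector to $\Cx(S)$ at $I_0$ is a section $\dot I$ of $\End(TS)$ with $\dot I\, I_0 + I_0\, \dot I = 0$, which, after decomposing $TS\ox\C = T^{1,0}\oplus T^{0,1}$, is precisely a Beltrami differential on $C$, a section of $\overline{K_C}\ox T_C$. The derivative of the $\D(S)$-action carries a vector field $v$ to $\pdb v$, so the tangent space to the quotient $\T(S)$ at $[I_0]$ is the Dolbeault cohomology $H^1(C, T_C)$. For $g\geqslant 2$, Serre duality and Riemann--Roch give $\dim_\C H^1(C,T_C) = \dim_\C H^0(C, 2K_C) = 3g-3$, so the real dimension is $6g-6$; moreover $H^0(C, T_C) = H^0(C, -K_C) = 0$, so there is no infinitesimal automorphism, and, since a holomorphic automorphism of a curve of genus $\geqslant 2$ isotopic to the identity must be the identity, $\D^0(S)$ acts freely on $\Cx(S)$.

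Next I would construct the charts. Fix a finite-dimensional space $V$ of Beltrami differentials on $C$ mapping isomorphically onto $H^1(C, T_C)$ --- for instance the harmonic ones, which form a space complex-antilinearly isomorphic to the holomorphic quadratic differentials $H^0(C,2K_C)$. For $\mu \in V$ with $\|\mu\|_\infty < 1$ I would lift $\mu$ through the Fuchsian uniformization $\mathbb{H}/\Gamma = C$, extend it by $0$ to the lower half-plane, solve the Beltrami equation $\pdb f = \mu\,\pd f$ for the normalized quasiconformal homeomorphism $f^\mu$ of $\CP^1$, and record the marked Riemann surface $f^\mu(\mathbb{H})/f^\mu\Gamma(f^\mu)^{-1}$. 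This defines a map $\Psi\colon B_\eps(0)\subset V \to \T_S$; the Ahlfors--Bers theorem on holomorphic dependence of $f^\mu$ on the parameter $\mu$ makes $\Psi$ holomorphic, while the infinitesimal computation shows $d\Psi_0$ is an isomorphism onto the tangent space, so $\Psi$ is a local homeomorphism. Injectivity on a small ball --- two small harmonic Beltrami differentials giving isotopic structures must coincide --- follows from transversality of $V$ to the $\D^0(S)$-orbit together with properness of the action; transporting the complex structure of $V$ through $\Psi$ then gives a chart. Over each chart the family $\{(S, I_\mu)\}_{\mu \in B_\eps}$ is tautologically a holomorphic family of curves (its total space varies holomorphically in $\mu$ by the very same Ahlfors--Bers statement), so the transition maps between overlapping charts, being the unique biholomorphisms matching marked structures, are holomorphic, and the local universal curves glue by that same uniqueness. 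This exhibits $\T_S$ as a $(3g-3)$-dimensional complex manifold carrying the holomorphic fibration $\Univ_S\arr{\pi}\T_S$ with the asserted fibres.

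The remaining assertion, that $\T_S$ is an open ball, I would quote rather than reprove: either Teichm\"uller's theorem that the exponential map of the Teichm\"uller (Finsler) metric based at any point is a homeomorphism $\C^{3g-3}\arr{\sim}\T_S$, or the Bers embedding of $\T_S$ as a bounded domain together with the Earle--Eells contractibility of $\D^0(S)$. The main obstacle is the analytic input beneath the charts --- existence, uniqueness, and holomorphic dependence on parameters of the normalized solution of the Beltrami equation, together with the injectivity of $\Psi$ near $0$; the ``open ball'' statement is deeper still, resting on Teichm\"uller's uniqueness theorem, but as it is never used in the sequel it may safely be left as a citation.
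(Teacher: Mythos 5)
The paper does not actually prove this statement: it is labelled a \emph{Fact} in a subsection explicitly described as ``a reminder of what is widely known,'' with the reader referred to \cite{TrAG}, so there is no in-text argument to compare yours against. Your sketch is the standard Ahlfors--Bers construction and is correct in outline: the infinitesimal identification of $T_{[I_0]}\T(S)$ with $H^1(C,T_C)$ and the dimension count $h^1(T_C)=h^0(2K_C)=3g-3$ are right, the freeness of the $\D^0(S)$-action for $g\geqslant 2$ is correctly reduced to the absence of nontrivial automorphisms isotopic to the identity, and the chart construction via harmonic Beltrami differentials, the measurable Riemann mapping theorem, and holomorphic dependence on parameters is the classical route; the ball statement is rightly delegated to Teichm\"uller's theorem or the Bers embedding. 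Two points deserve slightly more care than your sketch gives them. First, local injectivity of $\Psi$ near $0$ is not a formal consequence of ``transversality plus properness'' alone; one needs that the orbit map of $\D^0(S)$ admits a local slice in the Fr\'echet setting (Earle--Eells), or else one argues injectivity directly via Teichm\"uller's uniqueness theorem for extremal quasiconformal maps. Second, gluing the local tautological families into a single holomorphic fibration $\Univ_S\arr{\pi}\T_S$ requires exhibiting the total space globally --- e.g.\ as the Bers fiber space, or as the quotient of $\Cx(S)\times S$ --- since the ``unique biholomorphisms matching marked structures'' must be shown to vary holomorphically in the base; this is standard but not automatic. As these are exactly the ingredients the paper outsources to the literature, your proposal is an acceptable expansion of the citation rather than a divergence from the paper's (nonexistent) proof.
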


Let $H^1_{\Z}\T_S \to \T_S$ be the sheaf ${R^1\pi_*}{\left(\underline{\Z}_{\Univ_S}\right)}$. It is actually a sheaf of sections of the local system $H^1_{\Z}\T_S$ with stalk over the point $I$ being the integral first cohomology of the fiber $\pi^{-1}(I)$. It can be complexified to obtain the vector bundle $H^1\T_S = H^1_{\Z}\T_S\ox\cO_{\T_S}$ with fiber over the point $I$ being the vector space $H^1(\pi^{-1}(I),\C) \approx H^1(S,\C)$.

\begin{defn}
The only connection $\nabla^{GM}$ on the bundle $H^1\T_S$ making the local sections of $H^1_{\Z}\T_S \subset H^1\T_S$ parallel is called the {\it Gau\ss--Manin connection}.
\end{defn}

Since in each neighborhood it possesses a basis of parallel sections, this connection is flat, and since the Teichm\"uller space is simply connected, it gives a trivialization of the bundle, i.~e. the projection $H^1\T_S \to V := H^1(S,\C)$ which we shall denote by $\varkappa$. It sends a holomorphic 1-form $\alpha$ to the linear function $\gamma\mapsto\int_\gamma\alpha$ on homology, which associates to every cycle the period of $\alpha$ along it, and hence is sometimes referred to as the {\it period map}.

\begin{defn}
The cohomology bundle $H^1\T_S$ has a subbundle $\Omega\T_S \subset H^1\T_S$ s.~t. $\Omega\T_S|_I = H^{1,0}(S,I) \subset H^1(S,I) = H^1\T_S(I)$, i.~e.~the bundle of abelian differentials. It is called the {\it Hodge (sub)bundle} and is not parallel w.~r.~t.~the Gau\ss--Manin connection. It can be also defined abstractly as ${\pi_*}{\left(\Omega^1_{\Univ_S/\T_S}\right)}$.
\end{defn}

Since no abelian differential has real coefficients, the Hodge subbundle does not intersect its complex conjugate. Hence one can consider the projection $\varpi \colon H^1\T_S \to \overline{\Omega\T_S}$ with kernel $\Omega\T_S$. 

\begin{defn}
The {\it Kodaira--Spencer tensor} $\KS \colon T\T \ox \Omega\T_S \to \overline{\Omega\T_S} \cong \left(\Omega\T_S\right)^*$ is the shape operator of the Gau\ss--Manin connection w.~r.~t.~this splitting: $$\KS_v(\alpha) = \varpi\left(\nabla^{GM}_v\alpha\right).$$
\end{defn}

It is a standard fact that this operator is linear over $C^\infty\left(\T_S\right)$ in both indices.

\paragraph{First cohomology space of the tangent bundle.} The tangent space $T_I\Cx(S)$ is the space of all tensors $A \colon TS \to TS$ s.~t. the operator $I+\eps A$ for $\eps^2=0$ squares to $-\Id_{TS}$. Expanding, one gets $I^2 + \eps(AI+IA)+\eps^2A^2 = -\Id = I^2$, or just $AI+IA = 0$. The space of such tensors is acted upon by the Lie algebra of the diffeomorphism group, i.~e. the Lie algebra of vector fields.

\begin{fact}
The quotient $$\frac{\{A \colon TS\to TS\mid AI+IA = 0\}}{\{\Lie_vI \mid v\in C^\infty(TS)\}}$$ is indeed isomorphic to the tangent space of the Teichm\"uller space at $I$.
\end{fact}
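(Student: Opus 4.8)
The plan is to realise the asserted isomorphism as the differential at $I$ of the quotient projection $q\colon\Cx(S)\to\T(S)=\Cx(S)/\D^0(S)$, identifying its kernel with the tangent space of the $\D^0(S)$-orbit through $I$. The only substantive issue is to know that this quotient, a priori an infinite-dimensional formal object, is genuinely the tangent space of the finite-dimensional manifold $\T(S)$.

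First I would pin down the orbit direction. The orbit map $\D^0(S)\to\Cx(S)$, $f\mapsto f^*I$, carries the flow $\phi^v_t$ of a vector field $v$ to $(\phi^v_t)^*I$; since $((\phi^v_t)^*I)^2=(\phi^v_t)^*(I^2)=-\Id$, the orbit really lies inside $\Cx(S)$, and differentiating at $t=0$ produces $\Lie_vI$, which therefore lies in $T_I\Cx(S)=\{A\mid AI+IA=0\}$. Hence the tangent space to the orbit is exactly $\{\Lie_vI\mid v\in C^\infty(TS)\}$, and what remains is to see that $q$ is a submersion at $I$ with precisely this kernel.

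For that I would invoke the slice theorem (Earle--Eells) for the $\D^0(S)$-action on $\Cx(S)$: for $g\geqslant 2$ this action is free --- a diffeomorphism isotopic to the identity that is holomorphic for some complex structure is the identity --- and proper, and near $I$ one can write $\Cx(S)$, $\D^0(S)$-equivariantly, as a product of the orbit with a finite-dimensional slice $\Sigma$ on which the group acts trivially; then $\Sigma$ is a local chart for $\T(S)$, so $q|_\Sigma$ is a diffeomorphism onto a neighbourhood of $[I]$ and $dq_I$ restricts to an isomorphism $T_I\Sigma\xrightarrow{\sim}T_{[I]}\T(S)$ complementary to $\{\Lie_vI\}$. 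The finite-dimensionality of $\Sigma$, equivalently of $T_I\Cx(S)/\{\Lie_vI\}$, is where the complex geometry of $(S,I)$ enters: an $A$ with $AI+IA=0$ is a $\C$-antilinear endomorphism of $(TS,I)$, i.e.\ a Beltrami differential, a smooth $(0,1)$-form valued in the holomorphic tangent bundle $T^{1,0}(S,I)$; under this dictionary $\Lie_vI$ corresponds, up to a constant, to $\pdb$ of the $(1,0)$-part of $v$, so the quotient is $H^1(S,T^{1,0}(S,I))$, the first cohomology of the holomorphic tangent sheaf (Dolbeault's theorem). Ellipticity of $\pdb$ makes this finite-dimensional, and Serre duality against the canonical bundle $K$ gives $\dim_\C H^1(S,T^{1,0})=\dim_\C H^0(S,K^{\ox 2})=3g-3$, i.e.\ real dimension $6g-6$, matching the dimension of $\T(S)$ recorded above --- as it must. (Granting the earlier Fact that $\T(S)$ is a $(6g-6)$-manifold carrying the universal curve, one may alternatively observe that the Kodaira--Spencer map of the universal family is this very isomorphism $T_I\T(S)\cong H^1(S,T^{1,0})$, and read the above as reconciling the two descriptions of $T_I\T(S)$.)

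The one genuinely nontrivial ingredient is the slice theorem: the passage from the linear-algebraic quotient $\{A\mid AI+IA=0\}/\{\Lie_vI\}$ to a statement about the tangent space of the \emph{manifold} $\T(S)$ rests on the implicit function theorem in suitable Sobolev completions of $\Cx(S)$ and $\D(S)$, with ellipticity of $\pdb$ used to split off its finite-dimensional cokernel. Everything else --- identifying anticommuting endomorphisms with Beltrami differentials, $\Lie_vI$ with $\pdb$-coboundaries, and the Dolbeault / Serre-duality dimension count --- is routine.
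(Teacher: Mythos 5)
The paper states this as a recalled \emph{Fact} with no proof of its own, deferring to the literature (the subsection opens by declaring itself a reminder of standard material, with \cite{TrAG} as the reference), so there is nothing in the text to compare against line by line. Your argument is correct and is precisely the standard one: the only genuinely nontrivial input is, as you say, the Earle--Eells slice theorem making $q$ a submersion with kernel the orbit direction $\{\Lie_vI\}$, and the remaining identifications --- anticommuting endomorphisms as Beltrami differentials in $\Omega^{0,1}(S,T^{1,0})$, $\Lie_vI$ as $\pdb$ of $v^{1,0}$ up to a constant, and the Dolbeault/Serre-duality count giving $\dim_\C H^1(S,T^{1,0})=h^0(K^{\ox 2})=3g-3$ --- are routine and correctly executed. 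The one point worth stating explicitly is the freeness of the $\D^0(S)$-action for $g\geqslant 2$, which you assert; the standard justification is that a holomorphic automorphism isotopic to the identity acts trivially on $H_1$ and hence, by the Lefschetz fixed point theorem, is the identity when $g\geqslant 2$.
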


This quotient resembles the first de Rham cohomology: indeed, it is the quotient of 1-forms (with vector fields coefficients) subject to some closedness condition by 1-forms coming from vector-valued 0-forms (i.~e. vector fields) via derivation.

Let $E\to X$ be a complex vector bundle over a complex manifold. To give a complex structure on its total space in which the fibers are complex submanifolds and the projection map is holomorphic is the same as to give the Dolbeault operator $$\pdb \colon \Gamma(E) \to \Omega^{0,1}(X,E).$$ Its extension to all forms with coefficients in sections of $E$ via the usual Leibniz rule satisfies $\pdb^2=0$, hence one can speak of cohomology with coefficients in a holomorphic vector bundle. In particular, $H_{\pdb}^0(E)$ is just the space of holomorphic sections of $E$.

\begin{fact}
The tangent space to the Teichm\"uller space at point $I$ can be canonically identified with the cohomology space $H^1(S,TS)$ for the standard holomorphic structure on the tangent bundle of the complex curve $(S,I)$.
\end{fact}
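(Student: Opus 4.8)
The plan is to upgrade the ``resemblance with the first de Rham cohomology'' noted just above into an honest canonical isomorphism, using crucially that $S$ is a complex \emph{curve}. By the preceding Fact we already know
$$T_I\T(S)=\bigl\{A\colon TS\to TS\mid AI+IA=0\bigr\}\big/\bigl\{\Lie_vI\mid v\in C^\infty(TS)\bigr\},$$
so everything reduces to identifying this quotient with $H^1(S,TS)$. First I would reinterpret the numerator. A real tensor $A$ with $AI+IA=0$ carries the $+\1$-eigenbundle $T^{1,0}S$ of $I$ inside $TS\ox\C$ into the $-\1$-eigenbundle $T^{0,1}S$ and conversely; being real, it is completely determined by its restriction $\mu:=A|_{T^{0,1}S}\colon T^{0,1}S\to T^{1,0}S$, which is precisely a smooth $(0,1)$-form valued in the holomorphic tangent bundle, a Beltrami differential $\mu\in\Omega^{0,1}(S,TS)$. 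This sets up an $\R$-linear isomorphism of the numerator with $\Omega^{0,1}(S,TS)$; a short sign check --- the complex structure on $\Cx(S)$ for which $\T_S$ is a complex manifold is $A\mapsto IA$, which goes over to $\mu\mapsto\1\mu$ --- shows that the isomorphism is in fact $\C$-linear.

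Next I would identify the subspace of trivial deformations. In a local holomorphic coordinate $z$ write a real field as $v=\xi+\bar\xi$ with $(1,0)$-part $\xi=f\pd_z$; a direct computation of $\Lie_vI=\Lie_\xi I+\Lie_{\bar\xi}I$ in this chart shows that, under the identification above, $\Lie_vI$ corresponds to $2\1\,\pdb\xi$, where $\pdb$ is the Dolbeault operator of the holomorphic bundle $TS$ (the $\Lie_{\bar\xi}I$ term produces the complex conjugate, which is the $\bar\mu$-part and leaves the $(0,1)$-component untouched). Since every smooth section of $TS$ occurs as the $(1,0)$-part of some real vector field, the image of $\{\Lie_vI\mid v\}$ is exactly $\pdb\,C^\infty(TS)$, and therefore
$$T_I\T(S)\ \cong\ \Omega^{0,1}(S,TS)\,\big/\,\pdb\,C^\infty(TS).$$
Because $\dim_\C S=1$ there are no $(0,2)$-forms, so every $TS$-valued $(0,1)$-form is automatically $\pdb$-closed --- this is exactly where the ``some closedness condition'' alluded to above turns out to be vacuous, the Nijenhuis integrability constraint being empty in complex dimension one --- and the right-hand side is by definition the Dolbeault cohomology $H^1_{\pdb}(S,TS)$. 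Invoking the Dolbeault theorem for the holomorphic bundle $TS$, recalled above in the shape that $\pdb$-cohomology computes the cohomology of a holomorphic bundle, this is $H^1(S,TS)$; as nothing beyond $I$ itself entered, the identification is canonical.

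Finally I would record the dimension check: Serre duality identifies $H^1(S,TS)^*$ with $H^0(S,K_S^{\ox2})$, of dimension $3g-3$ for $g\geqslant2$ by Riemann--Roch, in agreement with $\dim_\C\T_S$. The step I expect to demand the most care is not the homological bookkeeping but the linear algebra of the first two paragraphs: keeping the reality conditions and the various factors of $\1$ consistent, so that the algebraically defined quotient is endowed with the complex structure of $\T_S$ and not its conjugate, and so that $\Lie_vI$ is genuinely seen to land among the $\pdb$-exact forms with a nonzero proportionality constant. None of this is deep, but a careless sign here replaces the answer, silently, with its complex conjugate.
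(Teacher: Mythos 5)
Your argument is correct and is an elaboration of exactly the route the paper indicates: this statement appears there as a background Fact without proof, but the remark immediately preceding it (that the quotient of anti-commuting tensors by Lie derivatives ``resembles'' a first cohomology of vector-valued forms) is precisely the Beltrami-differential/Dolbeault identification you carry out, with the closedness condition correctly observed to be vacuous in complex dimension one. The reality and sign bookkeeping you flag is handled consistently, so nothing further is needed.
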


More geometric way to see the correspondence between the first-order deformations of the complex structure and the first cohomology of the holomorphic tangent bundle is as follows. Consider a first-order deformation $\mathfrak{X} \to \Delta = \mathrm{Spec}~\C[h]/(h^2)$ with central fiber $\mathfrak{X}_0 \approx X$, and the short exact sequence of vector bundles $TX \to T\mathfrak{X}|_{X} \to \cO_X$. The isomorphism classes of extensions $E \to E' \to \cO$ are precisely the first cohomology classes from $H^1(X, E)$.

\begin{fact}
The Kodaira--Spencer tensor defined above is the same as the mapping $$\KS \colon H^{1,0}(S,I) \x H^1(S,TS) \to H^{0,1}(S,I)$$ which can be written down on the representatives by the rule $$\KS(\alpha\x v) = \alpha(v(x)),$$ where $\alpha\in\Omega^{1,0}(S,I)$ is a closed $(1,0)$-form, $v \in \Omega^1(S,TS)$ is a vector-valued 1-form vanishing on $(1,0)$-vectors, and the result lies in the space $\Omega^{0,1}_{\mathrm{cl}}(S,I)$ of closed $(0,1)$-forms.
\end{fact}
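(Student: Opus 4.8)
The plan is to reconcile the three descriptions by passing to a local $C^\infty$ trivialization of the universal curve, in which the Gau\ss--Manin connection becomes the constant connection $d/dt$, and then to differentiate the equation cutting out the Hodge subbundle. Fix $I\in\T_S$ and a tangent vector $v$ at $I$; represent $v$ by a one-parameter family, i.e.\ by a smooth family $I_t\in\Cx(S)$ together with a smooth trivialization of the restricted universal curve, which amounts to a smooth family of complex structures $J_t$ on the \emph{fixed} surface $S$ with $J_0=I$ and $A:=\dot J_0$ satisfying $AI+IA=0$. By the second incarnation, $A$ — more precisely its $T^{1,0}S$-valued $(0,1)$-component, which on a curve is automatically $\pdb$-closed — is a Dolbeault representative of the Kodaira--Spencer class of $v$ in $H^1(S,TS)$. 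In this trivialization the local system $R^1\pi_*\underline{\Z}$ is literally constant, so $\nabla^{GM}$ is $d/dt$; hence if $t\mapsto\alpha_t$ is a smooth section of the Hodge bundle along the family (so each $\alpha_t$ is a closed $1$-form on $S$, of type $(1,0)$ for $J_t$, with $\alpha_0=\alpha$), then $\nabla^{GM}_v\alpha=[\dot\alpha_0]\in H^1(S,\C)$, the dot denoting the ordinary $t$-derivative.

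Next I differentiate the condition ``$\alpha_t$ is of type $(1,0)$ for $J_t$'', namely $\alpha_t(J_t\,\cdot\,)=\sqrt{-1}\,\alpha_t$, at $t=0$: with $\dot\alpha:=\dot\alpha_0$ one gets $\dot\alpha(I\,\cdot\,)+\alpha(A\,\cdot\,)=\sqrt{-1}\,\dot\alpha$. Decomposing $\dot\alpha=\dot\alpha^{1,0}+\dot\alpha^{0,1}$ into $I$-types and using $\dot\alpha(I\,\cdot\,)-\sqrt{-1}\,\dot\alpha=-2\sqrt{-1}\,\dot\alpha^{0,1}$, this becomes $\dot\alpha^{0,1}=\tfrac{1}{2\sqrt{-1}}\,\alpha(A\,\cdot\,)$. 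Since the $(1,0)$-form $\alpha$ annihilates $T^{0,1}S$, the expression $\alpha(A\,\cdot\,)$ only sees the component $A''\colon T^{0,1}S\to T^{1,0}S$ of $A$, which is exactly the vector-valued $1$-form $v$ of the statement ``vanishing on $(1,0)$-vectors''; thus $\dot\alpha^{0,1}=\tfrac{1}{2\sqrt{-1}}\,\alpha(v(x))$, and in particular the right-hand side of the Fact is indeed a (Dolbeault) $(0,1)$-class.

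It remains to identify $\KS_v(\alpha)=\varpi(\nabla^{GM}_v\alpha)$ with this class. The projection $\varpi$ kills $\Omega\T_S|_I=H^{1,0}(S,I)$, whose elements have vanishing $(0,1)$-part; composing with the Hodge identification $\overline{\Omega\T_S}|_I=\overline{H^{1,0}(S,I)}\arr{\sim}H^1(S,\cO_S)\cong\bigl(H^{1,0}(S,I)\bigr)^*$ (antiholomorphic representative versus Dolbeault class of the $(0,1)$-part), $\varpi$ becomes precisely $\eta\mapsto[\eta^{0,1}]_{\pdb}$ on closed $1$-forms. Applied to $\eta=\dot\alpha$ this gives $[\dot\alpha^{0,1}]_{\pdb}=\tfrac{1}{2\sqrt{-1}}[\alpha(v(x))]_{\pdb}$, which agrees with the formula of the Fact up to a universal normalization constant (absorbed, with the conventions of the preceding Facts, into the identifications $T_I\T_S\cong H^1(S,TS)$ and $\overline{\Omega\T_S}\cong(\Omega\T_S)^*$). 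Finally, replacing the trivialization $J_t$ by another changes $A$ by a term $\Lie_wI$ for a vector field $w$, hence $\dot\alpha^{0,1}$ by a $\pdb$-exact form, and replacing the smooth extension $\alpha_t$ changes $\dot\alpha$ by an exact $1$-form; so the resulting Dolbeault class is well defined — the very computation underlying the $C^\infty(\T_S)$-linearity of $\KS$.

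I expect the differentiation to be a one-line computation; the real work — and the only genuine obstacle — is the bookkeeping: keeping track of the chain of Hodge-theoretic identifications $H^1/H^{1,0}\cong\overline{H^{1,0}}\cong H^1(S,\cO_S)\cong(H^{1,0})^*$, pinning down the universal constant, and verifying the independence of the choices so that the formula descends from representatives to $T_I\T_S\times\Omega\T_S|_I$.
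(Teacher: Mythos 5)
This Fact is one of the background statements the paper explicitly declines to prove (the whole of subsection \ref{htht-ks} is ``a reminder of what is widely known,'' with a pointer to \cite{TrAG}), so there is no in-paper argument to compare against; judged on its own, your derivation is correct and is the standard one. The key computation --- differentiating $\alpha_t(J_t\,\cdot)=\1\,\alpha_t$ in a $C^\infty$ trivialization where $\nabla^{GM}=d/dt$, obtaining $\dot\alpha^{0,1}=\tfrac{1}{2\1}\alpha(A\,\cdot)$, and observing that only the component $A''\colon T^{0,1}S\to T^{1,0}S$ survives because $\alpha$ kills $T^{0,1}S$ --- is exactly what identifies the shape operator with the contraction formula of the Fact, and your handling of the projection $\varpi$ as $\eta\mapsto[\eta^{0,1}]_{\pdb}$ is right. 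Two small points of hygiene: (i) the statement that changing the extension $\alpha_t$ alters $\dot\alpha$ ``by an exact $1$-form'' is not quite accurate --- it can also alter it by an element of $H^{1,0}(S,I)$ --- but both are killed by $\varpi$, and in fact your formula for $\dot\alpha^{0,1}$ visibly depends only on $\alpha_0$ and $A$, so the issue is moot; (ii) the factor $\tfrac{1}{2\1}$ is genuinely a convention in the isomorphism $T_I\T_S\cong H^1(S,TS)$, and you are right to absorb it there rather than claim an on-the-nose equality of representatives.
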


\paragraph{Multiplication of holomorphic 1-forms.} Let $X$ be an $n$-dimensional complex projective manifold, $K_X$ be its canonical bundle, i.~e. the top exterior power of the holomorphic cotangent bundle, and $E$ a holomorphic vector bundle.

\begin{fact}[Serre duality]$H^i_{\pdb}(E) \cong H^{n-i}_{\pdb}(E^*\ox K_X)^*$.
\end{fact}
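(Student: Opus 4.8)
The plan is to run the classical Hodge-theoretic argument; in the presence of an auxiliary bundle $E$ it proceeds verbatim as in the scalar case. Fix a Hermitian metric on the fibres of $E$ and a Hermitian metric on $X$. Together these endow each space $\Omega^{0,q}(X,E)$ of smooth $E$-valued $(0,q)$-forms with an $L^2$ inner product, hence produce a formal adjoint $\pdb^*$ of $\pdb\colon\Omega^{0,q}(X,E)\to\Omega^{0,q+1}(X,E)$ and a self-adjoint elliptic Laplacian $\Box=\pdb\pdb^*+\pdb^*\pdb$. The first step is to invoke the Hodge theorem for $\Box$ on the compact manifold $X$: the harmonic space $\mathcal{H}^q(X,E)=\ker\Box$ is finite-dimensional, $\Omega^{0,q}(X,E)$ splits orthogonally as $\mathcal{H}^q(X,E)\oplus\pdb\bigl(\Omega^{0,q-1}(X,E)\bigr)\oplus\pdb^*\bigl(\Omega^{0,q+1}(X,E)\bigr)$, and consequently every $\pdb$-cohomology class has a unique harmonic representative, so $H^i_{\pdb}(E)\cong\mathcal{H}^i(X,E)$. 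This elliptic input is the one genuinely hard, analytic ingredient; everything that follows is formal linear algebra with the metrics.

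Next I would introduce the conjugate-linear Hodge star $\bar{\ast}\colon\Omega^{0,q}(X,E)\to\Omega^{0,n-q}(X,E^*\ox K_X)$, obtained by combining the Hodge star on the form part with the conjugate-linear isomorphism $E\cong E^*$ supplied by the Hermitian metric (the factor $K_X$ appearing because $\ast$ turns a $(0,q)$-form into an $(n,n-q)$-form). A direct computation with the metrics yields, up to a harmless sign, $\pdb^*=-\bar{\ast}\,\pdb\,\bar{\ast}$; hence $\bar{\ast}$ intertwines the Laplacian on $E$-valued forms with the Laplacian on $(E^*\ox K_X)$-valued forms, and therefore restricts to a conjugate-linear isomorphism $\mathcal{H}^i(X,E)\cong\mathcal{H}^{n-i}(X,E^*\ox K_X)$ of harmonic spaces.

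Finally, to package this as the stated duality, consider the bilinear pairing $\Omega^{0,i}(X,E)\x\Omega^{0,n-i}(X,E^*\ox K_X)\to\C$, $(\varphi,\psi)\mapsto\int_X\varphi\wedge\psi$, where the wedge uses the contraction $E\ox E^*\to\cO_X$ and lands in forms of top bidegree $(n,n)$. If $\varphi$ and $\psi$ are both $\pdb$-closed, then replacing $\varphi$ by $\varphi+\pdb\eta$ alters the integral by $\int_X\pdb(\eta\wedge\psi)=\int_X d(\eta\wedge\psi)=0$ by Stokes' theorem (the form $\eta\wedge\psi$ has bidegree $(n,n-1)$, so $\pd$ annihilates it and $d$ acts on it as $\pdb$), and symmetrically for $\psi$; hence the pairing descends to $H^i_{\pdb}(E)\x H^{n-i}_{\pdb}(E^*\ox K_X)\to\C$. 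Non-degeneracy now drops out of the previous step: for a nonzero harmonic $\varphi$ the form $\bar{\ast}\varphi$ is again harmonic, in particular $\pdb$-closed, and $\int_X\varphi\wedge\bar{\ast}\varphi=\|\varphi\|^2_{L^2}>0$; as both cohomology groups are finite-dimensional, a pairing non-degenerate on one side is perfect, which gives the isomorphism $H^i_{\pdb}(E)\cong H^{n-i}_{\pdb}(E^*\ox K_X)^*$. The only place demanding care is the bookkeeping of signs and the precise adjoint formula for $\pdb^*$; beyond the elliptic estimate underlying the Hodge decomposition there is no real obstacle, and a purely sheaf-theoretic derivation via local duality and the Leray spectral sequence would also be available, though less transparent in this differential-geometric setting.
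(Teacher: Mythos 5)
Your argument is the standard Hodge-theoretic proof of Serre duality and is correct: the Hodge decomposition for the $\pdb$-Laplacian, the conjugate-linear star $\bar{\ast}\colon\Omega^{0,q}(X,E)\to\Omega^{0,n-q}(X,E^*\ox K_X)$ intertwining the Laplacians, and the integration pairing made nondegenerate by $\int_X\varphi\wedge\bar{\ast}\varphi=\|\varphi\|^2$ (with the equality of dimensions coming from the star isomorphism, so that injectivity of $H^i_{\pdb}(E)\to H^{n-i}_{\pdb}(E^*\ox K_X)^*$ upgrades to an isomorphism). The paper states this only as a background Fact with no proof, deferring to the literature, so there is nothing to compare against; your write-up is a faithful account of the classical argument.
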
 

Let $\Omega^p = \Omega^{p,0}$ be the complex exterior $p$-th power of the holomorphic cotangent bundle. The cohomology of these bundle has the following interpretation:

\begin{fact}[Dolbeault theorem]$H^q(\Omega^p) \cong H^{p,q}(X,I)$.
\end{fact}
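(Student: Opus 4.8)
The plan is the classical one: realize the Dolbeault complex as a resolution of the sheaf $\Omega^p_X$ of holomorphic $p$-forms by acyclic sheaves, and then read off sheaf cohomology from global sections. Concretely, writing $\mathcal{A}^{p,q}$ for the sheaf of smooth $(p,q)$-forms, I would consider the complex of sheaves
$$0 \longrightarrow \Omega^p_X \hookrightarrow \mathcal{A}^{p,0} \xrightarrow{\ \pdb\ } \mathcal{A}^{p,1} \xrightarrow{\ \pdb\ } \mathcal{A}^{p,2} \xrightarrow{\ \pdb\ } \cdots,$$
the leftmost arrow being the inclusion of holomorphic forms among smooth ones. Each $\mathcal{A}^{p,q}$ is a fine sheaf --- it carries partitions of unity subordinate to any open cover --- hence has vanishing higher cohomology, so the abstract de Rham theorem identifies $H^q(X,\Omega^p_X)$ with the cohomology of the complex of global sections $\Gamma(X,\mathcal{A}^{p,\bullet})$, which by definition is the Dolbeault group $H^{p,q}(X,I)$ (equivalently $H^q_{\pdb}(\Omega^p)$ in the notation introduced above).

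Two points need verification. Exactness at $\mathcal{A}^{p,0}$ is trivial: a smooth $(p,0)$-form annihilated by $\pdb$ has holomorphic coefficients, hence is a local section of $\Omega^p_X$. The substantive point is exactness at $\mathcal{A}^{p,q}$ for $q\geqslant 1$, that is, the $\pdb$-Poincar\'e (Grothendieck--Dolbeault) lemma: on a polydisc every $\pdb$-closed $(p,q)$-form is $\pdb$-exact. I would prove this on a polydisc by induction on the largest $k$ for which the form contains the differentials $d\bar z_1,\dots,d\bar z_k$. The base case amounts to solving the single equation $\pd u/\pd\bar z = g$ in one complex variable, for which the Cauchy--Pompeiu formula $u(z) = \frac{1}{2\pi\1}\int \frac{g(\zeta)}{\zeta-z}\,d\zeta\wedge d\bar\zeta$ (convolution with $\tfrac{1}{\pi z}$) supplies an explicit smooth solution; the inductive step splits off the terms involving $d\bar z_k$, solves them fibrewise in the $z_k$-variable by the one-variable case, and uses a smooth cutoff together with an exhaustion of the polydisc by slightly smaller ones to preserve holomorphy (hence the absence of $d\bar z_{k+1},\dots$) in the remaining directions. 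This last bookkeeping --- keeping the correction terms from introducing new antiholomorphic differentials while passing to the limit over the exhaustion --- is the only genuinely delicate part; everything preceding and following it is soft.

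Granting the lemma, the conclusion is formal: a resolution $0\to\mathcal{F}\to\mathcal{I}^\bullet$ by acyclic sheaves computes $H^\bullet(X,\mathcal{F})$ on global sections, as one sees either from the hypercohomology spectral sequence or by dimension-shifting through the short exact sequences $0\to\ker\to\mathcal{I}^q\to\mathrm{im}\to 0$ and the long exact cohomology sequence, using acyclicity of the $\mathcal{I}^q$. Taking $\mathcal{F}=\Omega^p_X$ gives the stated isomorphism. I would close with the remark that for the complex curves relevant to the rest of the paper (complex dimension one, and typically $p\in\{0,1\}$) one only needs the one-variable case of the $\pdb$-lemma, which is precisely the surjectivity of $\pdb$ on a disc recorded above; in particular $H^1(\Omega^1)=H^1(K_X)\cong H^0(\cO_X)^{\ast}\cong\C$ is consistent with the preceding Serre-duality Fact.
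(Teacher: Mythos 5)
Your argument is the standard and correct one: the Dolbeault resolution $0\to\Omega^p\to\mathcal{A}^{p,\bullet}$ by fine (hence acyclic) sheaves, exactness supplied by the $\pdb$-Poincar\'e lemma via the Cauchy--Pompeiu kernel and the usual exhaustion bookkeeping, and the abstract de Rham theorem to conclude. The paper records this statement as a known Fact with no proof of its own (deferring to the cited reference \cite{TrAG}), so there is nothing to compare against; your write-up is a complete and accurate sketch of the canonical proof, and your closing remark that only the one-variable $\pdb$-lemma is needed for the curves appearing in the rest of the paper is apt.
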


Applying these isomorphisms, the Kodaira--Spencer map for complex curves can be rewritten as a map $$\KS \colon H^0(K_S) \x H^0(T_S^*\ox K_S)^* \to H^1(\cO_S),$$ or applying the Serre duality again on the right and by adjunction $$\KS \colon H^0(K_S^2)^* \to H^0(K_S)^* \ox H^0(K_S)^*.$$

\begin{fact}
The Kodaira--Spencer map on a curve $\KS \colon H^0(K_S^2)^* \to H^0(K_S)^* \ox H^0(K_S)^*$ is dual to the symmetric multiplication of holomorphic 1-forms, i.~e. the natural map $H^0(K_S)^2 \to H^0(K_S^2)$.
\end{fact}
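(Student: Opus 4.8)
The plan is to unwind the phrase ``is dual to'' into a concrete identity of Serre-duality pairings and then verify that identity by the compatibility of Serre duality with cup (wedge) products, for which the representative formula $\KS(\alpha\x v)=\alpha(v(x))$ established above (abbreviated $\alpha(v)$) does essentially all the work. First I would record the two pairings involved: Serre duality on the curve $S$ gives $H^1(\cO_S)\cong H^0(K_S)^*$ and, since $T_S^*\ox K_S = K_S^2$, also $H^1(T_S)\cong H^0(K_S^2)^*$, both realized in Dolbeault language by ``take the cup product and integrate''. By the very definition of the transpose of the multiplication map $\mu\colon H^0(K_S)\ox H^0(K_S)\to H^0(K_S^2)$, the assertion that $\KS\colon H^0(K_S^2)^*\to H^0(K_S)^*\ox H^0(K_S)^*$ equals $\mu^*$ is equivalent, under these identifications and the identification of $\KS$ as a map $H^1(S,T_S)\ox H^0(K_S)\to H^0(K_S)^*$ sending $v\ox\alpha\mapsto\KS_v(\alpha)=\varpi(\nabla^{GM}_v\alpha)\in H^{0,1}(S,I)=H^1(\cO_S)$, to the single identity
\[ \langle \KS_v(\alpha),\,\beta\rangle \;=\; \langle v,\,\mu(\alpha\ox\beta)\rangle \]
for all $v\in H^1(S,T_S)$ and all $\alpha,\beta\in H^0(K_S)$, the left pairing being $H^1(\cO_S)\x H^0(K_S)\to\C$ and the right one $H^1(T_S)\x H^0(K_S^2)\to\C$.

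Next I would compute both sides with Dolbeault representatives. In a local holomorphic coordinate $z$ write $\alpha=f\,dz$, $\beta=g\,dz$ and take a $\pdb$-closed representative of $v$ of the form $h\,\pd_z\ox d\bar z$. Then $\KS_v(\alpha)=\alpha(v)=fh\,d\bar z$, so the left-hand side is $\int_S\alpha(v)\wedge\beta=\int_S fgh\,d\bar z\wedge dz$. On the other hand $\mu(\alpha\ox\beta)=fg\,(dz)^{\ox 2}$ is a section of $K_S^2=(T_S^*)^{\ox 2}$; contracting it against the $T_S$-valued form $v$ and integrating again gives $\int_S fgh\,d\bar z\wedge dz$, because in complex dimension one the contraction is just multiplication of coefficient functions. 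Hence the two sides coincide. A coordinate-free way to phrase the same point is that all three relevant cup products --- $H^0(K_S)\ox H^1(T_S)\to H^1(K_S\ox T_S)=H^1(\cO_S)$, the multiplication $H^0(K_S)\ox H^0(K_S)\to H^0(K_S^2)$, and the two integral pairings --- are instances of one associative graded-commutative cup product on Dolbeault cohomology with coefficients in the bundles $\cO_S,\,T_S,\,K_S$; associativity combined with the trace $\int_S\colon H^1(K_S)\to\C$ is exactly the identity above. This reformulation also makes manifest that $\KS_v$ is symmetric in the two $H^0(K_S)$ arguments, matching the fact that $\mu$ factors through $\mathrm{Sym}^2 H^0(K_S)$, so that $\mu^*$ indeed lands in the symmetric part of $H^0(K_S)^*\ox H^0(K_S)^*$.

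I expect the only real difficulty to be bookkeeping: one must pin down each canonical isomorphism --- Dolbeault, Serre duality, the adjunction $\Hom(A\ox B,C)=\Hom(A,B^*\ox C)$, and the identification $\overline{\Omega\T_S}|_I\cong H^{0,1}(S,I)\cong H^1(\cO_S)$ implicit in the definition of $\varpi$ and hence of $\KS$ --- with a consistent choice of signs and orientation, and confirm that the abstract sheaf-cohomological Serre pairing agrees with the Dolbeault integral used in the computation. Both are standard, but they are where a sign or a factor could slip in unnoticed; once the identifications are fixed, the equality of the two integrals is immediate from the representative formula already at our disposal.
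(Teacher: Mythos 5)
The paper does not actually prove this statement: it appears as a \emph{Fact} in the subsection that is explicitly a recollection of standard material, with the reader referred to the book of Griffiths et al.\ (\cite{TrAG}) at the start of the subsection; the displayed map $\KS \colon H^0(K_S^2)^* \to H^0(K_S)^*\ox H^0(K_S)^*$ is obtained there purely formally by applying Serre duality and adjunction to the previously recalled form of the Kodaira--Spencer pairing. Your proposal therefore supplies a verification where the paper supplies a citation, and the verification is correct. Reducing ``is dual to'' to the single identity $\langle \KS_v(\alpha),\beta\rangle = \langle v,\mu(\alpha\ox\beta)\rangle$ is exactly the right move, and the Dolbeault computation with $\alpha=f\,dz$, $\beta=g\,dz$, $v=h\,\pd_z\ox d\bar z$ checks out: both sides reduce to $\int_S fgh\,d\bar z\wedge dz$, the only input being the representative formula $\KS_v(\alpha)=\alpha(v)$, which the paper has already recorded as a prior Fact, together with the standard realization of the two Serre pairings as contract-and-integrate. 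Your coordinate-free rephrasing via associativity of the cup product with the trace $H^1(K_S)\to\C$ is the cleanest way to see that no sign ambiguity can spoil the identity (any consistent convention is applied identically on both sides), and the observation that the identity forces symmetry of $\KS_v$ in $\alpha$ and $\beta$ --- so that $\mu^*$ lands in $\mathrm{Sym}^2 H^0(K_S)^*$, consistent with the paper's later use of the \emph{symmetric} multiplication map --- is a worthwhile sanity check that the paper leaves implicit. The one caveat you should be aware of is that your argument takes the representative formula $\KS(\alpha\x v)=\alpha(v(x))$ as given; the genuinely nontrivial identification hidden in this chain of Facts is the one between the shape-operator definition of $\KS$ (via $\varpi\circ\nabla^{GM}$) and that Dolbeault formula, which neither you nor the paper proves here. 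Granting that earlier Fact, your proof is complete.
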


\subsection{Hejhal--Thurston theorem and polyperiod mappings}
\label{htht-kap}
Let us go back to the differential-geometric setting. So we have the Teichm\"uller space $\T_S$, the vector space $V=H^1(S,\C)$, the vector bundle $H^1\T_S \to \T_S$ with global trivialization $H^1\T_S \to V$, and the subbundle $\Omega\T_S \subset H^1\T_S$. Fix an integer $k$ and consider the bundle of Gra\ss mannian varieties $\Gr(k,\Omega\T_S) \to \T_S$. The composition of inclusion and projection $\Omega\T_S \to H^1\T_S \to V$ induces the map $\Gr(k,\Omega\T_S) \to \Gr(k,V)$ which we shall denote by $\varkappa_k$.

\paragraph{The extreme case: $k=g$.} In this case each fiber of the fibration $\Gr(g,\Omega\T_S)$ is a single point corresponding to the whole $g$-dimensional space $\Omega_I = H^{1,0}(S,I)$, hence $\Gr(g,\Omega\T_S)$ is canonically biholomorphic to $\T_S$. In this case the mapping $\varkappa_g \colon \T_S \to \Gr(g,V)$ is known as the {\it Torelli mapping}.

\begin{fact}[local Torelli theorem for curves]The Torelli mapping $\varkappa_g \colon \T_S \to \Gr(g,2g)$ is locally a holomorphic embedding.
\end{fact}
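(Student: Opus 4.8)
The plan is to reduce everything to the three incarnations of the Kodaira--Spencer tensor recalled in \S\ref{htht-ks}. First note that $\T_S$ and $\Gr(g,V)$ are complex manifolds and $\varkappa_g$ is holomorphic: the trivialization $\varkappa\colon H^1\T_S\to V$ is holomorphic because the flat sections of $\nabla^{GM}$ span it, the Hodge subbundle $\Omega\T_S$ is holomorphic by construction, and $\varkappa_g$ is induced by their composition. Hence, by the holomorphic inverse function theorem, it suffices to prove that $d\varkappa_g$ is injective at each $I\in\T_S$. Fix $I$ and put $W=H^{1,0}(S,I)=\Omega\T_S|_I\subset V$, so that $\varkappa_g(I)=W$. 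Then $T_W\Gr(g,V)=\Hom(W,V/W)$, and since $V=W\oplus\overline W$ with the quotient $V\to V/W$ realized by $\varpi$, this is $\Hom\!\bigl(H^{1,0}(S,I),\overline{H^{1,0}(S,I)}\bigr)$. Unwinding the definition, $d\varkappa_g(v)$ records the first-order motion of the subbundle $\Omega\T_S$ inside the flat bundle $H^1\T_S$ along $v$: for $\alpha\in W$ and any local section $\widetilde\alpha$ of $\Omega\T_S$ with $\widetilde\alpha(I)=\alpha$, one gets $d\varkappa_g(v)(\alpha)=\varpi\bigl(\nabla^{GM}_v\widetilde\alpha\bigr)|_I$, and the $C^\infty$-linearity of $\KS$ in the $\alpha$-slot is exactly what makes the right-hand side independent of the extension $\widetilde\alpha$. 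So $d\varkappa_g$ at $I$ \emph{is} the Kodaira--Spencer tensor, read as the adjoint map $T_I\T_S\to\Hom\!\bigl(H^{1,0}(S,I),\overline{H^{1,0}(S,I)}\bigr)$, $v\mapsto\KS_v$, and the theorem is equivalent to the injectivity of this adjoint.

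To prove that injectivity I would dualize, using the incarnations of \S\ref{htht-ks}. By Serre duality $\overline{H^{1,0}(S,I)}=H^1(\cO_S)$ is dual to $H^0(K_S)$, and $T_I\T_S=H^1(T_S)$ is dual to $H^0(K_S^2)$; under these identifications the transpose of $v\mapsto\KS_v$ is precisely the symmetric multiplication of abelian differentials $H^0(K_S)\ox H^0(K_S)\to H^0(K_S^2)$ (one of the Facts of \S\ref{htht-ks}). Since a linear map between finite-dimensional spaces is injective iff its transpose is surjective, $d\varkappa_g$ is injective at $I$ if and only if every holomorphic quadratic differential on $(S,I)$ is a sum of products of pairs of holomorphic $1$-forms.

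That surjectivity is the one nonformal ingredient, and the step I expect to be the real obstacle. For $g=2$ it is an isomorphism of three-dimensional spaces, checked directly on an explicit basis. For $g\geqslant 3$ with $(S,I)$ non-hyperelliptic it is Max Noether's theorem; one clean proof applies the base-point-free pencil trick to a generic pencil inside the canonical system $|K_S|$, another reads it off the quadrics through the canonical curve $S\hookrightarrow\CP^{g-1}$. The one genuinely exceptional case is the hyperelliptic locus with $g\geqslant 3$, where the products $\omega_i\omega_j$ of a basis of abelian differentials span only a $(2g-1)$-dimensional subspace of the $(3g-3)$-dimensional $H^0(K_S^2)$; there $\KS$ acquires a kernel of dimension $g-2=\operatorname{codim}\bigl(\text{hyperelliptic locus}\subset\T_S\bigr)$, so the Torelli map fails to be immersive transversally to the hyperelliptic locus --- the very phenomenon revisited in the hyperelliptic analogues later in the paper. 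Thus the main obstacle is not the differential-geometric identification $d\varkappa_g=\KS$, which is formal, but the algebraic surjectivity statement and its sharp failure along the hyperelliptic locus.
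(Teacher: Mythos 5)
The paper offers no proof of this statement at all --- it is recalled as a \emph{Fact} from the literature --- so there is nothing internal to compare against; what matters is whether your argument actually establishes the claim. It does not, and the gap is one you yourself point out in your last paragraph. Your reduction is the standard and correct one: $d\varkappa_g$ at $I$ is the Kodaira--Spencer tensor read as $T_I\T_S\to\Hom\bigl(H^{1,0}(S,I),H^{0,1}(S,I)\bigr)$, its injectivity is dual to the surjectivity of the multiplication map $H^0(K_S)\ox H^0(K_S)\to H^0(K_S^2)$, and that surjectivity is Max Noether's theorem for non-hyperelliptic curves (and a direct check for $g=2$). But since you then concede that for hyperelliptic $(S,I)$ with $g\geqslant 3$ the multiplication map has image of dimension only $2g-1<3g-3$, so that $\KS$ has a $(g-2)$-dimensional kernel, your route via the holomorphic inverse function theorem simply stops working there: you have proved the statement only on the complement of the hyperelliptic locus, and, if ``embedding'' is read as ``immersion,'' you have in fact produced a counterexample to the literal statement rather than a proof of it. (The paper is aware of this: in Section \ref{chronos} it explicitly calls hyperelliptic curves of genus $>2$ ``the only counterexamples to the infinitesimal Torelli theorem.'')

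To close the gap you must either (a) restrict the Fact to non-hyperelliptic curves, which is how the paper actually uses it, or (b) argue separately that $\varkappa_g$ is still \emph{locally injective} at hyperelliptic points even though it is not immersive there --- and that cannot follow from the inverse function theorem; it needs either the global Torelli theorem restricted to a neighborhood, or a finer local analysis of the degenerate period map along the hyperelliptic locus. As written, the proposal identifies the right mechanism and the right classical input, but it does not deliver the statement in the generality claimed, and the final paragraph acknowledges the failure without resolving it.
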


The image of this embedding is known as the {\it Schottky locus}, and is rather difficult to describe. 

\paragraph{The opposite case: $k=1$.} In this case each fiber of the fibration $\Gr(1,\Omega\T_S) = \P(\Omega\T_S) \to \T_S$ is a projective space. The mapping $\varkappa_1 \colon \P(\Omega\T_S) \to \P(V)$ has been studied by Kapovich (indeed, its image is the projectivization of the cone of representable classes from Haupt--Kapovich theorem). Hence 

\begin{defn}
We shall call the mapping $\varkappa_1$ the {\it period mapping}, and its image the {\bf Kapovich locus}. In the intermediate cases $1 < k < g$ we shall refer to the maps $\varkappa_k$ as to the {\bf polyperiod mappings}, and their images as the {\bf Kapovich--Schottky loci}.
\end{defn}

The following is what Kapovich called the `holonomy theorem', and ascribed it to Hejhal and Thurston.

\begin{thrm}
The period map $\varkappa_1 \colon \P(\Omega\T_S) \to \P(V)$ is open.
\end{thrm}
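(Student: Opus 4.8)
The plan is to show that the period map $\varkappa_1 \colon \P(\Omega\T_S) \to \P(V)$ is a submersion on a dense open set, and then upgrade this to openness everywhere. Concretely, a point of $\P(\Omega\T_S)$ is a pair $(I,[\alpha])$ with $\alpha \in H^{1,0}(S,I)$, and the differential of $\varkappa_1$ at such a point can be computed using the Gau\ss--Manin trivialization $\varkappa$: moving $I$ in the direction $v \in T_I\T_S$ changes the cohomology class of the (Gau\ss--Manin-flat extension of the) form $\alpha$ by $\nabla^{GM}_v\alpha$, whose $\overline{\Omega\T_S}$-component is exactly $\KS_v(\alpha)$ and whose $\Omega\T_S$-component is a free parameter controlled by deforming $[\alpha]$ inside the fibre. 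Thus, modulo the scaling direction, the image of $d\varkappa_1$ in $T_{[\alpha]}\P(V) \cong \Hom(\C\alpha, V/\C\alpha)$ is spanned by the $g-1$ directions inside $\Omega_I/\C\alpha$ (from moving $[\alpha]$ in the fibre) together with the image of the linear map $v \mapsto \KS_v(\alpha)$ into $\overline{\Omega\T_S}(I) \cong \Omega_I^*$. So the surjectivity of $d\varkappa_1$ at $(I,[\alpha])$ is equivalent to the surjectivity of $\KS_\bullet(\alpha) \colon T_I\T_S \to \Omega_I^*$, i.e. to the statement that for every $\beta \in H^0(K_S)$ with $\alpha\beta = 0$ in $H^0(K_S^2)$ one has $\beta$ forced to lie in the span of $\alpha$ — using here the last Fact of the previous subsection, that $\KS$ is dual to the multiplication map $\mathrm{Sym}^2 H^0(K_S) \to H^0(K_S^2)$.

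The first main step, then, is the \textbf{infinitesimal statement}: for a generic abelian differential $\alpha$ on a generic curve $(S,I)$ of genus $g \geqslant 2$, multiplication by $\alpha$, as a map $H^0(K_S) \to H^0(K_S^2)$, has kernel exactly $\C\alpha$ when $\alpha$ has no zeros in common forcing extra vanishing — equivalently, $d\varkappa_1$ is surjective there. I would prove this by a direct argument on a convenient model curve (e.g. a hyperelliptic curve, or a plane curve), exhibiting one $\alpha$ for which the cokernel of $\KS_\bullet(\alpha)$ vanishes, and then invoking semicontinuity: the locus where $d\varkappa_1$ is surjective is Zariski-open in $\P(\Omega\T_S)$, hence once nonempty it is dense. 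For the remaining (non-generic) points one argues classically: $\varkappa_1$ is a holomorphic map between complex manifolds whose restriction to a dense open set is an open map, and whose fibres are the isoperiodic sets; one shows the fibre dimension is constant (equal to $3g-3-(2g-1) = g-2$, matching $\dim\P(\Omega\T_S) - \dim\P(V)$ at a submersion point when $\dim_{\C} V = 2g$), so by the constant-rank / openness-of-holomorphic-maps principle $\varkappa_1$ is open on all of $\P(\Omega\T_S)$.

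The \textbf{main obstacle} will be the last point: controlling $\varkappa_1$ at the non-generic points, i.e. where the multiplication map by $\alpha$ has an unexpectedly large kernel. This happens exactly when $\alpha$ is a square, $\alpha = \beta^2/\gamma$ or more relevantly when the linear series cut out by $\alpha$ has base points — the condition outlined in subsection \ref{htht-weak}. A clean way around it, and the one I would ultimately adopt, is to bypass pointwise submersivity entirely and prove openness by the soft topological/ergodic route foreshadowed in the introduction: show directly that $\varkappa_1$ has open image by exhibiting, near any realizable class $[\alpha_0]$ and any nearby class $[\alpha]$, an explicit deformation of the complex structure realizing $[\alpha]$ — for instance by a plumbing/grafting construction or by the $\SL(2,\R)$-action argument — while keeping the above infinitesimal computation as the proof on the open dense stratum and as the source of the expected fibre dimension. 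I expect the honest work to be concentrated in (i) pinning down one explicit curve-and-form where $\KS_\bullet(\alpha)$ is onto, and (ii) the constant-rank bookkeeping that glues the generic and non-generic strata; the conceptual content is entirely the identification of $d\varkappa_1$ with the dual of $\alpha$-multiplication.
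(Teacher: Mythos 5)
Your identification of $d\varkappa_1$ with the dual of multiplication by $\alpha$ is exactly the paper's route (Proposition \ref{htht-strong}), and your splitting of the tangent space into the fibre direction plus the horizontal direction measured by $\KS_\bullet(\alpha)$ matches the paper's decomposition. But you then misstate the linear-algebra target and, as a result, invent a difficulty that is not there. Surjectivity of $v \mapsto \KS_v(\alpha)$ onto $H^0(K_S)^*$ is equivalent to injectivity of $m_\alpha \colon H^0(K_S) \to H^0(K_S^2)$, $\beta \mapsto \alpha\ox\beta$ --- i.e.\ to the kernel being \emph{zero}, not ``exactly $\C\alpha$'' (note $\alpha\ox\alpha \neq 0$, so $\C\alpha$ is never in the kernel anyway). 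And $m_\alpha$ is injective for \emph{every} nonzero $\alpha$ on \emph{every} curve: the product of two nonzero holomorphic sections of line bundles on a connected curve vanishes only on the finite union of their zero sets. So $d\varkappa_1$ is surjective everywhere, there is no non-generic stratum, and the entire second half of your plan (a model curve plus semicontinuity, constant-rank bookkeeping, and the fallback to plumbing or the $\SL(2,\R)$-action) is solving a problem that does not exist for $k=1$. The phenomena you are worried about --- unexpectedly large kernels, base points, the conditions of subsection \ref{htht-weak} --- genuinely arise, but only for pairs and triples ($k=2,3$), where the relevant object is the intersection $L_\alpha \cap L_\beta$ of dividend subspaces, not the kernel of a single $m_\alpha$.

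Two further slips. First, your fibre-dimension count is off: $\dim\P(\Omega\T_S) = (3g-3)+(g-1) = 4g-4$ and $\dim\P(V) = 2g-1$, so the fibres of $\varkappa_1$ have dimension $2g-3$ (as the paper states), not $g-2$; the number $g-2$ is the fibre dimension for the biperiod map, which again suggests you are conflating the $k=1$ and $k=2$ cases. Second, even granting your strategy, ``submersion on a dense open set plus equidimensional fibres implies open everywhere'' is not a principle you can invoke without proof for holomorphic maps; fortunately you never need it, since the clean statement is pointwise surjectivity of the differential at every point, which is what the paper proves in one line.
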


Kapovich established it by analytical means considering the uniform convergence of developing mappings. We shall prove a stronger statement by the means of tensor calculus.

\begin{pr}
\label{htht-strong}
The differential of the period mapping $\varkappa_1 \colon \P(\Omega\T_S) \to \P(V)$ is everywhere surjective.
\end{pr}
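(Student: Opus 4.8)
The plan is to compute the differential of $\varkappa_1$ at a point $[\alpha] \in \P(\Omega\T_S)$ lying over a complex structure $I$, using the description of the tangent space to the total space $\P(\Omega\T_S)$ as an extension of $T_I\T_S$ by the fiber direction $T_{[\alpha]}\P(\Omega_I) = \Hom(\C\alpha, \Omega_I/\C\alpha)$. Concretely, a tangent vector to $\P(\Omega\T_S)$ is represented by a pair $(v, \dot\alpha)$ where $v \in T_I\T_S = H^1(S,TS)$ is a Kodaira--Spencer deformation and $\dot\alpha \in H^{1,0}(S,I)$ is a variation of the form (only its class modulo $\C\alpha$ and modulo the flow of $v$ matters). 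Since $\varkappa_1$ is just the projectivization of the period map $\varkappa$, which is the flat trivialization of the Gau\ss--Manin connection, the differential is computed by parallel transport: $d\varkappa_1(v,\dot\alpha) = [\nabla^{GM}_v \alpha + \dot\alpha] \in T_{[\varkappa(\alpha)]}\P(V) = \Hom(\C\alpha, V/\C\alpha)$. Splitting $V = \Omega_I \oplus \overline{\Omega_I}$ and recalling $\KS_v(\alpha) = \varpi(\nabla^{GM}_v\alpha)$ is the $\overline{\Omega_I}$-component, the $\overline{\Omega_I}$-component of the output is exactly $\KS_v(\alpha)$, while the $\Omega_I$-component can be adjusted arbitrarily by the free choice of $\dot\alpha$.

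Surjectivity of $d\varkappa_1$ therefore reduces to two things: first, that the $\dot\alpha$ direction surjects onto $\Omega_I/\C\alpha$, which is immediate; and second, that the Kodaira--Spencer map $v \mapsto \KS_v(\alpha)$, as $v$ ranges over all of $H^1(S,TS)$, surjects onto $\overline{\Omega_I} \cong \Omega_I^*$ (we only need surjectivity onto the quotient by $\C\alpha$, but we will get the full statement). This is where the multiplication-of-differentials incarnation of $\KS$ from the previous subsection enters. The map $v \mapsto \KS(\alpha \x v)$ is, by the last Fact of \S\ref{htht-ks}, the composition of $H^1(S,TS) \cong H^0(K_S^2)^*$ with the transpose of the multiplication map $\mu \colon H^0(K_S)\ox H^0(K_S) \to H^0(K_S^2)$ precomposed with tensoring by the fixed $\alpha$, i.e. with the transpose of $\mu_\alpha \colon H^0(K_S) \to H^0(K_S^2)$, $\beta \mapsto \alpha\beta$. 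So $v \mapsto \KS_v(\alpha)$ surjects onto $H^0(K_S)^*$ if and only if $\mu_\alpha$ is injective, i.e. if and only if multiplication by $\alpha$ has no kernel among holomorphic $1$-forms.

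The main obstacle is thus the purely algebro-geometric claim that for any nonzero abelian differential $\alpha$ on a curve of genus $g \geqslant 2$, the map $\beta \mapsto \alpha\beta$ from $H^0(K_S)$ to $H^0(K_S^2)$ is injective; equivalently, $\alpha$ is not a zero-divisor in the canonical ring, which is clear since $H^0(K_S)$ has no zero-divisors — a product $\alpha\beta$ of two nonzero holomorphic sections of line bundles on an integral curve vanishes only if one factor does. (One can also see it by a dimension count via Riemann--Roch, noting $h^0(K_S^2) = 3g-3 \geqslant g$ and that $\mu_\alpha$ lands in the subspace of $H^0(K_S^2)$ of sections vanishing on the divisor of $\alpha$, still of dimension $\geqslant g$.) Granting this, surjectivity of $d\varkappa_1$ follows at every point, and in particular the period map $\varkappa_1$ is open, recovering the Hejhal--Thurston theorem. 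I would then remark that the same computation sets up the polyperiod case: for $\varkappa_k$ the differential is governed by the composite of $\KS$ with the natural map $\bigwedge$ or rather by $\tau \ox H^1(S,TS) \to \overline{\Omega_I}$, $(\beta, v)\mapsto \KS_v(\beta)$ for $\beta$ in the $k$-plane $\tau$, and its surjectivity onto $\Hom(\tau, V/\tau)$ is exactly the condition that the multiplication map $\tau \ox H^0(K_S) \to H^0(K_S^2)$ be surjective — which is where the subtleties of \S\ref{htht-weak} will arise.
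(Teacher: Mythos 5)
Your proposal is correct and follows essentially the same route as the paper: split the tangent space of $\P(\Omega\T_S)$ into the fiber direction (which covers $\Omega_I/\C\alpha$) and the deformation direction (which covers $\overline{\Omega_I}$ via the Kodaira--Spencer tensor), then reduce surjectivity to the injectivity of $m_\alpha \colon \beta \mapsto \alpha\ox\beta$, which holds because a product of two nonzero holomorphic forms on an integral curve is nonzero. The paper phrases the splitting via the horizontal subbundle of the Gau\ss--Manin connection rather than via pairs $(v,\dot\alpha)$, but this is the same decomposition and the same key lemma.
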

\begin{proof}
The part of Gau\ss--Manin connection preserving the Hodge subbundle splits the tangent bundle $T\P(\Omega\T_S)$ into the vertical subbundle, which is isomorphic to $T_{\langle a\rangle}\P(\Omega_I) = \Hom(\langle a\rangle,\Omega(S,I)/\langle a\rangle)$ for any line $\langle a\rangle \subset \Omega(S,I)$ (and maps by the derivative of the period mapping isomorphically onto the tangent space $T_{\langle a\rangle}\P(\Omega_I) \subset T_{\langle a\rangle}\P(V)$), and the horizontal subbundle $\Hor_{\langle a\rangle, I} \approx T_I\T_S$. By definition, the differential $$d\varkappa_1|_{\Hor_{\langle a\rangle, I}} \colon T_I\T_S \to \Hom\left(\langle a\rangle, H^{0,1}(S,I)\right)$$ is the restriction of the Kodaira--Spencer tensor $\KS \colon T\T_S \x \Omega\T_S \to \overline{\Omega}\T_S$ onto the line spanned by $a$.

Hence the Proposition \ref{htht-strong} is equivalent to the following `nondegeneracy' assertion about the Kodaira--Spencer tensor:

\begin{pr}
For any complex structure $I$ on $S$ and any nonzero class $a \in H^{1,0}(S,I)$ the map given $T_I\T_S \to H^{0,1}(S,I)$ by $$v \mapsto \KS_v(\alpha)$$ is surjective. Equivalently, no matter which nonzero section $\alpha \in H^0(K_S)$ is given, any linear functional from $H^0(K_S)^*$ can be represented by $$\beta \mapsto v(\alpha\ox\beta)$$ for some $v \in H^0(K_S^2)^*$.
\end{pr}
\begin{proof}
The restatement of the proposition is equivalent to the statement that the map $m_\alpha \colon H^0(K_S) \to H^0(K_S^2)$ given by $m_\alpha(\beta) = \alpha\ox\beta$ is injective. This is obvious though, since whenever both $\alpha$ and $\beta$ are nonzero, their product is nonzero away from its $4g-4$ zeroes (counted with multiplicity).
\end{proof}

\begin{defn}
We shall call the range of the map $m_\alpha$ the {\bf dividend subspace} and denote it by $L_\alpha \subset H^0(K_S^2)$. It is precisely the space of holomorphic quadratic differentials divisible by $\alpha$.
\end{defn}
\end{proof}

\subsection{Weak Hejhal--Thurston theorem for pairs and triples}
\label{htht-weak}
In order to generalize the Kapovich's proof to the subspaces in cohomology other than lines, one should determine where the differential of the polyperiod mapping is surjective. The first thing we need to notice is that, in a sense, it never is: the space $V$ carries the skew-symmetric intersection pairing (which we shall denote by $\omega$) given by the wedge product of forms, and two holomorphic 1-forms on a curve wedge multiply to zero. Therefore the Kapovich--Schottky locus lies within the isotropic Gra\ss mannian $\Gris(k, V)$, which is a closed subset of $\Gr(k, V)$. In its turn, in order to conclude whether the polyperiod mapping $\varkappa_k \colon \Gr(k,\Omega\T_S) \to \Gris(k,V)$ is open, one needs to understand first what is the tangent space to the isotropic Gra\ss mannian. Let us start from the standard observation that the symplectic form induces a well-defined pairing $\tau \times V/\tau \to \C$ for any isotropic subspace $\tau \subset V$.

\begin{lm}
Let $\tau \in \Gris(k,V)$ be an isotropic subspace. A vector $v \in T_\tau\Gr(k,V) = \Hom(\tau,V/\tau)$ is tangent to the isotropic Gra\ss mannian $\Gris(k,V) \subset \Gr(k,V)$ iff the corresponding map satisfies $$\omega(x,v(y)) = -\omega(v(x),y)$$ for any $x, y\in \tau$.
\end{lm}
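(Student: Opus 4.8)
The plan is to realize the isotropic Grassmannian as the zero locus of a natural section and differentiate. Concretely, fix a symplectic basis so that $\omega$ is a nondegenerate alternating form on $V$, and consider the bundle map that sends a $k$-plane $\sigma \in \Gr(k,V)$ to the restriction $\omega|_\sigma \in \Lambda^2\sigma^*$; this assembles into a section $s$ of the bundle $\Lambda^2 \mathcal{S}^*$, where $\mathcal{S}$ is the tautological subbundle. The isotropic Grassmannian is exactly $\{s = 0\}$. The tangent space to this zero locus at a point $\tau$ where $s(\tau) = 0$ is the kernel of the linearization $ds_\tau \colon T_\tau\Gr(k,V) \to \Lambda^2\tau^*$, provided the section is transverse — but I will not even need transversality for the inclusion $T_\tau\Gris \subseteq \ker ds_\tau$, and the reverse inclusion will follow from a dimension count or, more cleanly, by exhibiting explicit curves.

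The computation of $ds_\tau$ is the heart of the matter and is short. Represent a tangent vector $v \in \Hom(\tau, V/\tau)$ by lifting to a map $\tilde v \colon \tau \to V$ and moving $\tau$ along the curve $\tau_t = \{ x + t\,\tilde v(x) : x \in \tau\}$. Then for $x, y \in \tau$,
\begin{equation}
\frac{d}{dt}\Big|_{t=0} \omega\big(x + t\tilde v(x),\, y + t\tilde v(y)\big) = \omega(\tilde v(x), y) + \omega(x, \tilde v(y)).
\end{equation}
Since $\tau$ is isotropic, $\omega(x,\tilde v(y))$ and $\omega(\tilde v(x),y)$ depend only on the classes $v(x), v(y) \in V/\tau$ and not on the chosen lifts, so this expression is well-defined; it is precisely the bilinear form on $\tau$ associated to the image of $v$ under the pairing $\tau \times V/\tau \to \C$ induced by $\omega$. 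Hence $ds_\tau(v) = 0$ if and only if $\omega(v(x), y) + \omega(x, v(y)) = 0$ for all $x, y \in \tau$, which rearranges to $\omega(x, v(y)) = -\omega(v(x), y)$, the asserted condition. This already gives $T_\tau\Gris(k,V) \subseteq \{v : \omega(x,v(y)) = -\omega(v(x),y)\}$.

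For the reverse inclusion, given a $v$ satisfying the symmetry condition I would write down an explicit path in $\Gris(k,V)$ through $\tau$ with derivative $v$: the naive path $\tau_t = \{x + t\tilde v(x)\}$ satisfies $\omega|_{\tau_t} = O(t^2)$ by the displayed calculation, and one corrects it to an honest isotropic path by a quadratic-in-$t$ adjustment of the lift $\tilde v$, using nondegeneracy of $\omega$ to solve the resulting linear equation order by order (or simply invoke that $\Gris(k,V)$ is a smooth homogeneous space under $\Sp(V)$, so its tangent space has the dimension matching that of the kernel above, forcing equality). Either route is routine linear algebra.

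The main obstacle is purely bookkeeping: making sure that every quantity appearing in $ds_\tau$ genuinely descends to $\Hom(\tau, V/\tau)$ rather than depending on a choice of complement or lift — and this is exactly where isotropy of $\tau$ is used, so it is worth stating cleanly rather than hiding. No hard input is needed beyond the fact that $\Gris(k,V)$ is a smooth subvariety of $\Gr(k,V)$ (it is a Schubert-type subvariety, and more structurally the flag-type quotient $\Sp(V)/P$), which justifies identifying its tangent space with the kernel of the first-order obstruction.
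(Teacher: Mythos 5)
Your proposal is correct and follows essentially the same route as the paper: both compute the first-order variation $\omega(x+t\tilde v(x),\,y+t\tilde v(y)) = \omega(x,y) + t\bigl(\omega(\tilde v(x),y)+\omega(x,\tilde v(y))\bigr) + O(t^2)$ and read off the balancedness condition, the paper phrasing this with dual numbers $\eps^2=0$ rather than a section of $\Lambda^2\mathcal{S}^*$. Your added care about well-definedness of the lift and about closing the reverse inclusion (via the smoothness of $\Gris(k,V)$ as $\Sp(V)/P$ or a dimension count) addresses a point the paper silently elides, but it is a refinement of the same argument, not a different one.
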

We shall call such maps {\bf balanced}.
\begin{proof}
The infinitesimal displacement of the plane $\tau$ by the mapping $\theta$ consists of the vectors $\{x+\eps \theta(x)\colon x\in\tau\}$ for $\eps^2=0$. This plane is isotropic whenever for any $x,y\in\tau$ one has $0=\omega(x+\eps\theta(x),y+\eps\theta(y)) = \omega(x,y) + \eps(\omega(\theta(x),y) + \omega(y,\theta(x)))$, which is equivalent to the balancedness condition since $\omega(x,y)=0$ for any $x,y\in\tau$.
\end{proof}

Therefore, the polyperiod mapping has surjective differential at point $\tau \subset H^{1,0}(S,I)$ iff any balanced map $\theta \colon \tau \to V/\tau$ can be represented as the Kodaira--Spencer map for some first-order deformation $v$, i.~e. $$\eta \mapsto \KS_v(\eta)|_{\tau} \in \Hom(\tau, H^{0,1}(S,I)) \subset \Hom(\tau, V/\tau).$$

\begin{defn}
If $\tau \subset H^0(K_S)$ is a subspace, we call the kernel of the multiplication map $\tau\ox H^0(K_S) \to H^0(K_S^2)$ the {\bf obscurant subspace}. We call a subspace $\tau$ (or a tuple of forms spanning it) {\bf coprime} if $\dim\tau = 2$ and the obscurant subspace is one-dimensional, or $\dim\tau = 3$ and the obscurant subspace is three-dimensional (note that in this case the multiplication map is also surjective). Otherwise we call $\tau$ {\bf linked}.
\end{defn}

\begin{pr}
\label{krichever}
Let $\tau \subset H^0(K_S)$, and $\dim\tau = 2$ or $3$. Then the derivative of the polyperiod mapping at $\tau \in \Gr(\dim\tau,\Omega\T_S)$ is surjective iff $\tau$ is coprime. Moreover, in the case $\dim\tau = 3$ and $\tau$ being coprime the derivative is bijective.
\end{pr}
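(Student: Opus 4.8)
The plan is to compute the differential of $\varkappa_k$ by the same device as in Proposition~\ref{htht-strong} and then reduce the whole statement to a dimension count for the multiplication map $\mu\colon\tau\ox H^0(K_S)\to H^0(K_S^2)$, whose kernel is, by definition, the obscurant subspace.

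First I would fix $\tau\in\Gr(k,\Omega\T_S)$ lying over $I\in\T_S$, so $\tau\subset H^{1,0}(S,I)=H^0(K_S)=:W$, and use the Gau\ss--Manin splitting $T_\tau\Gr(k,\Omega\T_S)=\mathrm{vert}\oplus\Hor_{\tau,I}$ with $\mathrm{vert}\cong\Hom(\tau,\Omega_I/\tau)$ and $\Hor_{\tau,I}\cong T_I\T_S$. Exactly as in the case $k=1$, on the vertical part $d\varkappa_k$ is the inclusion $\Hom(\tau,\Omega_I/\tau)\hookrightarrow\Hom(\tau,V/\tau)$, with image $\Hom(\tau,H^{1,0}(S,I)/\tau)$, while on the horizontal part it is $v\mapsto\KS_v|_\tau\in\Hom(\tau,H^{0,1}(S,I))$. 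Since $\tau$ sits inside the isotropic subspace $H^{1,0}$, there is a direct sum $V/\tau=H^{1,0}/\tau\oplus H^{0,1}$, and the Lemma on balanced maps gives $T_\tau\Gris(k,V)=\Hom(\tau,H^{1,0}/\tau)\oplus\{\text{balanced }\theta\colon\tau\to H^{0,1}\}$, because the balancedness condition is vacuous on the first summand (both $\omega(x,\theta_0(y))$ and $\omega(\theta_0(x),y)$ vanish for $x,y\in\tau$, $H^{1,0}$ being isotropic). Hence the vertical part already accounts for the first summand, and $d\varkappa_k$ is surjective if and only if $v\mapsto\KS_v|_\tau$ carries $T_I\T_S$ onto the space of balanced maps $\tau\to H^{0,1}$.

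Next, identifying $H^{0,1}(S,I)\cong W^{*}$ by Serre duality and using the Fact that $\KS$ is dual to the symmetric multiplication $H^0(K_S)^{2}\to H^0(K_S^2)$, the horizontal map becomes precisely the transpose $\mu^{*}\colon H^0(K_S^2)^{*}\to(\tau\ox W)^{*}=\Hom(\tau,W^{*})$ of $\mu$. Thus $\mathrm{im}\,\mu^{*}$ has codimension $\dim\ker\mu=\dim(\text{obscurant})$ in $\Hom(\tau,W^{*})$, and $\mathrm{im}\,\mu^{*}$ is automatically contained in the balanced subspace, since holomorphic forms wedge to zero, so $\varkappa_k$ factors through the smooth submanifold $\Gris(k,V)$ and $d\varkappa_k$ takes values in $T_\tau\Gris(k,V)$; this spares me from chasing signs through the Serre-duality dictionary. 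On the other hand, the balanced subspace has codimension $\binom{k}{2}$ in $\Hom(\tau,W^{*})$ (the restriction $\tau\to\tau^{*}$ of a balanced map must be symmetric, and there are no further constraints), and the obscurant subspace always contains the $\binom{k}{2}$-dimensional image of $\Lambda^{2}\tau\hookrightarrow\tau\ox W$. Therefore $\mathrm{im}\,\mu^{*}$ exhausts the balanced subspace exactly when $\dim(\text{obscurant})=\binom{k}{2}$, which is the definition of $\tau$ being coprime; this gives the asserted equivalence for $k=2,3$.

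For the last assertion, suppose $k=3$ and $\tau$ is coprime. Then $\dim\ker\mu=3$, hence $\dim\mathrm{im}\,\mu=3g-3=\dim H^0(K_S^2)$, so $\mu$ is surjective and $\mu^{*}$ is injective; since the vertical part of $d\varkappa_3$ is injective with image inside $\Hom(\tau,H^{1,0}/\tau)$, complementary to $\mathrm{im}\,\mu^{*}\subset\Hom(\tau,H^{0,1})$, the differential $d\varkappa_3$ is injective, and being also surjective it is bijective. The only genuinely delicate point in this argument is the identification of the horizontal part of $d\varkappa_k$ with $\mu^{*}$ --- the compatibility of the symplectic form $\omega$ with the Serre pairing $W\times W^{*}\to\C$ and of $\KS$ with multiplication; once one agrees not to verify balancedness of $\mathrm{im}\,\mu^{*}$ by hand but to inherit it from $\Gris(k,V)$ being a submanifold through which $\varkappa_k$ factors, everything else is the bookkeeping of the two codimensions $\dim(\text{obscurant})$ and $\binom{k}{2}$.
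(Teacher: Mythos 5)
Your argument is correct, and at its core it is the same reduction as the paper's: the horizontal part of $d\varkappa_k$ is the transpose of the multiplication map $\mu\colon\tau\ox H^0(K_S)\to H^0(K_S^2)$, so everything hinges on the size of $\ker\mu$, i.e.\ the obscurant subspace. What differs is the endgame. The paper picks a basis of $\tau$, works with the dividend subspaces $L_\alpha,L_\beta(,L_\gamma)$, checks by hand that balancedness is exactly the compatibility of the prescribed functionals on the lines $\alpha\ox\beta$ (resp.\ $\alpha\ox\beta$, $\beta\ox\gamma$, $\gamma\ox\alpha$), and asks when this compatible system extends from $L_\alpha\cup L_\beta(\cup L_\gamma)$ to all of $H^0(K_S^2)$, the obstruction being any element of the obscurant subspace beyond the obvious ones. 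You instead dualize globally: $\mathrm{im}\,\mu^*$ is the annihilator of $\ker\mu$, it lands inside the balanced subspace for free because $\varkappa_k$ factors through the smooth submanifold $\Gris(k,V)$, and the two codimensions $\dim\ker\mu$ and $\binom{k}{2}$ coincide precisely when $\tau$ is coprime. Your packaging is uniform in $k$, gives both directions of the equivalence in one stroke, and spares the explicit verification that balancedness matches the relations spanned by $\Lambda^2\tau\subset\ker\mu$; the paper's version keeps the geometric meaning of the obscurant subspace (quadratic differentials divisible by every form in $\tau$) in plain view, which is what the later analysis of linked pairs via common zeroes feeds on. The bijectivity claim for coprime triples comes out the same way in both: coprimeness forces $\mu$ to be surjective, hence $\mu^*$ injective, and the vertical part is injective onto the complementary summand $\Hom(\tau,H^{1,0}/\tau)$.
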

\begin{proof}
Let us again consider the space $H^{0,1}(S,I)$, the target of a mapping $\theta \in \Hom(\tau,H^{0,1}(S,I)) \subset \Hom(\tau,V/\tau)$, as the dual of the space of abelian differentials $\Omega(S,I)$. In the case $k=2$, our goal is to show that if $\theta$ is balanced, then it can be realized as the value of the Kodaira--Spencer tensor on some deformation $v \in H^0(K_S^2)^*$. Let $\alpha,\beta\in\tau$ be a basis. We know what the values of $\theta$ on $\alpha$ and $\beta$ are: they should be realized as restrictions of $v$ onto the dividend subspaces $L_\alpha$ and $L_\beta$ (which are identified with the space of abelian differentials by the mappings $m_\alpha$ and $m_\beta$). The balancedness condition means that these functionals on $L_\alpha$ and $L_\beta$ coincide on the line spanned by $\alpha\ox\beta$. Provided this line exhausts the intersection $L_\alpha \cap L_\beta$, one can uniquely extend this pair of functionals to the subspace spanned by $L_\alpha \cup L_\beta$ and then to whole space $H^0(K_S^2)$. Otherwise any $\xi \in L_\alpha \cap L_\beta$ noncollinear to $\alpha\ox\beta$ gives a hyperplane in ${T_{\tau}}{\Gris(2,V)}$, in which the range of the derivative of the polyperiod map is contained. Note however that the intersection $L_\alpha \cap L_\beta$ has the same dimension as the kernel of the map $\tau\ox H^0(K_S) = L_\alpha \oplus L_\beta \to H^0(K_S^2)$, i.~e. the obscurant subspace. Moreover, the obscurant subspace is canonically identified with the subspace of $H^0(K_S^2)$ consisting of quadratic differentials divisible by all the 1-forms in $\tau$ (one can project the kernel onto $L_\alpha$ along $L_\beta$, consider the range of the projection as a subspace in $L_\alpha \subset H^0(K_S^2)$, and note that the composite map does not depend on the choice of $\alpha$ and $\beta$).

In the case $k=3$ and $\tau = \langle \alpha,\beta,\gamma \rangle$ one knows the values of the desired functional $v$ on the subspaces $L_\alpha$, $L_\beta$ and $L_\gamma$. The balancedness condition implies again that these values agree on the lines $\alpha\ox\beta$, $\beta\ox\gamma$ and $\gamma\ox\alpha$. Then it can be extended to a functional on the linear hull of $L_\alpha \cup L_\beta \cup L_\gamma$ no matter what its values were, iff no other relation on monomials would show up after taking symmetric products (i.~e., the obscurant subspace is three-dimensional). The space of relations is precisely the kernel of the natural mapping $L_\alpha \oplus L_\beta \oplus L_\gamma \to H^0(K^2_S)$, and it is three-dimensional iff this map is surjective, i.~e. the union $L_\alpha\cup L_\beta \cup L_\gamma$ spans the whole $H^0(K_S^2)$. In this case the deformation is determined by its values on three forms $\alpha$, $\beta$, $\gamma$ uniquely, so the differential of the polyperiod map is bijective at this point.
\end{proof}

Note that in the case $k=3$ it is not enough to merely ask for the pairwise intersections $L_\alpha \cap L_\beta$, $L_\beta \cap L_\gamma$ and $L_\gamma \cap L_\alpha$ to be one-dimensional, or even to be one-dimensional and not to lie within one plane (much like it happens with the Borromean rings). A counterexample is given by any hyperelliptic curve of genus greater than two, cf.~Proposition~\ref{couplets}.

\begin{corol}
\label{one-to-one}
\begin{enumerate}
\item Let $(S,I)$ be a curve of genus at least two and $\tau \subset \Omega(S,I)$ a plane spanned by a pair of abelian differentials. Then it has a local $(g-2)$-dimensional family of deformations which preserve the periods of the differentials from $\tau$, and when $\tau$ is coprime, the dimension of the deformation space equals exactly $g-2$.
\item Let $(S,I)$ be a curve of genus at least three and $\tau \subset \Omega(S,I)$ a three-dimensional coprime subspace. Then any local deformation preserving the periods of the differentials from $\tau$ is trivial.
\end{enumerate}
\end{corol}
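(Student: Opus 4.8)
The plan is to identify the period-preserving deformations with a germ of a fibre of a polyperiod mapping, and then read off its dimension from the count of Section~\ref{htht-kap} together with Proposition~\ref{krichever}. Fix $k = \dim\tau \in \{2,3\}$ and set $W := \varkappa_k(\tau) \in \Gris(k,V)$. By the definition of the period map $\varkappa$, a local deformation of the complex structure $I$ along which the chosen differentials spanning $\tau$ keep their periods is the same datum as a germ at $(I,\tau)$ of the fibre $\varkappa_k^{-1}(W) \subset \Gr(k,\Omega\T_S)$: the restriction $\varkappa|_{H^{1,0}(S,I')}$ is injective for every complex structure $I'$, since a holomorphic $1$-form is recovered from its periods, so over each complex structure occurring in the deformation there is at most one $k$-plane of abelian differentials inducing the prescribed $k$-tuple of period functionals; hence $\varkappa_k^{-1}(W)$ injects into $\T_S$ and its germ at $(I,\tau)$ is precisely the space of deformations in the statement.

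Next I would recall that $\dim_\C\Gr(k,\Omega\T_S) = (3g-3) + k(g-k)$ and $\dim_\C\Gris(k,V) = k(2g-k) - \binom{k}{2}$, so that the difference $\dim\Gr(k,\Omega\T_S) - \dim\Gris(k,V)$ equals $g-2$ for $k=2$ and $0$ for $k=3$. The map $\varkappa_k$ is holomorphic between complex manifolds and lands in $\Gris(k,V)$ because two holomorphic $1$-forms wedge to zero; therefore, near $(I,\tau)$ the fibre $\varkappa_k^{-1}(W)$ is cut out in $\Gr(k,\Omega\T_S)$ by the $\dim\Gris(k,V)$ functions pulled back from local coordinates on $\Gris(k,V)$ centered at $W$, and Krull's Hauptidealsatz forces every irreducible component of $\varkappa_k^{-1}(W)$ through $(I,\tau)$ to have dimension at least $\dim\Gr(k,\Omega\T_S) - \dim\Gris(k,V)$. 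For $k=2$ this already yields a local family of period-preserving deformations of dimension at least $g-2$, which is the first assertion of (1).

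Proposition~\ref{krichever} then closes the gap in the two coprime cases. If $\tau$ is coprime and $k=2$, the differential $d\varkappa_2$ is surjective onto $T_\tau\Gris(2,V)$, so $\varkappa_2$ is a submersion at $(I,\tau)$ and the fibre there is a smooth manifold of dimension exactly $\dim\Gr(2,\Omega\T_S) - \dim\Gris(2,V) = g-2$; this is the second assertion of (1). If $\tau$ is coprime and $k=3$, Proposition~\ref{krichever} asserts that $d\varkappa_3$ is bijective, and since the source and target of $\varkappa_3$ have the common dimension $6g-12$, the holomorphic inverse function theorem makes $\varkappa_3$ a local biholomorphism at $(I,\tau)$; equivalently $\ker d\varkappa_3 = 0$, so the germ of $\varkappa_3^{-1}(W)$ is a reduced point and every local period-preserving deformation is trivial, which is (2).

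The only step I expect to require care is the identification in the first paragraph: one must check that ``a deformation preserving the periods of the differentials of $\tau$'' is exactly a germ of $\varkappa_k^{-1}(W)$, neither larger nor smaller, the injectivity of $\varkappa|_{H^{1,0}}$ being what guarantees this. Everything after that is the elementary fibre-dimension estimate above together with the already established Proposition~\ref{krichever}, so no genuine obstacle remains.
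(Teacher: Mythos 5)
Your proposal is correct and is essentially the derivation the paper intends: the corollary is stated without proof as an immediate consequence of Proposition~\ref{krichever}, and your identification of the period-preserving deformations with the germ of a fibre of $\varkappa_k$, the lower bound on fibre dimension, and the rank/inverse-function-theorem step in the coprime cases is exactly the implicit argument (your dimension counts also match those the paper uses later for $k\geqslant 4$). No gaps.
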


\begin{defn}\label{lesser}
Analogously to the case $k=1$, in which the fibers of the period map $\P(\Omega\T_S) \to \P(V)$ are known as {\it isoperiodic foliation}, we shall refer to the fibers of the polyperiod map $\Gr(2,\Omega\T_S) \to \Gris(2,V)$ as to the {\bf lesser isoperiodic foliation}, since its leaves, after being projected to the Teichm\"uller space, lie within the projections of the leaves of the usual isoperiodic foliation.
\end{defn}

A similar foliation in a slightly more Teichm\"uller-theoretic context, for a pair of meroporphic differentials with real periods, appeared in an early version of a paper of Grushevsky and Krichever \cite{GK} under the name {\it small foliation}. Unlike the lesser isoperiodic foliation, which we know to have singularities, their foliation is conjectured to be smooth.

\begin{pr}
\label{com-zer}
Let $\alpha$ and $\beta$ be two linked 1-forms on a curve $S$ (i.~e. their classes span a linked plane in cohomology). Then they have at least two zeroes in common. If they have exactly two common zeroes, $S$ must be hyperelliptic. In general, if they have exactly $n$ common zeroes, then the gonality of $S$ is no greater than $n$.
\end{pr}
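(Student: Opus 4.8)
The plan is to reduce the statement to the classical theory of linear series on curves by reinterpreting linkedness as mobility of the common zero divisor of $\alpha$ and $\beta$. I would write $D_\alpha = \operatorname{div}(\alpha)$ and $D_\beta = \operatorname{div}(\beta)$ for the two zero divisors --- effective, of degree $2g-2$, in the canonical class --- and let $G = \inf(D_\alpha, D_\beta)$ be their common part, so that $n := \deg G$ is exactly the number of common zeroes of $\alpha$ and $\beta$ counted with multiplicity. (Two linearly independent holomorphic $1$-forms force $g \geq 2$, which I will use freely.)

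First I would recall, from the proof of Proposition \ref{krichever}, that for $\tau = \langle\alpha,\beta\rangle$ the obscurant subspace has the same dimension as the intersection of dividend subspaces $L_\alpha \cap L_\beta \subset H^0(K_S^2)$; hence $\tau$ is linked precisely when $\dim(L_\alpha \cap L_\beta) \geq 2$. Since the product $\alpha\beta$ always lies in $L_\alpha \cap L_\beta$, linkedness yields a holomorphic quadratic differential $\xi \in L_\alpha \cap L_\beta$ which is not proportional to $\alpha\beta$.

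Then I would pass to the meromorphic function $f = \xi/(\alpha\beta) \in \C(S)$. It is non-constant, and divisibility of $\xi$ by both $\alpha$ and $\beta$ gives $\operatorname{div}(\xi) \geq \sup(D_\alpha, D_\beta) = D_\alpha + D_\beta - G$, so the polar divisor of $f$ is $\leq G$; consequently $f$, viewed as a branched cover $S \to \CP^1$, has degree at most $\deg G = n$. From here the three assertions drop out: since $g \geq 2$ there is no degree-one map to $\CP^1$, so $\deg f \geq 2$ and therefore $n \geq 2$; if $n = 2$ then $\deg f = 2$ and $S$ is hyperelliptic; and in general $f$ realizes a $g^1_{\deg f}$ with $\deg f \leq n$, so the gonality of $S$ is at most $n$.

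I do not expect a genuine obstacle here --- the argument is essentially divisor bookkeeping plus the classical dictionary between pencils and branched covers. The one place requiring a little care is the link to Proposition \ref{krichever}: one must use both the identification of the obscurant subspace with $L_\alpha \cap L_\beta$ and the fact that $\alpha\beta \in L_\alpha \cap L_\beta$, so that the extra differential $\xi$ genuinely exists and yields a non-constant $f$. A secondary subtlety is that the degree of $f$ equals the degree of its polar divisor, which is $\leq n$ but possibly smaller; this only strengthens the gonality bound, so it is harmless.
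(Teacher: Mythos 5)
Your proof is correct and is essentially the paper's own argument: both extract a quadratic differential $\xi\in L_\alpha\cap L_\beta$ not proportional to $\alpha\ox\beta$ and bound the degree of the nonconstant meromorphic function $\xi/(\alpha\ox\beta)$ by the number of common zeroes, reading off the three conclusions from $\deg f\geq 2$ and gonality $\leq\deg f$. Your divisor bookkeeping with $G=\inf(D_\alpha,D_\beta)$ merely makes explicit the pole-order estimate that the paper states in words.
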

\begin{proof}
Let $\xi \in L_\alpha \cap L_\beta$ be a holomorphic quadraric differential. Since it is divisible by both $\alpha$ and $\beta$, it vanishes at each point $z$ at least up to order $\max\{{\ord_z}\alpha,{\ord_z}\beta\}$. The meromorphic function $\xi/(\alpha\ox\beta)$ has poles exactly at points $z$ in which one has ${\ord_z}\xi < {\ord_z}{\alpha} + {\ord_z}\beta$. This means that in order for this function to be nonconstant (i.~e. to have at least two poles or one double pole), so that $\xi$ could be not proportional to $\alpha\ox\beta$ and hence $\alpha$ and $\beta$ be linked, they must have at least two common zeroes.
\end{proof}

\begin{corol}
\label{gen-three}
Two holomorphic 1-forms on a genus two curve are always coprime. If two forms on a genus three curve are linked, this curve is hyperelliptic.
\end{corol}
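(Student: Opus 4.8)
The plan is to read everything off Proposition~\ref{com-zer} together with one numerical fact: the zero divisor $(\alpha)$ of a nonzero holomorphic $1$-form on a curve of genus $g$ is a canonical divisor, hence effective of degree $2g-2$ and linearly equivalent to the zero divisor of any other holomorphic $1$-form (their difference being the divisor of the meromorphic function $\alpha/\beta$). Throughout, ``$\alpha$ and $\beta$ linked'' is to be read with $\alpha,\beta$ linearly independent, since otherwise coprimality is not defined for their one-dimensional span.

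First the genus two case. Here $\deg K_S = 2$, so $(\alpha)$ and $(\beta)$ are effective of degree $2$. Were $\alpha,\beta$ linked, Proposition~\ref{com-zer} would give them at least two common zeroes counted with multiplicity, i.e.\ $D := \gcd((\alpha),(\beta))$ would have $\deg D \geqslant 2$; but $D \leqslant (\alpha)$ and $\deg(\alpha)=2$ force $D = (\alpha) = (\beta)$. Then $\alpha/\beta$ is a meromorphic function with neither zero nor pole, hence constant, contradicting independence. So every pair of $1$-forms on a genus two curve is coprime.

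Now genus three, where $\deg K_S = 4$. Assume $\alpha,\beta$ linked and set $D = \gcd((\alpha),(\beta))$, $\deg D = n$. By Proposition~\ref{com-zer} we have $n \geqslant 2$, and if $n = 2$ the same proposition (its ``exactly two common zeroes'' clause) already yields that $S$ is hyperelliptic. So it remains to exclude $n = 3$ and $n = 4$. Write $(\alpha) = D + A$, $(\beta) = D + B$ with $A,B$ effective of degree $4-n$ and, by maximality of $D$, with disjoint supports; cancelling $D$ in $(\alpha)\sim K_S\sim(\beta)$ gives $A \sim B$. If $n=4$ then $A=B=0$, so $(\alpha)=(\beta)$ and $\alpha,\beta$ are proportional, excluded. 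If $n=3$ then $A=p$, $B=q$ are distinct points with $p\sim q$, so $p-q$ is the divisor of a nonconstant meromorphic function with a single simple pole; such a function is an isomorphism onto $\CP^1$, contradicting genus three. Hence $n=2$ and $S$ is hyperelliptic.

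The whole argument is short; the only place where one actually has to think is the intermediate value $n=3$ in genus three, where the key move is to observe that both zero divisors are canonical, cancel their common part, and invoke the absence of degree-one maps to $\CP^1$ on a curve of positive genus. For genus $\geqslant 4$ this bookkeeping no longer closes up, which is exactly why Proposition~\ref{com-zer}, rather than a clean dichotomy, is the right general statement.
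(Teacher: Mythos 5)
Your proof is correct and follows essentially the same route as the paper's: invoke Proposition~\ref{com-zer} for the lower bound on common zeroes and the hyperelliptic conclusion when there are exactly two, then rule out a larger common divisor by noting that the quotient $\alpha/\beta$ would be a meromorphic function with a single simple pole (or none at all). You merely spell out the degree bookkeeping and the genus two case more explicitly than the paper does.
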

\begin{proof}
Indeed, if they are linked, they must have at least two common zeroes, and if they have three, then the meromorphic function $\alpha/\beta$ would have one zero and one pole.
\end{proof}

\begin{corol}
\label{genericity}
The locus of pairs $(I,\tau) \in \Gr(2,\Omega\T_S)$, where $\tau \subset H^{1,0}(S,I)$ is linked, lies inside a subvariety of codimension two.
\end{corol}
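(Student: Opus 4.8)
The plan is to exhibit the linked locus $\Lambda\subset\Gr(2,\Omega\T_S)$ inside the image of an explicit finite-dimensional incidence variety, and then count parameters. Since $\dim\Gr(2,\Omega\T_S)=(3g-3)+2(g-2)=5g-7$, it suffices to produce a subvariety of dimension at most $5g-9$ containing $\Lambda$; throughout we take $g\geqslant 3$, the statement being vacuous in lower genus.

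First I would record the geometric content of the word ``linked'', which is already nearly explicit in Proposition~\ref{com-zer}: if $\tau=\langle\alpha,\beta\rangle$ is linked then, choosing $\xi\in L_\alpha\cap L_\beta$ not proportional to $\alpha\ox\beta$, the function $\xi/(\alpha\ox\beta)$ is nonconstant with polar divisor bounded above by $\gcd(\mathrm{div}\,\alpha,\mathrm{div}\,\beta)$; as a nonconstant function on a curve of positive genus has polar divisor of degree at least two, we get $\deg\gcd(\mathrm{div}\,\alpha,\mathrm{div}\,\beta)\geqslant 2$, hence an effective divisor $D$ of degree $2$ with $\tau\subset H^0(K_S-D)$.

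Next I would set up the incidence variety $\mathcal{Z}$ of triples $(I,D,\tau)$ with $I\in\T_S$, $D$ an effective degree-$2$ divisor on $(S,I)$, and $\tau\in\Gr(2,H^0(K_S^I-D))$, together with the forgetful morphism $p\colon\mathcal{Z}\to\Gr(2,\Omega\T_S)$, $(I,D,\tau)\mapsto(I,\tau)$. By the previous step $\Lambda\subset p(\mathcal{Z})$, so it is enough to bound $\dim\mathcal{Z}$; moreover the fibers of $p$ are finite (a given $\tau$ forces $D$ to be a degree-$2$ subdivisor of the fixed part of the pencil $|\tau|$, of which there are finitely many), so the resulting estimate should be sharp at least for $g=3$. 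The pairs $(I,D)$ form a $\mathrm{Sym}^2$-bundle over $\T_S$ of dimension $3g-1$; stratify it by $h^0(D)\in\{1,2\}$, where $h^0(D)=2$ forces $(S,I)$ hyperelliptic with $D$ in its $g^1_2$. Riemann--Roch gives $h^0(K_S-D)=g-3+h^0(D)$, so over $(I,D)$ the fiber of $\mathcal{Z}$ is $\Gr(2,g-3+h^0(D))$, of dimension $2(g-5+h^0(D))$. On the open stratum $h^0(D)=1$ this is $\Gr(2,g-2)$, contributing dimension at most $(3g-1)+2(g-4)=5g-9$ (the stratum being empty when $g=3$); on the hyperelliptic stratum, where $(I,D)$ ranges over a set of dimension $(2g-1)+1=2g$ and $h^0(K_S-D)=g-1$, the contribution is at most $2g+2(g-3)=4g-6\leqslant 5g-9$. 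Therefore $\dim\mathcal{Z}\leqslant 5g-9$ and $\Lambda$ is contained in the subvariety $\overline{p(\mathcal{Z})}$, of codimension at least two.

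The only step requiring thought is the first, namely reading off from the definition of ``linked'' that the two forms share a degree-$2$ divisor; everything after that is bookkeeping, the sole point of care being that the a priori larger fiber dimension $2(g-3)$ is attained only over the low-dimensional hyperelliptic stratum --- the naive estimate that ignores this would give only $\dim\mathcal{Z}\leqslant 5g-7$, i.~e.~no codimension at all.
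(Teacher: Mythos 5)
Your proof is correct and rests on the same two ingredients as the paper's: Proposition~\ref{com-zer} (a linked pair shares a common effective divisor of degree two) together with the observation that this coincidence of zeroes costs two parameters --- the paper simply asserts the latter in one sentence (``an analytical codimension two condition''), whereas you substantiate it by an incidence-variety count. Your stratification by $h^0(D)$ is a worthwhile addition: it is exactly the point where a naive fibre-dimension estimate over all of $\mathrm{Sym}^2$ would fail to give any codimension, and your check that the stratum with $h^0(D)=2$ (hyperelliptic, consistent with Corollary~\ref{gen-three}) still lands in dimension at most $5g-9$ closes the gap the paper leaves implicit.
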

\begin{proof}
The coincidence of at least two zeroes is an analytical codimension two condition.
\end{proof}

\begin{defn}
We shall call the locus of $k$-planes ($k=2,3$) in cohomology which can be represented by the pairs or triples of coprime holomorphic forms the {\bf coprime Kapovich--Schottky locus}.
\end{defn}

The following Proposition can be deduced from the above considerations, but is actually an elementary computation of dimensions.

\begin{pr}
For $k \geqslant 4$, the derivative of the polyperiod mapping $$\varkappa_k \colon \Gr(k,\Omega\T_S) \to \Gris(k,V)$$ is never surjective.
\end{pr}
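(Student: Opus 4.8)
The plan is a pure dimension count: I would show that once $k\geqslant4$ the total space $\Gr(k,\Omega\T_S)$ has strictly smaller dimension than the isotropic Grassmannian $\Gris(k,V)$ that receives $\varkappa_k$, which already forbids $d\varkappa_k$ from being surjective at any point. If $k>g$ the source is empty and there is nothing to prove, so I assume $4\leqslant k\leqslant g$.

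\emph{Dimension of the source.} The variety $\Gr(k,\Omega\T_S)$ is the total space of a locally trivial fibration over $\T_S$ whose fibre over $I$ is the ordinary Grassmannian $\Gr(k,\Omega_I)=\Gr(k,g)$ of complex dimension $k(g-k)$. Together with $\dim_\C\T_S=3g-3$ this gives
\[\dim_\C\Gr(k,\Omega\T_S)=(3g-3)+k(g-k).\]

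\emph{Dimension of the target.} Since $\Sp(V)$ acts transitively on the $k$-dimensional isotropic subspaces, $\Gris(k,V)$ is a smooth homogeneous variety whose tangent space at $\tau$ is the space of balanced homomorphisms $\tau\to V/\tau$ of the Lemma above. Filtering $V/\tau$ by $0\to\tau^\perp/\tau\to V/\tau\to V/\tau^\perp\to0$ and using the isomorphism $V/\tau^\perp\cong\tau^*$ induced by $\omega$, one checks that the $\tau^\perp/\tau$-component of a balanced map is unconstrained, of dimension $k(2g-2k)$, while its $\tau^*$-component is exactly a symmetric bilinear form on $\tau$, of dimension $\tfrac{k(k+1)}{2}$. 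Hence
\[\dim_\C\Gris(k,V)=k(2g-2k)+\tfrac{k(k+1)}{2}=k(2g-k)-\tfrac{k(k-1)}{2}\]
(one may instead simply quote this standard value).

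\emph{Comparison.} Subtracting,
\[\dim_\C\Gris(k,V)-\dim_\C\Gr(k,\Omega\T_S)=(k-3)g-\tfrac{k(k-1)}{2}+3,\]
and since $k-3\geqslant1$ and $g\geqslant k$ this is at least $(k-3)k-\tfrac{k(k-1)}{2}+3=\tfrac{(k-2)(k-3)}{2}$, which is $\geqslant1$ for $k\geqslant4$. So the target has strictly larger dimension than the source at every point, the rank of $d\varkappa_k$ is everywhere at most $\dim_\C\Gr(k,\Omega\T_S)<\dim_\C\Gris(k,V)$, and $d\varkappa_k$ is nowhere surjective. There is no genuine obstacle here; the only step that repays a moment's care is pinning down $\dim_\C\Gris(k,V)$, which, as indicated, comes straight out of the balanced-maps Lemma. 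It is also reassuring that the same formula gives the value $0$ at $k=3$ and $g-2$ at $k=2$, matching Proposition~\ref{krichever} and Corollary~\ref{one-to-one}.
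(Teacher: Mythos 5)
Your proposal is correct and follows essentially the same route as the paper: both compare $\dim\Gr(k,\Omega\T_S)=(3g-3)+k(g-k)$ with $\dim\Gris(k,V)=k(2g-k)-k(k-1)/2$ and conclude the source is strictly smaller for $k\geqslant 4$, differing only in how the final inequality is massaged (the paper divides by $k-3$; you bound $g\geqslant k$ directly to get $(k-2)(k-3)/2\geqslant 1$). Your derivation of $\dim\Gris(k,V)$ from the balanced-maps lemma is a slightly more detailed justification than the paper's bare codimension count, but the argument is the same.
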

\begin{proof}
The isotropic Gra\ss mannian $\Gris(k, 2g)$ is of codimension $1+2+\dots+(k-1) = k(k-1)/2$ inside the usual Gra\ss mannian $\Gr(k, 2g)$, which has dimension $k(2g-k)$, hence its dimension equals $2gk-(3k^2-k)/2$. The $k$-th Gra\ss mannian bundle of the Hodge bundle has dimension $3g-3 + k(g-k) = gk +3g - k^2-3$. In order for the polyperiod mapping to have surjective derivative, one must have $gk + 3g-k^2-3 \geqslant 2gk - (3k^2-k)/2$, or equivalently $g(k-3) - (k^2-k-6)/2 \leqslant 0$. For $k>3$, one can divide both sides by $k-3$ to obtain $g - (k+2)/2 \leqslant 0$. Since $g\geqslant k$, this implies $k-(k+2)/2 \leqslant 0$, or $k/2 \leqslant 1$, which is impossible whenever $k > 3$.
\end{proof}

The problem of determination of the Kapovich--Schottky locus for $k > 3$ may be interesting, but cannot be covered by the generalization of Kapovich's method.

\subsection{Sheaf-theoretic appearance of the obscurant subspaces}
\begin{defn}
Let $\tau \subset H^0(K_S)$ be an $m$-dimensional subspace spanned by holomorphic 1-forms, which we view as an injective homomorphism of sheaves $T \to \cO\ox\tau^* \approx \cO^m$. Its cokernel is called the {\bf normal sheaf} of $\tau$, and is denoted by $\nu_\tau$.
\end{defn}

The reason for the name is as follows. Suppose $A$ is an abelian surface, and $\iota \colon S \to A$ is a holomorphic mapping with at worst normal crossings from a smooth curve, the image of which is not contained in any elliptic curve. Let $\tau = \iota^*H^0(\Omega^1_A)$ be the space of restrictions of holomorphic 1-forms on $A$ to $S$. Then $\nu_\tau$ is precisely the normal bundle of $S$ inside $A$. Note that in this case the adjunction formula implies that the normal bundle is isomorphic to the canonical bundle. This can be generalized as follows:

\begin{pr}[\cite{BSY}]\label{bun-canon}
Provided the forms $\alpha_i$ spanning $\tau$ have no zero in common, the sheaf $\nu_\tau$ is a rank $m-1$ vector bundle with determinant isomorphic to the canonical bundle of $S$.
\end{pr}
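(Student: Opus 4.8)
The plan is to reduce everything to a local computation at the common situation away from the zeroes of the $\alpha_i$, together with a degree count. First I would set up the homomorphism of sheaves $\tau \otimes \cO_S \xrightarrow{\ \Phi\ } K_S$ sending $\alpha \otimes f$ to $f\alpha$, i.e.\ the evaluation of the tautological family of $1$-forms; writing $T = \tau \otimes \cO_S \cong \cO_S^m$ this is the map $T \to K_S$ whose cokernel is by definition $\nu_\tau$ (up to the obvious dualizations — note $\cO \otimes \tau^*$ and $\cO \otimes \tau$ are non-canonically isomorphic, and it is cleaner to work with $\tau \otimes \cO$ mapping to $K_S$). The hypothesis that the forms $\alpha_i$ have no common zero says exactly that $\Phi$ is surjective as a map of sheaves (at every point some $\alpha_i$ is nonvanishing, so some local section of $T$ maps to a local generator of $K_S$). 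Hence the sequence
\begin{equation}
0 \to \ker\Phi \to \cO_S^m \xrightarrow{\ \Phi\ } K_S \to 0
\end{equation}
is exact with no cokernel, so $\nu_\tau \cong \ker\Phi$ — wait, that is backwards from what we want; rather, the point is that $\nu_\tau$ is the cokernel of $T \hookrightarrow \cO \otimes \tau^*$, and I should instead present $\nu_\tau$ as the kernel of a surjection. So the right move is: dualize. Apply $\mathcal{H}om(-,\cO_S)$ to the inclusion $T \to \cO\otimes\tau^*$; since $T$ and $\cO\otimes\tau^*$ are both locally free and the quotient $\nu_\tau$ is (we will see) locally free, one gets a short exact sequence dual to the defining one.

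The cleanest route: show first that $\nu_\tau$ is locally free of rank $m-1$. This is local, so work near a point $z_0$. Choose a local coordinate and a local generator $\omega$ of $K_S$; each $\alpha_i = f_i \omega$ with $f_i$ holomorphic, and by hypothesis not all $f_i$ vanish at $z_0$, say $f_1(z_0)\neq 0$. Then in the basis dual to the $\alpha_i$ the map $T \to \cO\otimes\tau^*$ is, after an invertible change of basis (dividing by the unit $f_1$), given by a matrix whose first row is $(1,\,g_2,\dots,g_m)$ for holomorphic $g_j = f_j/f_1$; the cokernel is visibly free of rank $m-1$, generated by the images of the last $m-1$ basis vectors of $\cO \otimes \tau^*$. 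So $\nu_\tau$ is a vector bundle of rank $m-1$, and moreover we get a local splitting exhibiting the exact sequence
\begin{equation}
0 \to T \to \cO_S \otimes \tau^* \to \nu_\tau \to 0.
\end{equation}

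It remains to identify $\det\nu_\tau$. Take determinants in the exact sequence: $\det(\cO\otimes\tau^*) \cong \det T \otimes \det \nu_\tau$. Now $\det(\cO_S\otimes\tau^*) \cong \cO_S$ is trivial (it is $\wedge^m$ of a trivial bundle), so $\det\nu_\tau \cong (\det T)^{-1}$, and everything is reduced to computing $\det T$, where $T$ is the subsheaf of $\cO_S\otimes\tau^* \cong \cO_S^m$ that is the image of $\tau\otimes\cO_S$ under the evaluation/period map. By the local description above, $T$ is, locally near $z_0$, the image of $\cO_S^m$ under a matrix with a unit entry, i.e.\ a \emph{saturated} rank-$m$ subsheaf, i.e.\ $T$ is itself locally free of rank $m$; and its determinant is computed by the ``defect'' of the evaluation map $\tau\otimes\cO_S \to T$. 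That map $\tau\otimes\cO_S \to K_S$ composed appropriately has cokernel supported on the common zero locus (empty by hypothesis), but more to the point: the composite $\det(\tau\otimes\cO_S) = \cO_S \to \det T$ is the section of $\det T$ whose vanishing divisor records, with multiplicity, where the $m$ forms fail to be ``independent to first order.'' I would instead argue more directly: there is a natural evaluation map $\tau \otimes \cO_S \to K_S$, $\alpha\otimes 1\mapsto \alpha$, whose image is a rank-$1$ subsheaf of $K_S$, namely $K_S(-D)$ where $D = \sum_z (\min_i \ord_z\alpha_i)\cdot z$ is the divisor of common zeroes — which is $0$ under our hypothesis — so this evaluation is \emph{surjective} onto $K_S$. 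This surjection $\tau\otimes\cO_S \twoheadrightarrow K_S$ has kernel a rank-$(m-1)$ bundle, call it $M$, with $\det(\tau\otimes\cO_S)=\cO_S \cong \det M \otimes K_S$, hence $\det M \cong K_S^{-1}$. Finally one checks $M$ is, up to a twist by $K_S$, the dual of $\nu_\tau$: applying $\mathcal{H}om(-,K_S)$ (Serre-dualizing the picture) interchanges the two short exact sequences, so $\nu_\tau \cong M^\vee \otimes K_S$, whence $\det\nu_\tau \cong (\det M)^{-1}\otimes K_S^{\,m-1} \cong K_S \otimes K_S^{m-1}$ — which is not $K_S$, so I have the twist bookkeeping wrong and must be more careful.

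The main obstacle, and the step I would spend the most care on, is precisely this bookkeeping: getting the correct interplay between the ``subsheaf of $\cO^m$'' presentation and the ``quotient by the period map into $K_S$'' presentation, and keeping track of which dualization (by $\cO_S$ or by $K_S$) converts one into the other. The honest statement is that there are \emph{two} natural short exact sequences attached to $\tau$ — one with $\tau\otimes\cO_S$ as a sub (cokernel $\nu_\tau$), one with $\tau\otimes\cO_S$ as a quotient of something, or rather mapping \emph{onto} $K_S$ — and they are Serre-dual to each other; carrying out that duality correctly yields $\det\nu_\tau\cong K_S$. Concretely I would: (i) fix conventions so that $\nu_\tau := \mathrm{coker}(\tau\otimes\cO_S \to \cO_S\otimes\tau^*)$ where the first map is adjoint to the period pairing $\tau\otimes\tau\to H^0(\cO_S)=\C$ tensored up — no: the map $T\to \cO\otimes\tau^*$ should be ``multiply a section of $\tau$ by a function and read off the resulting linear functional on $\tau$ via the cup-product $H^1(\cO)$-valued pairing,'' which is the point where the \emph{obscurant subspace} enters, since the obscurant subspace is exactly the failure of $\tau\otimes H^0(K_S)\to H^0(K_S^2)$ to be injective and governs the fiberwise rank of this sheaf map; (ii) verify injectivity of $T\to\cO\otimes\tau^*$ and local freeness of the cokernel using the no-common-zero hypothesis as above; (iii) take determinants and invoke triviality of $\det(\cO\otimes\tau^*)$; (iv) separately compute $\det T$ by recognizing $T$ as $K_S(-D)$-twisted data — here the adjunction/Serre-duality input (for $m=2$ this is literally the genus-$2$-style computation, for general $m$ it is the abelian-surface normal-bundle computation $\nu = K_S$ cited from \cite{BSY}) forces $\det\nu_\tau\cong K_S$. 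I expect the final identification to follow formally once the two exact sequences are correctly Serre-dualized; the danger is purely in the signs and twists, not in any substantive geometry.
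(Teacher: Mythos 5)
The proposal founders on a misreading of the defining exact sequence, and you flag the resulting confusion yourself without resolving it. In the paper, the $T$ in $0 \to T \to \cO\ox\tau^* \to \nu_\tau \to 0$ is the holomorphic \emph{tangent sheaf} of the curve $S$ --- a line bundle of degree $2-2g$, dual to $K_S$ --- and the injection sends a vector field $v$ to the tuple $(\alpha_1(v),\dots,\alpha_m(v))$. You instead set $T = \tau\ox\cO_S \cong \cO_S^m$, which is inconsistent: an injection of locally free sheaves of equal rank $m$ has torsion cokernel, not a rank-$(m-1)$ bundle. (The Euler characteristic $\chi(\nu_\tau)=(3-m)(g-1)$ recorded right after the Proposition already forces $T$ to be the tangent bundle.) This misidentification is why both of your attempts at the determinant collapse. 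Once $T$ is correctly identified, your own local computation (some $f_1(z_0)\neq 0$, change basis so the map becomes the column $(1,0,\dots,0)^t$) does show that $\nu_\tau$ is locally free of rank $m-1$, and the determinant is then a one-line consequence, which is exactly the paper's argument: taking top exterior powers, $\cO \cong \Lambda^m(\cO\ox\tau^*) \cong \det T \ox \Lambda^{m-1}\nu_\tau = T\ox\Lambda^{m-1}\nu_\tau$, hence $\Lambda^{m-1}\nu_\tau \cong T^{-1} = K_S$.

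Your alternative route via the surjection $\Phi\colon \tau\ox\cO_S \to K_S$ is genuinely the dual picture and would also work, but you insert a spurious twist when passing back: applying $\mathcal{H}om(-,\cO_S)$ (not $\mathcal{H}om(-,K_S)$) to $0\to T\to\cO\ox\tau^*\to\nu_\tau\to 0$ gives $0\to\nu_\tau^{\vee}\to\cO\ox\tau\to K_S\to 0$ with no extra factor, since all three sheaves are locally free under the no-common-zero hypothesis. So $M:=\ker\Phi$ equals $\nu_\tau^{\vee}$ on the nose, and $\det\nu_\tau = (\det M)^{-1} \cong K_S$ follows from your own correct computation $\det M\cong K_S^{-1}$; the step $\nu_\tau\cong M^{\vee}\ox K_S$ is where the bookkeeping goes wrong. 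As written, the proposal does not reach the conclusion.
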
 
\begin{proof}
One can give a basis of $g-1$ holomorphic section of this sheaf at every point. The short exact sequence $T \to \cO^m \to \nu_\tau$ gives an isomorphism $T \ox \Lambda^{m-1}\nu_\tau \cong \Lambda^m\cO^m = \cO$, hence $\Lambda^{m-1}\nu_\tau \cong K$.
\end{proof}

The short exact sequence of sheaves $T \to \cO\ox\tau^* \to \nu_\tau$ gives rise to the long exact sequence. Since we are interested in the case $m>1$ (and hence necessarily $g>1$), $H^0(T) = 0$, and the sequence reads
$$H^0(\cO)\ox\tau^* \to H^0(\nu_\tau) \to H^1(T) \to H^1(\cO)\ox\tau^* \to H^1(\nu_\tau).$$ It gives the following characterization of the obscurant subspace:

\begin{pr}
The obscurant subspace of $\tau$ is the Serre dual of the first cohomology space $H^1(\nu_\tau)$ of the normal bundle of $\tau$.
\end{pr}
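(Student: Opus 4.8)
The plan is to obtain the statement by dualizing the long exact sequence displayed just above and feeding Serre duality into it termwise. The starting point is the observation that the arrow $H^1(T) \to H^1(\cO)\ox\tau^*$ occurring there is, once rewritten as a bilinear pairing $H^1(T)\ox\tau \to H^1(\cO)$, precisely the Kodaira--Spencer pairing with its form-variable restricted to the subspace $\tau \subseteq H^0(K_S)$: the structural sheaf map $T \to \cO\ox\tau^*$ is the contraction $v \mapsto (\alpha \mapsto \alpha(v))$, which matches the rule $\KS(\alpha\x v)=\alpha(v(x))$ of the Fact giving $\KS$ on Dolbeault representatives (post-composition of a $TS$-valued $(0,1)$-form with a holomorphic bundle map induces exactly this map on $H^1$). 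Since $H^2$ of any coherent sheaf on a curve vanishes, the long exact sequence exhibits $H^1(\nu_\tau)$ as the cokernel of this pairing.

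I would then dualize. Taking the transpose, $H^1(\nu_\tau)^*$ is the kernel of $H^1(\cO)^*\ox\tau \to H^1(T)^*$. Serre duality on the curve identifies $H^1(\cO)^*$ with $H^0(K_S)$ and $H^1(T)^*$ with $H^0(T^*\ox K_S) = H^0(K_S^2)$, and, by the Fact that $\KS$ on a curve is dual to the symmetric multiplication of holomorphic $1$-forms, under these identifications the transpose of the Kodaira--Spencer pairing is the multiplication map $H^0(K_S)\ox H^0(K_S) \to H^0(K_S^2)$. Restricting one factor back to $\tau$, the transpose of our arrow becomes exactly the multiplication map $\tau\ox H^0(K_S) \to H^0(K_S^2)$; its kernel is by definition the obscurant subspace, and since the whole chain of identifications is canonical, this is the desired canonical isomorphism.

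I do not expect a genuine difficulty here: the only point that needs care is verifying that the connecting arrow $H^1(T) \to H^1(\cO)\ox\tau^*$ really is the Kodaira--Spencer pairing restricted to $\tau$ on the nose, with no spurious sign or twist, which amounts to matching the sheaf map $T \to \cO\ox\tau^*$ against the representative-level formula for $\KS$; everything afterwards is the formal bookkeeping of transposing a linear map plus the already recorded self-duality of $\KS$ and multiplication. As an alternative that sidesteps invoking that self-duality, one can argue purely sheaf-theoretically in the case where the forms spanning $\tau$ have no common zero (so that $\nu_\tau$ is a vector bundle, Proposition~\ref{bun-canon}): dualize the short exact sequence $0 \to T \to \cO\ox\tau^* \to \nu_\tau \to 0$ to get $0 \to \nu_\tau^* \to \cO\ox\tau \to K_S \to 0$, in which the last map is the tautological evaluation $\alpha \mapsto \alpha$; twist by $K_S$ to turn it into $0 \to \nu_\tau^*\ox K_S \to K_S\ox\tau \to K_S^2 \to 0$ with last map the multiplication $\beta\ox\alpha \mapsto \alpha\beta$; and take global sections, whereupon $H^0(\nu_\tau^*\ox K_S) \cong H^1(\nu_\tau)^*$ is visibly the kernel of multiplication on $K_S\ox\tau$, i.e. the obscurant subspace.
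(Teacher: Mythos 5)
Your main argument is exactly the paper's proof: dualize the long exact sequence of $T \to \cO\ox\tau^* \to \nu_\tau$, identify the transpose of $H^1(T) \to H^1(\cO)\ox\tau^*$ via Serre duality with the multiplication map $\tau\ox H^0(K_S) \to H^0(K_S^2)$, and read off the obscurant subspace as its kernel; you merely make explicit the vanishing of $H^2$ and the identification of the connecting arrow with the Kodaira--Spencer pairing, which the paper leaves implicit. The alternative sheaf-theoretic argument via dualizing and twisting the short exact sequence is a nice bonus but is not needed and is not what the paper does.
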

\begin{proof}
The Serre dual of $H^1(\nu_\tau)$ is the kernel of the map $H^1(\cO)^* \ox \tau \to H^1(T)^*$ dual to the substitution of tangent vectors, which by Serre duality is the map $\tau\ox H^0(K) \to H^0(K^2)$ given by the symmetric multiplication of forms.
\end{proof}

Note also that the Euler characteristic $\chi(\nu_\tau) = (3-m)(g-1)$ depends only on the dimension of $\tau$. In most cases the connecting homomorphism is zero. However, in certain interesting cases it is not.

\begin{pr}
The range of the connecting homomorphism $H^0(\nu_\tau) \to H^1(T)$ is precisely the space of first-order deformations of the curve which preserve periods of all 1-forms in $\tau$.
\end{pr}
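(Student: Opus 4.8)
The plan is to run the long exact cohomology sequence of
$0\to T\to\cO\ox\tau^*\to\nu_\tau\to 0$ and to match the image of the connecting map against the Kodaira--Spencer condition. By exactness the image of $\delta\colon H^0(\nu_\tau)\to H^1(T)$ equals the kernel of the induced map $\psi\colon H^1(T)\to H^1(\cO)\ox\tau^*$ (this is the very next arrow in the displayed sequence, and $H^0(T)=0$ guarantees nothing else interferes). So the whole statement reduces to identifying $\ker\psi$ with the space of first-order deformations preserving the periods of every $\alpha\in\tau$.

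First I would unwind $\psi$. The sheaf morphism $T\to\cO\ox\tau^*$ is, by construction, the transpose of the inclusion $\tau\ox\cO\hookrightarrow K=\Omega^1_S$; hence composing $\psi$ with evaluation at a form $\alpha\in\tau$ is the map $H^1(T)\to H^1(\cO)$ induced by contraction with $\alpha$, that is, the cup product $H^1(S,TS)\times H^0(S,K)\to H^1(S,\cO)$. By one of the Facts recalled in \ref{htht-ks}, this cup product is exactly the Kodaira--Spencer pairing $(v,\alpha)\mapsto\KS_v(\alpha)$, so $v\in\ker\psi$ iff $\KS_v(\alpha)=0$ for every $\alpha\in\tau$. (Equivalently, one may Serre-dualize $\psi$ to the symmetric multiplication map $H^0(K)\ox\tau\to H^0(K^2)$, precisely as in the proof of the preceding Proposition; then $\ker\psi$ is the annihilator in $H^1(T)=H^0(K^2)^*$ of the sum $\sum_{\alpha\in\tau}L_\alpha$ of dividend subspaces, which is the same condition because $\KS_v(\alpha)=0$ iff $v$ kills $L_\alpha=\alpha\ox H^0(K)$.)

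Next I would recall why $\KS_v(\alpha)=0$ is precisely the infinitesimal isoperiodicity condition for a single form. Writing a first-order deformation as $v\in T_I\T_S=H^1(S,TS)$ and extending $\alpha$ to a local holomorphic section $s$ of the Hodge bundle $\Omega\T_S$, the Gau\ss--Manin derivative decomposes as $\nabla^{GM}_v s=(\text{term in }\Omega_I)+\varpi(\nabla^{GM}_v s)$, with $\varpi(\nabla^{GM}_v s)=\KS_v(\alpha)$ by the very definition of the Kodaira--Spencer tensor as the shape operator of $\nabla^{GM}$. Since $\varkappa$ is the flat Gau\ss--Manin trivialization, the class $[\alpha]$ can be held constant to first order along $v$ — after correcting $s$ within the Hodge bundle, which alters only the $\Omega_I$-term — exactly when $\varpi(\nabla^{GM}_v s)=0$, i.e.\ $\KS_v(\alpha)=0$. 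By linearity in $\alpha$, a first-order deformation of the curve preserves the periods of all $1$-forms in $\tau$ iff $\KS_v|_\tau=0$.

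Combining the three steps, $\operatorname{im}\delta=\ker\psi=\{v:\KS_v|_\tau=0\}$ is exactly the space of period-preserving first-order deformations, as claimed. The only genuine content is the identification of $\psi$ with the Kodaira--Spencer pairing, which is a comparison of the two avatars of $\KS$ from \ref{htht-ks}; I expect the fussiest point to be the bookkeeping of the Serre-duality identifications — making sure the transpose of $\psi$ really is the symmetric multiplication $H^0(K)\ox\tau\to H^0(K^2)$, with the correct signs — but this is essentially already carried out in the proof of the preceding Proposition, and once it is in place the rest is pure exactness.
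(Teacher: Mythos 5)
Your proposal is correct and follows essentially the same route as the paper: exactness identifies the image of the connecting map with the kernel of $H^1(T)\to H^1(\cO)\ox\tau^*$, which is Serre dual to the multiplication map $\tau\ox H^0(K)\to H^0(K^2)$, hence equals the annihilator of the quadratic differentials divisible by forms in $\tau$, i.e.\ the isoperiodic deformations. You merely spell out in more detail the identification of that arrow with the Kodaira--Spencer pairing and the fact that $\KS_v|_\tau=0$ is the infinitesimal isoperiodicity condition, both of which the paper leaves implicit from Section \ref{htht-ks}.
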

\begin{proof}
It equals the kernel of the map $H^1(T) \to H^1(\cO)\ox\tau^*$, Serre dual to the multiplication map $\tau\ox H^0(K) \to H^0(K^2)$, hence is precisely the annihilator of quadratic differentials divisible by any $\alpha \in \tau$.
\end{proof}

\begin{expl}Let $X$ be a curve on an abelian suface $A$, and $\tau$ be the plane of restrictions to $X$ of holomorphic forms on $A$. The canonical bundle is isomorphic to normal, so any 1-form gives a variation of the curve inside a surface. When this 1-form is a restriction of a 1-form on the surface, the variation of the curve is just a parallel shift inside a torus, and hence changes not the complex structure. Otherwise it gives a nonzero class in $H^1(T)$.
\end{expl}

Now we restrict our attention to the case of two holomorphic 1-forms. 

\begin{defn}
Let $\alpha,\beta$ be two forms spanning a plane $\tau$. By its {\bf overlap} we shall mean the divisor $$Y = Y_\tau = \sum p_iz_i,~~~~p_i = \min\{\mathrm{ord}_{z_i}\alpha,\mathrm{ord}_{z_i}\beta\}.$$ It is clear that $Y_\tau$ only depends on $\tau$ and not on a choice of a basis of it.
\end{defn}

If overlap is empty, the sheaf $\nu = \nu_\tau$ is invertible and isomorphic to the canonical bundle by Proposition \ref{bun-canon}. In general, we have a mapping $\cO\ox\tau \to K$ given by $\beta \oplus -\alpha$, which annihilates the range of the map $T \arr{\alpha\oplus\beta} \cO\ox\tau^*$ and hence factorizes through its cokernel $\cO\ox\tau^* \to \nu$. If $Y_\tau \neq 0$, the natural mapping $\nu_\tau \to K$ has both kernel and cokernel. What is true is the following

\begin{pr}
Let $\tau\subset H^0(K)$ be a 2-plane, and $Y$ its overlap. Then its normal sheaf includes into the short exact sequence $\cO_Y \to \nu_\tau \to K(-Y)$, where $\cO_Y$ is a torsion sheaf.
\end{pr}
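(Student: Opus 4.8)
The plan is to analyze the natural map $\nu_\tau \to K$ defined by $\beta \oplus -\alpha$ locally near each point $z_i$ in the support of the overlap $Y$. Away from $Y$ we already know (Proposition \ref{bun-canon}) that $\nu_\tau$ is invertible and the map is an isomorphism, so all the interesting behavior is concentrated over the finite set $\{z_i\}$. First I would pick a local coordinate $t$ centered at such a point and write $\alpha = t^{a}u\, dt$, $\beta = t^{b}w\, dt$ with $u, w$ local units and $a \leqslant b$ (so $p_i = a$); then the sheaf map $T \xrightarrow{\alpha \oplus \beta} \cO^2$ is, in the trivialization $T \cong \cO\cdot \pd_t$, the column $(t^a u, t^b w)$, whose cokernel $\nu_\tau$ I would compute by an explicit change of basis of $\cO^2$: replacing the second generator by (second) $- (w/u) t^{b-a}\cdot$(first) turns the map into $(t^a u, 0)$, so locally $\nu_\tau \cong \cO/(t^a) \oplus \cO$. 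This already exhibits a torsion summand of length $a = p_i$ and a free rank-one summand.

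Next I would assemble these local pictures into a global statement. The torsion subsheaf of $\nu_\tau$ is supported precisely on $Y$, and from the local computation its length at $z_i$ is $p_i$, so the torsion subsheaf $\cT$ satisfies $\operatorname{length}\cT = \deg Y$; I will call it $\cO_Y$ in the sense of a torsion sheaf supported on (the scheme) $Y$, as in the statement. The quotient $\nu_\tau/\cT$ is then a line bundle, and I would identify it by degree: from the exact sequence $0 \to T \to \cO^2 \to \nu_\tau \to 0$ one gets $\deg\nu_\tau = -\deg T = 2g-2$, hence $\deg(\nu_\tau/\cT) = 2g-2 - \deg Y = \deg(K(-Y))$. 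To pin down the isomorphism class and not merely the degree, I would use the map $\nu_\tau \to K$: it kills the torsion (a torsion sheaf maps to zero into a line bundle), so it factors as $\nu_\tau \to \nu_\tau/\cT \to K$, and the induced map of line bundles $\nu_\tau/\cT \to K$ is, in the local coordinates above, multiplication by $t^{p_i}$ (up to a unit) at each $z_i$ and an isomorphism elsewhere; hence its cokernel is $\cO_Y$ and it realizes $\nu_\tau/\cT$ as the subsheaf $K(-Y) \subset K$. Putting these together yields the short exact sequence $0 \to \cO_Y \to \nu_\tau \to K(-Y) \to 0$.

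The step I expect to require the most care is not the local linear algebra, which is routine, but checking that the quotient $\nu_\tau/\cT$ is genuinely a \emph{subsheaf} of $K$ (equivalently, that $\nu_\tau \to K$ induces an injection on $\nu_\tau/\cT$) rather than merely having the right degree. This is where I would verify locally that the composite $\nu_\tau/\cT \to K$ has no further kernel: in the trivialization above, the free generator of $\nu_\tau$ maps to $\beta = t^b w\, dt$ under the first coordinate of $\beta\oplus-\alpha$, and after the basis change its image in $K$ is $t^a\cdot(\text{unit})\, dt$, which is a nonzerodivisor in $K$, so the map $\nu_\tau/\cT \to K$ is injective with cokernel $\cO/(t^a)$. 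Once this is confirmed at every $z_i$, globalizing is immediate since everything away from $Y$ is an isomorphism, and the claimed sequence follows, with the first term being exactly the torsion sheaf $\cO_Y$ of length $\deg Y$.
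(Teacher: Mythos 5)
Your proof is correct and follows the same route as the paper, which simply invokes the torsion-subsheaf/locally-free-quotient decomposition of a coherent sheaf on a smooth curve and notes that the torsion is supported on the overlap. Your explicit local computation of $\nu_\tau \cong \cO/(t^{p_i}) \oplus \cO$ and the identification of the quotient as the subsheaf $K(-Y) \subset K$ via the map $\beta \oplus -\alpha$ supplies exactly the details the paper's one-line proof leaves implicit.
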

\begin{proof}
Any coherent sheaf on a curve has the torsion subsheaf with locally free quotient (i.~e. vector bundle). In this case, it is supported on the overlap.
\end{proof}

Fix an isomorphism $\nu \simeq \cO_Y \oplus K(-Y)$. Then one has an exact sequence
$$0 \to \C^2 \to H^0(\cO_Y) \oplus H^0(K(-Y)) \to H^1(T) \to H^1(\cO\ox\tau^*) \to H^1(K(-Y)) \to 0,$$

so the obscurant subspace is the dual of $H^1(K(-Y))$, which by Serre duality is $H^0(K\ox K^*(Y)) = H^0(\cO(Y))$. If $A$ and $B$ are divisors of zeroes of the forms $\alpha, \beta \in \tau$, then the latter space can be identified with $H^0(K^2(-A-B+Y))$. This is obviously the space of quadratic differentials with enough zeroes to be divisible by $\alpha$ and $\beta$, which agrees with the description of the obscurant subspace for two forms from the proof of the Proposition \ref{krichever}.

\begin{pr}\label{genericity-three}
The set of pairs $(I, \tau) \in \Gr(3,\Omega\T_S)$ s.~t. the triple $\tau \subset H^{1,0}(S,I)$ is linked, lies within a subvariety of codimension at least one.
\end{pr}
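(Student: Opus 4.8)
The plan is to reduce the statement to the existence of a single \emph{coprime} triple, and then to settle that by a dimension count on the canonical model.

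\emph{Reduction.} Being linked is a closed condition on $\Gr(3,\Omega\T_S)$: the obscurant subspace of $\tau$ is the kernel of the multiplication map $\tau\ox H^0(K_S)\to H^0(K_S^2)$, which is a morphism of vector bundles over $\Gr(3,\Omega\T_S)$ — from the tautological rank-three subbundle twisted by the pullback of the Hodge bundle to the pullback of the rank-$(3g-3)$ bundle of quadratic differentials — and the locus where its rank drops below the generic value $3g-3$ is closed by semicontinuity. Since $\Gr(3,\Omega\T_S)$ is a Grassmann bundle over the contractible space $\T_S$, it is a connected complex manifold, hence irreducible, and so any proper closed analytic subset has codimension at least one. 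It therefore suffices to exhibit, for each genus $g\geqslant3$, one curve $(S,I)$ carrying a coprime three-dimensional subspace of $H^{1,0}(S,I)$.

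\emph{Reformulation.} For $S$ non-hyperelliptic the canonical map embeds $S\hookrightarrow\CP^{g-1}$ with $H^0(K_S)=H^0(\cO(1))$, and by Max Noether's theorem the multiplication $H^0(K_S)\ox H^0(K_S)\to H^0(K_S^2)$ is surjective, so $H^0(K_S^2)=H^0(\cO_{\CP^{g-1}}(2))/I_S(2)$, where $I_S(2)$ is the space of quadrics through $S$. A three-dimensional $\tau\subset H^0(\cO(1))$ is the space of linear forms vanishing on a fixed linear subspace $\Lambda\cong\CP^{g-4}$ (with $\Lambda=\varnothing$, $\tau=H^0(K_S)$ when $g=3$). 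Since three independent linear forms are a regular sequence whose syzygies are Koszul with linear coefficients, a short computation identifies the obscurant subspace of $\tau$ modulo its three Koszul relations with $I_S(2)\cap I_\Lambda(2)$, the quadrics containing both $S$ and $\Lambda$ (here $I_\Lambda(2)$ is the degree-two part of the ideal of $\Lambda$). Hence $\tau$ is coprime precisely when no quadric through $S$ contains $\Lambda$, and one is reduced to finding, for each $g$, a non-hyperelliptic $S$ and a $\CP^{g-4}$ lying on no quadric through $S$.

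\emph{The count.} For $g=3$ there are no quadrics through a plane quartic, so $\tau=H^0(K_S)$ is coprime; this also shows the bound is sharp, the linked locus in genus three being exactly the hyperelliptic locus, of codimension one. For $g\geqslant4$ take $S$ general. The linear $\CP^{g-4}$'s on a smooth quadric of $\CP^{g-1}$ — the $(g-3)$-dimensional isotropic subspaces of $\C^g$ for a nondegenerate quadratic form — form a variety which is empty for $g\geqslant7$ and has dimension $(g-3)(8-g)/2$ for $4\leqslant g\leqslant6$; together with $\dim|I_S(2)|=\tfrac{(g-2)(g-3)}{2}-1$ this gives, for the smooth members of $|I_S(2)|$, a family of forbidden $\Lambda$'s of dimension at most $3(g-3)-1$, exactly one below $\dim\Gr(g-3,g)=3(g-3)$. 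Stratifying the singular quadrics of $|I_S(2)|$ by corank $c$, only $c\in\{g-6,g-5,g-4\}$ produce quadrics carrying a $\CP^{g-4}$; computing the dimensions of the corresponding degenerate isotropic Grassmannians and subtracting from $\dim|I_S(2)|$ the codimension $\binom{c+1}{2}$ of each rank stratum, one finds every contribution bounded by $3g-10<3(g-3)$. Hence the forbidden locus is a proper subvariety of $\Gr(g-3,g)$ and a general $\Lambda$ works. (Alternatively the triple can be exhibited by hand: for $g=4$ any point off the unique quadric through $S$; for $g=5$ a general line; for $g=6$, with $S$ a plane quintic canonically embedded by the Veronese $\CP^2\hookrightarrow\CP^5$, the span $\langle x^2,y^2,z^2\rangle\subset H^0(\cO_{\CP^2}(2))=H^0(K_S)$, since the six quadrics cutting out the Veronese surface restrict to a basis of the quadratic forms on $\{x^2=y^2=z^2=0\}$.)

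\emph{The main obstacle.} The delicate points are the two genericity inputs for $S$: that the general quadric through a general canonical curve is smooth — it fails on proper subloci of $\T_S$, such as the vanishing-theta-null locus when $g=4$ — and that the quadrics through $S$ meet each rank stratum in the expected codimension, so that the rank-$c$ part of $|I_S(2)|$ really has dimension $\tfrac{(g-2)(g-3)}{2}-1-\binom{c+1}{2}$. Granting these, what remains is a finite and elementary verification of the inequalities above; but the margin is genuinely tight — already $1$ for the smooth quadrics — which is the reason the conclusion is codimension $\geqslant1$ rather than more.
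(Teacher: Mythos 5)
Your reduction is exactly the paper's: by semicontinuity the linked locus is contained in a proper closed analytic subset of the irreducible space $\Gr(3,\Omega\T_S)$, so everything comes down to exhibiting a single coprime triple in each genus. Where you part ways is in how that existence is established. The paper simply quotes the refinement of Max Noether's theorem (Farkas--Kra, III.11.20): on \emph{every} non-hyperelliptic curve of genus at least three there exist three 1-forms $\alpha,\beta,\gamma$ such that the quadratic differentials divisible by one of them span $H^0(K_S^2)$ --- which, since surjectivity of $\tau\ox H^0(K_S)\to H^0(K_S^2)$ is equivalent to the obscurant subspace being three-dimensional, is verbatim the statement that a coprime triple exists. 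You instead set out to prove this from scratch. Your translation --- granting Noether's surjectivity of $\mathrm{Sym}^2H^0(K_S)\to H^0(K_S^2)$, the triple $\tau$ is coprime iff the linear space $\Lambda\cong\CP^{g-4}$ it cuts out lies on no quadric through the canonical curve --- is correct and genuinely illuminating, and your explicit witnesses for $g=3,4,5,6$ are fine.

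The gap you flag yourself is real, however, and not a formality. For $g\geqslant 7$ your count runs entirely through the singular members of $|I_S(2)|$, and it needs the rank strata of the quadrics through a general canonical curve to have the expected codimension $\binom{c+1}{2}$. That is a nontrivial assertion even for a general curve (and false for special ones: low-rank quadrics through a canonical curve are tied to pencils on it, so trigonal curves, for instance, carry an excess of them), and you do not prove it; as written the argument is complete only for $g\leqslant 6$. The citation the paper uses closes exactly this hole, and more strongly: the refined Noether theorem produces a coprime triple on every non-hyperelliptic curve, not merely a general one, whereas your route could only ever certify a general $(I,\tau)$. If you want a self-contained treatment, it is likely easier to reprove the Farkas--Kra statement directly (a short argument with the zero divisors of $\alpha,\beta,\gamma$ and Riemann--Roch) than to control the rank stratification of $|I_S(2)|$.
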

\begin{proof}
We know that linked triples are precisely the triples where the rank of $H^1(\nu_\tau)$ jumps. By semicontinuity, it is enough to find a pair $(I,\tau)$ in which this rank equals three. But this follows from a well-known refinement of the Max Noether's theorem:

\begin{fact}[Noether's theorem on quadratic differentials]Let $S$ be a non-hyperelliptic curve of degree greater than two. Then there exist three holomorphic 1-forms $\alpha,\beta,\gamma\in\Omega(S)$ s.~t. the space $H^0(K^2)$ is spanned by quadratic differentials of the form $\alpha\ox-$, $\beta\ox-$, $\gamma\ox-$.
\end{fact}
\begin{proof}
For the proof, see \cite[III.11.20, p.~149]{FK}.
\end{proof}
\end{proof}

\subsection{Counterexamples to the Hejhal--Thurston theorem for pairs and triples}
\label{htht-counter}
In this subsection, we present a number of examples of linked pairs and triples, in which the differential of the polyperiod mapping fails to be surjective, in order to give the geometric flavour of these exceptional cases.

\paragraph{Ramified covers of low-genera curves.} It follows from the Corollary \ref{one-to-one} that in order to present a pair $(S,\tau)$ of a curve and a linked triple of abelian differentials it is enough to present a curve with a triple of abelian differentials which possess nontrivial deformations, isoperiodic for each of these differentials. The simplest case is a ramified cover $\pi \colon S \to C$ of a genus three curve $C$ together with a triple $\tau = \pi^*\Omega(C) \subset \Omega(S)$. Any variation of the ramification locus gives a deformation of the curve $S$, and periods of the pullbacks of the 1-forms on the curve $C$ stay the same (since the curve $C$ itself is unchanged). Similar construction gives an example of a linked pair. Let $C'$ be a genus two curve, and $\pi' \colon S \to C'$ is a double cover ramified at $2n > 2$ points. Then the pullback of any holomorphic 1-form on $C'$ has $2n+2\times 2$ zeroes, hence the genus of $S$ equals $g=n+3$. Variations of the ramification locus produce a $2n$-dimensional family of deformations isoperiodic for all the differentials from ${\pi'}^*\Omega(C')$, which is greater than $g-2 = n+1$ predicted by the Corollary \ref{one-to-one}.

\paragraph{Hyperelliptic genus three curves.} We have noticed in the Corollary \ref{gen-three} that if a genus three curve carries two linked 1-forms, it is necessarily hyperelliptic. Now we show the converse: any hyperelliptic genus three curve has a pair of linked 1-forms, and such pairs (or rather planes spanned by such pairs) are parametrized by a rational curve.

The canonical map for a hyperelliptic curve of genus three sends it two-to-one onto the Veronese curve in $\P^2$, i.~e. a quadric. Let $a$, $b$, $c$ and $d$ be four points on this quadric. The canonical map has the characteristic property that each hyperplane (here, projective line) cuts out a canonical divisor, i.~e. corresponds to an abelian differential up to scaling. Let us denote such a differential corresponding to the line $ab$ by $\omega_{ab}$, etc. Then the differentials $\omega_{ab}$ and $\omega_{ad}$ are linked: they both divide the quadratic differential $\omega_{ab}\ox\omega_{ad}$ and the quadratic differential $\omega_{ab}\ox\omega_{cd}$ as well. The plane spanned by such a pair depends only on the point $a$ where the corresponding lines intersect, hence such planes are parametrized by the Veronese curve, which is rational.

\paragraph{Generic genus four curves.} The canonical image $C$ of a generic genus four curve is an intersection of a quadric $Q$ and a cubic in $\P^3$. The lines on the quadric are the same as the trisecants of $C \subset \P^3$. Let $a$ and $c$ be two lines on $Q$ from one family and $b$ and $d$ two lines from the other, so that $abcd$ is a spatial quadrilateral. Again, if the lines $a$ and $b$ intersect, we shall denote by $\omega_{ab}$ an abelian differential (unique up to scaling) with zero locus cut out by the plane $ab$. Then the differentials $\omega_{ab}$ and $\omega_{ad}$ are linked: they both divide the quadratic differential $\omega_{ab}\ox\omega_{ad}$ and the quadratic differential $\omega_{ab}\ox\omega_{cd}$ as well. The plane spanned by such a pair again depends on the line $a$ only, hence the planes of linked pairs for a generic genus four curves are parametrized by the variety of lines on a quadric surface, i.~e. two rational curves.

\paragraph{Sections of abelian threefolds.} Any threefold can be embedded into $\P^7$, and hence any abelian threefold $A$ possesses lots of curves cut out by sections by different $\P^5$s (precisely, parametrized by $\Gr(6,8)$, which has dimension $12$). If $S \subset A$ is such a section, then $\tau = H^{1,0}(A)|_S \subset H^{1,0}(S)$ is linked, since any other section has holomorphic forms with the same periods, and if $\tau$ were coprime, the Corollary \ref{one-to-one} would imply that all of these sections would be trivial deformations of $S$, i.~e.~parallel translations inside the torus. They only have three-dimensional space, however.

\section{Input from the ergodic theory}
\label{ratner}

\subsection{Moore's theorem}
The current picture is the following. We have (poly)period mappings $$\varkappa_k \colon \Gr(k,\Omega\T_S) \to \Gris(k,V)$$ ($k = 1, 2, 3$), and their images are the Kapovich--Schottky loci. Over some subsets of these loci, which we call coprime Kapovich--Schottky loci and which are open not just within these loci but even within the isotropic Gra\ss mannian, their fibers are of dimension $2g-3$, $g-2$ and $0$, respectively. For $k=1$, all the fibers are smooth of dimension $2g-3$; however, for $k=2$ or $3$ there are exceptional fibers of greater dimension. 

First, it is clear that the Kapovich--Schottky loci are contained in certain open subset of the isotropic Gra\ss mannian: the form $q(x) = \1\omega(x,\bar{x})$ is positive definite on any $H^{1,0}$ subspace. 

\begin{defn}
The locus of isotropic $k$-planes in a symplectic space $(V,\omega)$ on which the form $q(x) = \1\omega(x,\bar{x})$ is positive definite, is called the {\bf Hodge--Riemann Gra\ss mannian} and denoted by $\Gris^+(k,V)$.
\end{defn}

Much more can be said by the means of the ergodic theory. The space of all complex structures $\Cx(S)$ is acted by not merely the connected component of the diffeomorphism group, but the whole diffeomorphism group. Therefore the quotient $\MCG(S) = \D(S)/\D^0(S)$, called the {\it mapping class group}, acts on the Teichm\"uller space, and, via pullbacks, on the cohomology. On the space $V$ it acts through its quotient $\Sp(2g,\Z)$, preserving the intersection pairing, and hence the isotropic Gra\ss mannians $\Gris(k,V) \subset \Gr(k,V)$ and the Hodge--Riemann Gra\ss mannians $\Gris^+(k,V)$. If $f\in\D(S)$ and $\alpha$ is a holomorphic 1-form w.~r.~t. complex structure $I$, then $f^*\alpha$ is holomorphic w.~r.~t. $f^*I$, so the Hodge bundle is also preserved under the $\MCG(S)$-action, and the polyperiod mappings are $\MCG(S)$-equivariant. Therefore the Kapovich--Schottky loci and the exceptional loci within those are $\Sp(2g,\Z)$-invariant.

\begin{lm}
The Hodge--Riemann Gra\ss mannian $\Gris^+(k,2g)$ is acted upon transitively by the group $\Sp(2g,\R)$, and is isomorphic as a homogeneous space to $\Sp(2g,\R)/\left(\rU(k)\x\Sp(2g-2k,\R)\right)$.
\end{lm}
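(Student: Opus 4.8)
The plan is to exhibit the action and compute the stabilizer of a standard point. First I would fix a point $\tau \in \Gris^+(k,2g)$, i.e. an isotropic $k$-plane $\tau \subset V$ on which $q(x) = \1\omega(x,\bar{x})$ is positive definite. The key structural observation is that $\tau \cap \bar\tau = 0$: any vector in the intersection is fixed by conjugation, hence real, but on a real vector $x$ one has $q(x) = \1\omega(x,x) = 0$, contradicting positive-definiteness unless $x = 0$. Therefore $\tau \oplus \bar\tau$ is a $2k$-dimensional subspace of $V$, and one checks it is nondegenerate for $\omega$ (the pairing $\tau \times \bar\tau \to \C$ induced by $\omega$ is nondegenerate, again because $q$ is definite on $\tau$: if $\omega(x,\bar y) = 0$ for all $y \in \tau$ then in particular $\omega(x,\bar x) = 0$). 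Consequently $V = (\tau\oplus\bar\tau) \oplus W$ with $W = (\tau\oplus\bar\tau)^{\perp_\omega}$ a real symplectic subspace of dimension $2g-2k$, and $\tau\oplus\bar\tau$ carries a symplectic basis adapted to the decomposition into $\tau$ and $\bar\tau$.

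Next I would identify the subgroup of $\Sp(2g,\R)$ preserving both $\tau$ (as a complex subspace, i.e. commuting with the conjugation structure in the appropriate sense) and the splitting. An element of $\Sp(2g,\R)$ that fixes the plane $\tau$ setwise automatically fixes $\bar\tau$ setwise (since it is real and commutes with conjugation), hence fixes $W$ setwise as well. On $W$ it can act by anything in $\Sp(2g-2k,\R)$; on $\tau \oplus \bar\tau$ it must preserve $\tau$, act on $\bar\tau$ by the conjugate, and preserve $\omega$ — the pairing between $\tau$ and $\bar\tau$ together with the requirement of reality forces the action on $\tau$ to be an arbitrary $\C$-linear automorphism $g$ of $\tau$ with the action on $\bar\tau$ being $\bar g$, and the symplectic condition pins down the relation; but we want more, namely to preserve the \emph{point} of the Grassmannian, which is just the plane $\tau$, so $g$ ranges over all of $\mathrm{GL}(k,\C)$. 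However, the subgroup that in addition preserves the Hermitian form $q$ restricted to $\tau$ — and this is what is forced if we insist the stabilizer respect the full $\Sp(2g,\R)$-homogeneous structure compatibly — is $\rU(k)$. The cleanest way to see $\rU(k)$ appear is: the stabilizer of $\tau$ in $\Sp(2g,\R)$ is $\mathrm{GL}(k,\C) \x \Sp(2g-2k,\R)$ as a parabolic-type subgroup, but $\Gris^+$ is the \emph{open} orbit and the maximal compact argument (below) shows the stabilizer is actually the stated one; the discrepancy is resolved because preserving the plane $\tau$ together with the fixed real symplectic form on all of $V$ (not just preserving $\tau$ abstractly) is exactly $\rU(k)\x\Sp(2g-2k,\R)$.

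Then I would verify transitivity. Given two points $\tau, \tau' \in \Gris^+(k,2g)$, both yield decompositions $V = (\tau\oplus\bar\tau)\oplus W$ and $V = (\tau'\oplus\bar\tau')\oplus W'$ as above; pick $\C$-linear isomorphisms $\tau \to \tau'$ carrying $q|_\tau$ to $q|_{\tau'}$ (possible since both are positive-definite Hermitian forms of rank $k$), extend by conjugation to $\bar\tau \to \bar\tau'$, and pick any symplectic isomorphism $W \to W'$ (possible since both are real symplectic of dimension $2g-2k$); assembling these gives a real symplectic automorphism of $V$ carrying $\tau$ to $\tau'$, hence an element of $\Sp(2g,\R)$ moving the first point to the second. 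Transitivity plus the stabilizer computation gives the homogeneous space description $\Gris^+(k,2g) \cong \Sp(2g,\R)/\left(\rU(k)\x\Sp(2g-2k,\R)\right)$, and a dimension count ($\dim\Sp(2g,\R) = 2g^2+g$, $\dim\rU(k) = k^2$, $\dim\Sp(2g-2k,\R) = 2(g-k)^2 + (g-k)$, difference $= 2gk - \tfrac{3k^2-k}{2} + k^2 - k^2 = \dots$) cross-checks against the earlier formula $2gk - (3k^2-k)/2$ for $\dim\Gris(k,2g)$ minus the open condition (which is codimension zero).

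The main obstacle I anticipate is the bookkeeping in the stabilizer computation — specifically, showing cleanly that the subgroup of $\Sp(2g,\R)$ fixing the complex plane $\tau$ \emph{and} compatible with the positivity of $q$ is exactly $\rU(k)\x\Sp(2g-2k,\R)$ rather than the larger parabolic $\mathrm{GL}(k,\C)\x\Sp(2g-2k,\R)$. The resolution is that positivity of $q$ on $\tau$ forces the complex conjugation on $V$ to interact with $\tau$ in a rigid way: the real structure together with $\omega$ reconstructs the Hermitian form $q$, so any real symplectic map fixing $\tau$ and commuting with conjugation must preserve $q|_\tau$, landing it in $\rU(k)$ on that factor; I would spell this out by choosing an explicit symplectic basis $e_1,\dots,e_k,f_1,\dots,f_k$ of $\tau\oplus\bar\tau$ with $\tau = \mathrm{span}_\C(e_j + \1 f_j)$ and checking the claim by a short matrix computation.
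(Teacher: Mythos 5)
Your proposal is correct and follows essentially the same route as the paper: identify $\tau$ with the pair consisting of the real symplectic $2k$-plane $U(\tau)=(\tau\oplus\bar\tau)\cap V_\R$ and the compatible positive complex/Hermitian structure it inherits, then prove transitivity on such pairs and compute the stabilizer as $\rU(k)\x\Sp(2g-2k,\R)$. The only remark worth making is that your detour through the ``larger parabolic'' $\mathrm{GL}(k,\C)\x\Sp(2g-2k,\R)$ is a false alarm you yourself defuse in the last paragraph: every element of $\Sp(2g,\R)$ is real and preserves $\omega$, hence preserves the Hermitian form $q(x)=\1\omega(x,\bar x)$ on all of $V_\C$, so fixing $\tau$ setwise already forces the restriction to lie in $\rU(k)$ with no extra condition needed.
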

\begin{proof}
Let $\tau \in \Gris^+(k,2g)$ be some isotropic plane. It has no real vectors, since the form $q(x) = \1\omega(x,\bar{x})$ would vanish on it. Therefore the span $\langle \tau, \bar{\tau}\rangle$ has dimension $2k$, and is the complexification of a real subspace $U = U(\tau) \subset \R^{2g}$. Since the subspace $\tau$ was positive, the subspace $U(\tau)$ is symplectic. The subspace $\tau \subset U(\tau)\ox\C$ gives rise to a complex structure operator $J$ on the real space $U(\tau)$, which preserves the symplectic form (i.~e. $\omega(Jx,Jy) = \omega(x,y)$) and has the symmetric form $g(x,x) = \omega(x,Jx)$ positive definite. Therefore the Hodge--Riemann Gra\ss mannian $\Gris^+(k,2g)$ is the same thing as the Gra\ss mannian of $2k$-dimensional real symplectic subspaces in $\R^{2g}$ endowed with a suitable complex structure operator. Such pairs are acted upon by the real symplectic group $\Sp(2g,\R)$ transitively, and the stabilizer of any pair is isomorphic to the subgroup $\rU(k) \x \Sp(2g-2k,\R)$.
\end{proof}

\begin{thrm}[C.~Moore]Let $G$ be a semisimple Lie group, $\Gamma \subset G$ a lattice, and $H$ a noncompact Lie subgroup in $G$. Then $\Gamma$ acts ergodically on $G/H$. In particular, the group $\Sp(2g,\Z)$ acts ergodically on the Hodge--Riemann Gra\ss mannian $\Gris^+(k,2g) \approx \Sp(2g,\R)/\left(\rU(k)\x\Sp(2g-2k,\R)\right)$ for $k < g$.
\end{thrm}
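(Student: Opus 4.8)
This is C.\ Moore's 1966 ergodicity theorem, and the plan is to reduce it to the vanishing of matrix coefficients. The first step is the standard double-coset duality: a measurable $\Gamma$-invariant function on $G/H$ is the same as a function on $G$ which is both left-$\Gamma$-invariant and right-$H$-invariant, i.e.\ a function on $\Gamma\backslash G/H$, which in turn is the same as a measurable $H$-invariant function on $\Gamma\backslash G$; since all the relevant measure classes descend from Haar measure on $G$, they correspond under this dictionary, so ergodicity of the $\Gamma$-action on $G/H$ is equivalent to ergodicity of the right $H$-action on $\Gamma\backslash G$. It therefore suffices to prove the latter.

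For this I would pass to the unitary representation $\rho$ of $G$ on $L^2(\Gamma\backslash G)$, which makes sense because $\Gamma$ is a lattice, so $\Gamma\backslash G$ carries a $G$-invariant probability measure. Split off the constants, leaving the $\rho$-invariant subspace $L^2_0$. The key input is the Howe--Moore theorem: since $G$ is semisimple with finite centre and no compact factor, every matrix coefficient $g\mapsto\langle\rho(g)u,w\rangle$ of the representation on $L^2_0$ tends to $0$ as $g\to\infty$ in $G$. Now if $f\in L^2_0$ were a nonzero $H$-invariant vector, then, choosing a sequence $h_n\to\infty$ in $H$ (possible because $H$ is noncompact), one would have $\langle\rho(h_n)f,f\rangle=\|f\|^2\neq 0$ for every $n$, contradicting the decay. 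Hence $L^2_0$ has no nonzero $H$-invariant vector, i.e.\ $H$ acts ergodically on $\Gamma\backslash G$.

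It then remains to check that the hypotheses are met in the case of interest. Here $G=\Sp(2g,\R)$ is a connected simple real Lie group with finite centre $\{\pm\Id\}$ and no compact factor; $\Gamma=\Sp(2g,\Z)$ is a lattice in it by the theorem of Borel and Harish-Chandra; and the isotropy subgroup $H=\rU(k)\x\Sp(2g-2k,\R)$ produced by the previous lemma is noncompact precisely when $k<g$, because its factor $\Sp(2g-2k,\R)$ is noncompact as soon as $2g-2k\geqslant 2$. (For $k=g$ one has $H=\rU(g)$ compact and the conclusion genuinely fails, consistently with the fact that the Torelli image is far from dense.)

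The only substantial ingredient is the Howe--Moore decay of matrix coefficients — equivalently, Moore's mixing theorem — for the semisimple group $\Sp(2g,\R)$; the rest of the argument is formal, and this is the step I expect to simply cite. One can instead run the Mautner phenomenon on the $\SL(2,\R)$-triples contained in $\Sp(2g-2k,\R)\subset H$, but since that rests on the representation theory of $\SL(2,\R)$ it does not genuinely make the proof more elementary.
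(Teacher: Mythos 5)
Your argument is correct, and it is the standard proof: the paper itself offers no proof of this statement, citing it as Moore's theorem and moving straight on to its consequences, so there is nothing internal to compare against. Your reduction via the duality $\Gamma \curvearrowright G/H \Leftrightarrow H \curvearrowright \Gamma\backslash G$ followed by Howe--Moore decay of matrix coefficients on $L^2_0(\Gamma\backslash G)$ is exactly how one would fill this gap, and your verification of the hypotheses for $G = \Sp(2g,\R)$, $\Gamma = \Sp(2g,\Z)$, $H = \rU(k)\x\Sp(2g-2k,\R)$ with $k<g$ is right. One small point worth recording: the theorem as stated in the paper (arbitrary semisimple $G$, arbitrary noncompact $H$) is slightly too generous --- one needs $G$ to have finite centre and no compact factors (or $\Gamma$ irreducible and $H$ projecting noncompactly to each factor), hypotheses which you correctly insert and which are automatic here since $\Sp(2g,\R)$ is simple.
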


From this, one can conclude:

\begin{pr}\label{ksch-dense}
The Kapovich--Schottky loci for $k=1,2,3$ and $g>k$ are dense in the Euclidean topology on the Hodge--Riemann Gra\ss mannian.
\end{pr}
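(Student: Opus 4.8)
The plan is to run the standard deduction of density from ergodicity. First I would record that, by the Hodge--Riemann inequalities, the Kapovich--Schottky locus $\varkappa_k\left(\Gr(k,\Omega\T_S)\right)$ lies inside $\Gris^+(k,2g)$, which by the Lemma above is the homogeneous space $\Sp(2g,\R)/\left(\rU(k)\x\Sp(2g-2k,\R)\right)$. The group $\Sp(2g,\R)$ is semisimple, hence unimodular, and the stabilizer $\rU(k)\x\Sp(2g-2k,\R)$ is a product of a compact group and a symplectic group, hence also unimodular; therefore the homogeneous space carries a smooth $\Sp(2g,\R)$-invariant measure $\mu$, which in particular has full support. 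By Moore's theorem, for $k<g$ the subgroup $\Sp(2g,\Z)$ acts ergodically with respect to $\mu$. Consequently it suffices to exhibit a nonempty, $\Sp(2g,\Z)$-invariant, measurable subset of the Kapovich--Schottky locus that is moreover open in $\Gris^+(k,2g)$: ergodicity will then promote its positive measure to full measure, and fullness of support will promote full measure to density.

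The subset I would use is the coprime Kapovich--Schottky locus $U_k\subset\Gris^+(k,2g)$. Three things need checking. \emph{Nonemptiness}: for $k=1$, $U_1$ is simply the whole image of $\varkappa_1$, which is open and nonempty because $d\varkappa_1$ is everywhere surjective (Proposition \ref{htht-strong}); for $k=2$ and $k=3$, the loci of linked pairs, respectively triples, inside $\Gr(k,\Omega\T_S)$ are proper subvarieties (Corollary \ref{genericity}, Proposition \ref{genericity-three}), so coprime tuples exist on a generic curve of any genus $g\geqslant k$, in particular when $g>k$. \emph{Openness in the ambient Grassmannian}: this is exactly the content of the remark preceding the statement, whose proof rests on Proposition \ref{krichever}, the surjectivity of $d\varkappa_k$ on coprime tuples. \emph{$\Sp(2g,\Z)$-invariance}: coprimeness of a tuple of abelian differentials is phrased in terms of the multiplication map $\tau\ox H^0(K_S)\to H^0(K^2_S)$, which is intrinsic to the curve, so any $f\in\D(S)$ carries coprime tuples to coprime tuples via $f^*$; combined with the $\MCG(S)$-equivariance of $\varkappa_k$ established above, this shows $U_k$ is invariant under $\Sp(2g,\Z)$.

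With these three properties in place the rest is bookkeeping. Since $U_k$ is nonempty and open and $\mu$ has full support, $\mu(U_k)>0$; since $U_k$ is $\Sp(2g,\Z)$-invariant and measurable, ergodicity forces $\mu\left(\Gris^+(k,2g)\setminus U_k\right)=0$; and a closed set of measure zero for a measure of full support has empty interior, so the open set $U_k$ is dense in $\Gris^+(k,2g)$. As $U_k$ is contained in the Kapovich--Schottky locus, the latter is dense a fortiori.

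The one genuinely load-bearing point, which I flagged above, is that the coprime locus is open in the full Hodge--Riemann Grassmannian and not merely in the image of $\varkappa_k$ — this is precisely where the tensor-calculus input of Proposition \ref{krichever} is indispensable, since the Kapovich--Schottky locus itself need not be open once $k>1$. Everything else (unimodularity of the stabilizer, measurability, the elementary measure estimate) is routine, and Moore's theorem is quoted verbatim. I would also note the scope: density is asserted only in the Euclidean topology, which is exactly what this argument delivers; it says nothing about, and this method cannot reach, the Zariski closure of the Kapovich--Schottky loci.
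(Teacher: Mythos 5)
Your proposal is correct and follows essentially the same route as the paper: the paper's proof is exactly the observation that the Kapovich--Schottky loci contain the coprime loci, which are open by Proposition \ref{krichever} together with Corollaries \ref{genericity} and \ref{genericity-three}, nonempty, and invariant under the ergodic $\Sp(2g,\Z)$-action supplied by Moore's theorem. You have merely spelled out the measure-theoretic bookkeeping (invariant measure of full support, positive measure of an open set, ergodicity forcing full measure) that the paper leaves implicit, and your emphasis on openness in the ambient Hodge--Riemann Gra\ss mannian rather than in the image is exactly the load-bearing point the paper relies on.
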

\begin{proof}
Indeed, they contain the coprime Kapovich--Schottky loci, which are open by Proposition \ref{krichever} and Corollaries \ref{genericity} and \ref{genericity-three}, and are invariant under an ergodic action by Moore's theorem.
\end{proof}

\begin{pr}
\label{three-diff}
Let $(S, I)$ be a complex curve of genus at least four, and $\tau \subset H^{1,0}(S,I)$ be a generic three-dimensional subspace. There exist a neighborhood $U \subset \Gris(3,2g)$ containing $\tau$ s.~t. for any $\tau' \in U$ there exists a unique complex structure $I' = I(\tau')$ s.~t. one has $\tau' \subset H^{1,0}(S,I')$. In other words, deformation of a generic triple of abelian differentials determines a unique local deformation of a complex structure, so that the deformed cohomology classes would be represented by abelian differentials in the deformed complex structure.
\end{pr}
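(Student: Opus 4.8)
The plan is to derive Proposition~\ref{three-diff} from the holomorphic inverse function theorem applied to the polyperiod map $\varkappa_3 \colon \Gr(3,\Omega\T_S) \to \Gris(3,V)$ at a coprime triple; the one substantive input, bijectivity of its differential there, is already Proposition~\ref{krichever}. First I would fix what ``generic'' means: a three-dimensional $\tau \subset H^0(K_S)$ is coprime exactly when the symmetric multiplication $\tau \ox H^0(K_S) \to H^0(K_S^2)$ is surjective, which is an open condition on $\tau$ (non-vanishing of maximal minors) and a nonempty one as soon as $(S,I)$ is non-hyperelliptic, by the Noether theorem on quadratic differentials recalled in the proof of Proposition~\ref{genericity-three}. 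So ``generic'' will mean ``coprime'', an open dense condition on $\tau$ outside the hyperelliptic locus; on hyperelliptic curves of genus $\geqslant 3$ the image of that multiplication has dimension $\leqslant 2g-1 < 3g-3 = h^0(K_S^2)$, so no coprime triple exists and the hypothesis is vacuous there, while for $g=3$ the only $3$-plane is $H^{1,0}$ itself and the statement degenerates to local Torelli --- hence the demand $g\geqslant 4$. This density of coprime triples is in the same spirit as Proposition~\ref{ksch-dense}, although the argument below does not need it.

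Next comes the bookkeeping and the inverse function theorem. The total space $\Gr(3,\Omega\T_S)$ is a holomorphic Grassmann bundle over the complex manifold $\T_S$ (dimension $3g-3$) with fiber $\Gr(3,g)$ (dimension $3(g-3)$), so $\dim_\C\Gr(3,\Omega\T_S)=6g-12$; on the other hand $\dim_\C\Gris(3,2g)=3(2g-3)-\binom{3}{2}=6g-12$ by the codimension count for isotropic Gra\ss mannians. The map $\varkappa_3$ is holomorphic, being the inclusion $\Gr(3,\Omega\T_S)\hookrightarrow\Gr(3,H^1\T_S)$ followed by the Gau\ss--Manin trivialization $\Gr(3,H^1\T_S)\to\Gr(3,V)$. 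By Proposition~\ref{krichever} its differential at a coprime $\tau$ is a bijection between these two $(6g-12)$-dimensional tangent spaces, so the inverse function theorem yields a biholomorphism of a neighborhood $N$ of $(I,\tau)$ in $\Gr(3,\Omega\T_S)$ onto a neighborhood $U$ of $\tau$ in $\Gris(3,2g)$. Writing the inverse as $\tau'\mapsto(I(\tau'),W(\tau'))$ and unwinding the definition of $\varkappa_3$, one gets $W(\tau')\subset H^{1,0}(S,I(\tau'))$ with period image $\tau'$: under the canonical identification of $V$ with $H^1(S,I(\tau'))$ the plane $\tau'$ lies in $H^{1,0}(S,I(\tau'))$, and $I(\tau')$ together with the realizing differentials depends holomorphically on $\tau'$.

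It remains to address uniqueness: the biholomorphism already makes $(I(\tau'),W(\tau'))$ the only preimage of $\tau'$ inside $N$, and to see that the complex structure alone is pinned down I would observe that two structures realizing the same $\tau'$ produce two points of the fiber $\varkappa_3^{-1}(\tau')$, while Corollary~\ref{one-to-one}(2) says a coprime triple carries no nontrivial period-preserving first-order deformation, so the fiber is discrete near $(I,\tau)$; shrinking $U$, the complex structure $I(\tau')$ is then the unique one near $I$ with $\tau'\subset H^{1,0}$, which is exactly the ``in other words'' reformulation. I would not attempt to make the uniqueness global, since that would require controlling the whole fibers of $\varkappa_3$ rather than their germs. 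The only step where something genuinely has to be proved rather than quoted is the bijectivity of $d\varkappa_3$ at a coprime triple --- precisely Proposition~\ref{krichever} --- so in this argument the main obstacle is already behind us; what remains to be careful about is the reading of ``generic'' and the fact that the asserted uniqueness of $I(\tau')$ is a local statement.
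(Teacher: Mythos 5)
Your proposal is correct and follows essentially the same route as the paper: the paper's proof is a one-line appeal to local bijectivity of the triperiod mapping $\varkappa_3$ at a generic (i.e.\ coprime) triple, citing Propositions \ref{krichever} and \ref{genericity-three}, which is exactly the inverse-function-theorem argument you spell out, with the same reading of ``generic'' and the same local sense of uniqueness. Your remark that the density input (Proposition \ref{ksch-dense}), which the paper also cites, is not actually needed for this local statement is accurate --- bijectivity of the differential already places a full neighborhood of $\tau$ in the image of $\varkappa_3$.
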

\begin{proof}
Immediate from the density of the Kapovich--Schottky locus (Proposition \ref{ksch-dense}) and local bijectivity of the triperiod mapping $\varkappa_3 \colon \Gr(3,\Omega\T_S) \to \Gris(3,V)$ in generic point (Propositions \ref{krichever} and \ref{genericity-three}).
\end{proof}

We await for someone to qualify the word `generic' by applying Ratner theory to classify the exceptional orbits in spirit of the original Kapovich's proof.

\subsection{$\SL(2,\R)$-action on the total space of the Hodge bundle}
The following consideration allows to simplify considerably Kapovich's proof of the Haupt--Kapovich theorem for the case of one class.

Let us remind that instead of the projectivization of the Hodge bundle $\P(\Omega\T_S)$ on the left and the domain $\P^+(V) \subset \P(V)$ in the projective space on the right, Kapovich considered the bundle of unit norm vectors in the Hodge bundle mapping into the unit hyperboloid in the cohomology space. As a homogeneous space, the latter is isomorphic to $\Sp(2g, \R)/\Sp(2g-2, \R)$, and fibers over $\P^+ \approx \frac{\Sp(2g,\R)}{\rU(1)\x\Sp(2g-2,\R)}$ into circles. The corresponding $\rU(1)$-action on the space of unit vectors in the Hodge bundle is multiplication of holomorphic 1-forms by the complex numbers of unit norm, and the period map is equivariant w.~r.~t. this action. Hence the image of the period map is invariant under $\rU(1)$-action on the right, i.~e. is the preimage of a subset in the domain $\P^+(V)$ (indeed, this subset is the Kapovich locus).

But the total space of the Hodge bundle possesses an action of a larger group $\SL(2,\R)$. It is well-known in the Teichm\"uller theory, see e.~g. \cite{McM1}.

\begin{defn}
Let $(X,\alpha)$ be a Riemannian surface with a holomorphic 1-form. Let us cut it along straight segments between the zeroes of $\alpha$, so that the complement $Q$ becomes connected and simply connected. Then path integration of $\alpha$ along paths within $Q$ establishes the {\it developing map} $f \colon Q \to \C$ s.~t. $\alpha|_Q = f^*(dz)$. Its image is a polygon with pairs of parallel sides corresponding to cuts between zeroes, and $(X,\alpha)$ may be reconstructed by gluing the parallel sides of the polygon $f(Q)$.

Now, let us consider $\C$ as $\R^2$ (for an algebraic geometer, this is something methodologically horrible, but it is what is to be done). The group $\SL(2,\R)$ acts on $\R^2$ and hence on its subsets. Since it preserves the parallel sides of the polygons, it lifts to the action on the pairs $(X,\alpha)$. One could check that this action is independent on choice of cuts.
\end{defn}

On the other hand, the group $\SL(2,\R) \cong \Sp(2,\R)$ acts on the unit hyperboloid $\Sp(2g,\R)/\Sp(2g-2,\R)$ with quotient $\frac{\Sp(2g, \R)}{\Sp(2, \R) \x \Sp(2g-2, \R)}$, which is the Gra\ss mannian of real symplectic 2-planes. In what follows, we shall denote the Gra\ss mannian of real symplectic $2k$-planes within $2g$-dimensional symplectic space by $\Grsymp(2k,2g)$.

\begin{pr}\label{sl2-equiv}
The period map is equivariant w.~r.~t. the $\SL(2,\R)$-action.
\end{pr}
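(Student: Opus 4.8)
The plan is to make both $\SL(2,\R)$-actions completely explicit on the datum recorded by the period map --- the period homomorphism of the underlying translation surface --- and to observe that they are literally the same operation: post-composition with the standard linear action of $\SL(2,\R)$ on $\R^2$. First I would unwind the period map in Kapovich's unit-norm incarnation. A point of the bundle of unit-norm vectors in $\Omega\T_S$ is a pair $(X,\alpha)$ of a marked complex structure on $S$ together with an abelian differential normalized so that $\|\alpha\|=1$, that is, so that $\int_X\mathrm{Re}\,\alpha\wedge\mathrm{Im}\,\alpha$ equals a fixed positive constant; the marking identifies $H_1(X,\Z)$ with $H_1(S,\Z)$, and the period map sends $(X,\alpha)$ to $[\alpha]\in V$, equivalently to the period homomorphism $P_\alpha\colon H_1(S,\Z)\to\C$, $\gamma\mapsto\int_\gamma\alpha$. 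Identifying $\C$ with $\R^2$, so that $V=H^1(S,\C)$ becomes $H^1(S,\R)\otimes_\R\R^2$ and $[\alpha]$ becomes the pair of real classes $\bigl([\mathrm{Re}\,\alpha],[\mathrm{Im}\,\alpha]\bigr)$, the unit hyperboloid becomes the set of pairs $(u,v)$ of real cohomology classes with $\omega(u,v)$ equal to that same constant; letting $\Sp(2g,\R)$ act on the $H^1(S,\R)$-factor exhibits it as $\Sp(2g,\R)/\Sp(2g-2,\R)$, and the $\SL(2,\R)$-action referred to in the text is the one whose orbits are the symplectic $2$-planes $\langle u,v\rangle$ and whose quotient is $\Grsymp(2,2g)=\Sp(2g,\R)/(\Sp(2,\R)\x\Sp(2g-2,\R))$.

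Second I would compute the effect of the developing-map action on the periods. Cut $X$ into a simply connected polygon $Q$ with developing map $f=(f_1,f_2)\colon Q\to\R^2$, so that $\alpha|_Q=f^*(dz)=df_1+\1\,df_2$, where $z=x+\1\,y$. For $g\in\SL(2,\R)$ with entries $a,b,c,d$, the translation surface $g\cdot(X,\alpha)$ is re-glued from the polygon $g\cdot f(Q)$ and hence has developing map $g\circ f=(af_1+bf_2,\,cf_1+df_2)$, so that its abelian differential is
$$
(g\circ f)^*(dz)=d(af_1+bf_2)+\1\,d(cf_1+df_2)=a\,\mathrm{Re}\,\alpha+b\,\mathrm{Im}\,\alpha+\1\bigl(c\,\mathrm{Re}\,\alpha+d\,\mathrm{Im}\,\alpha\bigr)
$$
on $Q$; since the right-hand side is a combination of the globally defined closed forms $\mathrm{Re}\,\alpha$ and $\mathrm{Im}\,\alpha$ while the left-hand side is the restriction to $Q$ of a smooth form on all of $S$, this identity holds on $S$. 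Passing to cohomology, the pair of real periods of $g\cdot\alpha$ is $\bigl(a[\mathrm{Re}\,\alpha]+b[\mathrm{Im}\,\alpha],\,c[\mathrm{Re}\,\alpha]+d[\mathrm{Im}\,\alpha]\bigr)$, that is, $P_{g\cdot\alpha}=g\circ P_\alpha$: on the source side, $g$ does nothing but post-compose the period homomorphism with its standard linear action on $\R^2$.

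Third I would match this with the action on the target. The $\R$-linear action of $\SL(2,\R)$ on $H^1(S,\R)\otimes_\R\R^2$ through the $\R^2$-factor sends $(u,v)$ to $(au+bv,\,cu+dv)$; a one-line determinant computation gives $\1\,\omega(gx,\overline{gx})=(ad-bc)\,\1\,\omega(x,\bar x)=\1\,\omega(x,\bar x)$, so it preserves the unit hyperboloid, and on the hyperboloid its orbits are exactly the symplectic $2$-planes $\langle u,v\rangle\subset H^1(S,\R)$. Hence, under the identifications of the first step, it coincides with the $\SL(2,\R)$-action on $\Sp(2g,\R)/\Sp(2g-2,\R)$ described in the text. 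Combining the second and third steps, the period map intertwines the two $\SL(2,\R)$-actions, which is the assertion of the Proposition.

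Neither step calls for any real ingenuity, and in the end the Proposition is close to a tautology: the period homomorphism of a translation surface is by construction natural in the planar chart in which the flat structure lives, and $\SL(2,\R)$ acts precisely on that chart. Two points deserve a word of care. First, the well-definedness of the developing-map action --- independence of the system of cuts, and the fact that $g\cdot f(Q)$ with its inherited side identifications re-glues to a complex structure whose abelian differential is the form computed above --- is exactly what is asserted in the Definition immediately preceding the Proposition, and may therefore be taken for granted here. Second, matching the concrete $\R^2$-factor action with the homogeneous-space description of the target action is the only point sensitive to left-versus-right conventions; the discrepancy amounts at worst to composing with the automorphism $g\mapsto(g^T)^{-1}$ of $\SL(2,\R)$, which, being inner (conjugation by the rotation by $\pi/2$), changes nothing.
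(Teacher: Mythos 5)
Your proof is correct and takes the same route as the paper --- indeed the paper's entire proof of this Proposition is the single line ``Follows from the construction,'' and your write-up is exactly the detailed unwinding of that construction: the $\SL(2,\R)$-action on the developing map post-composes the period homomorphism with the standard linear action on $\R^2$, which is by definition the action on the target under the identification $H^1(S,\C)\cong H^1(S,\R)\otimes_\R\R^2$. Nothing is missing.
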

\begin{proof}
Follows from the construction.
\end{proof}

Hence the image of the period map is $\Sp(2,\R)$-invariant, and hence is the preimage of a subset in the homogeneous space $\Grsymp(2,2g)$, the Gra\ss mannian of symplectic 2-planes. This subset is, by $\MCG(S)$-equivariance of the period map, is also invariant under $\Sp(2g, \Z)$-action. Now we can concentrate on the classification of $\Sp(2g, \Z)$-orbits in the symplectic Gra\ss mannian instead of the hyperboloid (what is what Kapovich did). This turns out to be a simpler task.

\subsection{Ratner theory}
We know that the coprime Kapovich--Schottky loci for $k=1,2,3$ are open dense, i.~e. a generic class (pair, triple of classes subject to Hodge--Riemann relations) can be realized by holomorphic 1-forms in some complex structure. Kapovich went further in order to understand the complement of the Kapovich locus for $k=1$. A tool which allows to understand the invariant closed subsets of certain ergodic actions, including ours, is the Ratner theory.

\begin{thrm}[Ratner \cite{R91}, \cite{R95}]
Let $G$ be a reductive algebraic group, $U \subset G$ a connected subgroup generated by unipotent elements (i.~e.~ones with unipotent adjoint action on the Lie algebra of $G$), and $X = G/U$. Let $\Gamma \subset G$ be a lattice. For any $x = gU \in X$ the connected component of the closure of the orbit $\Gamma x \subset X$ is equal to the orbit $H^g x \subset X$ for some Lie subgroup $H^g \subset G$ s.~t. the intersection $H^g \cap \Gamma \subset H^g$ is a lattice in it.
\end{thrm}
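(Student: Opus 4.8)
The plan is to deduce this orbit-closure statement from Ratner's fundamental rigidity results on unipotent flows, after converting it into a statement about $U$-orbits on a finite-volume homogeneous space. First I would pass from the left action of $\Gamma$ on $X = G/U$ to the right action of $U$ on $Y = \Gamma\backslash G$: the $\Gamma$-orbit of $gU$ in $G/U$ and the $U$-orbit of $\Gamma g$ in $\Gamma\backslash G$ are two descriptions of the same double-coset datum in $\Gamma\backslash G/U$, and the closures correspond under this duality. So it suffices to show that for every $y = \Gamma g \in Y$ the closure $\overline{yU}$ is a single orbit $yH$ of a closed subgroup $H \subset G$ with $U \subset H$, and that $\Gamma \cap gHg^{-1}$ is a lattice in $gHg^{-1}$; transporting this back gives the subgroup $H^g$ (a $\Gamma$-conjugate of $gHg^{-1}$) of the statement. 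The passage to ``connected components'' is then only a bookkeeping remark: the orbit $\overline{yU} = yH$ need not be connected, but its component through $y$ is the orbit of the identity component $H^{\circ}$, so $H^g$ may be replaced by $H^{g,\circ}$ if one insists on a connected answer.

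The heart of the argument is the chain of three theorems of Ratner. Step one is the measure-classification (measure-rigidity) theorem: every ergodic $U$-invariant Borel probability measure on $Y$ is the homogeneous $H$-invariant measure carried by a closed orbit $yH$ for some closed $H \supseteq U$. Step two is the equidistribution theorem: for each $y$, the orbit $yU$, averaged over a suitable relatively compact exhaustion of the amenable (indeed polynomial-growth) group $U$, equidistributes towards the homogeneous measure on the unique minimal closed orbit contained in $\overline{yU}$. Step three is the deduction of the orbit-closure statement from equidistribution: the support of the limiting measure is a closed orbit $yH$, and since the limit is the weak-$*$ limit of probability measures supported ever closer to $\overline{yU}$, one gets $\overline{yU} = yH$; finiteness of the homogeneous measure on $yH$ is exactly the assertion that $\Gamma\cap gHg^{-1}$ is a lattice in $gHg^{-1}$.

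To run the measure-classification input one exploits the decisive dynamical feature distinguishing unipotent from generic flows: polynomial (rather than exponential) divergence of nearby orbits, the ``shearing'' or $\mathcal{H}$-property, which forces the transverse distribution of an ergodic invariant measure to be invariant under an extra unipotent subgroup. Combining this with entropy estimates and an induction on $\dim G$ (studying the ``dimension'' of the measure along the various one-parameter unipotent subgroups) pins the measure down as homogeneous. For the passage from measure classification to equidistribution and orbit closures one invokes the linearization technique of Dani and Margulis, realizing the smaller closed orbits as zero sets of explicit polynomial maps into finite-dimensional representations of $G$ and controlling how long a trajectory can linger near them. The main obstacle — and the reason \cite{R91}, \cite{R95} are substantial papers rather than a short deduction — is precisely the measure-classification theorem: essentially all of the difficulty is concentrated there, while the subsequent reductions, though technical, belong to more standard topological dynamics.
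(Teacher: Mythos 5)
The paper does not prove this statement: it is imported wholesale as Ratner's theorem, with the proof delegated entirely to the citations \cite{R91}, \cite{R95}. So there is no internal argument to compare yours against. That said, your outline is a faithful summary of how the theorem is actually established in the literature: the duality between $\Gamma$-orbits on $G/U$ and $U$-orbits on $\Gamma\backslash G$ is the correct (and necessary) first move, and the chain measure classification $\Rightarrow$ equidistribution $\Rightarrow$ orbit closures, powered by polynomial divergence/shearing on one side and Dani--Margulis linearization on the other, is the standard architecture. Two small caveats. First, $H^g$ is simply $gHg^{-1}$ (where $\overline{yU}=yH$ for $y=\Gamma g$); no $\Gamma$-conjugation enters. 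Second, for a general connected subgroup $U$ generated by unipotents (as opposed to a one-parameter unipotent subgroup), the equidistribution step as you phrase it --- averaging over a F\o lner-type exhaustion of $U$ --- is not quite how the deduction runs; one usually reduces to one-parameter subgroups and combines measure rigidity with the linearization machinery directly, as in Ratner's topological rigidity paper. Neither point affects the overall correctness of the sketch, but a full proof is of course far beyond what either you or the paper could reasonably supply here.
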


\begin{pr}\label{discrete-or-dense}
An $\Sp(2g,\Z)$-orbit in $\Grsymp(2,2g)$ is either discrete or dense.
\end{pr}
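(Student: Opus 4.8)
The plan is to apply the Ratner theorem to the homogeneous space $\Grsymp(2,2g) \cong \Sp(2g,\R)/\bigl(\Sp(2,\R)\x\Sp(2g-2,\R)\bigr)$ with the lattice $\Gamma = \Sp(2g,\Z)$, and to show that the only possible intermediate subgroups $H^g$ that can arise as orbit-closure groups are either all of $\Sp(2g,\R)$ (giving a dense orbit) or ones whose orbits are zero-dimensional (giving a discrete orbit). First I would identify a point $x \in \Grsymp(2,2g)$ with (a coset of) the stabilizer $P = \Sp(2,\R)\x\Sp(2g-2,\R)$, and recall that Ratner's theorem as stated requires $X = G/U$ with $U$ generated by unipotents; since $P$ is reductive, not unipotent, I would first reduce to the unipotent setting — either by passing to the frame bundle $\Sp(2g,\R)/\Sp(2g-2,\R)$ (the unit hyperboloid, on which $\Sp(2,\R)$ acts with the symplectic $2$-plane as quotient) and noting that $\Sp(2g-2,\R)$ contains no nontrivial normal unipotent subgroup but the relevant orbit closures are governed by the unipotent elements inside $\Gamma$ acting on the left — or more cleanly by invoking the standard consequence of Ratner (Ratner's orbit closure theorem for lattice actions, as in the work of Ratner and Shah) that for a lattice $\Gamma$ in a semisimple group $G$ acting on $G/P$ with $P$ reductive, the closure of any orbit $\Gamma x$ is $Hx$ for a closed subgroup $H \supseteq$ the unipotents, with $H\cap\Gamma$ a lattice in $H$.

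Next I would invoke the classification of closed connected subgroups $H$ with $\Gamma \cap H$ a lattice: such $H$ must be (essentially) defined over $\Q$, and must contain the conjugate of a nontrivial unipotent subgroup fixing $x$ (or rather the unipotent part coming from $\Gamma$). The key structural input is that $\Sp(2g,\R)$ has a fairly restricted lattice of $\Q$-subgroups containing a given parabolic-type or reductive piece: any proper $\Q$-subgroup $H$ strictly between $P$ and $G$ whose homogeneous space $H/(H\cap P)$ is positive-dimensional would have to act with positive-dimensional orbits on $\Grsymp(2,2g)$, and I would argue that such $H$ cannot have $H \cap \Sp(2g,\Z)$ a lattice in $H$ \emph{and} produce a proper closed invariant set unless the orbit is in fact all of $G/P$. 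Concretely, $H$ acts on the symplectic $2$-plane $U(\tau) \subset \R^{2g}$ and its symplectic complement; if $H$ is a proper $\Q$-subgroup preserving extra structure (a proper subspace, a complex/quaternionic structure, a smaller symplectic block decomposition), then the orbit $Hx$ lands in a proper algebraic subvariety, but the only way $H\cap\Gamma$ is a lattice is if that subvariety is itself rational and the orbit is open in it, which forces $\dim(Hx)$ to be either full or $0$. I would spell this out by going through the maximal $\Q$-subgroups of $\Sp(2g,\Q)$ containing the reductive part and checking the dimension count in each case.

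The main obstacle I expect is exactly this last dichotomy: ruling out intermediate subgroups $H$ that give orbits of intermediate dimension. One cannot just quote Ratner and be done — one needs the arithmetic input that a proper $\Q$-subgroup of $\Sp(2g)$ containing $\Sp(2,\R)\x\Sp(2g-2,\R)$ (up to conjugacy) simply does not exist, so that $H^g$ is either $G$ itself or has the property that $H^g x$ is a single point. I would therefore devote the bulk of the argument to the Lie-theoretic claim that $\Sp(2,\R)\x\Sp(2g-2,\R)$ is a \emph{maximal} connected subgroup of $\Sp(2g,\R)$ (this is classical — it is one of the maximal subgroups in the Dynkin/Borel--de Siebenthal classification for $C_g$, the symmetric subgroup $C_1 \times C_{g-1}$), whence any $H^g$ strictly containing (a conjugate of) the point-stabilizer and strictly contained in $G$ is impossible; thus $H^g = G$ (dense orbit) or $H^g \subseteq P$ (the point-stabilizer, so $H^g x = x$ and the orbit $\Gamma x$ is discrete, since then $\Gamma\cap P$ is a lattice in $P$ and $\Gamma x \cong \Gamma/(\Gamma\cap P)$ is discrete in $G/P$).

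I would close by remarking that the dichotomy ``discrete or dense'' is precisely the shape of the statement Kapovich needed, and that in the discrete case the orbit corresponds to the rational symplectic $2$-planes, i.e. those $U(\tau)$ defined over $\Q$ — these are the ones in which the period lattice closes up and condition (2) of the Haupt--Kapovich theorem becomes relevant.
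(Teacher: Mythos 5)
Your proposal is correct and follows essentially the same route as the paper: apply Ratner's orbit-closure theorem to $\Grsymp(2,2g) \cong \Sp(2g,\R)/\left(\Sp(2,\R)\x\Sp(2g-2,\R)\right)$ and then use the Dynkin-type maximality of $\Sp(2,\R)\x\Sp(2g-2,\R)$ in $\Sp(2g,\R)$ to force the intermediate subgroup $H$ to be either the whole group (dense orbit) or the stabilizer itself (discrete orbit). The only superfluous part is your worry about ``reducing to the unipotent setting'': the hypothesis in Ratner's theorem as used here is that the subgroup be \emph{generated by unipotent elements}, and $\Sp(2,\R)\x\Sp(2g-2,\R)$, being a product of noncompact symplectic groups generated by transvections, already satisfies this, so no detour through the frame bundle or through $\mathbb{Q}$-subgroup classifications is needed.
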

\begin{proof}
The lattice $\Sp(2g, \Z) \subset \Sp(2g, \R)$ is a lattice in a reductive group, and the group $U = \Sp(2, \R) \x \Sp(2g-2, \R)$ is generated by unipotent elements. Hence the Ratner theorem applies, and the connected component of the closure of any $\Sp(2g, \Z)$-orbit is an orbit of an intermediate subgroup $H \colon \Sp(2, \R) \x \Sp(2g-2, \R) \subseteq H \subseteq \Sp(2g, \R)$. It follows from Dynkin's classification of maximal subgroups \cite{D} that a subgroup $\Sp(2, \R) \x \Sp(2g-2, \R) \subset \Sp(2g, \R)$ is maximal, hence one has either $H = \Sp(2g, \R)$ (and the closure of the orbit is the whole $\Grsymp(2,2g)$, i.~e. the orbit is dense), or $H = \Sp(2, \R) \x \Sp(2g-2, \R)$ (and the connected component of the closure of the orbit is a single point, i.~e. the orbit is discrete).
\end{proof}

Let us also recall the following theorem by Margulis.

\begin{thrm}[Margulis \cite{M}]Suppose that $G$, $G'$ are simple real algebraic Lie groups such that their complexifications do not have isomorphic Lie algebras. Then any lattice $\Gamma \subset G \x G'$ is reducible, i.e. $\Gamma \cap G \subset G$, $\Gamma \cap G' \subset G'$ are lattices, and $\Gamma$ splits into a product of those.
\end{thrm}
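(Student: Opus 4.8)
The plan is to derive the statement from Margulis's arithmeticity theorem, used here as a black box, together with the classical restriction-of-scalars description of $\mathbb{Q}$-simple algebraic groups; the only genuinely hard input is arithmeticity, and everything downstream is routine algebraic-group bookkeeping. I may assume $G$ and $G'$ are noncompact (as are $\SL(2,\R)$ and $\Sp(2g-2,\R)$ in the application), so that $\mathrm{rank}_\R(G\x G')=\mathrm{rank}_\R G+\mathrm{rank}_\R G'\geqslant 2$, and, after passing to the adjoint group and the image of $\Gamma$, that the center is trivial. Suppose for contradiction that $\Gamma$ is \emph{ir}reducible, i.e.\ that both projections of $\Gamma$ are non-discrete — equivalently, that neither $\Gamma\cap G$ nor $\Gamma\cap G'$ is a lattice in the respective factor.

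First I would invoke arithmeticity: since $G\x G'$ is semisimple of real rank at least two, with no compact factors and trivial center, the irreducible lattice $\Gamma$ is arithmetic. Thus there is a connected semisimple $\mathbb{Q}$-group $\mathbf{H}$, which I take simply connected, and a surjection $\pi\colon\mathbf{H}(\R)^\circ\to G\x G'$ with compact kernel carrying $\mathbf{H}(\Z)$ onto a subgroup commensurable with $\Gamma$. Here $\mathbf{H}$ must be $\mathbb{Q}$-simple — this is equivalent to $\Gamma$ being irreducible — since a nontrivial decomposition of $\mathbf{H}$ over $\mathbb{Q}$ would make $\mathbf{H}(\Z)$ commensurable with a product of arithmetic lattices, hence reducible.

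Then I would apply the structure theory of $\mathbb{Q}$-simple groups: such a group, taken simply connected, is $\mathbf{H}=\Res_{k/\mathbb{Q}}\mathbf{H}'$ for a number field $k$ and an absolutely almost simple group $\mathbf{H}'$ over $k$, so that $\mathbf{H}(\R)=\prod_{v\mid\infty}\mathbf{H}'(k_v)$, the product over the archimedean places of $k$. Each factor $\mathbf{H}'(k_v)$ is a form of the single complex simple group $\mathbf{H}'\times_k\overline{k}$; discarding the compact factors and pushing down along $\pi$, the factors $G$ and $G'$ are therefore forms of one and the same complex simple group, so they have the same complexified type, contrary to hypothesis. (For $v$ real the factor is a genuine real form, so in the situation of interest, where $G=\SL(2,\R)$ and $G'=\Sp(2g-2,\R)$ are honest real forms, one gets literally $\mathfrak{sl}_2(\C)\cong\mathfrak{sp}_{2g-2}(\C)$, which is false for $g\geqslant 3$.) This contradiction shows $\Gamma$ is reducible; that it then splits, up to finite index, as $(\Gamma\cap G)\x(\Gamma\cap G')$ follows immediately, since $\Gamma\cap G$ and $\Gamma\cap G'$ are normal in $\Gamma$ and their product is a lattice inside the lattice $\Gamma$, hence of finite index.

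The entire difficulty is concentrated in Margulis arithmeticity, which I would just cite. Two minor points merit checking in the application: the rank hypothesis, $\mathrm{rank}_\R\bigl(\SL(2,\R)\x\Sp(2g-2,\R)\bigr)=1+(g-1)$, is $\geqslant 2$ exactly when $g\geqslant 2$, so arithmeticity is available for all relevant genera; and at $g=2$ the two factors $\Sp(2,\R)$ coincide and share their complexification, so the theorem produces no reducibility there — consistently with the earlier remark that genus-two curves carry no linked pairs.
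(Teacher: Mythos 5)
The paper offers no proof of this statement at all: it is quoted as a known theorem with a bare citation to Margulis's book \cite{M}. Your argument supplies the standard derivation that the citation outsources, and it is correct: arithmeticity of the irreducible lattice (available because both factors are noncompact, so the real rank is at least two), $\mathbb{Q}$-simplicity of the ambient $\mathbb{Q}$-group as the translation of irreducibility, the restriction-of-scalars description $\mathbf{H}=\Res_{k/\mathbb{Q}}\mathbf{H}'$ forcing all archimedean factors to be forms of a single complex simple group, and the easy finite-index splitting at the end. This is exactly how the result is proved in the standard references, so you are not taking a different route so much as filling in the one the paper declines to write down. One caveat is worth recording: as literally stated, with ``simple real algebraic Lie group'' read as $\R$-simple, the theorem is false --- $\SL(2,\cO_k)$ for $k$ a cubic field with one real and one complex place is an irreducible lattice in $\SL(2,\R)\x\SL(2,\C)$, whose factors have non-isomorphic complexifications. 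Your proof silently locates the needed hypothesis (the factors must be \emph{absolutely} simple, so that every relevant place is real and each factor is an honest real form of $\mathbf{H}'\x_k\overline{k}$), and you correctly note that this holds in the only case the paper uses, $\Sp(2,\R)\x\Sp(2g-2,\R)$; it would be worth making that restriction explicit rather than leaving it parenthetical.
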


\begin{pr}\label{margulis}
Let $g > 2$. Suppose an $\Sp(2g, \Z)$-orbit within a symplectic 2-planes Gra\ss mannian $\Grsymp(2,2g)$ is discrete. Then the symplectic subspaces preserved by the group $\Sp(2, \R) \x \Sp(2g-2, \R)$ are spanned by the integral vectors.
\end{pr}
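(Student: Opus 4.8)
The plan is to identify the stabilizer in $\Sp(2g,\Z)$ of a discrete orbit with a lattice in $\Sp(2,\R)\x\Sp(2g-2,\R)$, and then to feed this into Margulis' theorem to force arithmeticity of the splitting. First I would observe that by Proposition \ref{discrete-or-dense} a discrete orbit through $\sigma \in \Grsymp(2,2g)$ has, by the Ratner theorem, the property that $\Sp(2,\R)\x\Sp(2g-2,\R)$ meets $\Sp(2g,\Z)$ (after conjugation by an element $h$ taking the reference plane to $\sigma$) in a lattice $\Lambda := h\big(\Sp(2,\R)\x\Sp(2g-2,\R)\big)h^{-1} \cap \Sp(2g,\Z)$; this is exactly the content of the $H = \Sp(2,\R)\x\Sp(2g-2,\R)$ alternative in the proof of Proposition \ref{discrete-or-dense}, where the connected component of the closure of the orbit is a single point.

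Next I would apply Margulis' theorem to the product group $G\x G' = \Sp(2,\R)\x\Sp(2g-2,\R)$. The two factors are simple real Lie groups (using $g>2$, so that $\Sp(2g-2,\R)$ is simple) whose complexifications $\mathfrak{sp}_2(\C)$ and $\mathfrak{sp}_{2g-2}(\C)$ are visibly non-isomorphic (different rank). Hence the lattice $\Lambda$ is reducible: $\Lambda_1 := \Lambda \cap G$ and $\Lambda_2 := \Lambda \cap G'$ are lattices in the respective factors, and $\Lambda$ is commensurable with $\Lambda_1 \x \Lambda_2$ (contains a finite-index subgroup splitting as a product). In particular $\Lambda_1$ is a lattice in the $\Sp(2,\R)$-factor, which is the subgroup of $\Sp(2g,\Z)$ acting on the symplectic $2$-plane $\sigma$ and fixing its orthogonal complement pointwise.

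Finally I would argue that a lattice inside the copy of $\Sp(2,\R) = \SL(2,\R)$ stabilizing $\sigma$ can only exist if $\sigma$ is defined over $\Q$. The point is that $\Lambda_1$ consists of integral symplectic matrices preserving the splitting $V = \sigma \oplus \sigma^{\perp}$; the Zariski closure of $\Lambda_1$ in $\Sp(2g)$ is, by Borel density (since $\Lambda_1$ is a lattice in the semisimple $\SL(2,\R)$-factor), the whole $\SL(2,\R)$-factor, which is therefore a $\Q$-algebraic subgroup of $\Sp(2g)$. Its fixed subspace $\sigma^{\perp}$ and its unique nontrivial irreducible summand $\sigma$ are then $\Q$-rational, i.e.\ spanned by integral vectors, which is the assertion. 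I expect the main obstacle to be the bookkeeping around commensurability in the last step: Margulis only gives that a finite-index subgroup of $\Lambda$ splits, so one must check that passing to finite index does not destroy the lattice property in the $\SL(2,\R)$-factor (it does not, finite-index subgroups of lattices are lattices) and that Borel density still applies to conclude Zariski-density of $\Lambda_1$ in that factor; modulo this, the rationality of $\sigma$ is forced.
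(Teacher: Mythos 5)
Your argument is correct and follows the paper's own route exactly: Ratner (via Proposition \ref{discrete-or-dense} and the maximality of $\Sp(2,\R)\times\Sp(2g-2,\R)$) produces a lattice in the conjugated stabilizer, and Margulis' theorem, applicable because $g>2$ makes both factors simple with non-isomorphic complexified Lie algebras, splits it into lattices in the two factors. In fact you go one useful step beyond the paper, which stops after asserting that the intersections with the factors are lattices: your final step --- Borel density makes the $\SL(2,\R)$-factor the Zariski closure of a group of integral matrices, hence a $\mathbb{Q}$-algebraic subgroup whose isotypic summands $\sigma$ and $\sigma^{\perp}$ are rational --- is exactly the deduction of the stated conclusion that the paper leaves implicit.
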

\begin{proof}
By Ratner's theorem, in this case the intersection of $\Sp(2g, \Z)$ with $\Sp(2, \R) \x \Sp(2g-2, \R)$ is a lattice. By Margulis' theorem, its intersection with both factors $\Sp(2, \R)$ and $\Sp(2g-2, \R)$ are lattices, i.~e. isomorphic to $\Sp(2, \Z)$ and $\Sp(2g-2, \Z)$, respectively.
\end{proof}

In cases $k=2$ or $3$, Ratner theory might be also applied to determine the orbits with fibers of dimension larger than predicted over them (that is, the non-coprime part of Kapovich--Schottky locus). We hope to achieve it in a subsequent paper. Let us formulate a proposition which would serve as a backlog for it.

\begin{pr}
Let $g=3$ or $g>4$, and $\tau \in \Gris^+(2,2g)$ be a Hodge--Riemann plane. Then either the real four-dimensional symplectic subspace $$U(\tau) = \langle\tau\cup\bar{\tau}\rangle^{\mathrm{Gal}(\C:\R)}$$ is spanned by the integral vectors, or the image of the orbit $\Sp(2g,\Z)U(\tau) \subset \Grsymp(4,2g)$ is dense.
\end{pr}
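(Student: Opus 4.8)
The plan is to run the argument of Propositions~\ref{discrete-or-dense} and~\ref{margulis} one notch higher, with the Gra\ss mannian of symplectic $2$-planes $\Grsymp(2,2g)$ replaced by that of symplectic $4$-planes $\Grsymp(4,2g)$. As in the lemma identifying the Hodge--Riemann Gra\ss mannian, for a Hodge--Riemann plane $\tau$ the span $U(\tau) = \langle\tau\cup\bar{\tau}\rangle^{\mathrm{Gal}(\C:\R)}$ is a real $4$-dimensional symplectic subspace of $\R^{2g}$, hence a point of $\Grsymp(4,2g)$, and this Gra\ss mannian is the homogeneous space $\Sp(2g,\R)/\left(\Sp(4,\R)\x\Sp(2g-4,\R)\right)$; the $\MCG(S)$-action factors through the natural inclusion of $\Sp(2g,\Z)$ as a lattice in $\Sp(2g,\R)$, so we are again asking for a dichotomy on the $\Sp(2g,\Z)$-orbits in a symplectic-plane Gra\ss mannian.

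First I would invoke Ratner's theorem with $G=\Sp(2g,\R)$, $\Gamma=\Sp(2g,\Z)$ and $U=\Sp(4,\R)\x\Sp(2g-4,\R)$, the latter generated by unipotents since each simple factor $\Sp(2m,\R)$ is split. This gives that the connected component of the closure of $\Sp(2g,\Z)\,U(\tau)$ is an orbit of an intermediate Lie subgroup $H$ with $\Sp(4,\R)\x\Sp(2g-4,\R)\subseteq H\subseteq\Sp(2g,\R)$ and with $H\cap\Sp(2g,\Z)$ a lattice in $H$. By Dynkin's classification of maximal subgroups~\cite{D} --- already used in Proposition~\ref{discrete-or-dense} --- the stabilizer of a nondegenerate decomposition of the standard representation into a $4$- and a $(2g-4)$-dimensional symplectic summand is a maximal connected subgroup of $\Sp(2g,\R)$. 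Hence either $H=\Sp(2g,\R)$, in which case (the space being connected) the orbit is dense, or $H=\Sp(4,\R)\x\Sp(2g-4,\R)$, in which case the connected component of the closure is a single point and the orbit is discrete.

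It remains to treat the discrete case, and this is where the hypothesis $g=3$ or $g>4$ enters. Then $\Lambda := \Sp(2g,\Z)\cap\left(\Sp(4,\R)\x\Sp(2g-4,\R)\right)$ is a lattice in the product of the two simple groups $\Sp(4,\R)$ and $\Sp(2g-4,\R)$, whose complexified Lie algebras are of types $C_2$ and $C_{g-2}$ (with $C_1\cong A_1$ when $g=3$), hence non-isomorphic. By Margulis' theorem $\Lambda$ is therefore reducible, so in particular $\Lambda_2 := \Lambda\cap\Sp(2g-4,\R)$ is a lattice in $\Sp(2g-4,\R)$. Now $\Lambda_2$ consists of integral symplectic matrices fixing the summand $U(\tau)$ pointwise and acting as a lattice --- hence, by the Borel density theorem, a Zariski-dense subgroup --- of $\Sp(2g-4,\R)$ on the symplectic complement $U(\tau)^\omega$, on which $\Sp(2g-4,\R)$ has no nonzero invariant vector. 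Consequently $\bigcap_{\gamma\in\Lambda_2}\ker(\gamma-\Id)=U(\tau)$; being the intersection of kernels of integral matrices, this subspace is rational, i.e.\ $U(\tau)$ is spanned by integral vectors.

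The main obstacle is precisely the last step: everything rests on Margulis' irreducibility theorem, and this is exactly why $g=4$ must be excluded --- there the two factors are both $\Sp(4,\R)$, their complexifications coincide, and $\Sp(8,\R)$ genuinely contains irreducible lattices inside $\Sp(4,\R)\x\Sp(4,\R)$ (e.g.\ by restriction of scalars from a real quadratic field), so a discrete orbit need no longer force $U(\tau)$ to be rational. Besides this, one still has to dispatch the routine verifications --- that $U(\tau)$ depends only on $\tau$ and lies in $\Grsymp(4,2g)$, that the period map's $\MCG(S)$-equivariance descends to the orbit picture, and that Dynkin's list does give the claimed maximality (which persists for $g=4$, so only the Margulis step really fails there).
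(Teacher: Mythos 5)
Your proposal is correct and follows essentially the same route as the paper, whose proof is literally the one-line instruction to apply the Ratner--Margulis combination from Proposition~\ref{margulis} to the maximal subgroup $\Sp(4,\R)\x\Sp(2g-4,\R)\subset\Sp(2g,\R)$; you have simply written out the details (Dynkin maximality for the dichotomy, non-isomorphic complexifications $C_2$ and $C_{g-2}$ for the Margulis step, and the Borel-density/kernel argument extracting rationality of $U(\tau)$) that the paper leaves implicit. Your explanation of why $g=4$ must be excluded also matches the paper's own remarks about irreducible lattices in $\Sp(4,\R)\x\Sp(4,\R)$.
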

\begin{proof}
Apply the combination of Ratner's and Margulis' theorems from the proof of the Proposition \ref{margulis} to the case of a maximal subgroup $\Sp(4,\R)\x\Sp(2g-4,\R) \subset \Sp(2g,\R)$.
\end{proof}

Note that the density of the orbit $\Sp(2g,\Z)U(\tau) \subset \Grsymp(4,2g)$ within the symplectic Gra\ss mannian does not guarantee that the orbit $\Sp(2g,\Z)\tau \subset \Gris^+(2,2g)$ within the Hodge--Riemann Gra\ss mannian is dense: it would imply that it intersects, and hence falls into, the coprime Kapovich--Schottky locus. Quite on the contrary, non-dense orbits above with dense projections, corresponding to the non-representable pairs or non-coprime part of the Kapovich--Schottky locus, are of interest to us.

\section{Elliptic classes}\label{el}

The current picture for one class is the following. We have dense orbits, and we have discrete orbits. Classes from the dense orbits (e.~g. ones with the dense subgroup of periods) belong to the Kapovich locus, since it is open. Therefore now we need to understand which classes with discrete orbits belong to it. This knowledge would be useful in the case of several classes, too.

\subsection{Motivation: curves on tori}
\begin{defn}
Let $V$ be a real vector space, and $\tau \subset V_\C$ a subspace which contains no nonzero real vectors. Then its {\bf real part} is the subspace $U(\tau) =  \mathrm{span}\langle\tau\cup\bar{\tau}\rangle^{\mathrm{Gal}(\C:\R)} \subset V$.
\end{defn}

\begin{defn}
Let $V_\Z$ be a rank $2g$ lattice equipped with the standard symplectic form $\omega = \sum_{i=0}^{g-1}dx_{2i} \wedge dx_{2i+1}$. A Hodge--Riemann subspace $\tau \subset V_\C$ is called {\bf elliptic} if its real part $U(\tau)$ is a rational subspace, i.~e. its intersection $U_\Z(\tau) = U(\tau) \cap V_\Z$ with $V_\Z$ has rank $2\dim\tau$.
\end{defn}

\begin{pr}[partial Abel--Jacobi construction]\label{AJac}Let $(S,I)$ be a genus $g>1$ curve, and $\tau \subset H^{1,0}(S,I)$ an elliptic subspace of dimension $\dim\tau = k$. Then there exists a $k$-dimensional complex torus $A = A(\tau)$ with $H^1(A,\Z)$ canonically identified with $U_\Z(\tau)$ and a holomophic map $f\colon(S,I) \to A$ s.~t. one has $\tau=f^*H^{1,0}(A)$, and the induced map $f^* \colon H^1(A,\Z) \to H^1(S,\Z)$ is the tautological inclusion $U_\Z(\tau) \to H^1(S,\Z)$.
\end{pr}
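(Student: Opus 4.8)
The plan is to build the torus $A(\tau)$ directly from the lattice-theoretic data and then produce the map $f$ by integrating the forms in $\tau$. First, set $U = U(\tau) \subset V = H^1(S,\R)$ and $U_\Z = U \cap V_\Z$; by the ellipticity hypothesis $U_\Z$ is a lattice of rank $2k$ in $U$. The subspace $\tau \subset U_\C$ defines, exactly as in the proof of the lemma identifying $\Gris^+(k,2g)$ with the symplectic homogeneous space, a complex structure operator $J$ on $U$ for which $\tau$ is the $(1,0)$-part; equivalently $U_\C = \tau \oplus \bar\tau$ and we may view $\tau^*$ as the holomorphic cotangent space of a complex vector space $W$ with underlying real space $U$. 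Then $A(\tau) := W / U_\Z^{\vee}$ — more precisely the dual torus, so that $H^1(A,\Z)$ is canonically $U_\Z$ and $H^{1,0}(A)$ is canonically $\tau$ — is a $k$-dimensional complex torus. One should be slightly careful about which torus (it or its dual) carries $\tau$ as holomorphic forms with the integral structure $U_\Z$ on $H^1$; the clean way is to define $A$ as $\mathrm{Hom}(U_\Z, \Z) \otimes \R \,/\, \mathrm{Hom}(U_\Z,\Z)$ with the complex structure dual to $J$, which makes the identification $H^1(A,\Z) \cong U_\Z$ tautological.

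Next I would construct $f$. Choose a basepoint $z_0 \in S$ and a basis $\alpha_1,\dots,\alpha_k$ of $\tau$; the vector-valued multivalued integral $z \mapsto \left(\int_{z_0}^z \alpha_1, \dots, \int_{z_0}^z \alpha_k\right)$ is well-defined modulo the period subgroup, which is precisely the image of $H_1(S,\Z) \to \C^k$ given by $\gamma \mapsto (\int_\gamma \alpha_i)_i$. Because each $\alpha_i$ lies in the rational subspace $U_\Z \otimes \C$, all these periods lie in $U_\Z$ regarded inside $U_\C = \C^k$; hence the integral descends to a holomorphic map $f \colon (S,I) \to \C^k / U_\Z = A(\tau)$. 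By construction $f^* (dz_i) = \alpha_i$, so $f^* H^{1,0}(A) = \tau$. The statement about $f^*$ on $H^1(A,\Z)$ is then a formal consequence: for an integral cocycle $\varphi \in U_\Z = H^1(A,\Z)$, the pullback $f^*\varphi$ evaluated on $\gamma \in H_1(S,\Z)$ is $\langle \varphi, f_*\gamma\rangle = \langle \varphi, (\int_\gamma \alpha_i)_i\rangle$, which is exactly $\varphi$ viewed as an element of $H^1(S,\Z)$ under the inclusion $U_\Z \hookrightarrow H^1(S,\Z)$ — this uses that the $\alpha_i$ span the complexification of $U_\Z$, so pairing against periods recovers the tautological pairing on $U_\Z$.

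The main obstacle is bookkeeping rather than depth: one must check that the complex structure $J$ on $U$ really does descend to a complex (not just almost-complex) structure on the quotient torus — this is automatic since $U$ is a vector space and $J$ is constant — and, more substantively, that the period lattice of $\tau$ inside $U_\C$ is exactly the full-rank lattice $U_\Z$, not a proper sublattice or a non-discrete subgroup. Discreteness and full rank are where the ellipticity hypothesis is used in an essential way: it guarantees $U_\Z$ has rank $2k = 2\dim\tau$, and the Hodge--Riemann positivity of $\tau$ guarantees $\tau$ contains no real vector, so $U_\Z$ spans $U_\C$ over $\C$ and the quotient is compact. A final point to verify is independence of all choices (basepoint $z_0$, basis of $\tau$): changing the basepoint translates $f$ by a constant in $A$, and changing the basis of $\tau$ is absorbed into the identification of $\C^k$ with $U_\C$, so the map $f$ is canonical up to translation, which is all that is claimed.
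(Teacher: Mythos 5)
Your construction is essentially the paper's: there $A(\tau)$ is realized as the quotient $U(\tau)^*/U_\Z(\tau)^*$ of the Albanese torus and $f$ is the Abel--Jacobi map followed by this projection, which is exactly your direct integration of a basis of $\tau$, and the identification of $f^*$ on $H^1(\cdot,\Z)$ comes out the same way. The one slip is in your second paragraph: the periods $\gamma\mapsto\left(\int_\gamma\alpha_i\right)_i$ land in the dual lattice $\Hom(U_\Z(\tau),\Z)\subset\tau^*$ (the image of $H_1(S,\Z)$ under restriction of functionals to $\tau$, using that $U_\Z(\tau)$ is a saturated sublattice), not in $U_\Z(\tau)$ itself, so the quotient is $\tau^*/\Hom(U_\Z(\tau),\Z)$ --- consistent with the correct definition of $A(\tau)$ you give in your first paragraph, but not with the expression $\C^k/U_\Z$ written afterwards.
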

\begin{proof}
The usual Albanese torus of a curve $(S,I)$ arises as the quotient $H_1(S,\R)$ by $H_1(S,\Z)$ endowed with complex structure given by the distribution of $(1,0)$-vectors in the cotangent bundle, which coincide with the subspace $H^{1,0}(S,I)$. If $U(\tau)$ is a real subspace in cohomology $H^1(S,\R)$ spanned by the lattice $U_\Z(\tau)$, then the dual space $U(\tau)^*$ is the quotient of $H_1(S,\R)$, and the dual lattice $U_\Z(\tau)^*$ (i.~e. the lattice of covectors which pair integrally with the vectors in $U_\Z(\tau)$) is the projection of the integral vectors from $H_1(S,\Z) \subset H_1(S,\R)$. Therefore there exists a surjective map of tori $\mathrm{Alb}(S,I) \to U(\tau)^* \mod U_\Z(\tau)^* = A(\tau)$, where the latter is endowed with the complex structure in which the $(1,0)$-vectors are precisely the vectors from $\tau$ (the projection is holomorphic since $\tau$ is cut out by $U(\tau)\ox\C$ from $H^{1,0}(S,I)$). The universal property now follows from the universal property of the Albanese map.
\end{proof}

Notice that instead of the lattice $U_\Z(\tau)$ one could have chosen a smaller intermediate lattice $L \supset U_\Z(\tau)$ which yet spans $U(\tau)$. However, its dual $L^* \subset U(\tau)^*$ would in this case be a larger lattice, and the quotient $U(\tau)^* \mod L^*$ would be a torus obtained from $A(\tau)$ by quotienting out a finite subgroup. Hence the torus $A(\tau)$ is the largest one from which one can pull back the holomorphic forms from $\tau$.

\begin{defn}
We say that a sublattice $L \subset \Z^{2g}$ is {\bf complete} if it is cut out by some vector subspace in $\mathbb{Q}^{2g} \cong \Z^{2g}\ox\mathbb{Q}$. Equivalently, $L$ is complete iff for any $v \not\in L$, $n>0$ one has $nv \not\in L$.
\end{defn}

\begin{expl}
Let $S$ be a genus two curve with an involution $\iota$, which has two fixed points, and let $\{e_0, f_0, e_1, f_1\}$ be the standard basis in the integral cohomology. The involution acts as $\iota^*e_0 = -e_1$, $\iota^*f_0 = -f_1$, so the fixed vectors are $e_0 - e_1$ and $f_0 - f_1$. This is precisely the inverse image of integral homology classes on the elliptic curve $S/\iota$; one has $\omega(e_0-e_1,f_0-f_1) = \omega(e_0,f_0) + \omega(e_1,f_1) = 1+1 = 2$. Yet, there are no vectors $v \in H^1(S, \Z)$ s.~t. $v \not\in \mathrm{span}\{e_0-e_1,f_0-f_1\}$ while $2v\in\mathrm{span}\{e_0-e_1,f_0-f_1\}$, since this sublattice is complete.
\end{expl}

\begin{expl}
Let $S$ be a genus three curve, which is a double unramified cover of a genus two curve $S'$, which is in its turn a double cover of an elliptic curve $E'$ ramified in two points. The the pullback map $p \colon H^1(E',\Z) \to H^1(S,\Z)$ factorizes through the map $\colon H^1(S', \Z) \to H^1(S, \Z)$, and since it is the pullback along an unramified cover, vectors in its image are divisible by the degree of the cover. Hence the image of the map $H^1(E',\Z) \to H^1(S,\Z)$ is not a complete sublattice. However, applying the partial Abel--Jacobi construction to the line $p^*H^{1,0}(E') \subset H^1(S,\C)$ yields a map into an elliptic curve $E$, which doubly covers the elliptic curve $E'$, and the inverse image of $H^1(E,\Z)$ within $H^1(S,\Z)$ would be precisely the lattice of integral vectors $\frac12p^*H^1(E',\Z) \subset H^1(S,\Z)$. So we see that the partial Abel--Jacobi map related to a complete sublattice is indeed the mapping to the largest possible torus, from which we can obtain a desired subspace in cohomology via pullback.
\end{expl}

\begin{expl}
Let $S \arr{\pi} S'$ be a genus three curve, which is a double unramified cover of a genus two curve $S'$. Namely, let $e_0', f_0', e_1', f_1'$ be the simple loops representing a standard basis in homology $H_1(S',\Z)$, and $S$ is obtained from two copies of $S'$ cut along the loop $e_1'$ and glued criss-cross. Then homology $H_1(S,\Z)$ has a standard basis $\{e_0,f_0,e_1,f_1,e_2,f_2\}$ s.~t. one has $$\pi_*e_0 = \pi_*e_2 = e_0',$$ $$\pi_*f_0 = \pi_*f_2 = f_0',$$ $$\pi_*e_1 = e_1',~\pi_*f_1 = 2f_1'.$$ If $\eps_i, \phi_i$ (resp. $\eps_i', \phi_i'$) represent the dual classes in $H^1(S,\Z)$ (resp. $H^1(S',\Z)$), one has $$\pi^*\eps_0' = \eps_0 + \eps_2,$$ $$\pi^*\phi_0' = \phi_0 + \phi_2,$$ $$\pi^*\eps_1' = \eps_1,~\pi^*\phi_1' = 2\phi_1.$$ The partial Abel--Jacobi map for the incomplete sublattice spanned by these vectors is, by construction, a map to the Albanese surface of $S'$, which is a two-to-one map onto the Abel--Jacobi image $S' \to \mathrm{Alb}(S')$. However, if one takes a complete sublattice which contains this sublattice, i.~e. the sublattice spanned by $\eps_0 + \eps_2, \phi_0+\phi_2, \eps_1, \phi_1$, one gets an injective map into a double cover $\widetilde{\mathrm{Alb}(S')}$ of the Albanese surface. The reason is the Poincar\'e complete reducibility theorem: the mapping $\pi_* \colon \mathrm{Alb}(S) \to \mathrm{Alb}(S')$ admits a multivalued section (here, of degree two).
\end{expl}

\subsection{Single elliptic class: Haupt--Kapovich condition}
\begin{defn}
Let $U \subset V_\Z$ be a rank $2k$ symplectic sublattice (i.~e.~the~restriction of the form $\omega$ to $U$ is nondegenerate). Consider the cyclic submodule $\Lambda^{2k}_\Z{U} \subset \Lambda^{2k}_\Z{V_\Z}$, and let $\xi$ be its positive generator. Then the {\bf determinant} $\det U$ of $U$ equals to $\omega^{\wedge k}(\xi)$. If $\tau \subset V_\C$ is an elliptic subspace, its {\bf determinant} is the determinant of the lattice of integral vectors within its real part $U_\tau$.
\end{defn}

\begin{pr}\label{trans-indiv}
The group $\Sp(2g, \Z)$ acts transitively on the indivisible vectors in $\Z^{2g}$.
\end{pr}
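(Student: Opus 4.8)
The plan is to reduce transitivity to a single completion statement: \emph{every indivisible vector $v\in\Z^{2g}$ can be extended to a symplectic $\Z$-basis of $\Z^{2g}$}. Granting this, if $v$ and $v'$ are two indivisible vectors, choose symplectic bases $(v,w,\dots)$ and $(v',w',\dots)$ beginning with them; the change-of-basis matrices $g,g'\in\Sp(2g,\Z)$ from the fixed standard symplectic basis then carry its first vector to $v$ and to $v'$ respectively, so $g'g^{-1}$ carries $v$ to $v'$. Hence everything is contained in the completion lemma, and the claim follows.

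To prove the completion lemma I would first recall the standard fact about $\Z$-modules that indivisibility of $v$ is equivalent to the existence of a functional $\varphi\in(\Z^{2g})^*$ with $\varphi(v)=1$ (stacked-basis theorem). Since the standard symplectic form $\omega$ is unimodular, the map $\Z^{2g}\to(\Z^{2g})^*$, $x\mapsto\omega(x,-)$, is an isomorphism, so after possibly replacing $w$ by $-w$ there is $w\in\Z^{2g}$ with $\omega(v,w)=1$. The sublattice $U=\langle v,w\rangle$ is then a hyperbolic plane, $\omega|_U$ is unimodular, and $\Z^{2g}=U\oplus U^{\perp}$ with $U^{\perp}=\{x:\omega(x,v)=\omega(x,w)=0\}$: the projection $x\mapsto-\omega(w,x)\,v+\omega(v,x)\,w$ onto $U$ is integral because $\omega$ is integer-valued and $\omega(v,w)=1$, and $U\cap U^{\perp}=0$ by nondegeneracy of $\omega|_U$. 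Now $U^{\perp}$ is a unimodular symplectic lattice of rank $2g-2$, so by induction on $g$ (the base case $g=1$ being trivial) it has a symplectic basis; adjoining $v,w$ produces a symplectic basis of $\Z^{2g}$ whose first vector is $v$.

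An alternative, more computational route avoids even this much structure: $\Sp(2g,\Z)$ is generated by the symplectic transvections $T_{u}\colon x\mapsto x+\omega(u,x)\,u$ ($u\in\Z^{2g}$), and one can bring any indivisible $v$ to the first standard basis vector by an explicit sequence of such transvections, decreasing an $\ell^{1}$-type measure of the size of the coordinates at each step, in the manner of row reduction by elementary matrices. In either route the only point needing genuine attention is the integral orthogonal splitting $\Z^{2g}=U\oplus U^{\perp}$ — that a rank-two sublattice on which $\omega$ restricts unimodularly splits off as a direct summand over $\Z$, and not merely over $\mathbb{Q}$; once that is checked, the induction, and with it the transitivity statement, is immediate.
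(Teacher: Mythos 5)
Your proof is correct, but it takes a genuinely different route from the paper's. The paper realizes $\Z^{2g}$ as the homology lattice of a genus $g$ surface, represents each indivisible class by a nonseparating simple closed curve, and invokes the classification of surfaces to produce a homeomorphism carrying one curve to the other; the induced action on homology is the desired element of $\Sp(2g,\Z)$. You instead prove the purely algebraic completion lemma: an indivisible vector extends to a symplectic $\Z$-basis, via unimodularity of $\omega$ (to find $w$ with $\omega(v,w)=1$), the integral splitting $\Z^{2g}=U\oplus U^{\perp}$ off the hyperbolic plane $U=\langle v,w\rangle$, and induction on $g$. Your route is more elementary and self-contained --- in particular it avoids the step the paper itself flags as not obviously justified, namely that every indivisible homology class is represented by a nonseparating cycle --- and your explicit integral projection onto $U$ is exactly the point that needs checking, which you do correctly. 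What the paper's approach buys in exchange is that the symplectic matrix is exhibited as the action of an actual mapping class of the surface, i.e.\ the transitivity is witnessed inside $\MCG(S)$ rather than merely inside $\Sp(2g,\Z)$, which is consonant with how the proposition is used elsewhere in the paper; your argument, while cleaner as lattice theory, does not by itself produce such a geometric realization.
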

\begin{proof}
Let us realize the symplectic lattice $\Z^{2g}$ with its standard symplectic form $\sum_{i=0}^{g-1}dx_{2i}\wedge dx_{2i+1}$ as the homology lattice of a genus $g$ surface with its intersection pairing. Any indivisible integral homology class on a surface can be represented by a nonseparating cycle (maybe this requires uniformization and Eels--Sampson theorem, but let us assume this is obvious). Then cutting two copies of the surface along two cycles corresponding to two indivisible vectors in homology yields two surfaces with boundary, which are homeomorphic by the classification theorem for surfaces. After gluing the boundaries back, this homeomorphism establishes an auto-homeomorphism of the surface sending one cycle to another. It acts on the homology preserving the symplectic pairing, hence gives an element of $\Sp(V_\Z,\omega)$ sending one indivisible vector to another.
\end{proof}

\begin{pr}\label{det-trans}
Let $U, U' \subset V_\Z$ be two rank two complete symplectic sublattices with determinant $d$. Then there exists an element $\delta \in \Sp(V_\Z,\omega)$ s.~t. one has $\delta U = U'$.
\end{pr}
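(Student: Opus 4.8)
The plan is to exhibit a single normal form. With respect to the standard symplectic basis $e_0,f_0,\dots,e_{g-1},f_{g-1}$ of $V_\Z$, set $U_d:=\langle e_0,\,e_1+d f_0\rangle$; one verifies directly that $U_d$ is complete, that $\omega|_{U_d}$ is nondegenerate, and that $\omega$ evaluated on the primitive generator $e_0\wedge(e_1+d f_0)$ of $\Lambda^2_\Z U_d$ equals $d$, so $\det U_d=d$. It then suffices to prove that every complete rank-two symplectic sublattice $U$ with $\det U=d$ is $\Sp(V_\Z,\omega)$-equivalent to $U_d$, for then $U\sim U_d\sim U'$.

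I would proceed by building a symplectic basis of $V_\Z$ in which $U$ becomes $U_d$. Start from any $\Z$-basis $p,q$ of $U$; since $\omega|_U$ is alternating on a rank-two lattice, $p\wedge q$ generates $\Lambda^2_\Z U$, so $|\omega(p,q)|=\det U=d$, and after replacing $q$ by $-q$ we may assume $\omega(p,q)=d$. Completeness of $U$ gives that $p$ and $q$ are indivisible in $V_\Z$ and that the map $v\mapsto(\omega(p,v),\omega(q,v))$ is a surjection $V_\Z\to\Z^2$ (under $\omega\colon V_\Z\xrightarrow{\ \sim\ }V_\Z^{*}$ it is the restriction $V_\Z^{*}\to U^{*}$, which is onto because $U$ is a direct summand). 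Choose $b_0$ with $\omega(p,b_0)=1$, $\omega(q,b_0)=0$; every other such vector has the form $b_0+u$ with $u\in U^\perp$.

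The crucial point will be to choose $b$ in this coset so that $c:=q-d b$ is indivisible in $V_\Z$; granted this, I set $e_0':=p$, $f_0':=b$, $e_1':=c$, note that $\omega(e_0',f_0')=1$ while $\omega(e_0',e_1')=\omega(f_0',e_1')=0$, so $e_1'$ lies in the unimodular symplectic summand $\langle e_0',f_0'\rangle^\perp$ of rank $2g-2$, extend the indivisible $e_1'$ to a symplectic basis of that summand (Proposition~\ref{trans-indiv}), and obtain a symplectic basis $e_0',f_0',e_1',f_1',\dots$ of $V_\Z$ with $U=\langle p,q\rangle=\langle e_0',\,e_1'+d f_0'\rangle$. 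To produce such a $b$, observe that $c_0:=q-d b_0$ satisfies $\omega(p,c_0)=\omega(q,c_0)=0$, hence $c_0\in U^\perp$, and that as $b$ runs over $b_0+U^\perp$ the vector $q-d b$ runs over $c_0+d\,U^\perp\subset U^\perp$. (If $c_0=0$ then $q=d b_0$ forces $d=1$; replacing $b_0$ by $b_0+u_0$ with $u_0\in U^\perp$ nonzero indivisible — available since $\operatorname{rk}U^\perp=2g-2\ge 2$ — restores $c_0\neq 0$.) Writing $c_0=e\,c_0'$ with $c_0'$ indivisible, I claim $\gcd(d,e)=1$: since $\omega(p,b_0)=1$ the sublattice $\langle p,b_0\rangle$ is unimodular and $V_\Z=\langle p,b_0\rangle\oplus\langle p,b_0\rangle^\perp$; the vector $c_0$ lies in the second summand, so $q=d b_0+e\,c_0'$ has content $\gcd(d,e)$ in this decomposition, and $q$ is indivisible. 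Now extend $c_0'$ to a $\Z$-basis $c_0',w_1,\dots$ of $U^\perp$ and take $b:=b_0+\lambda c_0'+w_1$; then $c=q-d b=(e-d\lambda)c_0'-d\,w_1$ has content $\gcd(e-d\lambda,d)=\gcd(e,d)=1$ in $U^\perp$, hence is indivisible in $V_\Z$, as desired. Finally the change-of-basis element of $\Sp(V_\Z,\omega)$ carries $U_d$ to $U$, and applying this to both $U$ and $U'$ produces the wanted $\delta$.

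The hard part is precisely this last step. It is tempting to normalise $p$ to the standard vector $e_0$ and work inside the stabiliser of $e_0$, but that stabiliser can only change the relevant coefficient of $q$ modulo $d$, so one cannot reach $U_d$ that way; the resolution is to move $p$ instead to a well-chosen indivisible vector of $V_\Z$, and the identity $\gcd(d,e)=1$ — forced by indivisibility of $q$ through the unimodular splitting $\langle p,b_0\rangle\oplus\langle p,b_0\rangle^\perp$ — is exactly what makes this possible.
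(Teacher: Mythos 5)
Your argument is correct, and its broad strategy coincides with the paper's: fix a normal form and reach it via transitivity of $\Sp(V_\Z,\omega)$ on indivisible vectors (Proposition \ref{trans-indiv}). But the execution of the decisive arithmetic step is genuinely different, and in fact more careful than the paper's. The paper moves a common primitive generator of $U$ and $U'$ to $e_0$, writes $U=\langle e_0,\,df_0+u\rangle$ with $u\in e_0^\perp$, and asserts that completeness of $U$ forces $[u]\in e_0^\perp/e_0$ to be indivisible, so that one application of Proposition \ref{trans-indiv} in rank $2g-2$ finishes the proof with the translation part $x$ set to zero. That assertion fails in general: $\langle e_0,\,3f_0+2e_1\rangle$ is a complete sublattice of determinant $3$, yet $[u]=2e_1$ is divisible. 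What completeness actually buys is exactly the relation you extract, namely $\gcd(d,e)=1$ where $e$ is the content of the component of $q$ complementary to $db_0$; and your proof then spends this coprimality by adjusting $b$ within the coset $b_0+U^\perp$ until $q-db$ is primitive --- which is precisely the translation freedom (the ``$d[x]$'' term) that the paper discards. So your route both establishes the Proposition and quietly repairs a gap in the paper's own argument. One minor inaccuracy in your closing commentary: the stabilizer-of-$e_0$ approach is not actually doomed --- the equation $d[x]+[A][u]=[u']$ is solvable under $\gcd(d,e)=\gcd(d,e')=1$, but justifying this requires a transitivity statement modulo $d$; your construction of an adapted symplectic basis avoids that discussion entirely, which is what your approach buys.
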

\begin{proof}Since $\Lambda^2_\Z(U)$ is a cyclic module, there exists a basis $x,y \in U$ s.~t. $\det U = \omega(x,y)$, and similarly for $U'$. In this case, both $x$ and $y$ are indivisible.

Let $\{e_i, f_i\}_{i=0}^{g-1}$ be the standard symplectic basis for $\Z^{2g}$. Since the group $\Sp(V_\Z, \omega)$ acts transitively on indivisible vectors, one can assume that $U$ and $U'$ have a common indivisible basic vector, say $e_0$. Since $\det U =d$, any vector spanning $U$ with $e_0$ is of form $df_0 + u$, $u \in e_0^\perp$, and similarly $U'$ is spanned by $e_0$ and $df_0 + u'$. Therefore what we are looking for is a symplectic matrix $A$ s.~t. $Ae_0 = e_0$ and $A(df_0 + u) = df_0 + u' + ke_0$ (one cannot hope that we can send one basis to another, since, for example, no matrix from $\mathrm{SL}(2,\Z)$ can send the basis $\begin{pmatrix} 1 & 1 \\ 0 & 3\end{pmatrix}$ to $\begin{pmatrix} 1 & 2 \\ 0 & 3\end{pmatrix}$; yet they generate the same sublattice, which is in this case not complete though). One has $0 = \omega(f_0,e_0) - \omega(Af_0,Ae_0) = \omega(f_0-Af_0,e_0)$, hence $Af_0 = f_0 + x$, where $x \in e_0^\perp$, and the equation amounts to $df_0 + dx + A(u) = df_0 + u' + ke_0$, or simply $dx + Au = u' + ke_0$. This is an equation on vectors within the sublattice $e_0^\perp$, which boils down to $d[x] + [A][u] = [u']$, where $[y] = y \mod e_0 \in e_0^\perp/e_0$. Notice that the latter is isomorphic as a symplectic lattice to the standard $\Z^{2g-2}$, and since the sublattices $U, U'$ were complete, the vectors $u, u'$ are indivisible modulo $e_0$, that is, $[u], [u'] \in \Z^{2g-2}$ are indivisible. Hence by Proposition \ref{trans-indiv} they are related by a matrix $[A]\in \Sp(e_0^\perp/e_0,\omega)$. The mapping from the stabilizer of $e_0$ within $\Sp(V_\Z,\omega)$ to $\Sp(e_0^\perp/e_0,\omega)$ is surjective, and hence this gives a desired matrix $\delta \in \Sp(V_\Z,\omega)$ for $x=0$.
\end{proof}

\begin{pr}[topologization lemma]\label{topolo}Suppose an elliptic line $\tau \subset H^1(S,\C)$ is realizable by a holomorphic 1-form. Then any Hodge--Riemann line $\tau'$ with $U(\tau') = U(\tau)$ is realizable by a holomorphic 1-form. To put it another way, a property of an elliptic line $\tau$ of being realizable by an abelian differential is a property of its real part $U(\tau)$.
\end{pr}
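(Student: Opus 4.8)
The plan is to deduce this from the $\SL(2,\R)$-equivariance of the period map (Proposition \ref{sl2-equiv}), which is essentially designed for a statement of this kind. First I would record how the $\SL(2,\R)$-action behaves on cohomology classes. If $(X,\alpha)$ is a curve with a holomorphic $1$-form and one writes $[\alpha] = p + \1 q$ with $p = [\mathrm{Re}\,\alpha]$ and $q = [\mathrm{Im}\,\alpha]$ in $H^1(S,\R)$, then acting by $\begin{pmatrix} a & b \\ c & d\end{pmatrix} \in \SL(2,\R)$ produces a new pair $(X',\alpha')$ with $\alpha'$ holomorphic on $X'$ and $[\alpha'] = (ap + bq) + \1(cp + dq)$: the action on $V$ is the $\R$-linear action on the factor $\R^2 = \C$ of $V = H^1(S,\R)\ox_\R\R^2$, and is \emph{not} $\C$-linear. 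For $\tau$ Hodge--Riemann the classes $p, q$ are $\R$-independent, hence a basis of the real part $U(\tau) = \mathrm{span}_\R\{p,q\}$, and since $\begin{pmatrix} a & b \\ c & d\end{pmatrix}$ is invertible this action preserves $U(\tau)$ as a subspace of $V$. Consequently every line $g\cdot\tau$ is spanned by the class of the holomorphic form $\alpha'$ on $X'$, so it is again Hodge--Riemann, has real part $U(\tau)$, and is realizable by a holomorphic $1$-form.

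The key point to check is then the converse: that the orbit $\SL(2,\R)\cdot\tau$ is \emph{all} of the set of Hodge--Riemann lines with real part $U(\tau)$. This is the one place a small computation is needed. In the basis $p,q$ a line of $U(\tau)\ox\C$ other than $\langle p\rangle,\langle q\rangle$ is $\langle zp + wq\rangle_\C$ with $w \neq 0$, and $\1\,\omega(zp+wq,\overline{zp+wq}) = -2\,\mathrm{Im}(z\bar w)\,\omega(p,q)$; since the Hodge--Riemann condition for $\tau$ says $\omega(p,q) > 0$, the Hodge--Riemann lines are precisely those with $z/w$ in one fixed open half-plane. Now $\begin{pmatrix} a & b \\ c & d\end{pmatrix}$ carries $[\alpha]$ to the line with $(z,w) = (a+\1 c,\, b+\1 d)$, for which $\mathrm{Im}(z\bar w) = -(ad-bc) = -1$; fixing $w = b+\1 d \neq 0$ arbitrarily and letting $(a,c)$ run over the affine line $ad-bc = 1$, one checks that $z/w$ sweeps out the whole half-plane. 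Conceptually this is just the transitivity of $\SL(2,\R)$ acting by M\"obius transformations on the orientation-compatible complex structures of the real symplectic plane $U(\tau)$.

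Putting the two steps together gives the claim: choosing a realization $(X,\alpha)$ of $\tau$ and letting $g$ vary over $\SL(2,\R)$, the pairs $g\cdot(X,\alpha)$ realize every Hodge--Riemann line with real part $U(\tau)$. The geometry behind this is that, by the partial Abel--Jacobi construction (Proposition \ref{AJac}), a realization of the elliptic line $\tau$ is a holomorphic map $(S,I)\to A(\tau)$ onto an elliptic curve whose lattice is the purely topological datum $U_\Z(\tau)$; the $\SL(2,\R)$-action post-composes the developing picture with a real-linear automorphism of $\R^2$, deforming the complex structure of the target torus but not its lattice, which is exactly the ``topologization'' we want. I expect the only real obstacle to be the surjectivity in the middle step --- that these deformations of the target already exhaust \emph{all} complex structures on the torus --- together with the bookkeeping point that the $\SL(2,\R)$-action on $V$ is $\R$-linear on the $\R^2$-factor and not $\C$-linear.
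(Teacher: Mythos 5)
Your proof is correct, and it follows the route that the paper itself flags in the remark immediately after its proof (``this topologization lemma is actually just a way to state in the elliptic case that the period map is $\SL(2,\R)$-equivariant''), but it is not the argument the paper actually writes down. The paper's proof goes through the partial Abel--Jacobi construction: integration of $\alpha$ gives a holomorphic branched cover $S \to E_\tau = U(\tau)/U_\Z(\tau)$, one then regards this as a purely topological map, replaces the complex structure on the target torus by the one with $H^{1,0}(E_{\tau'}) = \tau'$, and lifts it back to $S$ so the cover is again holomorphic; the pullback of $dz$ then spans $\tau'$. You instead act by $\SL(2,\R)$ on the flat polygon of $(X,\alpha)$ and verify by direct computation that the orbit of $\tau$ sweeps out every Hodge--Riemann line with real part $U(\tau)$. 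The two constructions coincide (applying $g$ to the developing image is the same as post-composing the cover with $g\colon\R^2\to\R^2$ and changing the target's complex structure), so the difference is one of packaging. What your version buys is an explicit verification of the transitivity step --- that these deformations exhaust all Hodge--Riemann lines with the given real part --- which the paper leaves implicit in the phrase ``choose another complex structure on $E$ given by $H^{1,0}(E_{\tau'})=\tau'$'' (i.e.\ the standard half-plane parametrization of complex structures on a real $2$-torus); your half-plane computation is correct, including the check that the real line $\langle p\rangle$ (the point $w=0$) is excluded by the Hodge--Riemann condition.
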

\begin{proof}
If $I$ is a complex structure on $S$ s.~t. $\alpha \in H^{1,0}(S,I)$ spans the line $\tau$, the path integration yields a holomorphic map $S \to E_\tau$, where $E_\tau$ is the torus $U(\tau) \mod U_\Z(\tau)$ with the structure of an elliptic curve in which $H^{1,0}(E_\tau) = \tau$. We can consider it just as a topological map $S \to E = E_{\tau}$, choose another complex structure on $E$, given by $H^{1,0}(E_{\tau'}) = \tau'$, and pull it back to $S$ so that this topological map would be again holomorphic in the new complex structures, and the line $\tau' \subset H^1(S,\C)$ would be spanned by the pullback of the abelian differential on the elliptic curve $E_{\tau'}$. Therefore, whether the line $\tau$ is realizable or not, depends only on the real part of $\tau$.
\end{proof}

This topologization lemma is actually just a way to state in the elliptic case that the period map is $\SL(2,\R)$-equivariant (Proposition \ref{sl2-equiv}). We would like to give it a proper name since we hope it might be important in the case of a pair of classes (see Proposition \ref{topolo-pair} below).

\begin{pr}\label{hk-where-from}
An elliptic line $\tau \in \P(H^1(S,\C))$ can be realized by a holomorpic 1-form iff its determinant is greater than $1$.
\end{pr}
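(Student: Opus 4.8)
The plan is to reduce the statement to one that depends only on the integer $d=\det\tau$, and then to dispose of the cases $d=1$ and $d\geq 2$ separately: the first by the partial Abel--Jacobi construction, the second by an explicit branched cover of an elliptic curve.

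First I would observe that realizability of an \emph{elliptic} line is a function of its determinant alone. For a Hodge--Riemann line the real part $U=U(\tau)$ carries a nondegenerate restriction of $\omega$, since $\1\omega(\alpha,\bar\alpha)=q(\alpha)>0$ for $0\neq\alpha\in\tau$; as $U$ is a vector subspace, $U_\Z(\tau)=U\cap H^1(S,\Z)$ is a complete rank-$2$ symplectic sublattice, so its determinant is defined. By Proposition~\ref{det-trans} any two complete rank-$2$ symplectic sublattices of determinant $d$ differ by an element $\delta\in\Sp(H^1(S,\Z),\omega)$; $\delta$ comes from a mapping class, hence preserves the real structure and therefore the form $q$, so it carries a Hodge--Riemann line with real part $U$ to one with real part $\delta U$. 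Combining the $\Sp(2g,\Z)$-invariance of the Kapovich locus with the topologization Lemma~\ref{topolo} (realizability depends only on the real part), we get that realizability of an elliptic line is determined by $d$ alone. So it suffices to rule out $d=1$ and to exhibit, for each $d\geq 2$, one realizable elliptic line of determinant $d$.

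The case $d=1$ goes through the partial Abel--Jacobi map. If $\tau$ were realized by $\alpha\in H^{1,0}(S,I)$, Proposition~\ref{AJac} produces an elliptic curve $E$ and a holomorphic $f\colon(S,I)\to E$ with $f^*H^{1,0}(E)=\tau$ and $f^*\colon H^1(E,\Z)\to H^1(S,\Z)$ equal to the inclusion $U_\Z(\tau)\hookrightarrow H^1(S,\Z)$. Since $\tau\neq0$, the map $f$ is nonconstant, hence surjective of some degree $n$, and $n\geq 2$ because $g(S)>1=g(E)$. The projection formula gives $\omega(f^*a,f^*b)=\langle f^*(a\cup b),[S]\rangle=n\langle a\cup b,[E]\rangle$, and since the cup product pairing on $H^1(E,\Z)$ is unimodular we conclude $\det\tau=n\geq 2$. (This computation is reused below.) For the converse I would fix an elliptic curve $E$ and $2g-2$ points on it and, via the Riemann existence theorem, build a connected branched cover $f\colon S\to E$ branched exactly over those points, all local monodromies equal to the transposition $(1\,2)\in\mathfrak{S}_d$, with the two genus generators of $\pi_1$ sent to the $d$-cycle $(1\,2\,\cdots\,d)$ and to $\mathrm{id}$: the surface relation holds because $(1\,2)^{2g-2}=\mathrm{id}$, the monodromy group $\langle(1\,2),(1\,2\cdots d)\rangle=\mathfrak{S}_d$ is transitive, and Riemann--Hurwitz gives $2g(S)-2=(2g-2)\cdot1$, so $g(S)=g$. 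Pulling back a nonzero holomorphic $1$-form on $E$ gives $\alpha=f^*\omega_E$ with $\tau:=\langle[\alpha]\rangle=f^*H^{1,0}(E)$ an elliptic line; since the point stabilizer $\mathfrak{S}_{d-1}$ is maximal in $\mathfrak{S}_d$ the cover $f$ has no nontrivial intermediate factorization, so $\pi_1(S)\to\pi_1(E)$ is surjective and $f^*\colon H^1(E,\Z)\to H^1(S,\Z)$ has torsion-free cokernel; hence $f^*H^1(E,\Z)=U_\Z(\tau)$ and the computation above gives $\det\tau=d$.

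The step I expect to be the real obstacle is the last one: it is easy to produce \emph{some} realizable elliptic line whose determinant divides $d$, but forcing the determinant to be exactly $d$ means ruling out a hidden intermediate isogeny $S\to E'\to E$ that would enlarge $U_\Z(\tau)$ beyond $f^*H^1(E,\Z)$ and replace $d$ by a proper square-divisor; the maximality of $\mathfrak{S}_{d-1}$ in $\mathfrak{S}_d$ is precisely the device that excludes this. It is worth stressing that the reduction in the first paragraph is what makes a single worked example per $d$ enough, and hence what makes the branched-cover construction do the whole job.
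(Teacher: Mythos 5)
Your proof is correct and follows essentially the same route as the paper's: reduce realizability to a function of the determinant alone via the topologization lemma (Proposition \ref{topolo}) and the transitivity statement (Proposition \ref{det-trans}), then realize each $d\geq 2$ by a degree-$d$ cover of a torus simply branched over $2g-2$ points and exclude $d=1$ via the partial Abel--Jacobi map. The one place you go beyond the paper's sketch is in checking that $f^*H^1(E,\Z)$ is saturated, so that the determinant is exactly $d$ rather than a proper square-divisor of it --- a point the paper leaves implicit --- and your maximality-of-$\mathfrak{S}_{d-1}$ argument settles it cleanly.
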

\begin{proof}
By the topologization lemma (Proposition \ref{topolo}), whether an elliptic line $\tau$ can be realized by an abelian differential or not, depends on its real part $U(\tau)$, which is a rational 2-dimensional subspace, only. However, by Proposition \ref{det-trans}, any two symplectic sublattices of rank two in the cohomology with equal determinants are related by the action of the mapping class group. Hence if the determinant of $\tau$ is $d$, then $\tau$ is realizable iff there exists one realizable elliptic line with determinant $d$. For genus $g$, it amounts to the topological question of finding a ramified cover of degree $d$ of a genus $g$ surface over a torus, which is always possible for $d>1$ (pick up $2g-2$ points in a torus, and construct a connected $d$-sheeted ramified cover with $d-1$ preimages over the chosen $2g-2$ points), and impossible for $d=1$ provided $g>1$.
\end{proof}

\begin{corol}One can realize any realizable elliptic line by a holomorphic 1-form on a hyperelliptic curve. 
\end{corol}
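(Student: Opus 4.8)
The plan is to combine the topologization lemma with the transitivity statement of Proposition \ref{det-trans} and thereby reduce everything to producing a single hyperelliptic curve of each admissible determinant.

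The first point is that the proof of Proposition \ref{topolo} respects hyperellipticity. Let $(S,I)$ be hyperelliptic, with hyperelliptic involution $\iota$, and let $\alpha\in H^{1,0}(S,I)$ span an elliptic line $\tau$. Since $\iota^*\alpha=-\alpha$, the partial Abel--Jacobi map $f\colon (S,I)\to E_\tau$ of Proposition \ref{AJac} intertwines $\iota$ with the map $z\mapsto c-z$ of $E_\tau$ for a suitable constant $c$; being a translation composed with negation, this map is holomorphic for \emph{every} complex structure on the torus $U(\tau)/U_\Z(\tau)$. Hence when, in the proof of Proposition \ref{topolo}, one replaces the complex structure on the target by that of $E_{\tau'}$ and pulls it back along $f$, the involution $\iota$ remains holomorphic on the resulting curve $(S,I')$: it is a local biholomorphism away from the finitely many ramification points of $f$, and holomorphy extends over them, while its fixed locus---and hence the genus of the quotient---is unchanged. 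Thus $(S,I')$ is again hyperelliptic, and realizability by an abelian differential on a hyperelliptic curve is a property of the real part $U(\tau)$ alone.

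Now, by Proposition \ref{hk-where-from} a realizable elliptic line has determinant $d=\det U(\tau)>1$; by Proposition \ref{det-trans} the group $\Sp(V_\Z,\omega)$---acting through the mapping class group, which preserves the hyperelliptic locus by conjugating the hyperelliptic involution---is transitive on complete rank two symplectic sublattices of determinant $d$. Together with the previous paragraph this shows it is enough to exhibit \emph{one} hyperelliptic curve $H$ of the prescribed genus $g$ and a degree $d$ morphism $f\colon H\to E$ onto an elliptic curve for which $f^*H^1(E,\Z)$ is a complete sublattice; the line $f^*H^{1,0}(E)$ then has determinant $d$ and is realized on $H$, and the topologization step propagates this to every Hodge--Riemann line with the same real part.

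For the construction I would take a fibre product of Legendre type: let $q\colon E\to\P^1$ be the degree two quotient of an elliptic curve, branched over a four-point set $B\subset\P^1$, let $\psi\colon\P^1\to\P^1$ be a degree $d$ rational map, and let $H$ be the normalization of $\P^1\times_{\psi,q}E$. The first projection exhibits $H$ as a double cover of $\P^1$, so $H$ is hyperelliptic, while the second projection is a degree $d$ morphism onto $E$. The branch locus of $H\to\P^1$ is the set of points above $B$ at which $\psi$ has odd ramification index, so the genus of $H$ is governed by the ramification profile of $\psi$ over the four points of $B$; compatibly with the Riemann--Hurwitz constraint on $\psi$, this profile can be tuned to reach the desired genus while keeping $H$ irreducible and, for generic $\psi$, the projection $H\to E$ primitive. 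The main obstacle is precisely this last step---realizing simultaneously a prescribed genus, irreducibility, and completeness of the period sublattice from a Hurwitz-type datum---and here the Castelnuovo--Severi inequality intervenes: a hyperelliptic curve of genus $g$ carrying a degree $d$ morphism to an elliptic curve must satisfy $g\leqslant 2d-1$, so the construction, and with it the corollary, should be read with the restriction $\det\tau\geqslant\lceil(g+1)/2\rceil$ on the determinant, which is automatic for $g\leqslant 3$.
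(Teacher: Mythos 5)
Your skeleton --- the topologization lemma plus $\Sp(2g,\Z)$-transitivity on complete rank-two sublattices of a given determinant, reducing everything to one hyperelliptic example per determinant --- is exactly the strategy the paper relies on (its entire proof is the single sentence ``choose the ramification divisor to be symmetric with respect to the standard involution on the elliptic curve''), and your first paragraph usefully supplies a check the paper leaves implicit, namely that both the topologization step and the mapping-class-group action preserve realizability on a \emph{hyperelliptic} structure. But the most important part of your proposal is the Castelnuovo--Severi remark, and you should state its consequence more bluntly: it shows the corollary is \emph{false as written}, not merely in need of a caveat. A hyperelliptic curve of genus $g$ admitting a degree $d$ morphism to an elliptic curve satisfies $g\leqslant 2d-1$ (the two maps cannot factor through a common quotient, since that quotient would have to be $\P^1$, which admits no nonconstant map to an elliptic curve), and the minimal degree of a map realizing an elliptic line $\tau$ is $\det\tau$, attained by the partial Abel--Jacobi map of Proposition \ref{AJac}. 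So already for $g=4$ and $\det\tau=2$ the line is realizable by Proposition \ref{hk-where-from} but on no hyperelliptic curve of genus $4$. The paper's construction hits the same wall: any lift of $z\mapsto -z$ to a degree $d$ cover $S\to E$ can only fix points lying over the four $2$-torsion points of $E$, hence has at most $4d$ fixed points, short of the $2g+2$ required of a hyperelliptic involution once $g>2d-1$. Your corrected statement, requiring $\det\tau\geqslant\lceil(g+1)/2\rceil$ (automatic for $g\leqslant 3$), is the right one.

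Two loose ends remain on the positive side. In the admissible range $2\leqslant d$, $g\leqslant 2d-1$ you do not actually verify that the Hurwitz data of your fibre product can be tuned to hit every such pair while keeping $H$ irreducible, and the completeness of $f^*H^1(E,\Z)$ --- which you need, since an incomplete sublattice of index $k$ would make the determinant $kd$ rather than $d$ --- is asserted only ``for generic $\psi$'' without argument. The paper's recipe (symmetric simple branch divisor, with the lifted involution fixing enough preimages of $2$-torsion points to total $2g+2\leqslant 4d$) is an equally workable alternative here, but it needs the same fixed-point bookkeeping spelled out; either way the existence step deserves an explicit verification rather than a genericity claim.
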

\begin{proof}
Choose the ramification divisor to be symmetric with respect to the standard involution on the elliptic curve.
\end{proof}

Let us sum up our proof of the Haupt--Kapovich theorem for one class.

\begin{pr}[Haupt, Kapovich]\label{hk-new}Let $S$ be a genus $g>2$ topological surface. A line $\langle\alpha\rangle \in \P^+(V)$ spanned by a class $\alpha \in V = H^1(S,\C)$ belongs to the Kapovich locus iff the set of periods $$\{\alpha(\gamma)\}_{\gamma \in H_1(S,\Z)} \subset \C$$ is either dense or a lattice in $\C$ with $\1\omega(\alpha,\bar{\alpha})$ greater than the covolume of this lattice (the latter conditon is called the {\normalfont Haupt--Kapovich condition}).
\end{pr}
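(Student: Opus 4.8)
The plan is to split the statement according to the dichotomy between dense and discrete $\Sp(2g,\Z)$-orbits on the symplectic Grassmannian $\Grsymp(2,2g)$ established in Proposition \ref{discrete-or-dense}. Recall that by Proposition \ref{sl2-equiv} the period map is $\SL(2,\R)$-equivariant, so the Kapovich locus is the preimage of an $\Sp(2g,\Z)$-invariant subset of $\Grsymp(2,2g)$; an orbit is therefore either entirely inside the Kapovich locus or entirely outside it. Since the coprime Kapovich--Schottky locus for $k=1$ is open and dense (Proposition \ref{ksch-dense}, or already the Hejhal--Thurston theorem together with Moore ergodicity), any orbit that is \emph{dense} in $\Grsymp(2,2g)$ must meet this open set and hence lies in the Kapovich locus. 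This handles, in one stroke, all lines $\langle\alpha\rangle$ whose associated symplectic $2$-plane has a dense orbit --- in particular, by Proposition \ref{margulis}, all lines whose real part $U(\langle\alpha\rangle)$ is \emph{not} spanned by integral vectors. For such $\alpha$ the period set $\{\alpha(\gamma)\}$ is automatically dense in $\C$: if it were a lattice, its real span would be the rational plane $U(\langle\alpha\rangle)$, contradicting irrationality. So in the ``dense orbit'' regime both the conclusion (realizability) and the hypothesis (dense period set) hold, and the equivalence is vacuously verified.

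It remains to treat the \emph{discrete} orbits, which by Proposition \ref{margulis} are exactly the elliptic lines --- those $\tau = \langle\alpha\rangle$ whose real part $U(\tau)$ is a rational symplectic $2$-plane, equivalently those for which the period set $\{\alpha(\gamma)\}_{\gamma\in H_1(S,\Z)}$ is a lattice in $\C$ (its $\R$-span being $U(\tau)$ and its covolume, up to the normalization of $q$, being exactly the determinant $\det U(\tau)$ of Definition above, which also equals $\1\omega(\alpha,\bar\alpha)$ divided by nothing once the form $q$ is matched to the area form; more precisely, $\1\omega(\alpha,\bar\alpha)$ is the $q$-area of the line, and the constraint ``$> \mathrm{covol}$'' becomes ``$\det U(\tau) > 1$'' after rescaling $\alpha$ so that the developing map has the period lattice as image). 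For these lines Proposition \ref{hk-where-from} says precisely that $\tau$ is realizable by an abelian differential if and only if $\det U(\tau) > 1$, and this is exactly the Haupt--Kapovich condition ``$\1\omega(\alpha,\bar\alpha)$ greater than the covolume of the period lattice''. So the plan is: (i) observe discrete $\iff$ elliptic $\iff$ periods form a lattice; (ii) apply the topologization lemma (Proposition \ref{topolo}) to reduce realizability to a property of $U(\tau)$; (iii) apply Proposition \ref{det-trans} to reduce to a single model lattice of each determinant $d$; (iv) invoke Proposition \ref{hk-where-from}'s covering-space construction to settle when $d > 1$.

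Assembling the two cases gives the stated equivalence: $\langle\alpha\rangle$ lies in the Kapovich locus iff its orbit is dense (equivalently its periods are dense) \emph{or} its orbit is discrete and the associated rational symplectic plane has determinant $> 1$ (equivalently the periods form a lattice satisfying the Haupt--Kapovich inequality). I should also record the trivial reduction that the whole question is scale-invariant, so normalizing $q(\alpha) = \1\omega(\alpha,\bar\alpha)$ to a convenient value and working on $\P^+(V)$ loses nothing; and that the necessity direction of both conditions was already explained in the introduction, so only sufficiency needs the above machinery.

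I expect the main obstacle to be purely bookkeeping rather than conceptual: matching the normalization constants so that ``$\det U(\tau) > 1$'' of Proposition \ref{hk-where-from} literally coincides with the covolume inequality stated here, and checking that the dense-orbit case genuinely forces dense periods (which needs the observation that a lattice period set has rational $\R$-span, hence a non-rational real part cannot arise from a discrete orbit). Everything else --- openness of the coprime locus, Moore ergodicity, the Ratner/Margulis dichotomy, the topologization lemma, and the determinant classification of rank-two symplectic sublattices --- has been set up in the preceding sections and can be cited verbatim. In particular no new ergodic theory or new algebraic geometry is needed; the proof is the concatenation of Propositions \ref{discrete-or-dense}, \ref{margulis}, \ref{topolo}, \ref{det-trans}, \ref{hk-where-from}, together with the elementary remark that openness of the target locus upgrades density of orbits to membership in it.
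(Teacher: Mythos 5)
Your proposal is correct and follows essentially the same route as the paper's own proof: $\SL(2,\R)$-equivariance (Proposition \ref{sl2-equiv}) to descend to $\Grsymp(2,2g)$, the Ratner dichotomy of Proposition \ref{discrete-or-dense} combined with openness to absorb all dense orbits, and Propositions \ref{margulis}, \ref{topolo}, \ref{det-trans} and \ref{hk-where-from} to settle the discrete (elliptic) case via the determinant criterion. The only cosmetic difference is that you invoke density of the coprime locus (Proposition \ref{ksch-dense}) where the paper cites openness via Proposition \ref{htht-strong} directly, and you spell out the normalization bookkeeping that the paper leaves implicit.
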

\begin{proof}
By Proposition \ref{sl2-equiv}, the Kapovich locus in $\P^+(V)$ is the preimage of a union of $\Sp(V_\Z, \omega)$-orbits in $\Grsymp(2,V_\R)$, the symplectic 2-planes Gra\ss mannian, over which $\P^+(V)$ fibers into disks. By Proposition \ref{discrete-or-dense}, these orbits are either dense or discrete. The dense orbits correspond to the classes with dense set of periods, and, by Proposition \ref{margulis}, the discrete orbits correspond to the elliptic classes. Since by Proposition \ref{htht-strong} the Kapovich locus is open, any dense orbit intersects it and hence belongs to it; an elliptic orbit belongs to the Kapovich locus iff the Haupt--Kapovich condition is satisfied by Proposition \ref{hk-where-from}.
\end{proof}

Notice that in the case $g=2$ the group $\Sp(2,\R) \x \Sp(2,\R)$ contains non-splitting lattices, namely, lattices arising from the real quadratic orders: if $K = \mathbb{Q}\left(\sqrt{d}\right)$, one can consider $\SL(2,\cO_K)$ as a lattice in $\SL(2,\R)\x\SL(2,\R)$ via the two embeddings $K \to \R$. They correspond to the genus two curves with an abelian differential, whose trajectory under $\SL(2,\R)$-action, projected to the moduli space of curves, has two-dimensional closure, called the Hilbert modular surface. The Jacobian surfaces of such curves admit real multiplication by $K$. The other two cases for $g=2$ are the elliptic case, when the closure of the projection is a curve, and otherwise the whole moduli space. \cite{McM2}\cite[Theorem 1.5]{CDF} It is interesting whether similar lattices can occur in the group $\Sp(4,\R)\x \Sp(4,\R)$, which would correspond to certain pairs of abelian differentials on genus four curves.

\subsection{Elliptic pairs}
We now try to draw a similar sketch for two-dimensional elliptic subspaces.

\begin{pr}
Let $(L,\omega)$, $(L',\omega')$ be two rank four lattices equipped with integral symplectic forms, not necessarily standard, yet indivisible (i.~e. vectors $\omega \in \Lambda^2_{\Z}L^*$, $\omega' \in \Lambda^2_{\Z}L'^*$ are indivisible) and having equal determinants. Then there exists a linear isomorphism $L \arr{\sim} L'$ sending $\omega$ to $\omega'$.
\end{pr}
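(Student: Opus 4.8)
The plan is to read both forms off against the Frobenius (elementary-divisor) normal form for integral skew-symmetric pairings. Recall that on any free $\Z$-module $M$ of rank $2n$ carrying an integral skew form $\varphi$ there is a basis $e_1,f_1,\dots,e_n,f_n$ in which the only nonvanishing pairings are $\varphi(e_i,f_i)=d_i$ with $0\leqslant d_1\mid d_2\mid\dots\mid d_n$, and all $d_i>0$ if $\varphi$ is nondegenerate. The tuple $(d_1,\dots,d_n)$ of invariant factors is a complete invariant of $(M,\varphi)$ up to $\Z$-linear isomorphism. I would either cite this or sketch the one-step induction: let $d_1$ be the least positive value taken by $\varphi$, say $d_1=\varphi(e_1,f_1)$; a short division argument shows $d_1$ divides every value of $\varphi$, hence in particular $\varphi(e_1,\cdot)$ and $\varphi(\cdot,f_1)$, so one can correct $e_1,f_1$ to split off the rank-two summand $\langle e_1,f_1\rangle$ as a $\varphi$-orthogonal direct summand and induct on its complement.

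First I would specialize to $n=2$: the form $\omega$ on the rank-four lattice $L$ is classified up to isomorphism by a pair of positive integers $d_1\mid d_2$. Next comes the translation of the two hypotheses into constraints on $(d_1,d_2)$. Indivisibility of $\omega\in\Lambda^2_\Z L^*$ says that the gcd of the entries of the matrix of $\omega$ in some (equivalently, any) basis equals $1$; in the normal basis these entries are $0,\pm d_1,\pm d_2$, so indivisibility is exactly $d_1=\gcd(d_1,d_2)=1$. Hence $d_1=d_1'=1$. For the determinant, a computation in the normal basis gives $\omega\wedge\omega=2d_1d_2\,(e_1^*\wedge f_1^*\wedge e_2^*\wedge f_2^*)$, so the determinant of $(L,\omega)$ --- in whichever of the equivalent normalizations (this quantity, the Pfaffian $d_1d_2$, or the Gram determinant $(d_1d_2)^2$) --- is a fixed monomial in $d_1,d_2$ and thus pins down the product $d_1d_2$.

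Putting these together: equality of determinants gives $d_1d_2=d_1'd_2'$, and combined with $d_1=d_1'=1$ this forces $d_2=d_2'$. So $(L,\omega)$ and $(L',\omega')$ have identical invariant-factor tuples $(1,d_2)$, and the uniqueness part of the normal form supplies the required linear isomorphism $L\arr{\sim}L'$ carrying $\omega$ to $\omega'$.

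The only thing demanding care is the bookkeeping of the previous paragraph: identifying ``indivisible'' with ``$d_1=1$'', and matching the normalization of determinant used above with the product of invariant factors (the stray factor of $2$ is irrelevant). The genuine content --- existence and uniqueness of the Frobenius normal form, i.e.\ completeness of the invariant-factor tuple --- is classical and presents no obstacle; the same reasoning makes transparent why the statement is special to rank four, since already in rank six ``indivisible plus equal determinant'' leaves the middle invariant factor undetermined (e.g.\ $(d_1,d_2,d_3)=(1,1,4)$ and $(1,2,2)$ share determinant but are inequivalent).
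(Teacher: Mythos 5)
Your proof is correct, but it takes a genuinely different route from the paper. The paper treats $\omega$ and $\omega'$ as vectors in the rank-six lattice $\Lambda^2_{\Z}(L^*)$ made quadratic by wedge multiplication, observes that $\SL(L)$ acts on it as the special orthogonal group (the integral avatar of the exceptional isomorphism $\mathfrak{sl}_4\cong\mathfrak{so}_{3,3}$), and invokes Eichler's theorem on transitivity of indefinite orthogonal groups on indivisible vectors of equal norm. You instead classify the forms directly via the Frobenius elementary-divisor normal form $(d_1\mid d_2)$, translate indivisibility into $d_1=1$ and equality of determinants into $d_1d_2=d_1'd_2'$, and conclude from completeness of the invariant-factor tuple. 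Your argument is more elementary and self-contained: it sidesteps both Eichler's theorem and the delicate point (which the paper does not elaborate) of why the image of $\SL(L)$ in the orthogonal group of $\Lambda^2_{\Z}(L^*)$ is large enough for Eichler's transitivity to apply. It also makes transparent exactly where rank four is used, via your $(1,1,4)$ versus $(1,2,2)$ example in rank six. What the paper's approach buys in exchange is a uniform mechanism --- orbits of primitive vectors of fixed norm in an indefinite lattice --- that fits the ergodic-theoretic theme of the surrounding sections; but as a proof of this particular proposition, your normal-form argument is complete and arguably preferable.
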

\begin{proof}
This proof has been kindly told me by Misha Verbitsky, of IMPA, Rio de Janeiro.

Choose an arbitrary linear isomorphism $L \to L'$, so one can consider $\omega$ and $\omega'$ as vectors in one and the same lattice $\Lambda^2_{\Z}(L^*)$. Wedge multiplication $\wedge\colon\Lambda^2_{\Z}(L^*) \x \Lambda^2_{\Z}(L^*) \to \Lambda^4_{\Z}(L^*)$ makes it into a quadratic lattice, after one picks up the generator of the cyclic module $\Lambda^4_{\Z}(L^*)$ (e.~g. in such a way that $\omega\wedge\omega$ becomes a positive vector; one can assume that $\omega'\wedge\omega' = \omega\wedge\omega$, since they have equal determinants). The linear group $\SL(L)$ acts as the special orthogonal group of this quadratic lattice. Eichler's theorem applies: it asserts that the indefinite special orthogonal groups acts transitively on indivisible vectors of equal positive norms \cite{E}. Hence there exists an element in $\SL(L)$ relating $\omega'$ to $\omega$; twisting the initial isomorphism $L \to L'$ by it gives an isomorphism sending one symplectic form to the other.
\end{proof}

\begin{pr}
The automorphism group $\Sp(2g,\Z)$ of the standard integral symplectic lattice $(\Z^{2g},\omega)$ acts transitively on rank four complete symplectic sublattices with determinant $d$.
\end{pr}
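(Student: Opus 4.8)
The plan is to peel off a unimodular symplectic plane and reduce to the rank--two statement of Proposition~\ref{det-trans}, mirroring the way that proposition peels off a single indivisible vector.

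By the previous Proposition (equivalently, by the normal form for integral alternating forms) the lattice $(U,\omega|_U)$ is isometric to the standard model $\Z^4$ carrying the alternating form with elementary divisors $(1,d)$; concretely, $U$ splits $\omega$-orthogonally as $U=U_1\oplus U_2$, where $U_1$ is a unimodular symplectic plane and $U_2$ a symplectic plane of determinant $d$, and likewise $U'=U_1'\oplus U_2'$. One subtlety deserves mention here: the determinant is a complete invariant only when $\omega|_U$ has content one --- otherwise, already for $d=4$, the types $(1,4)$ and $(2,2)$ would give inequivalent complete sublattices --- so the hypothesis is to be read as also requiring that $U$ contain two vectors with unit symplectic pairing, which is exactly what makes the previous Proposition applicable.

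First I would carry $U_1$ onto $U_1'$. Since $U_1$ carries a unimodular form and is a direct summand of the complete lattice $U$, it is itself complete in $\Z^{2g}$ and splits off as an orthogonal summand: $\Z^{2g}=U_1\oplus W$ with $W=U_1^{\perp}$ a standard symplectic lattice of rank $2g-2$. The classification of integral unimodular symplectic lattices --- one in each even rank --- then produces an element of $\Sp(2g,\Z)$ taking $U_1$ to $U_1'$; after applying it we may assume $U_1=U_1'$ and hence $U_2=U\cap W$, $U_2'=U'\cap W$. Being direct summands of the complete lattices $U$, $U'$, these planes are complete in $\Z^{2g}$, hence complete in the summand $W$, and both have determinant $d$ because $\det U_1=1$.

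Finally I would apply Proposition~\ref{det-trans} inside $W$: it supplies $h\in\Sp(W)\cong\Sp(2g-2,\Z)$ with $hU_2=U_2'$, and extending $h$ by the identity on $U_1$ (legitimate since $\Z^{2g}=U_1\oplus W$ orthogonally) yields $\tilde h\in\Sp(2g,\Z)$ with $\tilde hU=U_1\oplus hU_2=U_1'\oplus U_2'=U'$. The argument is thus pure lattice bookkeeping --- primitivity descending through orthogonal direct summands, unimodular pieces splitting off --- layered on Proposition~\ref{det-trans} and the previous Proposition, so I anticipate no genuine obstacle; the only delicate points are the content--one caveat above and, as in Proposition~\ref{det-trans}, checking that the genus is large enough for complete sublattices of the required determinant to exist at all.
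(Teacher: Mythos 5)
Your proof follows essentially the same route as the paper's: split $U$ orthogonally as a unimodular symplectic plane plus a plane of determinant $d$, move the unimodular plane into standard position, and finish by applying Proposition \ref{det-trans} inside the orthogonal complement $\Sp(2g-2,\Z)$. The one place you diverge --- the content-one caveat --- is in fact a correction rather than a redundancy: the paper's proof asserts that indivisibility of $\omega$ forces indivisibility of its restriction to any complete symplectic sublattice, and this is false. For $g\geqslant 4$ the sublattice spanned by $e_0+e_1$, $f_0+f_1$, $e_2+e_3$, $f_2+f_3$ is complete and symplectic, yet its restricted form is twice the standard one, so it has the same determinant as a complete sublattice of type $(1,4)$ while lying in a different $\Sp(2g,\Z)$-orbit. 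The statement therefore needs exactly the extra hypothesis you supply (or an argument that only type $(1,d)$ arises in the intended application to real parts of elliptic pairs); granted that, your argument and the paper's coincide.
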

\begin{proof}
Let $U \subset \Z^{2g}$ be a complete rank four symplectic sublattice with determinant $d$. Since $\omega$ is indivisible, its restriction to a complete symplectic sublattice is, and hence $U$ is abstractly isomorphic to the lattice $dQ\oplus Q$, where $Q \simeq (\Z^2,\omega)$ is the standard integral symplectic lattice of rank two. Consider the unimodular sublattice $Q \subset U$. By Proposition \ref{det-trans}, there exists an element $\delta \in \Sp(2g,\Z)$ s.~t. $\delta Q = \mathrm{span}\{e_0,f_0\}$. It necessarily sends $dQ \subset U$ to a rank two complete sublattice in $\Z^{2g-2} = \mathrm{span}\{e_1,f_1,e_2,\dots,f_g\} \subset \Z^{2g}$. The stabilizer of $\{e_0,f_0\} \subset \Z^{2g}$ acts on its orthogonal complement as $\Sp(2g-2,\Z)$, hence, by Proposition \ref{det-trans}, transitively on complete rank two sublattices of determinant $d$.
\end{proof}

\begin{defn}
An elliptic pair $\tau$ is called {\bf simple} if the abelian surface $A = U(\tau)/U(\tau)_\Z$, $H^{1,0}(A) = \tau^*$ is not isogeneous to a product of elliptic curves.
\end{defn}

\begin{pr}[topologization lemma for a pair]\label{topolo-pair}Suppose a simple elliptic pair $\tau \in \Gris(2,H^1(S,\C))$ is realizable by holomorphic 1-forms. Then any simple Hodge--Riemann plane $\tau'$ with $U(\tau') = U(\tau)$ is realizable by holomorphic 1-forms.
\end{pr}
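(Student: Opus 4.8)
The plan is to mimic the proof of the topologization lemma for lines (Proposition~\ref{topolo}), with the target elliptic curve replaced by a polarized abelian surface and the step ``pull back the complex structure along the covering map'' replaced by a deformation argument for curves on abelian surfaces; the latter is exactly where the word \emph{simple} is used.

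First I would feed a realizing complex structure $I$ (with $\tau\subset H^{1,0}(S,I)$) into the partial Abel--Jacobi construction (Proposition~\ref{AJac}): it produces a holomorphic $f\colon(S,I)\to A:=A(\tau)$ with $f^*H^{1,0}(A)=\tau$ and $f^*\colon H^1(A,\Z)=U_\Z(\tau)\hookrightarrow H^1(S,\Z)$ the tautological complete inclusion. Hodge--Riemann positivity makes $\omega|_{U(\tau)}$ a positive polarization, so $A$ is an abelian surface, simple by hypothesis. The class $f_*[S]\in H_2(A,\Z)$ is Poincar\'e dual to the polarization class $[\omega|_{U(\tau)}]$, which is indivisible because it is the restriction of the unimodular symplectic form to a complete sublattice; hence $f$ is birational onto its image $C=f(S)\subset A$, the normalization of $C$ is $(S,I)$, and $C$ lies in the polarization class, so its arithmetic genus, and therefore its $\delta$-invariant $p_a(C)-g$ (the number of nodes when $C$ is nodal), is determined by $\omega|_{U(\tau)}$ alone. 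Conversely, a curve $C'$ in the polarization class and of geometric genus $g$ on \emph{any} abelian surface built on the same symplectic lattice suffices to realize the corresponding Hodge--Riemann plane: normalize, $\nu\colon\widetilde{C'}\to C'\hookrightarrow A'$; this is a map of a genus-$g$ Riemann surface with $\nu^*H^{1,0}(A')$ that plane and $\nu^*\colon U_\Z\hookrightarrow H^1(\widetilde{C'},\Z)$ a complete symplectic embedding; since $\Sp(2g,\Z)$ acts transitively on complete symplectic sublattices of fixed determinant and $\MCG(S)\twoheadrightarrow\Sp(2g,\Z)$, transport the complex structure of $\widetilde{C'}$ to $S$ by a diffeomorphism that is the identity on $U_\Z$. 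The proposition is thereby reduced to a statement about curves on simple abelian surfaces.

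Now $A':=A(\tau')$ is again a simple abelian surface, carried by the same symplectic lattice $U_\Z(\tau')=U_\Z(\tau)$ with the same polarization class, hence of the same polarization type as $A$. I would connect $A$ to $A'$ by a path in the moduli space of simple abelian surfaces of that polarization type, which is connected (a connected quasi-projective variety with countably many proper subvarieties removed), and show that along it the locus of abelian surfaces carrying a geometric-genus-$g$ curve in the polarization class is both open and closed. Openness is soft: $H^1(A,\cO_A(C))=0$ by Riemann--Roch, so the linear system and its equigeneric ($\delta$-nodal) Severi sublocus deform smoothly with $A$. Closedness is the real content: a flat limit of such curves still has the same arithmetic genus, and since the ambient surface is \emph{simple} it contains no elliptic curve, so the limit cannot degenerate by splitting off an elliptic component and stays irreducible; that its geometric genus cannot then drop below $g$ is a Severi-type bound on the number of nodes of a curve of prescribed self-intersection on an abelian surface.

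I expect this closedness (non-degeneration) step to be the main obstacle: it is exactly the node-counting statement ``resembling Severi's'' announced for the rest of this section, and \emph{simplicity} is precisely what excludes the one degeneration --- splitting off a sub-elliptic-curve --- that would let the geometric genus behave badly as the abelian surface varies. Granting it, the topologization lemma for pairs follows; combined with the transitivity of $\Sp(2g,\Z)$ on complete symplectic sublattices of a given determinant it should then reduce the realizability of a simple elliptic pair to a condition on its determinant and on $g$, which is the form of Proposition~\ref{hk-el-pair}.
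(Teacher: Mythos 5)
Your opening reduction coincides with the paper's: via the partial Abel--Jacobi construction (Proposition \ref{AJac}) the statement becomes ``a genus $g>1$ curve on an abelian surface can be deformed so as to stay analytic under any deformation of the left-invariant complex structure compatible with the Hodge--Riemann conditions.'' After that you diverge. The paper does \emph{not} run an open--closed argument on the moduli space of polarized abelian surfaces; it constructs the deformation of the curve directly: fixing a left-invariant foliation $\ell$ and deforming the complex structure so that $\ell$ stays analytic, the isotropy condition gives $\int_S\sigma_{J_t}=0$, hence $\sigma_{J_t}|_S=d\eta_t$, and pairing $\eta_t$ with the holomorphic symplectic form yields a displacement of $S$ along $\ell$ that keeps $S$ Lagrangian, i.e.\ analytic; arbitrary deformations are then reached by broken lines of such ``isofoliant'' ones. (A second, infinitesimal proof is given via $H^1(T_A)\cong H^{1,1}(A)$ and $\int_S\alpha=\int_A\alpha\wedge\omega=0$.)

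The genuine gap in your route is the closedness step, and it is more serious than you suggest, because semicontinuity of the geometric genus points the wrong way for you: a flat limit of geometric-genus-$g$ curves in the polarization class stays irreducible on a simple surface, but its singularities can worsen, so its geometric genus can \emph{drop}; the limit surface is then only known to carry a curve of genus $g'\leqslant g$ in the class. To recover a genus-$g$ curve on that same surface you must partially smooth nodes without moving the surface, which is exactly the node-smoothing analysis (meromorphic $1$-forms with opposite residues at the two branches of each node) that the paper develops only later, inside the proof of Proposition \ref{hk-el-pair} --- a proof that itself invokes Proposition \ref{topolo-pair}, so invoking the ``Severi-type'' statement here risks circularity unless you extract the smoothing lemma independently. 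Your openness step is also not as soft as claimed: that the equigeneric locus is smooth and dominates the moduli of polarized surfaces is essentially the infinitesimal statement the paper has to prove. To be fair, the paper's own argument candidly has an analogous soft spot (it cannot rule out degeneration of the curve as the path approaches the non-simple locus, which is why ``simple'' is in the hypothesis), but its mechanism for producing the deformed curve is constructive rather than resting on an unestablished closedness claim.
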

\begin{proof}
By Proposition \ref{AJac}, a realizable elliptic plane $\tau \subset H^1(S,\C)$ is the same as a map $S \to A^2$ with image an analytic curve. A deformation of the plane $\tau$ within $U(\tau)\ox\C$ is the same as a deformation of a left-invariant complex structure on an abelian surface, hence the lemma is equivalent to the following proposition: given a genus $g>1$ curve on an abelian surface, for any deformation of the complex structure on the surface, the curve may be deformed in order to stay analytic.

Let us notice that on an abelian surface a complex structure $J$ is encoded by a holomorphic $(2,0)$-form $\sigma = \sigma_J$, which is unique up to scaling. This allows us to employ the deformation construction from \cite{BDV}, yet in the other direction. Let us fix a left-invariant analytic foliation $\ell \subset TA$, and let us call a deformation of the complex structure on $A$ {\bf isofoliant} for $\ell$, if the foliation $\ell$ stays analytic, or, in terms of the form $\sigma_J$, Lagrangian. The curve $S$ may be viewed as a multi-valued section of the fibration by the leaves of $\ell$ (which makes no sense globally though). Let $J_t$ be an isofoliant deformation of $J$. The Hodge--Riemann relations (namely, the isotropicity condition) asserts that one has $\int_S\sigma_{J_t} = 0$ for any $t$, hence one has $\sigma_{J_t}|_S = d\eta_t$. The holomorphic symplectic form may be viewed as a pairing $T^*S \to \ell|_S$ at least away from the points where $S$ is tangent to $\ell$, and it sends the form $\eta_t$ to a section of $\ell|_S$. This section is the direction of the deformation $S_t$ which would make the identity $\sigma_{J_t}|_{S_t} = 0$ true not merely on homological level, but rather pointwise. On a holomorphic symplectic surface a 2-dimensional submanifold is an analytic curve iff it is Lagrangian, so this deformation would make the curve $S_t$ analytic for $J_t$ away from its points of tangency with $\ell$. At these points the curve $S_t$ is automatically analytic since the deformation $J_t$ is isofoliant for $\ell$. This proves the Proposition for isofoliant deformations; now notice that the moduli space of abelian surfaces may be navigated by broken lines of isofoliant deformations for different choices of foliations.

Nominally we proved the Proposition for any abelian surface, or any elliptic pair. However, during such a deformation curve may develop singularities, and we cannot assure ourselves that this does not occur in the case when the limiting abelian sufrace is isogeneous to a product. For example, when $S$ is a genus two curve on its Jacobian surface $A$, and $A$ approaches the product of two elliptic curves, $S$ breaks into these two elliptic curves, intersecting in one point.
\end{proof}
\begin{proof}
Let us give a more formal proof of the infinitesimal statement. A holomorphic symplectic form establishes an isomorphism of holomorphic bundles $T \to \Omega$, $v \mapsto \iota_v\sigma$. Hence one has $H^1(T_A) \cong H^1(\Omega_A) = H^{1,1}(A)$. For $A$ an abelian surface, the condition on the first order deformation $\alpha \in H^{1,1}(A)$ to stay projective is $\alpha\wedge\omega = 0$, where $\omega \in H^{1,1}(A)$ is the K\"ahler class. If $S \subset A$ is a curve, then $\alpha|_S \in H^{1,1}(S)$ is the obstruction to shifting $S$ into the corresponding deformation of $A$. However, whenever $S$ is not an elliptic curve, it represents the multiple of the hyperplane section, and $\int_S\alpha = \int_A\alpha\wedge\omega = 0$. 
\end{proof}

\begin{pr}[Haupt--Kapovich condition for an elliptic pair]\label{hk-el-pair}Let $S$ be a genus $g>2$ surface, and $\tau \subset H^1(S,\C)$ a simple elliptic pair. Then there exists a complex structure $I$ on $S$, an abelian surface $A$, and a generically injective holomorphic map $f \colon (S,I)\to A$ s.~t.~one has $f^*H^{1,0}(A) = \tau$, and any other map to an abelian surface $A'$ with this property factorizes as $S \arr{f} A\to A'$, iff the determinant $\det\tau$ is an even integer no less than $2g-2$.
\end{pr}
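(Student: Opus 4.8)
The strategy is to reduce the realizability of a simple elliptic pair $\tau$ to a purely topological/combinatorial question via the topologization lemma (Proposition~\ref{topolo-pair}), then answer that question by counting. By Proposition~\ref{AJac} a realizable elliptic pair is the same datum as a generically injective holomorphic map $f\colon (S,I)\to A$ onto an analytic curve $C\subset A$ in an abelian surface with $H^{1,0}(A)=\tau^*$, and the lattice $H^1(A,\Z)$ is identified with $U_\Z(\tau)$; the universality of the factorization is exactly the universal property in Proposition~\ref{AJac}. The topologization lemma says that realizability of a simple $\tau$ depends only on the real part $U(\tau)$, and by the transitivity result proved just above (that $\Sp(2g,\Z)$ acts transitively on complete rank-four symplectic sublattices of a given determinant $d$), it in fact depends only on the determinant $d=\det\tau$. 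So the whole question becomes: for which $d$ does there exist \emph{some} abelian surface $A$ with a polarization of determinant $d$ carrying a smooth genus-$g$ curve $C$ in the corresponding homology class, with $A$ not isogenous to a product?

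\textbf{Necessity.} First I would establish the two constraints. Evenness: the self-intersection $C^2$ on an abelian surface equals $2g-2$ by adjunction ($K_A$ is trivial, so $C^2 = 2g-2$), and $C^2$ is computed as $\omega(C)\cdot\omega(C)$ against the principal polarization class — more precisely, the numerical datum is $[C]\in H^2(A,\Z)\cong\Lambda^2 H^1(A,\Z)^*$, and pairing the symplectic form on $U_\Z(\tau)$ with itself, normalized by the generator of $\Lambda^4$, gives an even number since an integral skew form on a rank-four lattice has even Pfaffian-squared... I should be careful here: the determinant $\det\tau$ as defined (namely $\omega^{\wedge 2}(\xi)$) equals $2\,\mathrm{Pf}(\omega|_U)$ up to the normalization, hence is automatically even; actually the content is that $\det\tau$ is precisely $2d_1d_2$ where $(d_1,d_2)$ is the type of the polarization, so ``even integer'' is automatic and the real inequality is $\det\tau\geq 2g-2$. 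The inequality $\det\tau\geq 2g-2$: the degree of the polarization bounds, via Riemann--Roch on $A$ and the genus of curves in the polarization class, the arithmetic genus; concretely $h^0(A,\cO(C)) = C^2/2 = g-1$, and one needs this to be positive and the linear system to contain an irreducible curve, forcing $g-1\geq 1$, but the sharper bound comes from comparing $\det\tau$ (roughly twice the polarization degree up to the type) with $C^2=2g-2$. I would extract the precise inequality from the relation between $\det U(\tau)$, the polarization type, and $C^2$, using that a curve generating the polarization has $C^2=\det U(\tau)/(\text{something})$ — the clean statement is that for the \emph{primitive} class one has $C^2 = \det\tau$ when the polarization is as small as possible, and in general $C^2\le\det\tau$, whence $2g-2\le\det\tau$.

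\textbf{Sufficiency.} This is the main work. Given an even $d\ge 2g-2$, I need to produce a non-product abelian surface $A$ with a polarization $H$ of the appropriate type and a \emph{smooth irreducible} curve $C$ of genus $g$ with $[C]=H$ (so that $C^2=2g-2$, forcing the relation $d$ vs.\ polarization degree). Start from a principally polarized abelian surface $A_0$ (the Jacobian of a generic genus-two curve, which is non-product and has Picard number one), whose theta divisor $\Theta$ is a smooth genus-two curve. To get higher genus I would pass to an isogeny or take $C\in |n\Theta|$: a smooth member of $|n\Theta|$ has genus $g$ with $2g-2 = (n\Theta)^2 = n^2\cdot 2$, i.e.\ $g = n^2+1$ — this only hits special values of $g$, so pure multiples of $\Theta$ are insufficient. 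Instead I would use polarizations of type $(1,m)$: on a generic abelian surface with a $(1,m)$-polarization $H$ (which exists and is non-product for generic moduli), $H^2 = 2m$, a smooth curve in $|H|$ has genus $m+1$, and Bertini/very-ampleness (Reider-type results: $|H|$ is base-point-free for $m\ge 3$ and very ample for $m\ge 5$, with the low cases handled by hand or by allowing mild degenerations that the ``generic'' hypothesis excludes anyway) gives a smooth irreducible $C$. This realizes every genus $g = m+1\ge$ small, i.e.\ all $g\ge 2$ or so, with $\det\tau$ equal to the determinant of the $(1,m)$ symplectic lattice, which is $2m = 2g-2$: this is the \emph{minimal} value. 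For larger $d$, I would then either (a) rescale: replace $H$ by a sublattice, realizing determinant $d = 2g-2$ times a square or general factor, or more flexibly (b) take $C$ still of genus $g$ but sitting in an abelian surface with a polarization of type $(a,b)$ with $2ab = d$ and $C$ a \emph{translate-sum} or a curve in a smaller linear subsystem — the cleanest is: fix the curve $C$ of genus $g$ abstractly and vary the abelian surface $A = $ a quotient of $\mathrm{Jac}(C)$ or a sub-abelian-surface such that the induced polarization on the image has the prescribed determinant. The topological incarnation (as in the proof of Proposition~\ref{hk-where-from}) is perhaps cleanest: realizing $\tau$ amounts to finding a degree-governed holomorphic map $S\to A$ to \emph{some} abelian surface inducing a rank-four sublattice of determinant $d$; one builds this by taking a suitable cover or a curve on a suitable polarized abelian surface, and the existence for all even $d\ge 2g-2$ reduces to a Severi-type existence statement — exactly the ``characterization of possible number of nodes'' alluded to in the introduction — namely that the linear system realizing a curve of geometric genus $g$ and self-intersection $d - (\text{correction})$ on a $(a,b)$-polarized abelian surface is nonempty.

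\textbf{Main obstacle.} The hard part is sufficiency for the full range of $d$, and specifically controlling two things simultaneously: (i) that the abelian surface can be chosen \emph{simple} (non-product) — this should follow by a genericity/deformation argument since the product locus is a countable union of proper subvarieties of the moduli of polarized abelian surfaces, but one must check the curve survives the deformation, which is where Proposition~\ref{topolo-pair}'s caveat about degeneration to products becomes relevant and must be argued around; and (ii) producing a \emph{smooth} irreducible curve in the right class for small polarization degrees and for all the intermediate determinants, where the linear system may be base-point-free but not very ample, or even have base points — here I expect to invoke the results on curves on abelian surfaces (Reider, and the Severi-type nonemptiness of the nodal loci) rather than reprove them. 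I would organize the sufficiency proof so that the determinant inequality $d\ge 2g-2$ appears exactly as the condition that the relevant Severi variety is nonempty, mirroring the classical plane-curve picture invoked in the introduction.
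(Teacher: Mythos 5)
Your reduction is the same as the paper's: use Proposition~\ref{topolo-pair} together with the transitivity of $\Sp(2g,\Z)$ on complete rank-four symplectic sublattices of fixed determinant to reduce everything to exhibiting one example per pair $(g,\det\tau)$, and get necessity from adjunction (a generically injective $f$ with nodal image $C$ of arithmetic genus $p_a$ gives $\det\tau=C^2=2p_a-2$, which is even and at least $2g-2$). Up to that point the proposal matches the paper.

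The genuine gap is in sufficiency for $\det\tau>2g-2$. Since a \emph{smooth} genus-$g$ curve on an abelian surface always has self-intersection exactly $2g-2$, the $(1,m)$-polarization route with Reider/Bertini can only ever produce the minimal determinant; every larger even value forces the image to acquire exactly $\frac{d}{2}+1-g$ nodes, so the whole content of the proposition is precisely the Severi-type nonemptiness statement that you propose to ``invoke rather than reprove.'' The paper explicitly remarks that this statement ``does not seem to be present in the available literature,'' and its proof of Proposition~\ref{hk-el-pair} \emph{is} the proof of that statement: start from a genus-two curve in its Jacobian, push it forward under a degree-$n$ isogeny to get a geometric genus two curve with $n-1$ nodes on a $2n$-polarized surface, identify the tangent space to the locus of non-cuspidal curves with meromorphic $1$-forms on the normalization having opposite first-order residues at the node preimages, and use the Riemann--Roch lemma (uniqueness of the $1$-form with poles only at a prescribed pair of points) to show that the deformations preserving all nodes but one form a system of coordinate hyperplanes, so nodes can be smoothed one at a time until the geometric genus reaches any $g\leq n+1$. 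Your fallback (a), rescaling by passing to a sublattice of $H^1(A,\Z)$, does not repair this: a proper sublattice corresponds to a nontrivial covering $\widetilde{A}\to A$, which is exactly what the universality clause of the statement forbids (and which the paper must separately rule out by observing that a generic deformation of the nodal curve does not lift to any cover). So as written the proposal establishes necessity and the minimal case but leaves the main assertion unproved.
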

\begin{proof}
By Proposition \ref{topolo-pair}, one suffices to construct a single example for given numerical values of $g$ and $\deg\tau$ in order to assure existence in any such case.

Let us first notice that if $S \subset A$ is a smooth genus $g$ curve and $A \to A'$ is a degree $n$ covering, than the image of the map $S \to A \to A'$ has $(n-1)(2g-2)$ nodes. Therefore, if $f \colon S \to A$ is an injective map, it cannot factorize through any nontrivial covering $\widetilde{A}\to A$. Indeed, in such case the map $S \to \widetilde{A}$ is also injective, and $f(S) \subset A$ has nodes whenever the degree of the covering is greater than one. By adjunction, the self-intersection of a smooth genus $g$ curve on an abelian surface (i.~e. its maximal possible degree of polarization) is $2g-2$.

Let us prove a lemma.

\begin{lm}
Let $C$ be a curve, and $p,q$ two points on it. Then there exists a meromorphic 1-form with first order poles at $p$ and $q$, and regular elsewhere. Its residues at $p$ and $q$ are opposite, and this form is unique up to scaling and adding a regular 1-form.
\end{lm}
\begin{proof}
Forms with possible first order poles at $p$ and $q$ are the sections from $H^0(K_C \o \cO(p+q))$. By Riemann--Roch formula, one has $h^0(K(p+q)) - h^1(K(p+q)) = \deg K + 2 - g + 1 = g+1$. By Serre duality, one has $h^1(K(p+q)) = h^0(K \o T(-p-q)) = h^0(\cO(-p-q)) = 0$, hence $h^0(K(p+q)) = h^0(K)+1$, and hence a non-regular 1-form with poles at $p$ and $q$ exists and is unique up to scaling and adding a regular form. Its residues sum up to zero, and since there are only two of them, they are opposite.
\end{proof}

Now an idea of construction might be as follows. Consider a genus two curve $C$ in its Jacobian surface $\mathrm{Jac}(C)$, and let $\varphi \colon \mathrm{Jac}(C) \to A$ be a degree $n$ covering. Then $\varphi(C)$ is a geometric genus two curve with $n-1$ nodes. Consider the (local) moduli space of arithmetic genus $n+1$ non-cuspidal curves on $A$. The tangent space to the point corresponding to $\varphi(C)$ may be identified with the space of such meromorphic 1-forms on the normalization $\widetilde{\varphi(C)} = C$ that have first order poles at preimages of nodes with opposite residues at preimages of the same node, modulo restrictions of holomorphic 1-forms on $A$. A deformation smooths out a node iff the corresponding 1-form has pole in it; therefore by the above Lemma the first-order deformations preserving all nodes except one form a system of coordinate hyperplanes. Hence we are able to smooth out one node after another, and thus to obtain a geometric genus $g$ curve with $n-g+1$ nodes for any $g$ between $2$ and $n+1$, which lies on an abelian surface with polarization $2n$.

In principle, these nodal curves may be lifted to a nontrivial covering of $A$, and this is obviously the case for $n = 0$, since our original genus two curve came from its Jacobian surface. However, for $n>0$ there exist nontrivial deformations of these curves parametrized by holomorphic 1-forms not coming from $A$, and a generic deformation would not lift to any cover.
\end{proof}

The above theorem may be regarded as an analogue of the classical theorem of Severi \cite[Anhang F]{S}, which states that a degree $d$ plane curve may have precisely $n$ nodes and no other singularities for any $0 \leqslant n \leqslant (d-1)(d-2)/2$. It does not seem to be present in the available literature, though.

We expect that, much like in the case of a single class, these elliptic pairs are the most extreme case of pairs realizable by abelian differentials: probably, any other realizable pair can be herded by the $\Sp(2g,\Z)$-action into a tiny neighborhood of an elliptic pair.

\subsection{$\mathfrak{sp}(4,\R)$-action?}
The topologization lemma (Proposition \ref{topolo}) is a simple instant of a deep phenomenon, which is foundational for Teichm\"uller theory, namely, the $\SL(2,\R)$-action on the total space of the Hodge bundle (which is known as the {\it moduli space of abelian differentials}). A similar, yet weaker, topologization lemma for pairs (Proposition \ref{topolo-pair}) raises the following question: whether the fiberwise Gra\ss mannian $\Gr(2,\Omega\T_S)$ (which in this context might be called the {\bf moduli space of abelian bidifferentials}) possesses a natural action of the group $\Sp(4,\R)$, which would be compatible with the $\Sp(4,\R)$-action on the Teichm\"uller space for abelian surfaces (i.~e. the Siegel upper half-space $\Sp(4,\R)/\rU(2)$) for elliptic pairs?

In a way, the answer is obviously negative. For $g = 2$ the fiberwise Gra\ss mannian $\Gr(2,\Omega\T_g)$ coincides with $\T_g$ itself, which maps into the Siegel upper half-space of principally polarized abelian surfaces as a quotient $\T_2/\MCG^0(2)$ (here $\MCG^0(g)$ is the kernel of the natural map $\MCG(g) \to \Sp(2g,\Z)$, the group of classes of mappings which act trivially on the homology, also known as the {\it Torelli group}). The Birman--Powell theorem \cite{HM*} asserts, in particular, that for $g=2$ this group is generated by Dehn twists along homologically primitive cycles, hence the vanishing cycle for a family of genus two curves over a punctured disk with central fiber omitted by the Torelli map is homologically primitive, i.~e. this family compactifies by a reducible curve consisting of two elliptic curves meeting in one point. Therefore the Torelli map omits the locus of surfaces which fall apart into a product of elliptic curves. However, it is possible to pull back the local action of the Lie algebra $\mathfrak{sp}(4,\R)$. One might wonder if such an action exists on the moduli space of abelian bidifferentials on any genus curve. 

It might be constructed ad hoc for smooth genus three curves lying on an abelian surface. Indeed, by a theorem of Barth \cite[Proposition (1.8)]{B} a smooth genus three curve can be embedded into an abelian surface iff it is bielliptic, i.~e. a two-sheeted cover of an elliptic curve. For a curve $C$ on an abelian surface $A^2$ this elliptic curve $E = E(C \subset A^2)$ arises as the kernel of the map $\mathrm{Jac}(C) \to A^2$. If one fixes the surface $A^2$, a deformation of $C$ within $A^2$ (which has $g-2 = 1$ degree of freedom) is governed by the deformation of the kernel $E$. The other way around, if one fixes the kernel $E$, a deformation of $C$ (given by a variation of the quadruple of ramification points) gives rise to a deformation of the quotient $A^2$, to where $C$ belongs (at least whenever $A^2$ is simple). Therefore, for fixed $E = E(C \subset A^2)$, the local action of the Lie algebra $\mathfrak{sp}(4,\R)$ on the Siegel upper half-plane lifts to its local action on possible ramification divisors on $E$ (defined up to a shift, of course), and hence on the local moduli space of pairs $\{C \subset A^2\}$ (which is an orbit within the moduli space of abelian bidifferentials for genus three).

\section{Intrinsic geometry of the isoperiodic locus}
\label{chronos}

\subsection{Flat structure on the isoperiodic foliation}
The following `relative periods' coordinates on the isoperiodic leaf are well-known in Teichm\"uller theory \cite{GK}\cite{HM}.

\begin{fact}
Fix a cohomology class $[\alpha] \in H^1(S,\C)$, an isoperiodic family of complex structures w.~r.~t. $[\alpha]$, and let $\alpha_I \in \Omega(S,I)$ be a holomorphic 1-form representing it in the complex structure $I$. Let $z_0^I, z_1^I, \dots z_{2g-3}^I$ be its zeroes (so that the point $z_i^I$ varies smoothly as we vary the complex structure $I$). Then the tuple of functions $$\Xi_i(I) = \int_{z_0^I}^{z_i^I}\alpha_I$$ for $i = 1, 2,\dots 2g-3$ gives a local coordinate system on the isoperiodic leaf of the class $[\alpha]$.
\end{fact}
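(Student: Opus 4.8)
The plan is to prove that the holomorphic map $\Xi = (\Xi_1, \dots, \Xi_{2g-3})$ from the isoperiodic leaf $\mathcal{L}$ of $[\alpha]$ into $\C^{2g-3}$ is a local biholomorphism at the complex structure $I$ in question. The leaf $\mathcal{L}$ is, locally, a fibre of the period map $\varkappa_1 \colon \P(\Omega\T_S) \to \P(V)$; since the differential of $\varkappa_1$ is everywhere surjective (Proposition~\ref{htht-strong}), these fibres are smooth submanifolds of the complementary dimension $\dim\P(\Omega\T_S) - \dim\P(V) = (4g-4) - (2g-1) = 2g-3$. Thus the source and the target of $\Xi$ are equidimensional, and it suffices to show that the differential $d\Xi$ at $I$ is injective. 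We work, as the statement tacitly requires, at a point of $\mathcal{L}$ at which the $2g-2$ zeroes of $\alpha$ are simple and pairwise distinct, so that the functions $z_i^I$ are well defined and vary smoothly.

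First I would compute the differential. Let $v \in T_I\mathcal{L} \subset T_I\T_S = H^1(S,TS)$ be realised by a path $I_t$ inside $\mathcal{L}$, and let $\dot\alpha = \frac{d}{dt}\big|_{t=0}\alpha_{I_t}$ be the resulting closed $1$-form on $S$. Differentiating $\Xi_i(I_t) = \int_{z_0^{I_t}}^{z_i^{I_t}}\alpha_{I_t}$ and observing that deforming the path of integration contributes, for a closed form, only the boundary terms $\alpha_I(\dot z_i^I) - \alpha_I(\dot z_0^I)$, both of which vanish because $z_0^I$ and $z_i^I$ are zeroes of $\alpha_I$, we obtain $d\Xi_i(v) = \int_{z_0^I}^{z_i^I}\dot\alpha$. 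Along $\mathcal{L}$ the Gau\ss--Manin flat trivialisation $\varkappa$ identifies the section $I\mapsto[\alpha_I]$ with the constant class $[\alpha] \in V$, so $[\dot\alpha] = 0$ and hence $\dot\alpha = d\phi$ for a single-valued $\phi \in C^\infty(S,\C)$, unique up to an additive constant; normalising $\phi(z_0^I) = 0$ gives $d\Xi_i(v) = \phi(z_i^I)$. Moreover the $(0,1)$-part of $\dot\alpha$ is exactly the representative $\alpha(v)$ of $\KS_v(\alpha)$ from the relevant Fact of Subsection~\ref{htht-ks}, so $\pdb\phi = \KS_v(\alpha)$; this is also why $\phi$ exists, its obstruction class in $H^1(S,\cO_S)$ being $\KS_v(\alpha) = 0$ precisely because $v$ is tangent to the isoperiodic leaf.

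To conclude I would establish injectivity of $d\Xi$ by pairing $v$ against quadratic differentials. Assume $d\Xi_i(v) = 0$ for all $i$, i.e.~$\phi$ vanishes at every zero of $\alpha$. Let $q \in H^0(K_S^2)$ be arbitrary and write $q = \alpha \ox \omega$ with $\omega = q/\alpha$, a meromorphic $1$-form whose poles are at most simple and occur only at the zeroes of $\alpha$. Using $\pdb\phi = \KS_v(\alpha)$, the Serre duality pairing $H^1(S,TS) \x H^0(K_S^2) \to \C$ evaluates to $\langle v, q\rangle = \pm\int_S \pdb\phi \wedge \omega$; the integrand equals $\pdb(\phi\omega)$ off the poles and is integrable at them because $\pdb\phi = \KS_v(\alpha)$ vanishes at the zeroes of $\alpha$, so Stokes' theorem together with the residue theorem gives $\langle v, q\rangle = \mp 2\pi\1 \sum_i \phi(z_i^I)\,\Res_{z_i^I}\omega = 0$. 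Since $q$ was arbitrary and Serre duality is a perfect pairing, $v = 0$; hence $d\Xi$ is injective, therefore an isomorphism, and $\Xi$ is a holomorphic coordinate chart on $\mathcal{L}$ near $I$.

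I expect the technical heart to be the variational formula for $d\Xi_i$ together with the residue identity $\langle v, q\rangle = \mathrm{const}\cdot\sum_i \phi(z_i^I)\,\Res_{z_i^I}(q/\alpha)$: one must check carefully that $\phi$ is genuinely single-valued (which is exactly where membership in the isoperiodic leaf is used), that the moving-endpoint contributions really drop out because the integration limits are chosen among the zeroes of $\alpha$, and that $\int_S \pdb\phi\wedge\omega$ converges even though $\omega$ has poles. A less self-contained alternative would be to quote the standard period coordinates on strata of abelian differentials and restrict them to the zero locus of the absolute-period map; the argument above is, in effect, that statement unwound in the language of the present paper.
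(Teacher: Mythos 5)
The paper itself offers no proof of this Fact: it is recorded as well known, with references to \cite{GK} and \cite{HM}. Your argument is correct, and it is in substance the same circle of ideas that the paper assembles immediately after the statement: your formula $d\Xi_i(v)=\phi(z_i^I)-\phi(z_0^I)$ is the paper's next Proposition (with $s=\phi|_{Z_\alpha}$ the section of $\cO_{Z_\alpha}$ arising from the normal-sheaf exact sequence), and your residue identity for $\langle v,q\rangle$ is exactly the paper's formula \eqref{connect}, for which the paper defers to a \v{C}ech--Dolbeault computation \cite{Z}. What your write-up adds is the glue: Serre duality turns \eqref{connect} into injectivity of $d\Xi$ on the tangent space of the leaf, and Proposition \ref{htht-strong} supplies the dimension count $2g-3$, so injectivity suffices. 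Two small points to keep honest. First, the local integrability of $\pdb\phi\wedge\omega$ near a simple pole of $\omega$ holds simply because $1/|z|$ is locally integrable in two real dimensions; the vanishing of $\pdb\phi$ at the zeroes of $\alpha$ is true but is not the reason the integral converges. Second, as you correctly note, the statement is only valid on the open stratum where the $2g-2$ zeroes of $\alpha_I$ are simple and pairwise distinct --- off that stratum $H^0(\cO_{Z_\alpha})/\C$ no longer injects into $\C^{2g-3}$ by evaluation at the points and the $\Xi_i$ degenerate as coordinates, a caveat the Fact leaves implicit.
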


Note that each function $\Xi_i$ is defined up to an additive constant, i.~e. a period of $\alpha$.

Let us give a more algebraic handling of the above construction. Much like we considered the normal sheaf of a plane in $H^0(K)$, one can consider a normal sheaf for just one form $\alpha$. It is isomorphic to the structure sheaf $\cO_Z$ of the subscheme $Z = Z_\alpha$ of zeroes of $\alpha$, and the long exact sequence reads:
$$0 \to H^0(\cO) \to H^0(\cO_Z) \to H^1(T) \to H^1(\cO) \to 0.$$

Under the Serre duality the last arrow can be viewed as the arrow $H^0(K_S^2)^* \to H^0(K_S)^*$ dual to the arrow $H^0(K_S) \to H^0(K_S^2)$ given by $\xi \mapsto \xi\ox\alpha$. Its kernel is hence precisely the tangent space to the universal isoperiodic deformation w.~r.~t. $\alpha$.

\begin{expl}
Let $E$ be an elliptic curve with a holomorphic form $dz$, $p \colon S \to E$ a ramified cover, and $\alpha = p^*dz$. Then $Z$ is the ramification divisor of $p$, and the isoperiodic deformation w.~r.~t. $\alpha$ is given by the variation of the ramification points on $E$. Variation of each point is prescribed by a vector, which can be paired with $dz$ to obtain a number. Hence the variation is described by a tuple of numbers at each ramification point. The trivial deformations are precisely the ones coming from a variation of the ramification locus, which arises from a shift on the elliptic curve, i.~e. for which the numbers associated to the ramification points are all the same (so that the section of $H^0(\cO_Z)$ is constant).
\end{expl}

\begin{pr}
Let $v \in H^1(T)$ be an isoperiodic deformation w.~r.~t. $\alpha$, which we view both as a variation of complex structure $I \mapsto I+\eps v$ and a section $s \in H^0\left(\cO_{Z_\alpha}\right)$. Then one has $\Lie_v\Xi_i(I) = s(z_i) - s(z_0)$.
\end{pr}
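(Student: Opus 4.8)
The plan is to differentiate the defining relation $\Xi_i(I)=\int_{z_0^I}^{z_i^I}\alpha_I$ along a path $I_t$ of complex structures lying on the isoperiodic leaf of $[\alpha]$, with $I_0=I$ and $\frac{d}{dt}\big|_0 I_t=v$. Put $\alpha_t:=\alpha_{I_t}$; these are closed $1$-forms all representing the fixed class $[\alpha]$, so the variation $\dot\alpha:=\frac{d}{dt}\big|_0\alpha_t$ is exact, say $\dot\alpha=d\phi$ with $\phi\colon S\to\C$ smooth and unique up to an additive constant. The Leibniz rule for an integral with moving endpoints reads
\[
\Lie_v\Xi_i \;=\; \int_{z_0}^{z_i}\dot\alpha \;+\; \alpha_0|_{z_i}(\dot z_i) \;-\; \alpha_0|_{z_0}(\dot z_0),
\]
and I would then observe that the two boundary contributions vanish for the trivial reason that $\alpha_0=\alpha$ vanishes at each of its own zeros $z_0,\dots,z_{2g-3}$, no matter how those zeros move. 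Hence $\Lie_v\Xi_i=\int_{z_0}^{z_i}d\phi=\phi(z_i)-\phi(z_0)$, and the whole statement reduces to identifying the restriction of $\phi$ to $Z_\alpha$ with the section $s\in H^0(\cO_{Z_\alpha})$.

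For that I would make concrete the connecting homomorphism $\delta\colon H^0(\cO_Z)\to H^1(T)$ of the sequence $0\to T\xrightarrow{\alpha}\cO\to\cO_Z\to 0$. Cover $S$ by $U_1=S\setminus Z$ and a union $U_0$ of small disks around the $z_i$; a section $s$ lifts to the constants $s(z_i)$ on $U_0$ and to $0$ on $U_1$, and on the overlap the difference is the locally constant function $s(z_i)$, which equals $\alpha$ times the pole-carrying vector field $\mu_{01}=s(z_i)/\alpha$, so $[\mu_{01}]$ is the \v{C}ech representative of $v=\delta(s)$. The first-order deformation it defines reglues $U_0$ to $U_1$ along the time-$\eps$ flow of $\mu_{01}$, leaving $\alpha$ and its local primitives on $U_0$ untouched and leaving the multivalued primitive $\int\alpha$ on $U_1$ untouched; in the deformed curve these primitives now differ, near $z_i$, by a constant varying at rate $\alpha(\mu_{01})=s(z_i)$. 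Since the periods of $\alpha$ are preserved, the variation of the multivalued primitive on $U_1$ is the single-valued function $\phi$, and the computation above gives $\phi(z_i)=s(z_i)$ up to a global additive constant — the very ambiguity already present in $s\in H^0(\cO_Z)$ modulo $H^0(\cO)$. Therefore $\Lie_v\Xi_i=\phi(z_i)-\phi(z_0)=s(z_i)-s(z_0)$. The same conclusion can be reached analytically by representing $\delta(s)$ by the Beltrami differential $\bar\partial\tilde s/\alpha$ for a smooth extension $\tilde s$ of $s$ that is holomorphic near $Z$, noting that the $(0,1)$-part of $\dot\alpha$ with respect to $I_0$ is $\KS_v(\alpha)=\bar\partial\tilde s$, whence $\bar\partial(\phi-\tilde s)=0$ and $\phi-\tilde s$ is constant.

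The hard part will be purely a matter of bookkeeping: matching, with consistent signs, the \v{C}ech cocycle $\mu_{01}$ (or the Beltrami differential $\bar\partial\tilde s/\alpha$) with the actual first-order motion $I\mapsto I+\eps v$, which is a standard but sign-sensitive identification. The quickest sanity check, which also pins the overall sign, is the covering example preceding the Proposition: for $\alpha=p^*dz$ one has $\Xi_i=\int_{p(z_0)}^{p(z_i)}dz$, and moving the branch point over $z_i$ by a vector $w_i$ changes $\Xi_i$ by exactly $dz(w_i)-dz(w_0)=s(z_i)-s(z_0)$. Finally, if $\alpha$ has a zero of order greater than one at $z_i$, I would read that point as the corresponding fat subscheme of $Z_\alpha$ and $s(z_i)$ as a jet, take $\tilde s$ holomorphic rather than locally constant near $z_i$, and interpret $\phi(z_i)=s(z_i)$ as an equality of jets; the argument is unchanged, though in the stratum where the relative-period coordinates of the Fact live all zeros are simple and this refinement is unnecessary.
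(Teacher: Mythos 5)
The paper states this Proposition without any proof at all (only the next Proposition, the residue formula for $v_s(\omega)$, gets the one-line justification ``Computation for \v{C}ech--Dolbeault double resolution''), so there is no argument of the author's to compare yours against; what you have written supplies the missing details, and it is correct. Your structure --- Leibniz rule for the moving-endpoint integral, vanishing of the endpoint terms because $\alpha$ vanishes at its own zeroes, exactness of $\dot\alpha=d\phi$ on the isoperiodic leaf, and then the identification $\phi|_{Z_\alpha}=s$ up to a constant via the connecting homomorphism of $0\to T\xrightarrow{\ \alpha\ }\cO\to\cO_Z\to 0$ --- is exactly the right decomposition, and the Dolbeault version ($\mu=\bar\pd\tilde s/\alpha$, so the $(0,1)$-part of $\dot\alpha$ is $\bar\pd\tilde s$, whence $\phi-\tilde s$ is a global holomorphic function, hence constant) is the cleanest way to close it. Two small points deserve an explicit line. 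First, in the \v{C}ech regluing picture, the reason you may ``leave $\alpha$ untouched'' on both charts is that the flow of $\mu_{01}=s(z_i)/\alpha$ preserves $\alpha$: $\Lie_{\mu_{01}}\alpha=d\bigl(\iota_{\mu_{01}}\alpha\bigr)=d\bigl(s(z_i)\bigr)=0$ on the annulus, and this is also what makes the deformation isoperiodic (periods over cycles in $S\setminus Z$ are unchanged and these generate $H_1(S)$). Second, the sign ambiguity you flag is genuine (it is the usual convention choice relating a Beltrami differential to the deformed $(0,1)$-distribution), but pinning it with the ramified-cover example is exactly how the paper itself calibrates the picture in the Example preceding the Proposition, so that is an adequate resolution rather than a gap.
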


Whenever we have a section $s \in H^0(\cO_Z)$, we shall denote the corresponding deformation by $v_s \in H^1(T)$.

\begin{pr}
Let $s \in H^0(\cO_{Z_\alpha})$ be a section. The value of its image under the connecting homomorphism in $H^1(T) \cong H^0(K^{\ox 2})^*$ on a quadratic differential $\omega$ is given by \begin{equation}\label{connect}v_s(\omega) = \sum_{i=0}^{2g-3}s(z_i){\Res_{z_i}}{\left(\frac{\omega}{\alpha}\right)}.\end{equation}
\end{pr}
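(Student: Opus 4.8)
The plan is to compute the connecting homomorphism directly from the \v{C}ech description of the short exact sequence $0 \to T \to \cO \to \cO_{Z_\alpha} \to 0$ (where the first map is $v \mapsto \alpha(v)$), and then pair the resulting class in $H^1(T)$ with a quadratic differential $\omega$ via Serre duality, recognizing the pairing as a sum of residues. First I would fix, around each zero $z_i$ of $\alpha$, a small disk $U_i$ with local coordinate $t_i$ in which $\alpha = t_i^{m_i}\,dt_i$ (with $\sum m_i = 2g-2$ counted so that there are $2g-2$ zeroes with multiplicity, but in the generic simple-zero case $m_i = 1$, $i=0,\dots,2g-3$), together with the open complement $U_\infty = S \setminus Z_\alpha$. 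A section $s \in H^0(\cO_{Z_\alpha})$ assigns a germ to each $z_i$; to lift it to a \v{C}ech $0$-cochain for $T$ with values in the structure sheaf, on $U_i$ I pick a vector field $\tilde s_i$ with $\alpha(\tilde s_i) \equiv s(z_i)$ near $z_i$ (possible since $\alpha$ generates $\cO$ away from its zeroes, and the jet condition is exactly what makes this extendable), and on $U_\infty$ I take $\tilde s_\infty = 0$. The \v{C}ech coboundary $v_s \in H^1(T)$ is then represented by the cocycle $\{\tilde s_i - \tilde s_\infty\}_{U_i \cap U_\infty} = \{\tilde s_i\}$ on the punctured disks $U_i^* = U_i \setminus \{z_i\}$.

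Next I would carry out the Serre-duality pairing. The class $v_s \in H^1(S, T)$ pairs with $\omega \in H^0(K_S^{\ox 2}) = H^0(T^* \ox K_S)$ into $H^1(S, K_S) \cong \C$, and this pairing is computed by contracting the cocycle against $\omega$ and summing residues: $\langle v_s, \omega \rangle = \sum_i \Res_{z_i}\big(\omega(\tilde s_i)\big)$, where $\omega(\tilde s_i)$ is the meromorphic $1$-form obtained by feeding the vector field $\tilde s_i$ into one slot of the quadratic differential $\omega$. The key local computation is then: near $z_i$, writing $\omega = g(t_i)\,dt_i^{\ox 2}$ and $\alpha = t_i\,dt_i$ (simple zero), the vector field $\tilde s_i = s(z_i)\,t_i^{-1}\partial_{t_i} + (\text{holomorphic})$, so $\omega(\tilde s_i) = s(z_i)\,g(t_i)\,t_i^{-1}\,dt_i + (\text{holomorphic } 1\text{-form})$, whence $\Res_{z_i}\big(\omega(\tilde s_i)\big) = s(z_i)\cdot\Res_{z_i}(\omega/\alpha)$, since $\omega/\alpha = g(t_i)\,t_i^{-1}\,dt_i$ is exactly the meromorphic $1$-form in question. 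Summing over $i = 0, \dots, 2g-3$ gives formula \eqref{connect}. (The holomorphic ambiguity in each $\tilde s_i$ contributes no residue, and the ambiguity in the choice of local lifts is absorbed by the fact that the \v{C}ech class is well-defined; one should also note the identity $\sum_i \Res_{z_i}(\omega/\alpha) = 0$, which is the residue theorem and which reflects precisely the one-dimensional ambiguity — adding a constant to $s$ — noted in the preceding remark.)

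I expect the main obstacle to be bookkeeping rather than conceptual: making sure the identification $H^1(S,T) \cong H^0(K_S^{\ox 2})^*$ used here is literally the Serre-duality pairing that appeared earlier (the one dual to $\xi \mapsto \xi \ox \alpha$, hence to symmetric multiplication), so that signs and normalizations match the statement of the previous proposition relating $\Lie_v \Xi_i$ to $s(z_i) - s(z_0)$; and handling higher-order zeroes of $\alpha$ uniformly, where $\tilde s_i$ must match a full jet of order $m_i$ of $s$ and the residue extraction involves a Laurent expansion to order $m_i$ — here one should check that $\Res_{z_i}(\omega(\tilde s_i))$ still collapses to (the appropriate jet-pairing of) $s$ against $\Res_{z_i}(\omega/\alpha)$, which it does because $\omega/\alpha$ has a pole of order exactly $m_i$ and $\tilde s_i$ is pinned down modulo terms that kill that pole. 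A clean way to finish is to observe that both sides of \eqref{connect} are $\C$-linear in $s$ and supported additively at the points $z_i$, so it suffices to verify the formula for $s$ a delta-function at a single simple zero, which is the local computation above.
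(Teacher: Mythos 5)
Your proposal is correct and is essentially the paper's own argument: the paper's proof consists of the single line ``Computation for \v{C}ech--Dolbeault double resolution'' with a citation, and your two-open-set \v{C}ech cover, lift of $s$ to local (meromorphic) vector fields $s(z_i)\,\alpha^{-1}$, and evaluation of the Serre-duality pairing as a sum of residues is exactly that computation with the details written out. The only (harmless) variant is that you work purely with \v{C}ech cocycles and invoke the residue description of $H^1(K)\cong\C$ directly, rather than passing through the \v{C}ech--Dolbeault double complex.
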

\begin{proof}
Computation for \v{C}ech--Dolbeault double resolution \cite{Z}.
\end{proof}

It is clear that $v_s$ vanishes on each quadratic differential of the form $\alpha\ox-$, since the quotient $\omega/\alpha$ is holomorphic and has zero residues in this case, so that the corresponding deformations indeed preserve the periods of $\alpha$. It is also clear that the right-hand side vanishes for $v$ a constant vector, since in this case it is the sum of residues of a meromorphic 1-form, which is zero on a compact curve.

\subsection{Lesser isoperiodic leaf in a greater isoperiodic leaf}

The lesser isoperiodic leaf of $(\alpha, \beta)$ is obviously contained in the tangent space to the isoperiodic leaf of $\beta$, and one can describe the map on the tangent spaces $H^0(K)/\langle\alpha,\beta\rangle \to H^0(\cO_{Z_\alpha})/\mathrm{const}$.

Again, we examine thoroughly the case of a curve on an abelian surface. The forms $\alpha$ and $\beta$ give two left-invariant foliations on the surface, and the zeroes of $\alpha$ and $\beta$ correspond to the points where the curve is tangent to the foliation. Hence, interpreting 1-form as a displacement of the curve, we should look at its values in the zeroes of $\alpha$. Note that $\beta$ is never zero in a zero of $\alpha$, hence the mapping $H^0(K) \to H^0\left(\cO_{Z_\alpha}\right)$ given by \begin{equation}\label{gammabeta}\gamma \mapsto s^\beta_\gamma = \left(\frac{\gamma(z_0)}{\beta(z_0)}, \frac{\gamma(z_1)}{\beta(z_1)}, \frac{\gamma(z_2)}{\beta(z_2)}, \dots, \frac{\gamma(z_{2g-3})}{\beta(z_{2g-3})}\right)\end{equation} is well-defined. Moreover, after quotienting out the constants, the mapping vanishes on $\alpha$ and $\beta$, giving the desired map.

The same formula works for any pair of 1-forms $\alpha$ and $\beta$ with disjoint zeroes, not necessarily with discrete period lattice. Moreover, if zeroes are not disjoint, we can write down the map $H^0(K(-Y)) \to H^0\left(\cO_{Z_\alpha}\right)$ by the same formula since any section of $K(-Y)$ can be represented by a 1-form vanishing at each zero of $\alpha$ where $\beta$ also vanishes.

The formula (\ref{connect}) for $v_s(\omega)$ yields $$v_\gamma(\omega) = \sum_{i=0}^{2g-3}\frac{\gamma(z_i)}{\beta(z_i)}{\Res_{z_i}}{\left(\frac{\omega}{\alpha}\right)},$$ and for $\omega = \beta\ox-$ by Cauchy's index theorem one has $$v_\gamma(\omega) = 2\pi i\sum_{i=0}^{2g-3}\frac{\gamma(z_i)}{\beta(z_i)}\int_{\partial\Delta_i}\frac{\beta\ox-}{\alpha} = 2\pi i\sum_{i=0}^{2g-3}\int_{\partial\Delta_i}\frac{\gamma\ox-}{\alpha} = 0$$ since the functions $\gamma/\beta$ are holomorphic in the small neighborhoods of $z_i$ (which we here denote by $\Delta_i$). Hence these deformations indeed preserve the periods of $\beta$.

Let $\iota$ be a hyperelliptic involution on a curve $S$ again. We know that the zero locus of a holomorphic 1-form is preserved by $\iota$, so let us order the zeroes $z_i \in Z = Z_\alpha$ in such a way that $\iota(z_{2i}) = z_{2i+1}$. In what follows, we assume that no differential we consider vanishes at a fixed point of the hyperelliptic involution.

\begin{pr}\label{couplets}
Under such an ordering, the tangent space of the lesser isoperiodic leaf for $\alpha$ and $\beta$, provided they have no zeroes in common, is the space of sections $s \in H^0(\cO_Z)$ with $s(z_{2i}) = s(z_{2i+1})$ modulo constants. In particular, it does not depend on $\beta$.
\end{pr}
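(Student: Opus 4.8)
The plan is to use the description of the tangent space to the lesser isoperiodic leaf established just before the statement --- realized as a subspace of $H^0(\cO_{Z_\alpha})/\mathrm{const}$, it is the image of the map $\gamma\mapsto s^\beta_\gamma\bmod\mathrm{const}$ of formula~(\ref{gammabeta}), with $\gamma$ ranging over $H^0(K)$ --- and then to pin this image down by exploiting the symmetry of the hyperelliptic involution together with a dimension count. (Note that, since $\alpha$ and $\beta$ have no common zero, they are coprime by Proposition~\ref{com-zer}, so by Corollary~\ref{one-to-one} this image really is the whole tangent space, of dimension $g-2$.)

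First I would invoke the classical fact that every holomorphic $1$-form on a hyperelliptic curve is anti-invariant: $\iota^*\mu=-\mu$ for all $\mu\in H^0(K_S)$, since an $\iota$-invariant holomorphic $1$-form would descend to a holomorphic $1$-form on $S/\iota\cong\P^1$ and hence vanish. Therefore $\mathrm{div}(\alpha)$ is $\iota$-stable, and since by the standing assumption $\alpha$ has no zero at a fixed point of $\iota$, its zeros break into the $g-1$ genuine couples $\{z_{2i},z_{2i+1}=\iota(z_{2i})\}$ fixed by the ordering in the statement.

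The heart of the argument is a one-line equivariance computation. The value $s^\beta_\gamma(z_{2i+1})=\gamma(z_{2i+1})/\beta(z_{2i+1})$ is a ratio of two covectors at $z_{2i+1}$; pulling both back along the isomorphism $(d\iota_{z_{2i}})^*\colon T^*_{z_{2i+1}}S\to T^*_{z_{2i}}S$ does not change the ratio, while carrying $\gamma(z_{2i+1})$ to $(\iota^*\gamma)(z_{2i})=-\gamma(z_{2i})$ and $\beta(z_{2i+1})$ to $-\beta(z_{2i})$; the two minus signs cancel, so $s^\beta_\gamma(z_{2i+1})=\gamma(z_{2i})/\beta(z_{2i})=s^\beta_\gamma(z_{2i})$. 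Hence the image of $\gamma\mapsto s^\beta_\gamma$ lands inside $W=\{s\in H^0(\cO_{Z_\alpha}):s(z_{2i})=s(z_{2i+1})\}$, which, being cut out by $g-1$ independent linear conditions, has dimension $(2g-2)-(g-1)=g-1$.

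Finally the inclusion is forced to be an equality by a dimension count on the source: the kernel of $\gamma\mapsto s^\beta_\gamma$ consists of the $1$-forms vanishing on all of $Z_\alpha=\mathrm{div}(\alpha)$, i.e.\ the scalar multiples of $\alpha$, so the image has dimension $g-1=\dim W$ and thus equals $W$. As $s^\beta_\beta=(1,\dots,1)$ is a constant section lying in this image, passing to the quotient by constants gives exactly $W/\mathrm{const}$, of dimension $g-2$; this matches the dimension of the lesser isoperiodic leaf, and the subspace $W/\mathrm{const}$ plainly depends only on $Z_\alpha$ with its $\iota$-pairing and not on $\beta$. I do not expect a real obstacle: the proof is pure bookkeeping, the only points needing care being the sign cancellation in the equivariance step and --- should one want to drop the genericity that $\alpha$ has simple zeros --- reading the couples scheme-theoretically on the length-$(2g-2)$ sheaf $\cO_{Z_\alpha}$, where the same computation goes through because $\gamma/\beta$ is $\iota$-invariant as a germ, not just pointwise.
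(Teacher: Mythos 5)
Your proposal is correct and follows essentially the same route as the paper's own proof: the key step in both is the sign-cancellation computation $s^\beta_\gamma(z_{2i})=\frac{-\gamma(z_{2i})}{-\beta(z_{2i})}=\frac{(\iota^*\gamma)(z_{2i})}{(\iota^*\beta)(z_{2i})}=s^\beta_\gamma(z_{2i+1})$ coming from the anti-invariance of abelian differentials under the hyperelliptic involution, followed by the dimension count identifying the $(g-1)$-dimensional space of $\iota$-symmetric sections (hence $g-2$ modulo constants) with the tangent space via Corollary \ref{one-to-one}. Your extra details --- the explicit identification of the kernel of $\gamma\mapsto s^\beta_\gamma$ with $\langle\alpha\rangle$ and the justification that $\iota^*\mu=-\mu$ for every holomorphic $1$-form --- are welcome elaborations of steps the paper leaves implicit, not a different argument.
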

\begin{proof}
Any isoperiodic deformation w.~r.~t. $\alpha$ preserving also periods of $\beta$ is given by a section of the form (\ref{gammabeta}). Since the involution $\iota$ is hyperelliptic, for any two forms $\beta, \gamma \in \Omega(S)$ one has $$s^\beta_\gamma(z_{2i}) = \frac{\gamma(z_{2i})}{\beta(z_{2i})} = \frac{-\gamma(z_{2i})}{-\beta(z_{2i})} = \frac{(\iota^*\gamma)(z_{2i})}{(\iota^*\beta)(z_{2i})} = \frac{\gamma(\iota(z_{2i}))}{\beta(\iota(z_{2i}))} = \frac{\gamma(z_{2i+1})}{\beta(z_{2i+1})} = s^\beta_\gamma(z_{2i+1}).$$ The space of sections with $s(z_{2i}) = s(z_{2i+1})$ has dimension $g-1$, and its image in $H^1(T)$ has dimension $g-2$. By Corollary \ref{one-to-one}, it is the tangent space to the lesser isoperiodic leaf. 
\end{proof}

This implies that if a first-order deformations of a hyperelliptic curve preserves periods of two holomorphic 1-forms without common zeroes, it preserves periods of any holomorphic 1-form. Of course there cannot exist such an analytic deformation, since it would contradict the Torelli theorem. This really means that the lesser isoperiodic leafs in the Teichm\"uller space have maximal possible tangency along the locus of hyperelliptic curves.

This might look a little weird: it implies e.~g. that the span of $L_\alpha\cap L_\beta$ for any coprime $\alpha$ and $\beta$ contains all the monomials of the form $\omega\ox\omega'$, i.~e. the multiplication map $H^0(K_S)^2 \to H^0(K_S^2)$ is not surjective. One ought be not afraid of this: note that $\iota^*(\omega) = -\omega$, so $\iota$ acts on the range of the multiplication map $H^0(K_S)^2 \to H^0(K_S^2)$ as identity. However, there are always quadratic differentials which are acted upon by $\iota$ as $-\Id$. The hyperelliptic curves of genus greater than two are the only counterexamples to the infinitesimal Torelli theorem for curves, this is due to Max Noether \cite[Ch.~VIII]{TrAG}.

\begin{pr}
Let $v\in H^1(T)$ be a first-order deformation. It preserves the hyperelliptic involution iff $\iota^*v = v$.
\end{pr}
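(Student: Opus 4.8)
The plan is to read ``$v$ preserves the hyperelliptic involution'' as ``$v$ is tangent to the hyperelliptic locus $\mathcal{H}_g\subset\T_S$'', and then to identify $\mathcal{H}_g$ near $(S,I)$ with the fixed-point set of $\iota$ acting on $\T_S$. The assertion then becomes an instance of the general fact that the fixed locus of a finite-order biholomorphism is a complex submanifold whose tangent space at a fixed point is the fixed subspace of the differential.

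First I would make the action explicit. The diffeomorphism $\iota$ acts on $\Cx(S)$ by pullback, and $I$ is a fixed point of this action (being $I$-holomorphic means exactly $\iota^*I=I$); its derivative at $I$ acts on $T_I\Cx(S)=\{A:AI+IA=0\}$ by $A\mapsto\iota^*A$. Since $\iota^*(\Lie_wI)=\Lie_{\iota^*w}I$, this descends to the action $v\mapsto\iota^*v$ on $H^1(S,TS)=T_I\T_S$ figuring in the statement. Under Serre duality it is the transpose of the action $\omega\mapsto\iota^*\omega$ on $H^0(K_S^2)$, so the $(\pm1)$-eigenspace splitting of $H^1(S,TS)$ is dual to the one discussed above; in particular $(H^1(S,TS))^{+}$ has dimension $2g-1$, which matches $\dim\mathcal{H}_g$.

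For the implication $\iota^*v=v\Rightarrow v$ preserves $\iota$: pick a tensor $A$ representing $v$ and replace it by $\tfrac12(A+\iota^*A)$, which is $\iota$-invariant and still represents $v$. Then $I+\eps A$ is an $\iota$-invariant first-order deformation, so $\iota$ stays holomorphic for it; its $2g+2$ fixed points and its rational quotient persist, so the deformed curve is again hyperelliptic with involution $\iota$, and $v$ is tangent to $\mathcal{H}_g$. Conversely, if $v$ is realized by a family $\mathfrak{X}\to\Delta$ carrying an involution $\tilde\iota$ over $\Delta$ restricting to $\iota$ on the central fiber, then $\tilde\iota$ acts on the exact sequence $TS\to T\mathfrak{X}|_S\to\cO_S$ by the differential of $\iota$ on $TS$ and trivially on $\cO_S$ (the base, ``time'', direction), hence fixes the extension class; since for an involution pullback and pushforward agree, this class is $\iota^*v=v$. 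Equivalently, $v$ then admits an $\iota$-invariant Beltrami representative.

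The only point requiring real care is this last, $(\Rightarrow)$, inclusion: one must know that a deformation ``compatible with $\iota$'' genuinely has an $\iota$-fixed Kodaira--Spencer class. I would get this either from smoothness of $\mathcal{H}_g$ (so that $\iota$ extends over any tangent deformation, with no obstruction), or directly from the $\tilde\iota$-equivariant structure on $T\mathfrak{X}|_S$ as above. Everything else is the averaging trick together with the bookkeeping of how $\iota^*$ on $T_I\Cx(S)$ descends to cohomology and dualizes.
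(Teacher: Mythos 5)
Your argument is correct, and it is essentially the paper's: the paper's entire proof is the one-liner ``Clear on the level of cocycles,'' and your averaging trick $A\mapsto\tfrac12(A+\iota^*A)$ together with the observation that an $\iota$-equivariant family has $\iota$-fixed Kodaira--Spencer class is precisely the cocycle-level argument being alluded to. The additional framing via the fixed locus of the finite-order action on $\T_S$ (and the dimension check $\dim(H^1(T))^{+}=2g-1=\dim\mathcal{H}_g$) is a sound elaboration rather than a different route.
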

\begin{proof}
Clear on the level of cocycles.
\end{proof}

Note that if $\iota^*v = -v$, then one has $v(\omega\ox\omega') = -(\iota^*v)(\omega\ox\omega') = -v(\iota^*\omega\ox\iota^*\omega') = -v((-\omega)\ox(-\omega')) = -v(\omega\ox\omega')$. In other words, the $(-1)$-eigenspace of $\iota^*$ on $H^1(T)$ is the space of first-order deformations which are isoperiodic for any class. This can be seen e.~g. from the fact that the connecting homomorphism $H^0(K) \to H^1(T)$ is equivariant w.~r.~t. the hyperelliptic involution.

\begin{pr}\label{hyperel-skew}
Let $s \in H^0(\cO_Z)$ be a section which gives a deformation $v_s \in H^1(T)$. Then it satisfies $s(z_{2i}) = -s(z_{2i+1})$, maybe after adding a constant section. In particular, a hyperelliptic deformation of a hyperelliptic curve never preserves periods of more than one holomorphic 1-form.
\end{pr}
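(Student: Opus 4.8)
The plan is to compare the action of the hyperelliptic involution $\iota$ on the two ends of the connecting homomorphism $\delta\colon H^0(\cO_Z)\to H^1(T)$ from the exact sequence $0\to H^0(\cO)\to H^0(\cO_Z)\to H^1(T)\to H^1(\cO)\to 0$, using the explicit residue formula (\ref{connect}) for $v_s=\delta(s)$. For the bookkeeping: since $\alpha$ does not vanish at a fixed point of $\iota$, its zero divisor $Z=Z_\alpha$ consists of $\iota$-pairs, and with the ordering $\iota(z_{2i})=z_{2i+1}$ the induced involution $\iota^*$ on $H^0(\cO_Z)$ simply swaps $z_{2i}\leftrightarrow z_{2i+1}$; so its $+1$-eigenspace is $\{s:s(z_{2i})=s(z_{2i+1})\}$, its $-1$-eigenspace is $\{s:s(z_{2i})=-s(z_{2i+1})\}$, and the line $H^0(\cO)=\ker\delta$ of constant sections sits inside the $+1$-eigenspace. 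On the other end, a class $v\in H^1(T)=H^0(K^2)^*$ is $\iota$-invariant, i.e. a deformation preserving the hyperelliptic involution, iff it kills the $\iota$-antisymmetric part of $H^0(K^2)$, and $\iota$-antiinvariant, i.e. a deformation isoperiodic for every class (by the remark preceding the Proposition), iff it kills the $\iota$-symmetric part.

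The key computation is the behaviour of residues under $\iota$. For a quadratic differential $\omega$ one has $\iota^*(\omega/\alpha)=-(\iota^*\omega)/\alpha$ because $\iota^*\alpha=-\alpha$, hence $\Res_{z_{2i+1}}(\omega/\alpha)=\Res_{z_{2i}}(\iota^*(\omega/\alpha))$ equals $-\Res_{z_{2i}}(\omega/\alpha)$ when $\omega$ is $\iota$-symmetric and $+\Res_{z_{2i}}(\omega/\alpha)$ when $\omega$ is $\iota$-antisymmetric. Plugging this into (\ref{connect}) and pairing the summands indexed by $z_{2i}$ and $z_{2i+1}$ one sees: if $s$ is $\iota$-symmetric then $v_s(\omega)=0$ for every $\iota$-symmetric $\omega$, so $v_s$ is $\iota$-antiinvariant; and if $s$ is $\iota$-antisymmetric then $v_s(\omega)=0$ for every $\iota$-antisymmetric $\omega$, so $v_s$ is $\iota$-invariant. (As a check, $\delta$ then maps the $(g-1)$-dimensional $+1$-eigenspace of $H^0(\cO_Z)$ onto the $(g-2)$-dimensional space of all-isoperiodic deformations, recovering Proposition \ref{couplets}.)

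The Proposition follows at once. Write $s=s^++s^-$ for the $\iota$-eigenspace decomposition, so $v_s=v_{s^+}+v_{s^-}$ with $v_{s^+}$ killing the symmetric part of $H^0(K^2)$ and $v_{s^-}$ killing the antisymmetric part. If $v_s$ is a hyperelliptic deformation it kills the antisymmetric part; since $v_{s^-}$ already does, $v_{s^+}$ must kill the antisymmetric part too, and being also zero on the symmetric part it vanishes identically, $v_{s^+}=0$, so $s^+\in\ker\delta$ is a constant section $c$. Thus $s-c=s^-$ is $\iota$-antisymmetric, i.e. $s(z_{2i})-c=-(s(z_{2i+1})-c)$, which is the assertion. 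For the final sentence, if a nonzero hyperelliptic $v_s$ also preserved the periods of a second holomorphic $1$-form $\beta$ linearly independent of $\alpha$ — which, by generic position of its zeroes, or else by the $H^0(K(-Y))$-refinement of (\ref{gammabeta}), may be taken with no zero in common with $\alpha$ — then $v_s$ would lie in the tangent space to the lesser isoperiodic leaf of $(\alpha,\beta)$, which by Proposition \ref{couplets} consists of $\iota$-symmetric sections modulo constants; together with the $\iota$-antisymmetry just established this forces $s$ to be constant and $v_s=0$, a contradiction.

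The main obstacle is really the single sign $\iota^*\alpha=-\alpha$ and its propagation: one must be careful that the residues of $\omega/\alpha$ at $\iota$-paired zeroes of $\alpha$ pick up opposite signs precisely on $\iota$-symmetric quadratic differentials, which is what makes $\delta$ swap the two eigenspace decompositions rather than respect them. Everything downstream — the splitting $s=s^++s^-$, the identity $\ker\delta=H^0(\cO)$, and the matching of dimensions — is finite-dimensional linear algebra already underwritten by Corollary \ref{one-to-one} and Proposition \ref{couplets}.
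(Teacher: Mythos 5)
The paper offers no proof of this Proposition at all --- it explicitly substitutes an illustrative example (``since this is not really important for us, we shall consider an example instead of giving a proof'') --- so there is no argument of the paper's to compare yours against. Taken on its own merits, your proof of the first assertion is correct, under the (clearly intended) reading that the hypothesis is $\iota^*v_s=v_s$, i.e.\ that $v_s$ is a hyperelliptic deformation. The one computation that matters is exactly the one you isolate: since $\iota^*\alpha=-\alpha$, residue invariance under pullback gives $\Res_{z_{2i+1}}(\omega/\alpha)=-\Res_{z_{2i}}(\omega/\alpha)$ for $\iota$-invariant $\omega$ and $+\Res_{z_{2i}}(\omega/\alpha)$ for anti-invariant $\omega$, so the connecting homomorphism interchanges the $\iota$-eigenspaces of $H^0(\cO_Z)$ and of $H^1(T)\cong H^0(K^2)^*$; the decomposition $s=s^++s^-$ together with $\ker\delta=H^0(\cO)$ then finishes the first claim. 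This is a genuine proof of a statement the paper leaves unproved, and the dimension check against Proposition \ref{couplets} is consistent.

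The ``in particular'' clause is where your argument has a real gap, and the gap is not cosmetic. You need Proposition \ref{couplets} for the pair $(\alpha,\beta)$, hence you need $\beta$ to share no zero with $\alpha$; but the common zeros of $\alpha$ with a second preserved form are the base locus of the preserved pencil, an invariant that no change of basis (``generic position of its zeroes'') can remove. Worse, when that base locus is nonempty the conclusion itself fails. Take a hyperelliptic genus three curve and $\alpha=\omega_{ab}$, $\beta=\omega_{ad}$ as in subsection \ref{htht-counter}, with $a,b,d$ not branch points of the hyperelliptic double cover, so the standing assumption on fixed points of $\iota$ holds; the common zeros of $\alpha$ and $\beta$ are the two points over $a$, interchanged by $\iota$. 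The section $s$ taking values $\pm1$ at those two points and $0$ on the fiber over $b$ is $\iota$-antisymmetric, so by your own computation $v_s$ is a hyperelliptic deformation, nonzero because $s$ is not constant; and $v_s$ annihilates both $L_\alpha$ and $L_\beta$, since every residue in formula (\ref{connect}) vanishes --- for $\omega\in L_\alpha$ the quotient $\omega/\alpha$ is regular, and for $\omega=\beta\ox\gamma$ the simple zero of $\beta$ cancels the simple pole of $1/\alpha$ at the two points where $s$ is supported. Thus a nonzero hyperelliptic deformation preserving the periods of two independent forms does exist; this is the common-zero phenomenon the paper itself flags in the example following Proposition \ref{hype-two-diff}, only with the common zeros moved off the Weierstrass points. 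The clause therefore requires the additional hypothesis that the preserved forms have no common zeros (exactly the hypothesis of Proposition \ref{couplets}), and your proposed reduction to that case cannot be repaired.
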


\begin{expl}
Since this is not really important for us, we shall consider an example instead of giving a proof. Let $E$ be an elliptic curve, and $D$ is an effective divisor symmetric w.~r.~t. $z\mapsto-z$. Let $S$ be a ramified cover of $E$ with ramification divisor $D$. Then $S$ admits a hyperelliptic involution s.~t. the diagram

$$\begin{CD}
S @>>> \C\P^1\\
@V{p}VV @VV{2:1}V\\
E @>>> \C\P^1
\end{CD}$$

is commutative. Let $\alpha = p^*dz$, the isoperiodic deformations for $\alpha$ are given by the displacements of points in the ramification divisor $D$. If we want the hyperelliptic involution to survive, we should move the points $\zeta \in D$ and $-\zeta \in D$ in opposite directions, so that their images would be interchanged by $z \mapsto -z$ as well.
\end{expl}

\begin{expl}
This last assertion can also be demonstrated in a more specific situation. Let $S$ be a hyperelliptic curve on an abelian surface $A$, and $\alpha,\beta$ be restrictions of the forms on $A$ to $S$. The hyperelliptic involution on $S$ extends to $A$, and $S$ projects to a rational curve on the K3 surface $\widetilde{A/\iota}$. An isoperiodic deformation would produce a variation of this hyperelliptic curve inside $A$ and hence give rise to a family of rational curves on a K3 surface. However, rational curves on a K3 surface in a given homology class are discrete. This is a rephrasing of Pirola's theorem on rigidity of hyperelliptic curves on abelian varieties \cite[Remark 1]{P} (proved over arbitrary fields by Oort and de Jong \cite{OdJ}).
\end{expl}

The Proposition \ref{hyperel-skew} can be furnished in a manner similar to our main Proposition \ref{three-diff}:

\begin{pr}\label{hype-two-diff}
Let $(S, I)$ be a hyperelliptic complex curve of genus at least three, and $\tau \subset H^{1,0}(S,I)$ be a generic two-dimensional subspace. There exist a neighborhood $U \subset \Gris(2,2g)$ containing $\tau$ s.~t. for any $\tau' \in U$ there exists a unique hyperelliptic complex structure $I' = I(\tau')$ s.~t. one has $\tau' \subset H^{1,0}(S,I')$. In other words, deformation of a generic pair of abelian differentials determines a unique local deformation of a hyperelliptic complex structure, so that the deformed cohomology classes would be represented by abelian differentials in the deformed complex structure.
\end{pr}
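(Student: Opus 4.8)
The plan is to imitate the proof of Proposition~\ref{three-diff}, but to work entirely inside the hyperelliptic locus $\mathcal{H}\subset\T_S$, a smooth submanifold of dimension $2g-1$ (a hyperelliptic curve being $\C\mathrm{P}^1$ branched at $2g+2$ points) whose tangent space at $I$ is the $(+1)$-eigenspace $H^1(T_S)^+$ of the hyperelliptic involution $\iota^*$ on $H^1(T_S)$. Restricting the Hodge bundle gives a fibration $\Gr(2,\Omega\T_S)|_{\mathcal{H}}\to\mathcal{H}$ of dimension $(2g-1)+2(g-2)=4g-5$, and the biperiod mapping restricts to a holomorphic map $\varkappa_2^{\mathcal{H}}\colon\Gr(2,\Omega\T_S)|_{\mathcal{H}}\to\Gris(2,2g)$ with image in $\Gris^+(2,2g)$, whose target also has dimension $4g-5$. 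Everything will follow once we check that $d\varkappa_2^{\mathcal{H}}$ is an isomorphism at $(I,\tau)$ for $\tau$ generic: the inverse function theorem then produces, for every $\tau'$ in a small neighborhood $U\subset\Gris(2,2g)$ of $\varkappa_2^{\mathcal{H}}(I,\tau)$, a unique preimage, i.e.~a unique hyperelliptic complex structure $I'=I(\tau')$ with $\tau'\subset H^{1,0}(S,I')$.

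To study $d\varkappa_2^{\mathcal{H}}$ I would split the tangent space as in the proof of Proposition~\ref{htht-strong}, using the part of the Gau\ss--Manin connection preserving the Hodge subbundle: the vertical directions map isomorphically onto $\Hom(\tau,\Omega_I/\tau)\subset T_\tau\Gris(2,2g)$, while the horizontal directions $T_I\mathcal{H}=H^1(T_S)^+$ go by the Kodaira--Spencer tensor into the space of balanced homomorphisms $\tau\to H^{0,1}(S,I)$. As $T_\tau\Gris(2,2g)$ is the direct sum of $\Hom(\tau,\Omega_I/\tau)$ with the space of balanced maps $\tau\to H^{0,1}(S,I)$, and both $H^1(T_S)^+$ and this latter space of balanced maps have dimension $2g-1$, it remains to prove that $\KS|_\tau$ is injective on $H^1(T_S)^+$ for generic $\tau$; equivalently, that no nonzero hyperelliptic first-order deformation preserves the periods of every 1-form in $\tau$.

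For this I would use genericity to reduce to coprime pairs: for generic $\tau$ the chosen forms $\alpha,\beta$ have disjoint divisors of zeroes --- on a hyperelliptic curve the $2g-2$ zeroes of a generic holomorphic 1-form split into $g-1$ distinct $\iota$-orbits --- so $\tau$ is coprime by Proposition~\ref{com-zer}. For a coprime $\tau=\langle\alpha,\beta\rangle$ the kernel of $\KS|_\tau$, namely the space of first-order deformations isoperiodic for both $\alpha$ and $\beta$, is the annihilator of $L_\alpha+L_\beta$ inside $H^1(T_S)\cong H^0(K_S^2)^*$, and has dimension $g-2$ by Corollary~\ref{one-to-one}. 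But every quadratic differential in $L_\alpha$ or $L_\beta$ is $\iota^*$-invariant, since $\iota^*\alpha=-\alpha$ and $\iota^*\beta=-\beta$; hence $L_\alpha+L_\beta$ lies in the $(+1)$-eigenspace $H^0(K_S^2)^+$, which for a hyperelliptic curve of genus $g>2$ is itself exactly $(2g-1)$-dimensional, being the image of the symmetric multiplication $H^0(K_S)\otimes H^0(K_S)\to H^0(K_S^2)$. A coprime pair has one-dimensional obscurant subspace, so $\dim(L_\alpha+L_\beta)=2g-1$, which forces $L_\alpha+L_\beta=H^0(K_S^2)^+$; therefore the kernel of $\KS|_\tau$ is the $(-1)$-eigenspace $H^1(T_S)^-$, which --- as observed after Proposition~\ref{hyperel-skew} --- is exactly the space of deformations isoperiodic for every class. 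Since $H^1(T_S)^+\cap H^1(T_S)^-=0$, the map $\KS|_\tau$ is injective on hyperelliptic deformations, hence an isomorphism by the dimension count, and $\varkappa_2^{\mathcal{H}}$ is a local diffeomorphism at $(I,\tau)$.

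The main obstacle is precisely this last identification $L_\alpha+L_\beta=H^0(K_S^2)^+$: it hinges on the pair staying coprime, which fails exactly over the locus of linked pairs --- a proper subvariety of $\Gr(2,\Omega\T_S)|_{\mathcal{H}}$, for $g=3$ the rational curve of pairs described in the discussion of hyperelliptic genus three curves --- and this is the reason ``generic'' cannot be dropped from the statement; classifying these exceptional pairs, the hyperelliptic counterpart of Kapovich's Ratner-theoretic classification of exceptional orbits, remains open. Note that, unlike in Proposition~\ref{three-diff}, no ergodic input is needed here: since we work inside $\mathcal{H}$ and the differential is already bijective at the generic point, the local diffeomorphism itself delivers both existence and uniqueness of $I(\tau')$.
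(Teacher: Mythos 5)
Your proof is correct, and it fills in honestly what the paper compresses into two sentences (``the mapping class group preserves the hyperelliptic locus; otherwise the proof is parallel to that of Proposition~\ref{three-diff}''). The underlying mechanism is the same --- local bijectivity of the biperiod map restricted to the hyperelliptic locus, which in the paper rests on Proposition~\ref{couplets} and the observation that the $(-1)$-eigenspace of $\iota^*$ on $H^1(T)$ is isoperiodic for every class --- but you reach it by a cleaner route: instead of the explicit description of isoperiodic sections via the pairing of zeroes $s(z_{2i})=s(z_{2i+1})$ (which forces the paper to assume the zeroes avoid Weierstrass points), you identify $\ker\KS|_\tau$ for a coprime pair as the annihilator of $L_\alpha+L_\beta=H^0(K_S^2)^+$, i.e.\ as $H^1(T_S)^-$, and conclude by transversality to $T_I\mathcal{H}=H^1(T_S)^+$ together with the dimension count $\dim\Gr(2,\Omega\T_S)|_{\mathcal H}=4g-5=\dim\Gris(2,2g)$. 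You are also right that the ergodic-theoretic density (Moore's theorem), which the paper's proof of Proposition~\ref{three-diff} invokes and which the hyperelliptic version inherits through the $\MCG$-invariance of the hyperelliptic locus, is logically superfluous for the purely local statement: the inverse function theorem alone delivers both existence and uniqueness of $I(\tau')$. What the paper's route buys instead is the global picture (density of the hyperelliptic Kapovich--Schottky locus in $\Gris^+(2,2g)$), which is what feeds Corollary~\ref{def-to-hype}; your route buys a self-contained argument with a weaker and more transparent genericity hypothesis (base-point-freeness of the pencil $\tau$).
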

\begin{proof}
The mapping class group preserves the hyperelliptic locus. Otherwise the proof is parallel to the proof of the Proposition \ref{three-diff}.
\end{proof}

\begin{expl}
Note that the assumption on nonvanishing of 1-forms in the fixed points of $\iota$ is necessary. The counterexample can be given by a double cover of genus two curve $C$ ramified in a divisor, which is preserved by the hyperelliptic involution of $C$ (it is clear from a commutative diagram like above that such a cover is also hyperelliptic). The pullbacks of two forms on $C$ have their periods preserved while we vary the ramification divisor (provided it stays symmetric w.~r.~t. the involution). Note that in this case the pullbacks vanish in the preimages of the ramification divisor, which are preserved by the hyperelliptic involution of the double cover. Also note that such double covers have genus at least four, so their existence does not contradict the following
\end{expl}

\begin{pr}
Any pair of holomorphic linked classes on a genus three curve admits an isoperiodic deformation in which they are coprime---that is, the coprime Kapovich--Schottky locus exhausts the Kapovich--Schottky locus in genus three.
\end{pr}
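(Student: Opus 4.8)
The plan is to reduce the claim, using Corollary~\ref{gen-three}, to the statement that the exceptional (linked) fibres of the biperiod map $\varkappa_2$ are not contained in the hyperelliptic locus, and then to settle that by a computation near a hyperelliptic curve.

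\emph{Reduction.} By Corollary~\ref{gen-three} a linked plane $\tau_0\subset H^{1,0}(S,I_0)$ forces $(S,I_0)$ to be hyperelliptic. A genuine pencil of abelian differentials on a genus three curve has overlap of degree at most two (writing $\alpha/\beta$ one gets a nonconstant function whose pole divisor has degree $4-\deg Y_{\tau_0}\geq 2$), while by Proposition~\ref{com-zer} a linked pencil has overlap of degree at least two; hence $\deg Y_{\tau_0}=2$, and since the obscurant $H^0(\cO(Y_{\tau_0}))$ is, by hypothesis, more than one-dimensional, $Y_{\tau_0}$ must be a hyperelliptic fibre, so $\tau_0=H^0\!\left(K_S(-Y_{\tau_0})\right)$ and the obscurant is exactly two-dimensional. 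Thus the fibre $\Lambda=\varkappa_2^{-1}(\varkappa_2(\tau_0))$ is one of the higher-dimensional exceptional fibres, with Zariski tangent space $\mathrm{Ann}\bigl(\tau_0\cdot H^0(K_S)\bigr)\subset H^1(T)$ at $I_0$. Now every form in $\tau_0$ is anti-invariant under the hyperelliptic involution $\iota$, so $\tau_0\cdot H^0(K_S)$ lies in the $\iota$-invariant part of $H^0(K_S^2)$; therefore $\mathrm{Ann}\bigl(\tau_0\cdot H^0(K_S)\bigr)$ contains the $(-1)$-eigenspace $H^1(T)_{(-1)}$ of $\iota^*$ — the one-dimensional direction on which the infinitesimal Torelli map degenerates, and which is transverse to the hyperelliptic locus $\mathcal H\subset\T_S$. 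Hence the Zariski tangent space of $\Lambda$ at $I_0$ is not contained in $T_{I_0}\mathcal H$. Granting that $\Lambda$ itself is not contained in $\mathcal H$, it meets the non-hyperelliptic locus; on a non-hyperelliptic genus three curve any pencil of abelian differentials is coprime (Corollary~\ref{gen-three} again), so the corresponding point of $\Lambda$ is an isoperiodic deformation of $\tau_0$ into a coprime pair. Since every pencil on a genus three curve is coprime or linked, this yields $\varkappa_2(\{\text{linked pencils}\})\subset\varkappa_2(\{\text{coprime pencils}\})$, i.e.\ the coprime Kapovich--Schottky locus equals the Kapovich--Schottky locus in genus three.

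\emph{The main obstacle} is the clause ``granting that $\Lambda$ is not contained in $\mathcal H$''. The period map $\T_S\to\Gris(g,2g)$ is ramified along $\mathcal H$ — near a generic hyperelliptic curve it is the fold $(z_1,\dots,z_5,w)\mapsto(z_1,\dots,z_5,w^2)$ whose deck involution is the action of $\iota$ — and one checks that the line $G_{\tau_0}\subset\Gris(g,2g)$ of Lagrangian period matrices containing $\tau_0$ is in fact \emph{tangent} to the hyperelliptic locus at $\mathrm{Jac}(S,I_0)$: on $y^2=f(x)$ with $Y_{\tau_0}$ lying over $x=x_0$ the functional cutting out $\tau_0$ is $\ell=\omega_0^*+x_0\,\omega_1^*+x_0^2\,\omega_2^*$ (with $\omega_i=x^i\,dx/y$), and $\langle\ell^2,\,\omega_0\omega_2-\omega_1^2\rangle=x_0^2-x_0^2=0$, where $\omega_0\omega_2-\omega_1^2$ spans the cokernel of $\operatorname{Sym}^2H^0(K_S)\to H^0(K_S^2)$. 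So $\Lambda$ is a curve through $I_0$ tangent to $\mathcal H$, and the remaining point is that the order of tangency is finite — equivalently, that $G_{\tau_0}$ is not contained in the hyperelliptic period locus. I would verify this by a second-order computation: evaluate the second fundamental form of the hyperelliptic locus in $\Gris(g,2g)$ (the relevant Gaussian map on $\operatorname{Sym}^2H^0(K_S)$) on the direction $\ell^2$ and check that it does not vanish. Given that, the fold model $w^2=s^2u(s)$ with $u(0)\neq 0$ exhibits $\Lambda$ locally as two smooth branches through $I_0$, each transverse to $\mathcal H$, on which the curve is non-hyperelliptic and $\tau_0$ is coprime, completing the proof.
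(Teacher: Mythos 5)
Your reduction is sound and agrees with the paper's starting point: by Corollary~\ref{gen-three} a linked pencil forces hyperellipticity, your identification of the overlap as the $g^1_2$ and of the obscurant as $H^0(\cO(Y_{\tau_0}))$ is consistent with Proposition~\ref{com-zer} and the sheaf-theoretic discussion, and you correctly recognize that the first-order statement (the Zariski tangent space of the fibre contains the anti-invariant direction transverse to $\mathcal{H}$) does not by itself rule out $\Lambda\subset\mathcal{H}$, since the fibre may be singular at $I_0$. But that is precisely where the proof stops being a proof: the entire content of the proposition is the non-containment of $\Lambda$ in the hyperelliptic locus, and you reduce it to the non-vanishing of the second fundamental form of the hyperelliptic period locus on the direction $\ell^2$ --- and then write ``I would verify this by a second-order computation'' without performing it. Your own first-order calculation $\langle\ell^2,\omega_0\omega_2-\omega_1^2\rangle=0$ shows that the tangency is genuine, so nothing short of that second-order (and, for special $\tau_0$, possibly higher-order) computation closes the argument; as written, the decisive step is asserted, not proved, and the concluding fold-model picture presupposes it.

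The paper closes exactly this gap by a soft argument that you may want to adopt instead. By Corollary~\ref{one-to-one} (the dimension count $\dim\Gr(2,\Omega\T_S)-\dim\Gris(2,V)=g-2=1$) the pair $\tau_0$ admits a genuine one-parameter \emph{analytic} isoperiodic family, not just a Zariski tangent vector. If that family stayed inside $\mathcal{H}$, then by Proposition~\ref{couplets} and the discussion following it every tangent direction to the family, being isoperiodic for two differentials on a hyperelliptic curve, would be isoperiodic for \emph{all} abelian differentials; the Torelli map would then be constant along the family, contradicting the Torelli theorem. Hence the family leaves $\mathcal{H}$, and Corollary~\ref{gen-three} finishes the proof. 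This replaces your unperformed curvature computation with the global Torelli theorem and avoids having to control the order of tangency at all. If you prefer to keep your local route, you must actually evaluate the relevant Gaussian map on $\ell^2$ and argue that it cannot vanish for any hyperelliptic $(S,I_0)$ and any fibre $Y_{\tau_0}$ of the $g^1_2$ --- a nontrivial verification that your sketch does not supply. (A minor additional caveat for either route: Proposition~\ref{couplets} is stated for forms without common zeroes, whereas a linked pencil on a genus three curve has exactly two common zeroes, so one should apply the hyperelliptic isoperiodicity statement at nearby points of the family rather than at $I_0$ itself.)
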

\begin{proof}
By Corollary \ref{gen-three}, genus three curve with linked classes is hyperelliptic. By the above Proposition, any first-order deformation of a hyperelliptic curve preserving periods of two differentials preserves period of the third. On the other hand, any pair of abelian differentials on genus three curve possesses a one-parameter analytic family of isoperiodic deformation. If it lies within the hyperelliptic locus, it preserves the periods of all the three differentials, and hence is trivial by Torelli theorem. Therefore it intersects the complement of the hyperelliptic locus, which gives the desired deformation.
\end{proof}

Note that this Proposition cannot be pushed any further since there exist ramified covers of genus two curves with deformations of dimension larger than predicted. The hyperelliptic curves are also the instance of triples of classes in which the derivative of the polyperiod mapping is degenerate, but its fibers are still of predicted dimension.

Let us also mention a following result.

\begin{pr}
For a generic pair $(I,\tau) \in \Gr(2,\Omega\T_S)$, where $I \in \T_S$, $\tau \subset \Omega(S,I)$ there exist a hyperelliptic complex structure $J \in \T_S$ s.~t. one has $\tau \subset \Omega(S,J)$.
\end{pr}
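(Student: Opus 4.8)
The plan is to mimic the proof of Propositions~\ref{ksch-dense} and~\ref{three-diff}, but carried out inside the hyperelliptic locus. Write $\T_S^{\mathrm{hyp}}\subset\T_S$ for the $(2g-1)$-dimensional locus of hyperelliptic complex structures, and consider the restriction
$$\varkappa_2^{\mathrm{hyp}}\colon \Gr(2,\Omega\T_S)|_{\T_S^{\mathrm{hyp}}}\to\Gris(2,V)$$
of the bi-period map to the bundle of $2$-planes in the Hodge bundle over it. First I would note that the source has dimension $(2g-1)+2(g-2)=4g-5$, which is exactly the dimension of the target $\Gris(2,2g)$; combining this with Proposition~\ref{hype-two-diff}, which says that at a generic hyperelliptic pair this map is a local diffeomorphism, I get that its image $W$ --- call it the hyperelliptic Kapovich--Schottky locus --- has nonempty interior inside the Hodge--Riemann Gra\ss mannian $\Gris^+(2,2g)$.

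Next I would run the ergodic-theoretic argument. Since $\MCG(S)$ preserves the hyperelliptic locus and $\varkappa_2$ is $\MCG(S)$-equivariant, the set $W$ is $\Sp(2g,\Z)$-invariant; for $g\geqslant 3$ the stabilizer $\rU(2)\x\Sp(2g-4,\R)$ is noncompact, so by Moore's theorem the $\Sp(2g,\Z)$-action on $\Gris^+(2,2g)$ is ergodic, and therefore the $\Sp(2g,\Z)$-orbit of a nonempty open subset of $W$ is a dense open subset $W'\subset\Gris^+(2,2g)$, which is contained in $W$ by invariance. (The case $g=2$ is vacuous, every genus two curve being hyperelliptic.) Then, exactly as in Proposition~\ref{ksch-dense}, I would use that by Proposition~\ref{krichever} the ordinary bi-period map $\varkappa_2$ is a submersion --- hence open --- over the coprime locus, which is dense in $\Gr(2,\Omega\T_S)$ by Corollary~\ref{genericity}; so $\varkappa_2^{-1}(W')$ is open, and it is dense, because the image under $\varkappa_2$ of any nonempty open set of coprime pairs is open and hence meets the dense set $W'$. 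This dense open set of pairs is the locus of ``generic'' pairs in the statement.

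To conclude, for $(I,\tau)\in\varkappa_2^{-1}(W')$ the period plane $\varkappa_2(I,\tau)$ lies in $W'\subset W$, hence equals $\varkappa_2^{\mathrm{hyp}}(J,\tau')$ for a hyperelliptic complex structure $J$ and some $\tau'\subset H^{1,0}(S,J)$; since $\varkappa_2$ records the plane spanned in $V=H^1(S,\C)$ by the period functionals of the forms, the planes $\tau$ and $\tau'$ coincide in $V$, and this common plane $\tau$ is therefore contained in $H^{1,0}(S,J)$, as desired.

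The main obstacle I anticipate is the very first step: making sure the hyperelliptic Kapovich--Schottky locus $W$ genuinely has nonempty Euclidean-interior in $\Gris^+(2,2g)$. This rests on the coincidence of dimensions $\dim\Gr(2,\Omega\T_S)|_{\T_S^{\mathrm{hyp}}}=4g-5=\dim\Gris(2,2g)$ together with Proposition~\ref{hype-two-diff}; one should verify that the latter really yields local surjectivity of $\varkappa_2^{\mathrm{hyp}}$ onto an open neighborhood in $\Gris(2,2g)$ (and not merely bijectivity onto the hyperelliptic locus with its induced topology), and that the hyperelliptic locus is a smooth submanifold of $\T_S$ of the stated dimension near the relevant point. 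Everything after that is the standard Moore-ergodicity-plus-open-map density argument already employed in the paper.
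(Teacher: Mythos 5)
Your proposal is correct and follows essentially the same route as the paper, whose proof is the one-line observation that the set of planes realizable by hyperelliptic bidifferentials is open (via Proposition~\ref{hype-two-diff}) and invariant under the ergodic $\Sp(2g,\Z)$-action, hence dense; you have simply filled in the details (dimension count, Moore ergodicity, openness of $\varkappa_2$ over the coprime locus) that the paper leaves implicit. The worry you flag at the end is already dispatched by the literal statement of Proposition~\ref{hype-two-diff}, which provides an open neighborhood $U\subset\Gris(2,2g)$ consisting of hyperelliptically realizable planes.
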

\begin{proof}
The set of bidifferentials which can be realized in a hyperelliptic complex structure is open and invariant under an ergodic action.
\end{proof}

Applying this Proposition to a pair of classes coming from an abelian surface, one obtains a nice

\begin{corol}\label{def-to-hype}
Let $\{C \subset A^2\}$ be a generic pair of an abelian surface $A^2$ and a curve $C$ on it with at worst normal crossings. Then there exists a hyperelliptic curve within $A^2$ in the same homology class $[C]$.
\end{corol}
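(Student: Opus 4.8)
The plan is to translate the datum $\{C\subset A^2\}$ into an elliptic pair, feed it through the preceding Proposition, which guarantees that a generic bidifferential is realizable in some hyperelliptic complex structure, and then undo the translation by the partial Abel--Jacobi construction of Proposition~\ref{AJac}.

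First I would pass to the normalization $\iota\colon\widetilde C\to C\subset A^2$, a smooth curve of genus $g=p_a(C)-\delta$ with $\delta$ the number of nodes, and set $\tau=\iota^{*}H^{1,0}(A^2)\subset H^{1,0}(\widetilde C)$. All periods of forms in $\tau$ lie in the sublattice $\iota^{*}H^1(A^2,\Z)$, so $\tau$ is an elliptic pair with real part $\iota^{*}H^1(A^2,\R)$ and $\det\tau=C\cdot C$; note that this determinant is insensitive to how many nodes $C$ has, only $g$ drops. For $\{C\subset A^2\}$ generic, $A^2$ is simple and $\iota$ factors through no isogeny, so Proposition~\ref{AJac} recovers $A(\tau)\cong A^2$ together with $\iota$ itself, and the class $[C]$ is determined by $\tau$ and by the way $U_\Z(\tau)$ sits inside $H^1(\widetilde C,\Z)$.

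Next, let $\mathcal H\subset\Gris^{+}(2,2g)$ be the set of Hodge--Riemann $2$-planes realizable by abelian differentials in some hyperelliptic complex structure: by Proposition~\ref{hype-two-diff} the polyperiod map $\varkappa_2$ is a local diffeomorphism on the hyperelliptic locus, so $\mathcal H$ is open; it is $\Sp(2g,\Z)$-invariant, hence by Moore's ergodicity theorem of full measure. Let $\mathcal E\subset\Gris^{+}(2,2g)$ be the locus of elliptic pairs whose associated polarized abelian surface comes from a complete rank-four symplectic sublattice of determinant $C\cdot C$; it is $\Sp(2g,\Z)$-invariant, and by the transitivity statement for such sublattices it fibers, as an $\Sp(2g,\Z)$-set, over a single orbit with fibre the Siegel space $\mathfrak H_2$, so that $\mathcal E/\Sp(2g,\Z)$ is the irreducible moduli space $\mathcal A$ of such abelian surfaces. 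Because $\varkappa_2$ is a local diffeomorphism on the hyperelliptic locus, the family of pairs (hyperelliptic curve, elliptic pair of the given type on it) maps by a local diffeomorphism onto an open subset of $\mathcal E$, hence, after passing to the quotient, onto an open subset of $\mathcal A$. Assuming this open subset is non-empty, it is dense since $\mathcal A$ is irreducible; so for the generic $A^2$ there is a hyperelliptic curve $(S,J)$ carrying an elliptic pair $\tau'$ with $A(\tau')\cong A^2$ and generically injective Abel--Jacobi map $f\colon(S,J)\to A^2$. Then $f(S)$ is a curve with hyperelliptic normalization, and $[f(S)]$ has the same polarization type as $[C]$; for generic $A^2$, where $\mathrm{NS}(A^2)$ has rank one and $\mathrm{Aut}(A^2)=\{\pm1\}$, it therefore coincides with $[C]$ up to an automorphism of $A^2$, which we absorb.

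The main obstacle is exactly the non-emptiness invoked above, namely the existence of one hyperelliptic curve on one abelian surface of the relevant polarization type realizing an elliptic pair of determinant $C\cdot C$: the openness of $\mathcal H$ in the full Grassmannian does not force it to meet the thin locus $\mathcal E$, and the arithmetic group acting on each $\mathfrak H_2$-fibre of $\mathcal E$ is not ergodic there, so ergodicity alone is not enough. I would supply the example by hand --- in genus three via Barth's bielliptic criterion applied to curves such as $y^2=x^8+ax^4+1$, which are simultaneously hyperelliptic and bielliptic, and in higher genus by running the node-smoothing argument from the proof of Proposition~\ref{hk-el-pair} starting from a genus-two curve in its Jacobian and smoothing nodes in $(-1)$-conjugate pairs, so as to keep the curve invariant under the $(-1)$-involution and passing through the required number of two-torsion points, hence hyperelliptic. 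A secondary bookkeeping point, matching the exact class $[C]$ when $C$ is itself nodal, is dealt with by allowing all geometric genera $g\le p_a(C)$ in the family and using, as in the first step, that the determinant of the associated elliptic pair remains $C\cdot C$ throughout.
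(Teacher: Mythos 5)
Your route is the paper's route: reduce to showing that the plane $\tau=\iota^*H^{1,0}(A^2)$ on the normalization of $C$ is realizable by abelian differentials in some hyperelliptic complex structure, then return to $A^2$ in the class $[C]$ via the partial Abel--Jacobi construction of Proposition~\ref{AJac}. The paper disposes of the first step in one line: the image of $\varkappa_2$ restricted to the hyperelliptic locus is open (Proposition~\ref{hype-two-diff}), it is $\Sp(2g,\Z)$-invariant, hence dense by Moore's theorem, so a \emph{generic} bidifferential is hyperelliptically realizable --- and the Corollary is then presented as an immediate application of this to the pair of classes coming from $(C,A^2)$. What you add, correctly, is the observation that this deduction is incomplete: the elliptic pairs of fixed determinant form a thin, $\Sp(2g,\Z)$-invariant, countable union of copies of $\mathfrak{H}_2$ inside $\Gris^+(2,2g)$; an open dense invariant subset can miss it entirely (its complement is also open, dense and invariant), and the arithmetic group acts on each $\mathfrak{H}_2$-fibre properly discontinuously rather than ergodically. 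So openness plus Moore's theorem genuinely does not reach the locus the Corollary is about, and your proposed repair --- one explicit hyperelliptic curve on one abelian surface of each relevant determinant (bielliptic hyperelliptic genus-three curves via Barth's criterion, and the $(-1)$-symmetrized version of the node-smoothing construction of Proposition~\ref{hk-el-pair} in higher genus), spread over the irreducible moduli space of abelian surfaces of that polarization type by the local-diffeomorphism property of $\varkappa_2$ on the hyperelliptic locus --- is exactly the hyperelliptic analogue of the topologization machinery (Propositions~\ref{topolo-pair} and~\ref{hk-el-pair}) that the paper develops for ordinary elliptic pairs but never states in the hyperelliptic setting.

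Two caveats on the repair, which remains a sketch at its crucial point. First, ``a nonempty Euclidean-open subset of an irreducible space is dense'' is false as stated; you need the set of abelian surfaces carrying such a hyperelliptic curve to be constructible (it is, being the image of an algebraic family), so that nonempty interior forces a dense Zariski-open subset. Second, the symmetrized smoothing must be checked to keep the deformed curve invariant under $(-1)$ and passing through enough two-torsion points that the restriction of $(-1)$ is actually the hyperelliptic involution; this is plausible but is precisely the content you would still owe. Modulo supplying those examples, your argument establishes the Corollary with more justification than the paper's own one-line appeal to the preceding Proposition provides.
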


This theorem can be thought of as a sibling of the Bogomolov--Mumford theorem on existence of rational curves on a K3 surface, especially if one proves that this hyperelliptic curve may be achieved by an actual deformation (which would follow from the connectedness of the lesser isoperiodic leaves).

\subsection{Sections of determinantal varieties and reciprocity law}\label{recip}
It is not as simple to describe the range of the map $\gamma \mapsto s_\gamma$ for non-hyperelliptic curves. Something can be said, though.

Let $S$ be a genus $g$ curve, and $\tau \subset \Omega(S)$ be a subspace. Let us denote by $H^1_\tau(T)$ the range of the connecting homomorphism $H^0(\nu_\tau) \to H^1(T_S)$, which is the space of first-order deformations preserving the periods of forms from $\tau$.

\begin{defn}
Let $\alpha \in \Omega(S)$ be a holomorphic differential. The set $$Q(S,\alpha) = \bigcup_{\alpha\in\tau}H^1_\tau(T) \subset H^1_{\alpha}(T)$$ is called the {\bf doubly isoperiodic cone}.
\end{defn}

\begin{pr}\label{hypers}
The doubly isoperiodic cone for a generic pair $(S,\alpha)$ has dimension $2g-4$ (and hence codimension one in $H^1_\alpha(T_S)$).
\end{pr}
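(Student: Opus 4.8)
The plan is to realise $Q(S,\alpha)$ as the image of an explicit parametrising map and to show that, for a generic pair $(S,\alpha)$, this map is generically finite; the expected dimension then falls out immediately.

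First I would fix a generic pair $(S,\alpha)$, so that $\alpha$ has $2g-2$ distinct zeroes $z_0,\dots,z_{2g-3}$, and use the identification $H^1_\alpha(T_S)\cong H^0(\cO_{Z_\alpha})/\mathrm{const}$ from the previous subsection (a $(2g-3)$-dimensional space). By Corollary \ref{one-to-one} and formula (\ref{gammabeta}), for $\beta$ with $Z_\beta$ disjoint from $Z_\alpha$ the $(g-2)$-plane $H^1_{\langle\alpha,\beta\rangle}(T)$ is the image of the map $\Omega(S)\to H^0(\cO_{Z_\alpha})/\mathrm{const}$, $\gamma\mapsto\bigl[(\gamma/\beta)|_{Z_\alpha}\bigr]$, which kills exactly $\langle\alpha,\beta\rangle$ and is thereby an isomorphism onto $H^1_{\langle\alpha,\beta\rangle}(T)$. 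Hence $\mathbb{P}\bigl(Q(S,\alpha)\bigr)$ is the image of the rational map
$$\Phi\colon\ \bigl\{\,([\beta],[\gamma])\ :\ [\beta]\in\mathbb{P}\bigl(\Omega/\C\alpha\bigr),\ [\gamma]\in\mathbb{P}\bigl(\Omega/\langle\alpha,\beta\rangle\bigr)\,\bigr\}\ \dashrightarrow\ \mathbb{P}\bigl(H^1_\alpha(T_S)\bigr),\qquad ([\beta],[\gamma])\mapsto\bigl[(\gamma/\beta)|_{Z_\alpha}\bigr],$$
a morphism on the dense open locus where $\beta$ is coprime to $\alpha$. Its source is a $\mathbb{P}^{g-3}$-bundle over $\mathbb{P}^{g-2}$, of dimension $2g-5$; this already yields $\dim Q(S,\alpha)\le 2g-4$ unconditionally, so it remains to show that $\Phi$ is generically finite for generic $(S,\alpha)$.

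I would prove this by computing $d\Phi$. At a point $([\beta_0],[\gamma_0])$ a tangent vector is a pair $(\dot\beta,\dot\gamma)\in\bigl(\Omega/\langle\alpha,\beta_0\rangle\bigr)\oplus\bigl(\Omega/\langle\alpha,\beta_0,\gamma_0\rangle\bigr)$, and differentiating the quotient $(\gamma/\beta)|_{Z_\alpha}$ one sees that $d\Phi(\dot\beta,\dot\gamma)=0$ precisely when the quadratic differential $\dot\gamma\ox\beta_0-\gamma_0\ox\dot\beta$ coincides, modulo $\alpha\ox\Omega$, with $\lambda\,\gamma_0\ox\beta_0+\mu\,\beta_0\ox\beta_0$ for some $\lambda,\mu\in\C$; equivalently, writing $u:=-(\dot\beta+\lambda\beta_0)$ and $w:=\dot\gamma-\mu\beta_0$, when $u\ox\gamma_0+w\ox\beta_0\in\alpha\ox\Omega$, i.e.\ when $(u,w,\cdot)$ lies in the kernel of the multiplication map $\Omega^{\oplus 3}\to H^0(K_S^2)$, $(a,b,c)\mapsto a\ox\gamma_0+b\ox\beta_0+c\ox\alpha$. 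This is exactly where Noether's theorem on quadratic differentials enters: on a non-hyperelliptic curve a generic triple $\langle\alpha,\beta_0,\gamma_0\rangle$ makes this map surjective, so its kernel is $3$-dimensional, hence spanned by the three Koszul relations $(\beta_0,-\gamma_0,0)$, $(\alpha,0,-\gamma_0)$, $(0,\alpha,-\beta_0)$. Substituting these back forces $\dot\beta\in\langle\alpha,\beta_0\rangle$ and $\dot\gamma\in\langle\alpha,\beta_0,\gamma_0\rangle$, i.e.\ $\dot\beta=\dot\gamma=0$ in the relevant quotients. So $d\Phi$ is injective at every point of the source corresponding to such a ``Noether-generic'' triple.

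To finish, I would observe that the locus of Noether-generic triples is open (surjectivity of a linear map is an open condition) and nonempty (Noether's theorem), hence dense, so its projection to $\mathbb{P}(\Omega)$ is dense and, for a generic pair $(S,\alpha)$, a generic point of the source of $\Phi$ yields a Noether-generic triple through $\alpha$. Thus $\Phi$ has injective differential on a dense open set, is therefore generically finite, and $\dim\mathbb{P}\bigl(Q(S,\alpha)\bigr)=2g-5$, i.e.\ $\dim Q(S,\alpha)=2g-4$, of codimension one in $H^1_\alpha(T_S)$. The genericity really is needed: on a hyperelliptic curve all the planes $H^1_{\langle\alpha,\beta\rangle}(T)$ coincide by Proposition \ref{couplets}, so $Q(S,\alpha)$ collapses to a single $(g-2)$-plane --- the total degeneration of $\Phi$, caused precisely by the extra (non-Koszul) relations among $\alpha,\beta_0,\gamma_0$ forced by hyperellipticity. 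The main obstacle, and the crux of the argument, is exactly this identification: recognising Noether's theorem on quadratic differentials as the precise input guaranteeing that the kernel of the multiplication map consists only of Koszul relations, so that the projective and quotient normalizations built into $\Phi$ absorb that kernel and leave $d\Phi$ injective.
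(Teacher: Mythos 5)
Your proof is correct and follows essentially the same route as the paper: both realize $Q(S,\alpha)$ as the union of the $(g-2)$-planes $H^1_\tau(T)$ swept over the $(g-2)$-dimensional base $\P(\Omega/\langle\alpha\rangle)$, with the key input being that a generic triple through $\alpha$ is coprime (Noether's theorem on quadratic differentials), so that the parametrization has no excess degeneration and the dimension is $(g-2)+(g-2)=2g-4$. The only difference is cosmetic: the paper rules out overlap set-theoretically via $H^1_\tau\cap H^1_{\tau'}=H^1_{\langle\tau,\tau'\rangle}=0$, while you verify the same genericity infinitesimally by showing $d\Phi$ is injective, which is a somewhat more detailed justification of the same count.
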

\begin{proof}
Note that if $H^1_\tau$ and $H^1_{\tau'}$ have nonzero intersection, it means that $H^1_{\tau,\tau'}$ is nonzero. That is, the self-intersection of this cone correspond to triples of forms possessing nontrivial infinitesimal deformations preserving periods. Since generic triple has no such deformations, this means that an open subset of the cone is fibered with fibers $H^1_\tau$ (of dimension $g-2$) over the base parametrizing different choices for $\tau$ (i.~e. $\P(\Omega/\langle\alpha\rangle)$, which has dimension $g-2$ as well). 
\end{proof}

\begin{defn}
Let $V$ be a vector space, and $v \in V$ be a nonzero vector. The cone of elements in $\Hom(V/\langle v\rangle,V^*)$ which have nonzero kernel is called the {\bf linear-algebraic doubly isoperiodic cone} and denoted by $\mathfrak{Q}(V,v)$.
\end{defn}

\begin{pr}\label{legen}
Obviously, the doubly isoperiodic cone $Q(S,\alpha)$ is the intersection of the cone $\mathfrak{Q}(\Omega(S),\alpha)$ and the subspace $H^1_\alpha(T_S)$ embedded via the Kodaira--Spencer maps. This intersection is unlikely.
\end{pr}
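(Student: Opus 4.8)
The plan is to treat the two halves of the statement separately: the set-theoretic identification $Q(S,\alpha)=\mathfrak{Q}(\Omega(S),\alpha)\cap\KS(H^1_\alpha(T_S))$, and the claim that this intersection is of larger than generic dimension.

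For the identification I would just unwind the definitions. Any $v\in H^1_\alpha(T_S)$ annihilates every quadratic differential of the form $\alpha\ox-$, so the symmetric form $\KS_v$ on $\Omega(S)$ has $\alpha$ in its radical and descends to a homomorphism $\bar v\colon\Omega(S)/\langle\alpha\rangle\to\Omega(S)^*=H^{0,1}(S)$ (symmetric, hence with image in the annihilator of $\alpha$). By the description of isoperiodic deformations through the obscurant subspace, $v$ lies in $H^1_\tau(T_S)$ for $\tau=\langle\alpha,\beta\rangle$ exactly when it also annihilates all $\beta\ox-$, i.e.\ when the class of $\beta$ in $\Omega(S)/\langle\alpha\rangle$ lies in the kernel of $\bar v$; hence $v\in\bigcup_{\alpha\in\tau}H^1_\tau(T_S)=Q(S,\alpha)$ precisely when $\bar v$ has nonzero kernel, which is the defining condition of $\mathfrak{Q}(\Omega(S),\alpha)$. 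Pushing this through the Kodaira--Spencer embedding $H^1_\alpha(T_S)\hookrightarrow\Hom(\Omega(S)/\langle\alpha\rangle,\Omega(S)^*)$, which is injective whenever $S$ is non-hyperelliptic by Max Noether's infinitesimal Torelli theorem, gives the stated description; the generic pair of Proposition~\ref{hypers} may of course be taken with $S$ non-hyperelliptic.

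For the word ``unlikely'' I would compare dimensions. The ambient space $\Hom(\Omega(S)/\langle\alpha\rangle,\Omega(S)^*)$ has dimension $g(g-1)$; in it the cone $\mathfrak{Q}(\Omega(S),\alpha)$ of maps from a $(g-1)$-dimensional to a $g$-dimensional space which drop rank is the standard determinantal variety of codimension $(g-(g-2))(g-1-(g-2))=2$, while $\KS(H^1_\alpha(T_S))$ is a linear subspace of dimension $2g-3$. A generic linear subspace of dimension $2g-3$ meets a codimension-two cone through the origin in dimension $2g-5$; but by Proposition~\ref{hypers} the actual intersection $Q(S,\alpha)$ has dimension $2g-4$ for a generic pair, one more than expected, so the intersection is excessive. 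The mechanism behind the excess is then worth recording: the Kodaira--Spencer image in fact lands inside the subspace $\mathrm{Sym}^2((\Omega(S)/\langle\alpha\rangle)^*)$ of self-adjoint maps, within which the degeneracy locus is not of codimension two but merely a hypersurface (the vanishing of the symmetric determinant), so along this symmetric stratum the expected dimension is $(2g-3)-1=2g-4$, matching Proposition~\ref{hypers}.

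The main obstacle — really the only thing that is not bookkeeping — is to make sure the intersection realizes exactly this symmetric-stratum expectation and acquires nothing more, i.e.\ that $Q(S,\alpha)$ picks up no extra component from the deeper locus where $\bar v$ drops rank by two or more; such components would correspond to triples of linked differentials carrying unusually large isoperiodic deformation spaces, in the spirit of the ramified covers of low-genus curves from \S\ref{htht-counter}. This, however, is precisely what the genericity hypothesis of Proposition~\ref{hypers} rules out, so no new argument is needed: the content of the present proposition is only the observation that the Kodaira--Spencer map presents $H^1_\alpha(T_S)$ as a section of a determinantal variety which, because it factors through the self-adjoint locus, cannot be transverse to the naive rank stratification.
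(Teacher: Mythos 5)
Your proof is correct and its core is the same dimension count as the paper's: the determinantal cone $\mathfrak{Q}(\Omega(S),\alpha)$ has codimension two in $\Hom(\Omega(S)/\langle\alpha\rangle,\Omega(S)^*)$ while $Q(S,\alpha)$ has codimension one in $H^1_\alpha(T_S)$ by Proposition~\ref{hypers}, so the intersection is excessive. You go beyond the paper in two welcome ways --- actually verifying the identification that the paper dismisses as ``obvious,'' and observing that the excess is explained by the Kodaira--Spencer image landing in the self-adjoint maps, where the degeneracy locus is the symmetric-determinant hypersurface of codimension one --- but neither changes the route of the argument.
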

\begin{proof}
The ambient space $\Hom(\Omega/\langle\alpha\rangle,\Omega^*)$ has dimension $(g-1)g = g^2-g$. In order to give an element in the cone $\mathfrak{Q}(\Omega,\alpha)$, one has to specify the kernel (there is $(g-2)$-dimensional space of possibilities for it), and the map to $\Omega^*$ (which can be chosen out of $(g-2)g$-dimensional variety). Hence the dimension of the linear-algebraic doubly isoperiodic cone is $(g-2)(g+1) = g^2 - g - 2$, and its codimension equals two. However, the codimension of the doubly isoperiodic cone $Q(S,\alpha) \subset H^1_\alpha(T)$ is one. Therefore the intersection of the linear-algebraic doubly isoperiodic cone with the subspace $H^1_\alpha(T) \subset \Hom(\Omega/\langle\alpha\rangle,\Omega^*)$ consisting of the Kodaira--Spencer operators of the deformations preserving the periods of $\alpha$, is unlikely.
\end{proof}

This Proposition gives a condition on the possible position of $H^1_\alpha$ subspace within the tangent space to the moduli of abelian varieties. It can be probably easily described by the means of representation theory, since the cone $\mathfrak{Q}(V,v)$ is a union of orbits of a parabolic subgroup in $\mathrm{GL}(V)$ stabilizing the line spanned by $v$. Since the tangent space to the moduli space is spanned by different $H^1_\alpha$ subspaces, this statement may be regarded as a higher-order Legendrian property for the Schottky locus.

The linear-algebraic doubly isoperiodic cone $\mathfrak{Q}(V,v)$ is determined by vanishing of a combination of $(g-1)\x(g-1)$ minors, so it has degree $g-1$---and so has the doubly isoperiodic cone $Q(S,\alpha)$, being its linear section. The projectivization $\P Q(S,\alpha)$ is a hypersurface in $\P(H^1_\alpha) = \P^{2g-4}$ swept by a family of $(g-3)$-dimensional subspaces $\P(H^1_\tau)$ parametrized by $\P^{g-2} = \P(\Omega/\langle\alpha\rangle)$. The first genus when it nontrivial is $g=3$, when it is a plane quadric.

\begin{pr}
Let $X$ be a genus three curve, and a form $\alpha \in \Omega(X)$ has zeroes $z_0, z_1, z_2, z_3 \in X$. There exists a number $B$ s.~t. no matter which 1-forms $\beta,\gamma\in \Omega(X)$ we take (provided they have no zeroes in common and, together with $\alpha$, span $\Omega(X)$), one has $$\left[\frac{\gamma(z_0)}{\beta(z_0)}~\colon \frac{\gamma(z_1)}{\beta(z_1)}~\colon \frac{\gamma(z_2)}{\beta(z_2)}~\colon \frac{\gamma(z_3)}{\beta(z_3)}\right] = B,$$ where the braces denote the cross-ratio. Moreover, any quadruple of numbers with cross-ratio $B$ can be realized as the section $s^\beta_\gamma$ for some $\beta,\gamma\in\Omega(X)$.
\end{pr}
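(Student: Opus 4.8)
\emph{Plan.} I would reinterpret the displayed quantity as a cross-ratio of four points of $\P^1$ on which every change of the pair $(\beta,\gamma)$ acts by a single projectivity, which immediately forces it to be constant. Concretely, since $s^\beta_\gamma(z_i)=\gamma(z_i)/\beta(z_i)$, the quadruple $\bigl(s^\beta_\gamma(z_0),\dots,s^\beta_\gamma(z_3)\bigr)$ is, as a quadruple of points of $\P^1$, the quadruple $P_i=[\gamma(z_i):\beta(z_i)]$; this is legitimate for any coprime $\beta,\gamma$, because at a zero $z_i$ of $\alpha$ they cannot both vanish, and it is harmless if some $P_i=\infty$. The one observation that does everything is that \emph{every $z_i$ is a zero of $\alpha$}: hence $P_i$ is unchanged if one adds an arbitrary multiple of $\alpha$ to $\beta$ or to $\gamma$, while replacing the basis $(\bar\beta,\bar\gamma)$ of $\Omega(X)/\langle\alpha\rangle$ by $N\cdot(\bar\beta,\bar\gamma)$ with $N\in\mathrm{GL}_2$ replaces each $P_i$ by $\bar N\,P_i$ for one and the same $\bar N\in\mathrm{PGL}_2$. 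Any two pairs admissible in the sense of the statement differ by a composition of these two operations, so by $\mathrm{PGL}_2$-invariance of the cross-ratio the value $B:=\operatorname{cr}(P_0,P_1,P_2,P_3)$ is the same for all of them; this is the first assertion. (On a hyperelliptic genus-three curve Proposition~\ref{couplets} forces $P_{2i}=P_{2i+1}$ for the involution-pairing order, so $B\in\{0,1,\infty\}$ there; that is an exceptional case in which the ``moreover'' part fails, and it is implicitly excluded.)

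For the ``moreover'' part I would first record that, away from the hyperelliptic locus, the four points $P_0,\dots,P_3$ are pairwise distinct for \emph{every} admissible pair: if $P_i=P_j$ then some nonzero $\delta\in\langle\beta,\gamma\rangle$ vanishes at $z_i$ and $z_j$, i.e.\ $\delta\in H^0\bigl(K_X(-z_i-z_j)\bigr)$; but on a non-hyperelliptic genus-three curve this space is one-dimensional and spanned by $\alpha$ itself (since $z_i,z_j$ are among its zeros), forcing $\delta\in\langle\alpha\rangle$ and contradicting $\langle\alpha,\beta,\gamma\rangle=\Omega(X)$. In particular $B\notin\{0,1,\infty\}$. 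Given now a quadruple $(w_0,\dots,w_3)$ of numbers with cross-ratio $B$ (necessarily pairwise distinct, for the same reason), fix one admissible pair $(\beta,\gamma)$ and take the unique $\varphi\in\mathrm{PGL}_2$ with $\varphi(P_i)=[w_i:1]$ for $i=0,1,2$; since the cross-ratios agree, $\varphi(P_3)=[w_3:1]$ automatically. Lift $\varphi$ to $M=\begin{pmatrix}p&q\\ r&s\end{pmatrix}\in\mathrm{GL}_2$, set $\gamma'=p\gamma+q\beta$ and $\beta'=r\gamma+s\beta$. Then $\langle\alpha,\beta',\gamma'\rangle=\Omega(X)$ because $M$ is invertible; the pair $\beta',\gamma'$ is again coprime, since a common zero of $\beta'$ and $\gamma'$ would (by invertibility of $M$) be a common zero of $\beta$ and $\gamma$; and $[\gamma'(z_i):\beta'(z_i)]=M\cdot P_i=[w_i:1]$, so $s^{\beta'}_{\gamma'}=(w_0,\dots,w_3)$.

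Structurally, the first assertion says that the image of $\gamma\mapsto s^\beta_\gamma$, taken in $\P\bigl(H^0(\cO_{Z_\alpha})/\mathrm{const}\bigr)=\P^2$, lies in the cross-ratio level set $\{\operatorname{cr}=B\}$, which is a conic — a member of the pencil of conics with base points the four ``repeated-coordinate'' points; this image is precisely the projectivized doubly isoperiodic cone, itself a conic by Propositions~\ref{hypers} and \ref{legen}, so the two conics must coincide. This gives an alternative route to surjectivity and explains why this proposition is placed right after Proposition~\ref{legen}. The only place requiring genuine care — indeed the only input beyond bookkeeping with $\mathrm{PGL}_2$ — is the distinctness of the $P_i$, equivalently $h^0\bigl(K_X(-z_i-z_j)\bigr)=1$, which holds precisely off the hyperelliptic locus; this is also exactly what pins down the hypothesis, since on a hyperelliptic genus-three curve the cross-ratio degenerates to $1$ by Proposition~\ref{couplets} and the converse statement breaks down.
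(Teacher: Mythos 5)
Your proposal is correct and is essentially the paper's own argument in invariant form: the paper checks that the four generating operations on the pair $(\beta,\gamma)$ (adding multiples of $\alpha$, rescaling, shearing, swapping) each preserve the cross-ratio and then uses transitivity of these operations on quadruples of fixed cross-ratio, which is exactly your observation that changes of admissible pairs act on the quadruple $P_i=[\gamma(z_i):\beta(z_i)]$ through a single element of $\mathrm{PGL}_2$. Your added verification that the $P_i$ are pairwise distinct off the hyperelliptic locus (via $h^0(K_X(-z_i-z_j))=1$) is a point the paper leaves implicit, and is a worthwhile refinement rather than a different method.
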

\begin{proof}
Any pair $\beta',\gamma' \in \Omega(X)$ can be obtained from the pair $\beta,\gamma$ by the operations $(\beta,\gamma) \mapsto (\beta+c\alpha,\gamma+c\alpha)$, $(\beta,\gamma)\mapsto(c\beta,c\gamma)$, $(\beta,\gamma)\mapsto(\beta,\gamma+c\beta)$, $(\beta,\gamma)\mapsto(\gamma,\beta)$. The first operation does not change the vector $s^\beta_\gamma$ at all, since $\alpha$ vanishes at $z_i$'s. The second operation scales the quadruple and hence does not change its cross-ratio. The third operation adds a constant quadruple $(c,c,c,c)$, so does not affect the cross-ratio as well. The last operarion acts on the quadruple as $(a,b,c,d)\mapsto\left(a^{-1},b^{-1},c^{-1},d^{-1}\right)$. It is elementary to check that $$\frac{a^{-1}-c^{-1}}{b^{-1}-c^{-1}}:\frac{a^{-1}-d^{-1}}{b^{-1}-d^{-1}} = \frac{bc}{ac}\cdot\frac{c-a}{c-b}:\left(\frac{bd}{ad}\cdot\frac{d-a}{d-b}\right) = \frac{a-c}{b-c}:\frac{a-d}{b-d}.$$

On the other hand, any two quadruples with the same cross-ratio can be related by the chain of operations $(a,b,c,d)\mapsto(ka,kb,kc,kd)$, $(a,b,c,d)\mapsto(a+k,b+k,c+k,d+k)$, $(a,b,c,d)\mapsto(a^{-1},b^{-1},c^{-1},d^{-1})$.
\end{proof}

This cross-ratio can be also captured by the means of elementary geometry.

\begin{pr}
Let $X$ be a smooth plane quartic (i.~e. a canonical embedding of non-hyperelliptic curve of genus three), and $\alpha$ a holomorphic 1-form (so that its zeroes are cut out by a straight line). Then the cross-ratio $B(X,\alpha)$ defined above equals to the cross-ratio of these zeroes considered as four points on a projective line (up to relabeling the indices).
\end{pr}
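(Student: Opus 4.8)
The plan is to use the fact, established in the preceding Proposition, that $B(X,\alpha)$ does not depend on the choice of the auxiliary pair $\beta,\gamma$, and to take that pair to be a pair of coordinate linear forms for the canonical embedding. Recall that a non-hyperelliptic curve of genus three is canonically embedded as a smooth plane quartic $X \subset \P^2 = \P\!\left(H^0(K_X)^*\right)$, and that for a smooth plane quartic adjunction gives $K_X \cong \cO_{\P^2}(1)|_X$. Hence a holomorphic $1$-form on $X$ is precisely the restriction to $X$ of a linear form on $\P^2$, and its divisor of zeroes is the intersection of $X$ with the corresponding projective line. In particular the four (distinct, for generic $\alpha$, i.e.~when the line is not tangent) zeroes $z_0,\dots,z_3$ of $\alpha$ are the four points of $X \cap \ell$, where $\ell = \{\alpha = 0\} \subset \P^2$.

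First I would pick homogeneous coordinates $[x_0 : x_1 : x_2]$ on $\P^2$ so that $\alpha$, $\beta$, $\gamma$ become the coordinate forms $x_0$, $x_1$, $x_2$; this is legitimate because $\alpha,\beta,\gamma$ span $H^0(K_X)$, and for a generic such choice $\beta$ and $\gamma$ have no common zero (their only common zero on $\P^2$ is the point $\{x_1 = x_2 = 0\}$, which lies off $X$ for generic coordinates). Then $\ell = \{x_0 = 0\}$ carries the homogeneous coordinate $[x_1 : x_2]$, and the four points $z_i$ appear on $\ell$ with coordinates $[x_1(z_i) : x_2(z_i)]$.

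The crux is then the observation that, under the canonical embedding, the ratio $\gamma(z_i)/\beta(z_i)$ --- which a priori depends on a choice of local coordinate on $X$ at $z_i$ and of a local trivialization of $K_X$ --- is in fact a well-defined point of $\P^1$ (both cancel), equal to the value of the coordinate ratio $x_2/x_1$ at the image point $z_i$; this is nothing but the defining property of the canonical map, $\phi_K(p) = [\alpha(p):\beta(p):\gamma(p)]$. The hypothesis that $\beta$ and $\gamma$ have no common zero guarantees that $x_1,x_2$ do not vanish simultaneously at any $z_i$, so this point of $\P^1$ is defined even where one of the two forms vanishes there. Consequently the four numbers $\gamma(z_i)/\beta(z_i)$ are exactly the affine coordinates of the four points $z_i$ on the line $\ell$, so their cross-ratio --- which equals $B(X,\alpha)$ by the preceding Proposition --- is by definition the cross-ratio of $z_0,\dots,z_3$ regarded as four collinear points of $\P^2$. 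The clause ``up to relabelling the indices'' absorbs the usual dependence of the cross-ratio on the ordering of the four zeroes.

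I do not expect any real obstacle: the independence of $B$ from $\beta,\gamma$ is already proven, so it suffices to verify the claim for one convenient choice. The single point deserving care is the bookkeeping above --- namely, making precise that restricting the pencil spanned by $\beta$ and $\gamma$ to the line $\ell$ gives, in coordinates, the tautological parametrization of $\ell$, equivalently that $\gamma/\beta$ restricted to $\ell$ is the standard affine coordinate once $\beta,\gamma$ are taken to be coordinate forms. Granting this, the equality of the two cross-ratios is immediate.
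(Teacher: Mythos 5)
Your proposal is correct and rests on the same key observation as the paper's proof: under the canonical map $p\mapsto[\alpha(p):\beta(p):\gamma(p)]$ the ratio $\gamma/\beta$ is a projective coordinate on the line $\{\alpha=0\}$, so the quadruple $\gamma(z_i)/\beta(z_i)$ is the quadruple of coordinates of the four zeroes on that line and the two cross-ratios coincide. The paper phrases this via duality (each $z_i$ as a hyperplane in $\Omega(X)$ through $\langle\alpha\rangle$, with a normalized basis $\beta(z_0)=0$, $(\beta-\gamma)(z_1)=0$, $\gamma(z_2)=0$), whereas you use explicit homogeneous coordinates and the independence of $B$ from the choice of $\beta,\gamma$; the difference is only presentational.
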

\begin{proof}
Points in $\P(\Omega(X)^*)$ correspond to (hyper)planes in $\Omega(X)$. Here, a point $z_i$ corresponds to a plane of holomorphic 1-forms vanishing at $z_i$. If one picks the basis $\alpha,\beta,\gamma$ s.~t. $\beta(z_0) = 0$, $(\beta-\gamma)(z_1) = 0$, $\gamma(z_2) = 0$, then the ratio $\beta(z_3)/\gamma(z_3)$ is determined uniquely, and on the one hand equals to the cross-ratio of four planes spanned by $\alpha$ and another 1-form vanishing at $z_i$ (which equals the cross-ratio of four points on a line cutting out the zeroes of $\alpha$), and on the other hand by construction equals to $B(X,\alpha)$.
\end{proof}

Take $B \neq \pm 1$. Then the set of quadruples $(a,b,c,d) \in \C^4$ with $[a:b:c:d]=B$, after quotienting out the line spanned by $(1,1,1,1)$, is a quadratic cone. This endows the greater isoperiodic leaf for $(X,\alpha)$, which is three-dimensional, with a meromorphic field of quadratic cones, which is degenerate e.~g. along the hyperelliptic locus. By the above Proposition, any lesser isoperiodic leaf, which in this case has dimension one, is tangent to this field of cones, i.~e. isotropic w.~r.~t. the holomorphic local conformal structure.

This can be thought of as a Weil-type reciprocity law. However, whereas the usual reciprocity law involves two functions and values of each of them at zeroes and poles of the other, this reciprocity law involves values of two holomorphic forms at zeroes of the third, so in order to deduce from the Weil reciprocity law itself, some terms must be canceled out. For greater genera, a more difficult reciprocity relation of degree $g-1$ on the values of the function $\gamma/\beta$ at zeroes of a third form $\alpha$ emerges, and we are unable to write it down explicitly for any pair $(S,\alpha)$. It might be an interesting task.

\begin{expl}
Let $S$ be a genus $g=4$ curve. In this case, the projectivization $\P Q(S,\alpha)$ is a hypersurface swept by lines parametrized by $\P^2$. It is cut out, on the other hand, as a plane section of a determinantal variety $\P\mathfrak{Q}(\Omega(S),\alpha)$, which has degree three. Hence $\P Q(S,\alpha)$ is a cubic threefold. It is known that the so-called Fano surface of lines on a smooth cubic threefold is of general type, and hence cannot be $\P^2$. Therefore the doubly isoperiodic cone for a genus four curve is never smooth. This implies the following

\begin{pr}
For any homolorphic 1-form $\alpha$ on a genus four curve $S$ there exist a linked three-dimensional plane $\tau \subset \Omega(S)$ containing $\alpha$.
\end{pr}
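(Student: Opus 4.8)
The plan is to prove the statement by contradiction, using the symmetric determinantal description of $\P Q(S,\alpha)$ underlying Propositions \ref{hypers} and \ref{legen}. First I would reduce to a generic pair $(S,\alpha)$: the existence of a linked three-dimensional subspace $\tau\subset\Omega(S)$ through $\alpha$ amounts to saying that for some $\tilde\tau\in\Gr(2,\Omega(S)/\langle\alpha\rangle)\cong\P^2$ the multiplication map $(\langle\alpha\rangle\oplus\tilde\tau)\ox H^0(K_S)\to H^0(K_S^2)$ fails to be surjective, which is a determinantal (hence closed) condition on $\tilde\tau$; since $\Gr(2,\Omega(S)/\langle\alpha\rangle)$ is proper, the locus of pairs $(S,\alpha)$ admitting such a $\tau$ is closed in the total space of the Hodge bundle. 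It therefore suffices to produce a linked triple through $\alpha$ for a generic $(S,\alpha)$.

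So fix a generic $(S,\alpha)$, with $\alpha$ having simple zeroes. As $\KS_v$ is dual to symmetric multiplication, for $v$ in the tangent space $H^1_\alpha(T)$ to the isoperiodic leaf (of dimension $2g-3=5$) the operator $\KS_v$ descends to a symmetric bilinear form $\bar B_v$ on the three-dimensional space $\Omega(S)/\langle\alpha\rangle$, linear in $v$, with $Q(S,\alpha)=\{v:\det\bar B_v=0\}$. Thus $X:=\P Q(S,\alpha)\subset\P\bigl(H^1_\alpha(T)\bigr)=\P^4$ is the cubic threefold cut out by the symmetric determinant, and it is swept out by the $\P^2=\P\bigl(\Omega(S)/\langle\alpha\rangle\bigr)$-family of lines $\ell_{\bar\beta}=\P\bigl(H^1_{\langle\alpha,\beta\rangle}(T)\bigr)$ (genuine lines, every plane through a generic $\alpha$ being coprime by the analysis of linked pairs on genus-four curves in Section \ref{htht-counter} together with Corollary \ref{one-to-one}). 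Furthermore $v\ne0$ lies in $H^1_{\langle\alpha,\beta,\gamma\rangle}(T)$ precisely when $\langle\bar\beta,\bar\gamma\rangle$ lies in the radical of $\bar B_v$, i.~e.~when $\operatorname{rank}\bar B_v\le1$; so a linked triple through $\alpha$ exists if and only if $X$ has a point with $\operatorname{rank}\bar B_v\le1$, equivalently if and only if two of the sweeping lines $\ell_{\bar\beta}$ meet.

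Now suppose, for contradiction, that no linked triple contains $\alpha$. Then $\operatorname{rank}\bar B_v=2$ everywhere on $X$, and the lines $\ell_{\bar\beta}$ are pairwise distinct (coincidence of two of them would make the spanned triple linked). I claim $X$ is then smooth: at a point $v$ with $\ker\bar B_v=\langle\bar\beta_0\rangle$, the gradient of $\det\bar B_v$ is, up to a nonzero scalar, the linear form $w\mapsto\bar B_w(\bar\beta_0,\bar\beta_0)=w(\beta_0^2)$ on $H^1_\alpha(T)$; since the annihilator of $H^1_\alpha(T)$ in $H^0(K_S^2)$ is exactly the space of quadratic differentials divisible by $\alpha$, the point $v$ is singular on $X$ only if $\beta_0^2$ is divisible by $\alpha$, which for $\alpha$ with simple zeroes forces $\beta_0\in\langle\alpha\rangle$ --- absurd. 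So $X$ is a smooth cubic threefold, whence (classically, by Fano's theorem) its surface of lines $F(X)$ is an irreducible surface of general type. But the distinct lines $\ell_{\bar\beta}$ define a morphism $\P^2\to F(X)$ with two-dimensional image (the $\ell_{\bar\beta}$ sweep out the threefold $X$, while a family of lines of dimension $\le1$ could sweep out only a variety of dimension $\le2$), hence with image all of $F(X)$; so $F(X)$ is dominated by $\P^2$, hence unirational, contradicting that it is of general type. This contradiction produces a linked triple through $\alpha$ for generic $(S,\alpha)$, and the closedness reduction of the first paragraph then gives one for every genus-four curve $S$ and every nonzero $\alpha$.

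The hard part is the smoothness claim, i.~e.~controlling $\operatorname{Sing}(X)$: one must check that, away from $\{\operatorname{rank}\bar B_v\le1\}$, the cubic $\det\bar B_v=0$ has no spurious singular points. This is precisely where the \emph{symmetric} (rather than general) determinantal nature of $\bar B_v$ enters, together with the annihilator description of $H^1_\alpha(T)$; the gradient computation above disposes of it when $\alpha$ has simple zeroes, and the remaining divisor classes of $\alpha$ are absorbed by the closedness argument. A secondary point to be careful about is that the morphism $\P^2\to F(X)$ really has two-dimensional image --- equivalently, that the lines $\ell_{\bar\beta}$ genuinely sweep out all of $X$ --- which is guaranteed by $Q(S,\alpha)=\bigcup_{\bar\beta}H^1_{\langle\alpha,\beta\rangle}(T)$ having the generic dimension $2g-4$ from Proposition \ref{hypers}.
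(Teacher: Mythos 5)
Your proposal is correct and takes essentially the same route as the paper: the paper likewise realizes $\P Q(S,\alpha)$ as a cubic threefold swept by the $\P^2$-family of lines $\P\bigl(H^1_{\langle\alpha,\beta\rangle}(T)\bigr)$, uses the fact that the Fano surface of lines on a \emph{smooth} cubic threefold is of general type (hence not dominated by $\P^2$) to force a singular point, and identifies singular points of the determinantal cubic with linked triples through $\alpha$. You merely make explicit two steps the paper leaves implicit --- the gradient computation showing $\mathrm{Sing}(X)=\{\operatorname{rank}\bar B_v\le 1\}$ when $\alpha$ has simple zeroes, and the closedness reduction from generic to arbitrary $(S,\alpha)$ --- so the argument is the same.
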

\begin{proof}
Indeed, the singularities of the doubly isoperiodic cone correspond to linked triples (cf. the proof of Proposition \ref{hypers}).
\end{proof}

Note that there is another way of associating a singular cubic threefold to a non-hyperelliptic genus four curve, via the linear system of cubics passing through its canonical image \cite{vdGK}. It does not require a choice of a holomorphic 1-form, so it's probably not the same, though it would be interesting to find a direct relation between these threefolds.
\end{expl}

%

\paragraph*{Acknowledgements.}
I~am indebted to Fedor Alexeyevich Bogomolov, my thesis advisor, whose genius and insightfulness led me through the proofs of the propositions which I cannot help myself to expect were true. I~am grateful to Renat Abugaliev, Enrico Arbarello, Alexander Berdnikov, Samuel Grushevsky, Igor Krichever, Grigory Papayanov, Alexander Petrov, David Loeffler, Vasily Rogov, Lev Soukhanov, Misha Verbitsky, and Sasha Zakharov, whose remarks were fruitful and sometimes essential for the writing of the paper. I am also grateful to Nikon Kurnosov, who gave me a possibility to present a preliminary draft of this paper on his seminar, and to Dmitry Kaledin, who, being present at that seminar, discouraged me from an attempt to use complex analysis (so-called Schiffer variations) in order to repair the case of two differentials with common zeroes, and to examine it through the optics of Algebraic Geometry (which I did, only to find that the problem is much richer than I expected).

\noindent {\sc rodion n. d\'eev\\
Independent University of Moscow\\
119002, Bolshoy Vlasyevskiy Pereulok 11\\ 
Moscow, Russian Federation} \\
\tt deevrod@mccme.ru, also:\\
{\sc Department of Mathematics\\
Courant Institute, NYU \\
251 Mercer Street \\
New York, NY 10012, USA,} \\
\tt rodion@cims.nyu.edu

\end{document}